\documentclass[opre,nonblindrev]
{informs3hack2} 
\usepackage{comment}
\OneAndAHalfSpacedXI 

\usepackage{endnotes}
\let\footnote=\endnote

\usepackage{natbib}
\bibliographystyle{informs2014}

\usepackage{natbib}
\bibpunct[, ]{(}{)}{,}{a}{}{,}%
%
%
%
%

\usepackage{titlesec,soul}
\usepackage[titletoc,toc,title]{appendix}
\usepackage{graphicx}
\usepackage{subfigure}
\usepackage{multirow}



\usepackage[hypertexnames=false,colorlinks=true,breaklinks=true,bookmarks=true,urlcolor=blue,citecolor=black,linkcolor=blue,bookmarksopen=false,draft=false]{hyperref}
\usepackage{todonotes,bm}
\usepackage{tikz}
\usepackage{url}
\usepackage{amssymb}
\usepackage{psfrag}
\usepackage{amsmath}
\usepackage{setspace}
\newcommand{\exclude}[1]{}
\usepackage{bm}
\usepackage[flushleft]{threeparttable}
\usepackage{algorithm}
\usepackage{algpseudocode}
\algdef{SE}[DOWHILE]{Do}{doWhile}{\algorithmicdo}[1]{\algorithmicwhile\ #1}%
\algnewcommand{\Or}{\textbf{or}}
\algnewcommand{\And}{\textbf{and}}

\usepackage{tikz}

\usetikzlibrary{shapes,decorations,arrows,calc,arrows.meta,fit,positioning}
\tikzset{
-Latex,auto,node distance =1 cm and 1 cm,semithick,
state/.style ={ellipse, draw, minimum width = 0.7 cm},
point/.style = {circle, draw, inner sep=0.04cm,fill,node contents={}},
bidirected/.style={Latex-Latex,dashed},
el/.style = {inner sep=2pt, align=left, sloped}
}


\usepackage{thmtools} 
\usepackage{thm-restate}

\usepackage{cleveref}

\declaretheorem[name=Theorem]{theorem}
\declaretheorem[name=Proposition]{proposition}
\declaretheorem[name=Lemma]{lemma}
\declaretheorem[name=Claim]{claim}
\declaretheorem[name=Corollary]{corollary}

\declaretheorem[name=Definition]{definition}
\declaretheorem[name=Example]{example}


\def\k{\tilde k}

\def\Re{\mathbb{R}}

\def\hat{\widehat}

\def \V{\mathbf{ V}}

\def\O{{\mathcal O}}

\def\H{{\mathcal H}}

\def\J{{\mathcal J}}

\def\X{{\mathcal D}}
\def\Y{{\mathcal Y}}

\def\Re{{\mathbb R}}

\def\D{{\mathcal D}}

\def\Q{{\mathcal Q}}
\def\O{{\mathcal O}}

\newcommand{\e}{\mathbf{e}}

\newcommand{\T}{\mathcal{T}}

\DeclareMathOperator{\Proj}{proj}

\DeclareMathOperator{\aff}{aff}

\DeclareMathOperator{\diag}{diag}
\DeclareMathOperator{\Diag}{Diag}
\DeclareMathOperator{\conv}{conv}
\DeclareMathOperator{\tr}{tr}

\DeclareMathOperator{\rank}{rank}

\DeclareMathOperator{\sign}{sign}

\DeclareMathOperator{\rel}{rel}
\DeclareMathOperator{\opt}{opt}

\renewcommand{\S}{\mathcal{S}}
%
%
%
%
%

\renewcommand*{\qed}{\hfill\ensuremath{\square}}
\newcommand*{\qedA}{\hfill\ensuremath{\diamond}}

\usepackage{enumerate}

\usepackage{natbib}
 \bibpunct[, ]{(}{)}{,}{a}{}{,}%

\allowdisplaybreaks

\begin{document}

\RUNAUTHOR{Y. Li and W. Xie}

\RUNTITLE{On the  Partial Convexification of the Low-Rank Spectral Optimization}

\TITLE{On the  Partial Convexification of the Low-Rank Spectral Optimization: Rank Bounds and Algorithms}

\ARTICLEAUTHORS{%
\AUTHOR{Yongchun Li}
\AFF{Stewart School of Industrial \& Systems Engineering, Georgia Institute of Technology, Atlanta, GA 30332, \EMAIL{ycli@gatech.edu}} 
\AUTHOR{Weijun Xie}
\AFF{Stewart School of Industrial \& Systems Engineering, Georgia Institute of Technology, Atlanta, GA 30332, \EMAIL{wxie@gatech.edu}}
} 

\ABSTRACT{
A Low-rank Spectral Optimization Problem (LSOP) minimizes a linear objective function subject to multiple two-sided linear inequalities intersected with a low-rank and spectral constrained domain set. Although solving LSOP is, in general, NP-hard, its partial convexification (i.e., replacing the domain set by its convex hull) termed ``LSOP-R", is often tractable and yields a high-quality solution. This motivates us to study the strength of LSOP-R. Specifically, we derive rank bounds for any extreme point of the feasible set of LSOP-R with different matrix spaces and prove their tightness.
The proposed rank bounds recover two well-known results in the literature from a fresh angle and  allow us to derive sufficient conditions under which the relaxation LSOP-R is equivalent to the original LSOP. To effectively solve LSOP-R, we develop a column generation algorithm with a vector-based convex pricing oracle, coupled with a rank-reduction algorithm, which ensures that the output solution always satisfies the theoretical rank bound. Finally, we numerically verify the strength of the LSOP-R and the efficacy of the proposed algorithms.
}

\KEYWORDS{Low Rank, Partial Convexification, Rank Bounds, Column Generation, Rank Reduction.}

\maketitle

\section{Introduction}\label{sec1}
This paper studies the Low-rank Spectral Optimization Problem (LSOP) of the form:
\begin{align} \label{eq_rank}
\V_{\opt} :=\min_{{\bm X \in \X}}\left\{\langle\bm A_0, \bm X\rangle: b_i^l
\le \langle \bm A_i, \bm X\rangle \le  b_i^u, \forall i \in [m] \right\}, \tag{LSOP} 
\end{align}
where the decision $\bm X$ is a matrix variable with domain $\X$, $\langle\cdot, \cdot \rangle$ denotes the Frobenius inner product of two equal-sized matrices,  
we have that $-\infty\leq b_i^l\leq b_i^u\leq +\infty$ for each $i\in [m]$, and  matrices $\bm A_0$ and $\{\bm A_i\}_{i\in [m]}$  are symmetric if the matrix variable $\bm X$ is symmetric. Throughout, we let $\tilde m$ denote the number of linearly independent matrices in the set $\{\bm A_i\}_{i\in [m]}$.

%

Specifically, the domain set $\X$ in \ref{eq_rank} is defined as below 
\begin{align}\label{eq_set}
\X:=\left \{\bm X\in \Q:   \rank(\bm X)\le k, F(\bm X):=f(\bm \lambda(\bm X))  \le 0\right\} ,
\end{align}
which 
consists of a low-rank and a closed convex  spectral constraint.
Note that (i) the matrix space $\Q$ can denote positive semidefinite matrix space $\S_+^n$, non-symmetric matrix space $\Re^{n \times p}$, symmetric indefinite matrix space $\S^n$,  or  diagonal matrix space with  $k\le n\le p$ being positive integers; (ii) function $F(\bm X): \Q \to \Re$ is continuous, convex, and spectral which only depends on the eigenvalue or singular value vector $\bm \lambda(\bm X)
\in \Re^n$ of matrix $\bm X$. Thus, we can rewrite it as $F(\bm X):= f(\bm \lambda(\bm X)) : \Re^n \to \Re$; and (iii)  when there are multiple convex  spectral constraints, i.e., $F_j(\bm X) \le 0, \forall j \in J$,  we can integrate them into a single convex spectral constraint by defining a function $F(\bm X):= \max_{j\in J} F_j(\bm X)$. Hence, 
the domain set $\X$ readily covers  multiple  spectral constraints. Such a  set $\X$ naturally appears  in many machine learning and optimization problems with a  low-rank constraint (see Subsection \ref{sub:app}). 

The low-rank constraint dramatically complicates \ref{eq_rank}, which  often turns out to be an intractable nonconvex bilinear program. Thus, we leverage the  convex hull of domain set $\X$, denoted by $\conv(\X)$, to obtain a partial convexification over $\X$ for \ref{eq_rank}, termed LSOP-R throughout:
\begin{align} \label{eq_rel_rank}
 \V_{\rel}:=\min_{{\bm X \in \conv(\X)}}\left\{\langle\bm A_0, \bm X \rangle: 
b_i^l \le  \langle \bm A_i, \bm X\rangle \le b_i^u, \forall i \in [m] \right\} \le \V_{\opt}, \tag{LSOP-R}
\end{align}
where the inequality is because that \ref{eq_rel_rank} serves as a convex relaxation of  \ref{eq_rank}.

Since the  \ref{eq_rank}  requires the solution to be at most rank-$k$ 
and the more tractable relaxation \ref{eq_rel_rank} may not enforce the rank constraint, a guaranteed low-rank solution of the  \ref{eq_rel_rank} can consistently favor various application requirements. 
That is, 
we are interested in finding the smallest integer $\hat{k}\geq k$ such that there is an optimal solution $\bm{X}^*$ of \ref{eq_rel_rank} satisfying $\rank(\bm{X}^*)\leq \hat{k}$. More importantly, 
the rank bounds are useful to develop 
approximation algorithms for  \ref{eq_rank} and solution algorithms for \ref{eq_rel_rank}, as well as  to understand 
when the relaxation \ref{eq_rel_rank} meets \ref{eq_rank} (see, e.g.,  \citealt{burer2003nonlinear,burer2005local, burer2020exact,lau2011iterative,tantipongpipat2019multi}).  The analysis of rank bounds further inspires us a rank-reduction algorithm for \ref{eq_rel_rank}. 
Hence, this paper aims to provide: (i) theoretical rank  bounds  of  \ref{eq_rel_rank} solutions; and (ii) 
effective algorithms for solving \ref{eq_rel_rank} while satisfying rank bounds.

\subsection{Scope and Flexibility of Our LSOP Framework} \label{sub:app}
In this subsection, we discuss several interesting low-rank constrained problems in different matrix space $\Q$ where the proposed \ref{eq_rank} framework can be applied (as a substructure).
For those application examples, we specify their corresponding \ref{eq_rel_rank}s along with the rank bounds later.

\noindent\textit{Quadratically Constrained Quadratic Program (QCQP) with  $\Q:=\S_{+}^n$. }   The QCQP has been widely studied in many application areas, including optimal power flow, sensor network problems, signal processing \citep{josz2016ac,gharanjik2016iterative, khobahi2019optimized}, among others.
%
The QCQP of matrix form can be viewed as a special case of the proposed  \ref{eq_rank}:
\begin{equation}\label{app:qcqp}
  \min_{\bm X \in \X} \left\{\langle\bm A_0, \bm X\rangle: 
b_i^l \le\langle \bm A_i, \bm X\rangle \le  b_i^u, \forall i \in [m]\right\},  \ \  \X:= \{\bm X \in \S_+^{n}:  \rank(\bm X)\le 1\},
\tag{QCQP}
\end{equation}
where  $F(\bm X)=0$.
In fact, the domain set $\X$ can be extended to incorporate any closed convex spectral function $F(\bm X)$. For example, if there is a ball constraint (i.e., $\bm x^{\top} \bm x \le 1$) in the \ref{app:qcqp} like trust region subproblem, we can add a spectral constraint $\tr(\bm X) =  1+ \bm x^{\top} \bm x \le 2$ into set $\X$.
\vspace{1em}
%

\noindent\textit{Low-Rank Kernel Learning with $\Q:=\S_{+}^n$.} Given an input kernel matrix $\bm Y \in \S_+^n$ of rank up to $k$,  the low-rank kernel learning aims to find a rank $\le k$ matrix $\bm X$ that closely approximates $\bm Y$, subject to additional linear  constraints. 
The Log-Determinant divergence is a popular measure of the kernel learning quality (see, e.g., \citealt{kulis2009low}), defined as $\langle \bm Y^{\dag}, \bm X\rangle - \log\det[(\bm X+\alpha \bm I_n) (\bm Y+\alpha \bm I_n)^{-1}]$, where $\alpha>0$ is small and $\bm I_n$ is the identify matrix for addressing the rank deficiency issue. 
Thus,  the kernel learning under Log-Determinant divergence, as formulated  by \cite{kulis2009low}, becomes
\begin{equation}
\label{app:kernel}
\begin{aligned}
 &\min_{z \in \Re, \bm X \in \X} \left\{\langle \bm Y^{\dag}, \bm X\rangle - z + \log\det(\bm Y+\alpha \bm I_n): 
b_i^l \le \langle \bm A_i, \bm X\rangle \le  b_i^u, \forall i \in [m]\right\},  \\   &\quad\X:= \{\bm X \in \S_+^{n}:   \rank(\bm X)\le k, \log\det(\bm X+\alpha \bm I_n)\ge z\}.
\end{aligned}
\tag{Kernel Learning}
\end{equation}
Here,  a closed convex spectral function is defined as $F(\bm X):= z- \log\det(\bm X+\alpha \bm I_n)$ for any $z$. The proposed \ref{eq_rank} naturally performs a substructure over variable $\bm X$ in \ref{app:kernel}.
\vspace{1em}

\noindent\textit{Fair PCA with $\Q:=\S_{+}^n$.} 
It is recognized that the conventional Principal Component Analysis (PCA) may generate biased learning results against sensitive  attributes, such as gender, race, or education level \citep{samadi2018price}. Fairness  has recently been introduced to the PCA.
Formally, the seminal work by  \cite{tantipongpipat2019multi} formulates the fair PCA  as follows in which the substructure over $\bm X$ is a special case of the proposed \ref{eq_rank}
\begin{align}\label{app:fpca}
  \max_{z\in \Re_+, \bm X \in \X} \left\{z: z\le \langle  \bm A_i, \bm X \rangle, \forall i \in [m]\right\}, \ \ {\X:= \{\bm X \in \S_+^{n}: \rank(\bm X)\le k, ||\bm X||_2 \le 1 \}}, \tag{Fair PCA}
\end{align}
where $F(\bm X):=||\bm X||_2-1$ denotes the spectral norm (i.e., the largest singular value) of a matrix and  matrices $\bm A_1, \cdots \bm A_m \in \S_+^n$ denote the sample covariance matrices from $m$ different groups. \vspace{1em}

\noindent\textit{Fair SVD with $\Q:=\Re^{n\times p}$.}
Similar to \ref{app:fpca}, fair Singular Value Decomposition (SVD) seeks a fair representation of $m$ different  data matrices that
are non-symmetric, as formulated  below
\begin{align}\label{app:fsvd}
 \max_{z\in \Re, \bm X \in \X} \left\{z: z\le \langle  \bm A_i, \bm X \rangle, \forall i \in [m]\right\}, \ \ {\X:= \{\bm X \in \Re^{n\times p}: \rank(\bm X)\le k, ||\bm X||_2 \le 1 \}}. \tag{Fair SVD}
\end{align}
Here, matrices $\bm A_1, \cdots \bm A_m \in \Re^{n\times p}$ are  non-symmetric. 
\vspace{1em}

\noindent\textit{Matrix Completion with $\Q:=\Re^{n\times p}$. }  In the matrix completion problem, the objective is to recover a
low-rank matrix $\bm X \in \Re^{n\times p}$ from  a subset of observed entries  $\{A_{ij}\}_{(i,j ) \in \Omega\subseteq [n]\times [p]}$, which arises  from a variety of applications including recommendation systems, computer vision, and signal processing \citep{chakraborty2013active,miao2021color}.
The proposed \ref{eq_rank} appears as a substructure of variable $\bm X$  in the following matrix completion formulation 
\begin{equation} \label{app:lmc} 
\begin{aligned}
  \min_{\bm X\in \X, z \in \Re_+} \left\{z:  X_{ij}=A_{ij} , \forall (i,j)\in \Omega \right\}, \ \  \X:=\{\bm X\in \Re^{n\times p}: \rank(\bm X)\le k, \|\bm X\|_*\le z\},
  \end{aligned}
 \tag{Matrix Completion}
\end{equation}
where $F(\bm X):=\|\bm X\|_*-z$ builds on the nuclear norm.
The \ref{eq_rel_rank} counterpart of matrix completion reduces to a widely studied convex relaxation in the literature (see \Cref{sec:nonsym}).

\vspace{1em}


\noindent \textit{Sparse Ridge Regression with $\Q:=\S^n$.}
As machine learning  may encounter with datasets with many 
features, 
the sparsity has been enforced to select a handful of important ones to improve the interpretability of the learning outcomes. The zero norm that denotes the sparsity of a vector is equal to the rank of the corresponding diagonal matrix constructed by this vector. Hence, when $\Q:=\S^n$ in \eqref{eq_set}, the special diagonal matrix space   allows us to formulate \ref{eq_rank}  with the known sparse  ridge regression problem (see, e.g., \citealt{xie2020scalable}) as
\begin{align*}
 \min_{\bm x\in \Re^n}
\left\{\|\bm b-\bm A\bm x\|_2^2/n + \alpha\|\bm x\|_2^2: \|\bm x\|_0 \le k \right\},
\end{align*}
where $\bm A\in \Re^{m\times n}$ represents the data matrix, $\alpha >0$ is a regularizer, and $\|\bm x\|_0$ denotes the zero norm.
By introducing the auxiliary variables $\bm y = \bm b-\bm A\bm x$, $z\ge \|\bm x\|_2^2$ and letting $\bm X =\Diag(\bm x)$,  the sparse ridge regression reduces to 
\begin{equation} \label{app:sparse}
\begin{aligned}
&\min_{\bm X\in \X, \bm y \in \Re^m, z \in \Re_+}
\left\{  \|\bm y\|_2^2/n + \alpha z: \bm y= \bm b-\bm A\diag(\bm X)\right\}, \\ 
&\X:=\left\{\bm X\in \S^n:  \bm X =\Diag(\diag(\bm X)), \rank(\bm X) \le k, \|\bm X\|_F^2 \le z\right\}, 
\end{aligned}
\tag{Sparse Ridge Regression}
\end{equation}
where the constraint $\bm X =\Diag(\diag(\bm X))$ enforces matrix $\bm X$ to be diagonal and thus  $ \|\bm X\|_F^2=\|\diag(\bm X)\|_2^2=\|\bm x\|_2^2 \le z$ must hold.
The subproblem over  $\bm X$ above  follows our \ref{eq_rank} framework.

\subsection{Relevant Literature}
In this subsection, we survey the relevant literature on two aspects.

\textit{Convexification of Low-Rank Spectral Domain Set $\X$.} There are few works on the convexification of a low-rank spectral domain set $\X$ in \eqref{eq_set}.  
The work by \cite{bertsimas2021new} has successfully extended the perspective technique to convexify a special low-rank set $\X$ with $\Q:=\S^n$ in which all the eigenvalues in the function $F(\bm X):=f(\bm \lambda(\bm X))$ are separable. 
Such approach, however, may fail to cover the general set $\X$, e.g., the spectral norm function $F(\bm X):=\|\bm X \|_2$ 
in the \ref{app:fpca} which is not separable. Another seminal work \citep{kim2022convexification} leverages the majorization technique on the convexification of any 
permutation-invariant set. We observe that our domain set $\X$ in \eqref{eq_set} is permutation-invariant with eigenvalues or singular values; thus, the   majorization technique can be applied. It should be noted that the majorization technique may not always provide an explicit description for our convex hull $\conv(\X)$, except making further assumptions on the spectral constraint, as shown in \Cref{sec3}. 
To resolve this limitation, this paper  studies the inner approximation of the convex hull  $\conv(\X)$ to obtain a tractable relaxation of \ref{eq_rank} with the matrix space  $\Q:=\S^n$.
This paper focuses on bounding 
the ranks of  \ref{eq_rel_rank} solutions, and  the convexification result of \cite{kim2022convexification} paves the path for the derivation of our bounds.

\textit{Rank Bounds for \ref{eq_rel_rank}.}
Given a domain set $\X=\{\bm X\in \S_{+}^n:\rank(\bm X)\le 1 \}$ particularly adopted in \ref{app:qcqp}, its convex hull is the positive semidefinite matrix space  and the corresponding \ref{eq_rel_rank} feasible set is a spectrahedron.
\cite{barvinok1995problems,deza1997geometry,pataki1998rank} independently showed that the rank of any extreme point in the spectrahedron is in the order of $\O(\sqrt{2\tilde m})$. Recently, in the celebrated paper on \ref{app:fpca} \citep{tantipongpipat2019multi}, the authors also proved a rank bound for all feasible extreme points of the corresponding \ref{eq_rel_rank}, which is  $\O(\sqrt{2 m})$, and the proof technique extended  that of \cite{pataki1998rank}. Another relevant work pays particular attention to the sparsity bound for the sparse  optimization problem \citep{askari2022approximation}, which relies on a different continuous relaxation from the \ref{eq_rel_rank}. 
Since the \ref{eq_rank} can encompass the sparse optimization when $\Q$ in \eqref{eq_set} denotes the diagonal  matrix space, our rank bounds are in fact sparsity bounds under this setting.
To the best of our knowledge, this is the first work to study the rank bounds for the generic partial convexification-- \ref{eq_rel_rank}. 
 Our rank bounds  recover all the ones reviewed here for \ref{app:qcqp} and  \ref{app:fpca} from a different perspective, and  successfully reduce the sparsity bound of \cite{askari2022approximation} when applying to \ref{app:sparse}.


\subsection{Contributions and Outline}
We theoretically guarantee the solution quality of the relaxation \ref{eq_rel_rank} by bounding the ranks of all feasible extreme points with three different matrix spaces (i.e., $\Q=\S_+^n$, $\Q=\Re^{n \times p}$, and $\Q=\S^n$), respectively, and each matrix space has its own advantages and applications, as exemplified in Subsection \ref{sub:app}. Notably,  our rank bounds hold for any domain set $\X$ in the form of \eqref{eq_set}
and for any $\tilde m$ linearly independent inequalities, and they are attainable by the worst-case instances. 
Below summarizes the major contributions of this paper.
\begin{enumerate}[(i)]
\item  \Cref{sec2} studies  \ref{eq_rel_rank} with the positive semidefinite matrix space, i.e., $\Q:=\S_{+}^n$.
%
We show that the rank of any extreme point in the feasible set of  \ref{eq_rel_rank} deviates  at most $\lfloor \sqrt{2\tilde m+9/4}-3/2\rfloor$ from the original rank-$k$ requirement in \ref{eq_rank}.
We establish this rank bound from a novel perspective, specifically that of analyzing the rank of various  faces in the convex hull of the domain set (i.e., $\conv(\X)$). Besides, the rank bound gives a sufficient condition under which the \ref{eq_rel_rank} exactly solves the original  \ref{eq_rank}. 

We conclude \Cref{sec2} by revisiting the three examples with matrix space $\Q:=\S_+^n$ in Subsection \ref{sub:app} and deriving  the rank bounds for their corresponding LSOP-Rs.

\item Section \ref{sec:nonsym} explores the non-symmetric matrix space, i.e., $\Q=\Re^{n\times p}$. 
Advancing the analysis in  \Cref{sec2},  we show that the \ref{eq_rel_rank} admits the same rank bound
as the one with $\Q:=\S_+^n$.

In Subsection \ref{sub:nonsymapp}, we present the non-symmetric applications:  \ref{app:fsvd} and \ref{app:lmc}, and derive the first-known rank bounds for their LSOP-Rs.

\item \Cref{sec3} discusses the symmetric indefinite matrix space, i.e., $\Q:=\S^n$ under two cases depending on whether the  function $f(\cdot)$ in \eqref{eq_set} is sign-invariant with eigenvalues or not.

In Subsection \ref{subsec:sign}, the sign-invariant property allows for an identical rank bound  of \ref{eq_rel_rank} to the positive semidefinite matrix space.

Beyond sign-invariance in Subsection \ref{subsec:nosign}, the convexification of domain set $\X$ involves the convex hull of a union of multiple convex sets, which inspires us a tighter relaxation than \ref{eq_rel_rank} by replacing $\conv(\X)$ with the union set, further termed LSOP-R-I. Then, we derive a rank bound of the order of $\O(\sqrt{4\tilde m})$  for the \ref{eq_rel_rank1}.

Finally, we extend the analysis to derive sparsity bounds in the diagonal matrix space and  apply the result to \ref{app:sparse}.

\item In \Cref{sec:algo}, we develop an efficient column generation algorithm
with a vector-based convex pricing oracle to solve \ref{eq_rel_rank}, where the rank of the output solution can be reduced to be no larger than the theoretical bound using a rank-reduction algorithm.
 In \Cref{sec:num}, we numerically test the proposed algorithms. 

\end{enumerate}
 The detailed rank bounds for \ref{eq_rel_rank} with different matrix spaces and those for application examples can be found in \Cref{table:rank}. Note that we  further tighten the rank bounds using $\tilde k$ in \Cref{def:k}. We also show that the  rank bounds  marked with asterisk in \Cref{table:rank} are tight.

\begin{table}[htb]
\caption{Summary of Our Rank Bounds } 
\centering
\label{table:rank}
\setlength{\tabcolsep}{2.5pt}\renewcommand{\arraystretch}{1.2}
\begin{tabular}{c| r| r |r}
\hline
\multicolumn{1}{c|}{{\textbf{LSOP Relaxation }}}&\multicolumn{1}{c|}{\textbf{Matrix Space} $\Q$}
&\multicolumn{1}{c|}{\textbf{Spectral Constraint }} & \multicolumn{1}{c}{{\textbf{Rank Bound}}}  \\
\hline
\multirow{2}{*}{\ref{eq_rel_rank}} & \multirow{2}{*}{$\S_+^n$} &\multirow{2}{*}{Any} &$\k \tnote{i} +\lfloor \sqrt{2\tilde m+9/4}-3/2\rfloor$ \\
& &&{(\Cref{them:rank})*} \\
\hline
\multirow{2}{*}{\ref{eq_rel_rank}}  &\multirow{2}{*}{$\Re^{n\times p}$}  &      \multirow{2}{*}{Any} &$\k +\lfloor \sqrt{2\tilde m+9/4}-3/2\rfloor$  \\
& &&{(\Cref{them:nonsym})*}  \\
\hline
\multirow{2}{*}{\ref{eq_rel_rank}}  &\multirow{2}{*}{$S^n$} & \multirow{2}{*}{Sign-invariant} &$\k+\lfloor \sqrt{2\tilde m+9/4}-3/2\rfloor$  \\
& &&{(\Cref{them:symsign})*}  \\
\hline
\multirow{2}{*}{\ref{eq_rel_rank1}} &\multirow{2}{*}{$S^n$}  & \multirow{2}{*}{Any} &$\k+\lfloor \sqrt{4\tilde m+9}\rfloor-3$ \\
& &&{(\Cref{them:sym_rank})}  \\
\hline
\multirow{2}{*}{\ref{eq_rel_rank1}} &\multirow{2}{*}{Diagonal  $S^n$}  & \multirow{2}{*}{Any} &$\k+\tilde m$ \\
& &&{(\Cref{them:sparsenonsign})*}  \\ 
\hline
\hline 
\multicolumn{1}{c|}{{\textbf{LSOP-R Examples }}}&\multicolumn{1}{c|}{\textbf{Matrix Space} $\Q$}
&\multicolumn{1}{c|}{\textbf{Spectral Constraint }} & \multicolumn{1}{c}{{\textbf{Rank Bound}}}  \\
\hline
\multirow{2}{*}{\ref{pc_kernel}} &\multirow{2}{*}{$\S_+^n$}& \multirow{2}{*}{$z- \log\det(\bm X+\alpha \bm I_n)\le 0$} & $k+\lfloor \sqrt{2\tilde m+9/4}-3/2\rfloor$\\
& &&(\Cref{cor:kernel})  \\
\hline
\multirow{2}{*}{\ref{pc_fpca}} &\multirow{2}{*}{$\S_+^n$} & \multirow{2}{*}{$\|\bm X\|_2-1 \le 0$} & $k+\lfloor \sqrt{2m +1/4}-3/2\rfloor$ \\
& &&(\Cref{cor:spca}) \\
\hline
\multirow{2}{*}{\ref{pc:qcqp}} & \multirow{2}{*}{$\S_+^n$} & \multirow{2}{*}{N/A} & 1+$\lfloor \sqrt{2\tilde m+1/4}-3/2\rfloor$ \\
& &&(\Cref{cor:qcqp})  \\
\hline
\multirow{2}{*}{\ref{pc_fsvd}} &\multirow{2}{*}{$\Re^{n\times p}$} &  \multirow{2}{*}{$\|\bm X\|_2-1 \le 0$} & $k+\lfloor \sqrt{2m+1/4} -3/2 \rfloor$\\
& &&(\Cref{cor:fsvd})  \\
\hline
\multirow{2}{*}{\ref{pc_lmc}} &\multirow{2}{*}{$\Re^{n\times p}$} &  \multirow{2}{*}{$\|\bm X\|_* -z \le 0$} & $\lfloor \sqrt{2|\Omega| +9/4}-1/2\rfloor$\\
& &&(\Cref{cor:lmc})  \\
\hline
\multirow{2}{*}{\ref{pc_sparse}} &\multirow{2}{*}{Diagonal $\S^n$} &  \multirow{2}{*}{$\|\bm X\|_F^2 -z \le 0$} & $k+\rank(\bm A)$\\
& &&(\Cref{cor:sparse})  \\
\hline
\end{tabular}%
\end{table}

\noindent\textit{Notation and Definition.} We use bold lower-case letters
(e.g., $\bm x$) and bold upper-case letters (e.g., $\bm X$) to denote vectors and matrices, respectively, and use
corresponding non-bold letters (e.g., $x_i$) to denote their components. 
 We let $[n]:=\{1,2,\cdots, n\}$.  We let $\bm I_n$ denote the $n\times n$
identity matrix. For any $\lambda \in \Re$, we let $\lfloor \lambda \rfloor$ denote the greatest integer less than or equal to $\lambda$,  let $(\lambda)_+:=\max\{0, \lambda\}$, and let $\sign(\lambda)$ be 1 if $\lambda \ge 0$,  otherwise, $-1$. For a set $S$, we let $|S|$ denote its cardinality.
For a vector $\bm x \in \Re^n$, we let $|\bm x| :=(|x_1|,\ldots, |x_n|)^\top$ contain the absolute entries of $\bm x$, let $\|\bm x\|_0$ denote its zero norm, let $\|\bm x\|_1$ denote its one norm, let $\|\bm x\|_2$ denote its two norm, and let $\Diag(\bm x)$ denote a diagonal matrix with diagonal entries from $\bm x$. 
For a matrix $\bm X \in \Q$, we let $\|\bm X\|_2$ be its largest singular value, let $\|\bm X\|_*$ be its nuclear norm,  let $\|\bm X\|_F$ be its Frobenius norm, and if matrix $\bm X$ is square, let $\bm X^{\dag}$ be its Moore–Penrose inverse, let $\tr(\bm X)$ be the trace, let $\diag(\bm X)$ be a vector including its diagonal elements, and let $\lambda_{\min}(\bm X), \lambda_{\max}(\bm X)$ denote the smallest and largest eigenvalue of matrix $\bm X$, respectively. 
For a matrix $\bm X\in \Q$ and an integer $i\in [n]$, we let $\|\bm X\|_{(i)}$ denote the sum of  first $i$ largest singular values of matrix $\bm X$. Note that $\|\bm X\|_{(n)} = \tr(\bm X)$  if matrix $\bm X$ is positive semidefinite. 
  For a set $T$ over $(\bm X, \bm x)$, we let $\Proj_{\bm X}(T):=\{\bm X: \exists \bm x, (\bm x, \bm X)\in T\}$ denote the projection of set $T$ into element $\bm X$. 
A function $f(\bm \lambda): \Re^n\to \Re$ is \textit{symmetric} if it is invariant with any permutation of $\bm \lambda$ and is \textit{sign-invariant} if $f(\bm \lambda)=f(|\bm \lambda|)$. Additional notation will be introduced later as needed. 

\section{The Rank Bound in Positive Semidefinite Matrix Space }\label{sec2}
To guarantee the solution quality  of the partial convexification \ref{eq_rel_rank}, this section derives a rank bound for all extreme points and for an optimal solution to \ref{eq_rel_rank}, provided that $\Q:=\S_{+}^n$ denotes the positive semidefinite matrix space. Notably, our results recover  the existing rank bounds for two special LSOPs:  \ref{app:qcqp} and \ref{app:fpca}, and are also  applicable to \ref{app:kernel}. 


\subsection{Convexifying Domain Set}
This subsection provides an explicit characterization of  the convex hull of the domain set $\X$, i.e., $\conv(\X)$, which is a key component in the feasible set of \ref{eq_rel_rank},

%
Before deriving $\conv(\X)$, let us define an integer $\k\leq k$ as the strengthened  rank. When necessary in the theoretical analysis of rank bounds, we replace the rank requirement $k$ with the smaller
$\k$.
\begin{definition}[Strengthened Rank $\k$] \label{def:k}
For a domain set $\X$ in \eqref{eq_set}, we let $1\le \k \le k$ be the smallest integer such that
\begin{align*}
\conv(\X):=\conv\left( \left\{\bm X\in \S_+^n: \rank(\bm X)\le \k, F(\bm X):=f(\bm \lambda(\bm X))\le 0 \right\}\right).
\end{align*}
\end{definition}
 The following example confirms that $\k$ can be indeed strictly less than $k$.
\begin{example}\label{eg1}
\rm
Suppose a domain set $\X:= \{\bm X\in \Q: \rank(\bm X)\le k, f(\|\bm X\|_*) \le 0\}$ with $k\ge 2$ and $f(\cdot)$ is a closed convex bounded function. Then sets $\X$ and $\conv(\X)$ are bounded. In this example, we show below that $\k=1<   k$.

For any matrix $\hat{\bm X}$ in domain set $\X$ with rank $r \ge 2$, let its singular value decomposition  be  $\hat{\bm X}={\bm Q} \Diag(\hat{\bm \lambda}) {\bm P}^{\top}$. 
Next, let us construct $r$ vectors $\{\bm \lambda^i \}_{i\in [r]}\subseteq \Re_+^n$ as below, where for each $i\in [r]$,  
\begin{align*}
\lambda_{\ell}^i  = \begin{cases}
\|\hat{\bm \lambda}\|_1, & \text{if } \ell=i;\\
0, &  \text{if } \ell \in [n]\setminus \{i\}.
\end{cases}.
\end{align*}
Then,  for each $i\in [r]$,
we have that $\|{\bm \lambda}^i\|_0=1\le k$ and $ f(\|{\bm \lambda}^i\|_1) = f(\|\hat{\bm \lambda}\|_1) \le 0$, which means that the inclusion  ${\bm Q} \Diag({\bm \lambda}^i) {\bm P}^{\top} \in \X$ holds for all $i\in [r]$. Also, we have $\hat{\bm X} = \sum_{ i \in [r]} \frac{\hat{\lambda}_i}{\|
\hat{\bm \lambda}\|_1}  \bm Q \Diag({\bm \lambda}^i) \bm P^{\top}$ by the construction of vector $\{\bm \lambda^i \}_{i\in [r]}\subseteq \Re_+^n$,
implying that $\hat{\bm X} $ cannot be an extreme point of the  set $\conv(\X)$. That is, any extreme point in the bounded set $\conv(\X)$ has a rank at most one. Hence, we must have $\k=1$ by \Cref{def:k}.
\qedA
\end{example}

We now turn to the explicit characterization of $\conv(\X)$, which builds on  the majorization below.

\begin{definition}[Majorization]
Given two vectors $\bm x_1, \bm x_2 \in \Re^n$,  we let $\bm x_1 \succ \bm x_2$ denote that $\bm x_1$ weakly majorizes  $\bm x_2$ (i.e., $\max_{\bm{z}\in \{0,1\}^n}\{\bm{z}^\top \bm{x}_1: \e^\top \bm{z}=\ell\}\geq \max_{\bm{z} \in \{0,1\}^n}\{\bm{z}^\top \bm{x}_2: \e^\top \bm{z}=\ell\}$ for all $\ell\in [n]$), and let $\bm x_1 \succeq \bm x_2$  denote that $\bm x_1$  majorizes  $\bm x_2$, i.e., $\bm x_1$ weakly majorizes  $\bm x_2$ and $\bm{e}^\top \bm{x}_1=\bm{e}^\top \bm{x}_2$, where $\bm e\in \Re^n$ denotes all-ones vector.
\end{definition}
The spectral function $F(\bm X):=f(\bm \lambda(\bm X))$ in \eqref{eq_set}  is invariant with any  permutation of eigenvalues (see, e.g., \citealt{drusvyatskiy2015variational}), implying a permutation-invariant domain set $\X$. This motivates us to apply the convexification result for a permutation-invariant set, as shown in the seminal work by \cite{kim2022convexification}.
\begin{restatable}{proposition}{propconv}\label{prop:conv}
For a domain set $\X$ in positive semidefinite matrix space, i.e., $\Q:=\S_{+}^n$ in \eqref{eq_set},   its convex hull $\conv(\X)$  is equal to
\begin{align*}
 \bigg\{\bm X \in \Q: \exists \bm x \in   \Re_+^n, f(\bm x)\le 0,  x_1 \ge \cdots \ge x_n,  x_{k+1}=0,  \|\bm X\|_{(\ell)} \le \sum_{i\in [\ell]} x_{i}, \forall \ell \in [k], \tr(\bm X) = \sum_{i\in [k]}x_i\bigg\},
\end{align*}
and  is a closed set.
\end{restatable}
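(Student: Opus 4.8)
The plan is to prove the two inclusions between $\conv(\X)$ and the candidate set on the right-hand side, which I denote by $\mathcal{C}$, and then verify that $\mathcal{C}$ is closed. Throughout I would work in the eigenvalue domain: since $\bm X\in\S_+^n$, its singular values coincide with its nonnegative eigenvalues, so $\|\bm X\|_{(\ell)}=\sum_{i\in[\ell]}\lambda_i(\bm X)$, and the constraints defining $\X$ depend on $\bm X$ only through the symmetric function $f(\bm\lambda(\bm X))$; hence $\X$ is permutation-invariant. This is exactly the setting of the convexification machinery of \cite{kim2022convexification}, and the explicit inequalities appearing in $\mathcal{C}$ are the Ky Fan-norm encoding of the majorization relation that it produces.

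First I would establish $\conv(\X)\subseteq\mathcal{C}$ by showing $\X\subseteq\mathcal{C}$ and that $\mathcal{C}$ is convex. The inclusion $\X\subseteq\mathcal{C}$ is immediate: for $\bm X\in\X$ take $\bm x=\bm\lambda(\bm X)$ sorted nonincreasingly, so that $\bm x\ge\bm 0$, $f(\bm x)\le 0$, $x_{k+1}=0$ by the rank bound, and both the Ky Fan inequalities hold with equality and the trace equality holds. Convexity of $\mathcal{C}$ is the key structural observation: given $\bm X_1,\bm X_2\in\mathcal{C}$ with witnesses $\bm x_1,\bm x_2$, the combination $\bm x=\theta\bm x_1+(1-\theta)\bm x_2$ is again nonincreasing, nonnegative, satisfies $x_{k+1}=0$, and obeys $f(\bm x)\le 0$ by convexity of $f$; the Ky Fan inequalities for $\theta\bm X_1+(1-\theta)\bm X_2$ then follow from convexity of the Ky Fan norm $\|\cdot\|_{(\ell)}$, and the trace equality from linearity of the trace. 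Thus $\bm x$ is a valid witness, so $\mathcal{C}$ is convex and contains $\conv(\X)$.

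The substantive direction is $\mathcal{C}\subseteq\conv(\X)$. Fix $\bm X\in\mathcal{C}$ with witness $\bm x$. I would first note that the listed inequalities, together with $x_{k+1}=\dots=x_n=0$, say exactly that $\bm x$ majorizes $\bm\lambda(\bm X)$: for $\ell\le k$ this is the hypothesis, while for $\ell>k$ it is automatic since $\sum_{i\in[\ell]}x_i=\sum_{i\in[k]}x_i=\tr(\bm X)\ge\|\bm X\|_{(\ell)}$. I would then invoke the matrix-majorization theorem (a special case of the permutation-invariant convexification of \citealt{kim2022convexification}, classically the Schur--Horn orbit result): the set of positive semidefinite matrices whose eigenvalues are majorized by a fixed sorted $\bm x\ge\bm 0$ equals $\conv\{\bm U\Diag(\bm x)\bm U^\top:\bm U\text{ orthogonal}\}$. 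Each generator $\bm U\Diag(\bm x)\bm U^\top$ is positive semidefinite, has rank $\|\bm x\|_0\le k$ because $x_{k+1}=0$, and has $f$ of its eigenvalues equal to $f(\bm x)\le 0$; hence each generator lies in $\X$, and $\bm X$ is a finite convex combination (by compactness of the orbit) of points of $\X$, i.e.\ $\bm X\in\conv(\X)$.

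Finally, for closedness I would show $\mathcal{C}$ is closed directly, which simultaneously certifies that no closure operation is needed around $\conv(\X)$. Writing $\mathcal{C}=\Proj_{\bm X}$ of the set cut out by the continuous constraints in the lifted variables $(\bm X,\bm x)$, I would use that every witness satisfies $0\le x_i\le\sum_j x_j=\tr(\bm X)$; so along any convergent sequence $\bm X_j\to\bm X$ in $\mathcal{C}$ the witnesses $\bm x_j$ are uniformly bounded, and passing to a convergent subsequence and using continuity of $f$, of $\|\cdot\|_{(\ell)}$, and of the trace yields a valid limit witness for $\bm X$. I expect the main obstacle to be the reverse inclusion, specifically invoking the correct orbit/matrix-majorization characterization and verifying that the support of $\bm x$ controls the rank of every orbit generator, so that the generators genuinely land inside the rank-constrained set $\X$; the remaining arguments are bookkeeping with convexity and continuity of the Ky Fan norms.
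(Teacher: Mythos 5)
Your proof is correct, but it takes a genuinely different route from the paper's. The paper does not prove the convex-hull characterization at all: it cites theorems 4 and 7 of \cite{kim2022convexification} for the identity and devotes its entire written proof to closedness, via the lifted set $\T$ of pairs $(\bm X,\bm x)$, a Bolzano--Weierstrass subsequence argument, and closedness of $f$. You instead re-derive the hull equality from first principles: the easy inclusion via convexity of the candidate set (sorted witnesses combine to sorted witnesses, $f$ is convex, the Ky Fan norms $\|\cdot\|_{(\ell)}$ are convex, the trace is linear), and the substantive inclusion by observing that the listed constraints, plus $x_{k+1}=\cdots=x_n=0$ and $\|\bm X\|_{(\ell)}\le\tr(\bm X)$ for $\ell>k$, amount exactly to $\bm x\succeq\bm\lambda(\bm X)$, and then invoking the Schur--Horn orbitope fact that $\{\bm Y:\bm\lambda(\bm Y)\preceq\bm x\}=\conv\{\bm U\Diag(\bm x)\bm U^\top\}$, whose generators visibly satisfy the rank bound ($\|\bm x\|_0\le k$), the spectral constraint ($f(\bm x)\le 0$ with $f$ symmetric), and positive semidefiniteness ($\bm x\ge\bm 0$). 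This buys self-containedness for this specific PSD case and makes transparent exactly why the orbit generators land in $\X$ --- the point you correctly flagged as the crux --- at the cost of not covering the general permutation-invariant machinery the paper reuses in \Cref{sec:nonsym} and \Cref{sec3}. On closedness your argument coincides with the paper's, and your explicit witness bound $0\le x_i\le\sum_j x_j=\tr(\bm X)$ actually firms up a step the paper leaves implicit (it merely asserts that a bounded witness $\bm x_t$ exists for each bounded $\bm X_t$); one further small bonus of your majorization route is that it shows membership in $\S_+^n$ is automatic, since total-sum equality and the $(n-1)$-st partial-sum inequality force $\lambda_n(\bm X)\ge x_n\ge 0$.
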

\begin{proof}
    See Appendix \ref{proof:conv}. \qed
\end{proof}

The function $\|\bm X\|_{(i)}$ in  \Cref{prop:conv} is known to admit a tractable semidefinite representation for each $i\in [n]$ (see, e.g., \citealt{ben2001lectures}) . 

\subsection{Rank Bound} \label{sub22}
In this subsection, we derive an upper bound of the ranks of all extreme points in the feasible set of  \ref{eq_rel_rank}, 
which also sheds light on the rank gap between the relaxation \ref{eq_rel_rank} and the original \ref{eq_rank} at optimality.
Let us first introduce a key lemma to facilitate the analysis of rank bound.


\begin{restatable}{lemma}{lemineq} \label{lem:ineq}
Given two vectors $\bm \lambda, \bm x \in \Re^n$  with their elements sorted in descending order and $\bm x \succeq \bm \lambda$, suppose that there exists an index $j_1 \in [ n-1]$ such that 
$\sum_{ i \in [j_1]} \lambda_i < \sum_{ i \in [j_1]}x_i$.  Then  we  have 
$$\sum_{ i \in [j]} \lambda_i < \sum_{ i \in [j]}x_i, \forall j\in [j_0, j_2-1],$$
where the indices $j_0, j_2$ satisfy $\lambda_{j_0} =\cdots =\lambda_{j_1} \ge \lambda_{j_1+1} =\cdots = \lambda_{j_2}$ with $1\le j_0\le j_1\le j_2-1 \le n-1$.
\end{restatable}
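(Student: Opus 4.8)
The plan is to track the running gap between the partial sums of $\bm x$ and $\bm \lambda$. I would define $S_j := \sum_{i \in [j]}(x_i - \lambda_i)$ for $j \in \{0,1,\ldots,n\}$ with the convention $S_0 = 0$. Since $\bm x \succeq \bm \lambda$ implies weak majorization and both vectors are sorted in descending order, we have $S_j \ge 0$ for every $j \in [n]$, and the hypothesis of the lemma is exactly $S_{j_1} > 0$. The goal then reduces to showing $S_j > 0$ for all $j \in [j_0, j_2-1]$. Note that full majorization (equal total sums, $S_n = 0$) is not needed; only weak majorization together with the single strict inequality at $j_1$ will drive the argument.

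The key structural observation I would exploit is that, on any index range where $\bm \lambda$ is constant, the sequence $S$ is concave. Indeed, on such a range its increments $S_i - S_{i-1} = x_i - \lambda_i$ are nonincreasing, because $x_i$ is sorted in descending order while $\lambda_i$ stays fixed; hence $S_{i+1} - S_i \le S_i - S_{i-1}$, which is the defining property of a concave sequence. A concave sequence lies above the chord joining its endpoints, i.e.\ $S_j \ge \frac{b-j}{b-a}S_a + \frac{j-a}{b-a}S_b$ for $a \le j \le b$. Here the two blocks $[j_0,j_1]$ (where $\bm\lambda$ is constant) and $[j_1+1,j_2]$ (where $\bm\lambda$ is constant) supply exactly the ranges of constancy I need.

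I would then apply the chord bound twice. For the backward direction I use concavity of $S$ on $\{j_0-1,\ldots,j_1\}$ with endpoints $S_{j_0-1}\ge 0$ and $S_{j_1}>0$: for any $j \in [j_0,j_1]$ the chord bound gives $S_j \ge \frac{j-(j_0-1)}{j_1-(j_0-1)}S_{j_1} > 0$, since the coefficient on $S_{j_1}$ is strictly positive (as $j \ge j_0 > j_0-1$) and the dropped term $\frac{j_1-j}{j_1-(j_0-1)}S_{j_0-1}$ is nonnegative. For the forward direction I use concavity of $S$ on $\{j_1,\ldots,j_2\}$ with endpoints $S_{j_1}>0$ and $S_{j_2}\ge 0$: for any $j \in [j_1,j_2-1]$ the chord bound gives $S_j \ge \frac{j_2-j}{j_2-j_1}S_{j_1} > 0$, since the coefficient on $S_{j_1}$ is strictly positive (as $j \le j_2-1 < j_2$). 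Combining the two ranges yields $S_j > 0$ throughout $[j_0,j_2-1]$, as claimed. The denominators $j_1-(j_0-1)$ and $j_2-j_1$ are positive thanks to the stated ordering $1 \le j_0 \le j_1 \le j_2-1 \le n-1$.

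The one delicate point, which I regard as the main obstacle, is preserving the \emph{strict} inequality at the ends of each block, where the opposite endpoint value may be exactly $0$ (weak but not strict majorization). Writing $S_j$ as a genuine convex combination of the two endpoint values and observing that the weight multiplying the strictly positive endpoint $S_{j_1}$ never vanishes on the relevant sub-range is precisely what rescues strictness. This is also why the conclusion must stop at $j_2-1$ rather than $j_2$ and start at $j_0$ rather than earlier: at $j=j_2$ the weight on $S_{j_1}$ drops to zero, and at $j=j_0-1$ the weight on the strictly positive endpoint likewise vanishes, so strictness can genuinely fail there.
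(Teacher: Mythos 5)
Your proof is correct, and it takes a genuinely different route from the paper's. The paper argues by contradiction: assuming the partial sums agree at some $j^*$ strictly inside the range, it averages $\bm x$ over the subrange between $j^*$ and $j_1$ (using that $\bm \lambda$ is constant there and $\bm x$ is sorted) to force $\lambda_{j^*} < x_{j^*}$ in the backward case, resp.\ $\lambda_{j^*+1} > x_{j^*+1}$ in the forward case, which then makes the partial-sum inequality fail at the adjacent index $j^*-1$, resp.\ $j^*+1$, contradicting $\bm x \succeq \bm \lambda$. Your argument is instead direct: the gap sequence $S_j = \sum_{i\in[j]}(x_i-\lambda_i)$ has nonincreasing increments on each block where $\bm \lambda$ is constant, hence is discretely concave there, and the chord bound applied on $\{j_0-1,\ldots,j_1\}$ and on $\{j_1,\ldots,j_2\}$ transfers the strict positivity of $S_{j_1}$ to all of $[j_0,j_2-1]$, with the weight on $S_{j_1}$ remaining strictly positive precisely on the sub-range where the lemma claims strictness. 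What your approach buys: it is constructive, yielding the quantitative lower bounds $S_j\ge \frac{j-j_0+1}{j_1-j_0+1}S_{j_1}$ and $S_j\ge \frac{j_2-j}{j_2-j_1}S_{j_1}$ rather than a bare contradiction; it transparently explains why the conclusion must start at $j_0$ and stop at $j_2-1$ (the weight on $S_{j_1}$ vanishes exactly at $j_0-1$ and $j_2$); and it makes explicit that only weak majorization plus the single strict inequality at $j_1$ is needed --- a fact the paper's proof also uses only implicitly (its contradictions are with weak-majorization inequalities, never with equality of total sums) but does not flag. Both proofs rest on the same two structural facts, constancy of $\bm \lambda$ on the blocks and monotonicity of $\bm x$; yours packages them into a single concavity statement in place of two case-by-case averaging contradictions.
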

\begin{proof}
    See Appendix  \ref{proof:lemineq}. \qed
\end{proof}

Next, we are ready to prove one of our main contributions that relies on the inequalities over $[j_0, j_1]$ in \Cref{lem:ineq}. Specifically, we show that the rank bound of a face in  $\conv(\X)$ depends on the dimension of this face. 
Below is the formal definition of faces and dimension of a closed convex set.

\begin{definition}[Face \& Dimension] \label{def:face}
A convex subset $F$ of a closed convex set $\T$ is called  a face of $\T$ if for any line segment $[a,b]\subseteq \T$ such that $F \cap (a,b)\neq \emptyset$, we have $[a,b]\subseteq F$. The dimension of a face  is equal to the dimension of its affine hull.
\end{definition}
\begin{restatable}{theorem}{thmproprank}\label{prop:rank}
Given   $\Q:=\S_{+}^n$ in domain set $\X$ in \eqref{eq_set}, suppose that $F^d\subseteq \conv(\X)$ is a $d$-dimensional face of the convex hull of the domain set. Then any point in face $F^d$ has a rank at most $\tilde k + \lfloor\sqrt{2d+9/4}-3/2\rfloor$, where  $\k\le k$ follows \Cref{def:k}.
\end{restatable}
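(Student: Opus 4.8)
\emph{Reduction to a relative-interior point.} The plan is to bound the rank of an arbitrary point of $F^d$ by bounding the rank of a point $\bm X_0\in\relint(F^d)$, since for a face of a set of positive semidefinite matrices the rank is maximized on the relative interior: if $\bm X_0\in\relint(F^d)$ and $\bm Y\in F^d$, then extending the segment through $\bm X_0$ beyond $\bm Y$ inside $F^d$ shows $\mathrm{range}(\bm Y)\subseteq\mathrm{range}(\bm X_0)$, so $\rank(\bm Y)\le\rank(\bm X_0)$. I fix such an $\bm X_0=\bm U\Diag(\bm\lambda)\bm U^\top$ with $\lambda_1\ge\cdots\ge\lambda_r>0=\lambda_{r+1}=\cdots$, where $r=\rank(\bm X_0)$. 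If $r\le\k$ there is nothing to prove, so I assume $s:=r-\k\ge 1$ and aim to show $d\ge s(s+3)/2$. This suffices because $s(s+3)/2\le d$ is equivalent to $s^2+3s-2d\le 0$, i.e. $s\le\sqrt{2d+9/4}-3/2$, so $r=\k+s\le\k+\lfloor\sqrt{2d+9/4}-3/2\rfloor$.

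\emph{A majorizing witness.} Applying \Cref{prop:conv} with $\k$ in place of $k$ (legitimate by \Cref{def:k}) yields a witness $\bm x^*\in\Re_+^n$, sorted and supported on $[\k]$, with $f(\bm x^*)\le 0$, with $\|\bm X_0\|_{(\ell)}=\sum_{i\le\ell}\lambda_i\le\sum_{i\le\ell}x^*_i$ for $\ell\in[\k]$, and with $\tr(\bm X_0)=\sum_{i\le\k}x^*_i$; equivalently $\bm x^*\succeq\bm\lambda$. Because $r>\k$, the slack $\sum_{i>\k}\lambda_i$ is strictly positive, so $\sum_{i\le\k}\lambda_i<\sum_{i\le\k}x^*_i$, i.e. the constraint at $\ell=\k$ is strictly slack.

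\emph{Locating the last tight constraint at a spectral gap (where I use \Cref{lem:ineq}).} Let $\ell_q\in[\k-1]$ be the largest index at which $\sum_{i\le\ell_q}\lambda_i=\sum_{i\le\ell_q}x^*_i$, setting $\ell_q=0$ if no such tight index exists; by maximality and the previous step, every index in $[\ell_q+1,\k]$ is strictly slack. I claim $\lambda_{\ell_q}>\lambda_{\ell_q+1}$. If instead $\lambda_{\ell_q}=\lambda_{\ell_q+1}$, then applying \Cref{lem:ineq} at the slack index $j_1=\ell_q+1$ gives a level start $j_0\le\ell_q$ and forces strict slack on $[j_0,j_2-1]\ni\ell_q$, contradicting tightness at $\ell_q$ (for $\ell_q=0$ the gap condition is vacuous). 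This spectral gap decouples the bottom eigenvectors from the tight top-sums.

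\emph{A Pataki-style count on the bottom block, and the main obstacle.} I then consider symmetric perturbations supported on the eigenvectors indexed by $B:=[\ell_q+1,r]$, namely $\bm D=\bm U\,(\bm 0\oplus\bm E\oplus\bm 0)\,\bm U^\top$ with $\bm E\in\S^{\,r-\ell_q}$ and $\tr(\bm E)=0$. For small $\epsilon$, $\bm X_0\pm\epsilon\bm D\succeq 0$; the gap at $\ell_q$ keeps the top $\ell_q$ eigenvalues, hence $\|\cdot\|_{(\ell)}$ for every $\ell\le\ell_q$, unchanged, preserving all tight constraints; the constraints for $\ell\in[\ell_q+1,\k]$ remain valid since they are strictly slack; and $\tr(\bm E)=0$ preserves the trace. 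Thus $\bm x^*$ stays a valid witness, so $\bm X_0\pm\epsilon\bm D\in\conv(\X)$, and since $\bm X_0\in\relint(F^d)$ and $F^d$ is a face, both points lie in $F^d$. The injective family $\bm E\mapsto\bm D$ then gives $d\ge\binom{r-\ell_q+1}{2}-1$, and $\ell_q\le\k-1$ yields $r-\ell_q\ge s+1$, so $d\ge\binom{s+2}{2}-1=s(s+3)/2$, completing the proof. The main obstacle is precisely the nonlinearity of the constraints $\|\bm X\|_{(\ell)}\le\sum_{i\le\ell}x^*_i$: they are convex, so a generic first-order tangent count is invalid because second-order curvature can push $\bm X_0\pm\epsilon\bm D$ out of $\conv(\X)$. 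I sidestep this by perturbing only within the block strictly below the last tight index, where no curved constraint is active and the spectral gap supplied by \Cref{lem:ineq} keeps every tight top-sum exactly fixed.
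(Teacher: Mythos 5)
Your proof is correct, and it arrives at the paper's exact count ($d\ge (s+1)(s+2)/2-1$ with $s=r-\k$) through the same Pataki-style mechanism used in Appendix \ref{proof:proprank}: keep the majorizing witness $\bm x^*$ from \Cref{prop:conv} fixed, perturb a trailing eigen-block by a traceless symmetric matrix so the perturbed points stay in $\conv(\X)$, and let the face property convert the dimension of admissible perturbations into the bound. Where you genuinely diverge is in how feasibility of the perturbed points is certified. The paper perturbs the fixed block $\Diag(\lambda_{\k},\dots,\lambda_r)$, so the perturbed eigenvalues can climb past $\lambda_{\k}$ and interleave with $\lambda_{j_0},\dots,\lambda_{\k-1}$; its \Cref{claim1} therefore applies \Cref{lem:ineq} at $j_1=\k$ and re-verifies the full majorization $\bm x^*\succeq\bm\lambda^{\pm}(\delta)$ quantitatively, via Weyl's inequality and an explicit margin constant $c^*$. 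You instead anchor the block at the last tight index $\ell_q$ and use \Cref{lem:ineq} contrapositively (at the slack index $j_1=\ell_q+1$) to extract a strict spectral gap $\lambda_{\ell_q}>\lambda_{\ell_q+1}$; then every tight constraint is preserved \emph{exactly} (the top-$\ell_q$ spectrum is literally unchanged for small $\epsilon$) and the slack ones survive by continuity, so no quantitative eigenvalue estimate is needed. You also run the dimension count forward — the entire traceless subspace of $\S^{r-\ell_q}$ embeds into the linear space parallel to $\aff(F^d)$, giving $d\ge\binom{r-\ell_q+1}{2}-1$ — rather than the paper's contradiction, which solves $d+1$ linear equations for a single $\bm\Delta$ orthogonal to the face; the two counts are equivalent, but yours avoids the bookkeeping with the $d+1$ affinely spanning points, and since your block $[\ell_q+1,r]$ contains the paper's block $[\k,r]$, your count is never weaker. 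What the paper's heavier estimate buys is reusability: the same $c^*$/Weyl machinery is invoked again in the proofs of \Cref{them:symface} and of the rank-reduction guarantee (\Cref{them:reduce}), where a concrete feasible step size must actually be computed. One small note: your opening reduction to $\relint(F^d)$ is sound but superfluous — by \Cref{def:face}, once the segment $[\bm X_0-\epsilon\bm D,\bm X_0+\epsilon\bm D]\subseteq\conv(\X)$ has its midpoint in $F^d$, the whole segment lies in $F^d$ for an \emph{arbitrary} point $\bm X_0$ of the face, which is all your argument uses.
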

\begin{proof}
    See Appendix \ref{proof:proprank}. \qed
\end{proof}

\Cref{prop:rank} suggests that the dimension of a face in set $\conv(\X)$ determines the rank bound.
When intersecting set $\conv(\X)$ with $m$ linear inequalities in \ref{eq_rel_rank}, the recent work by \cite{li2022exactness} established the one-to-one correspondence between the extreme points of the intersection set and no larger than $m$-dimensional faces of $\conv(\X)$ (see lemma 1 therein).
\begin{lemma}[\citealt{li2022exactness}]\label{lem:face}
For any closed convex set $\T$, if $\bm X$ is an  extreme point in the intersection of set $\T$ and $\tilde m$ linearly independent inequalities, then $\bm X$ must be contained in  a no larger than $\tilde m$-dimensional face of set $\T$.
\end{lemma}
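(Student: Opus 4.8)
The plan is to exhibit a concrete face of $\T$ through $\bm X$ and then bound its dimension by exploiting extremality. First I would take $F$ to be the \emph{minimal} face of $\T$ containing $\bm X$. By the standard theory of faces of a closed convex set, such a minimal face exists (it is the intersection of all faces of $\T$ that contain $\bm X$) and, crucially, it satisfies $\bm X \in \relint(F)$. Writing $V := \aff(F) - \bm X$ for the linear subspace parallel to the affine hull of $F$, we have $\dim(F) = \dim(V) =: d$. Since $F$ is by construction a face of $\T$ containing $\bm X$, the entire lemma reduces to proving the single inequality $d \le \tilde m$.

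To control $d$, I would introduce the linear map $\phi : V \to \Re^{\tilde m}$ given by $\phi(\bm Y) = \big(\langle \bm A_{i_1}, \bm Y\rangle, \ldots, \langle \bm A_{i_{\tilde m}}, \bm Y\rangle\big)$, where $\bm A_{i_1}, \ldots, \bm A_{i_{\tilde m}}$ is a maximal linearly independent subfamily of the constraint matrices $\{\bm A_i\}_{i\in[m]}$. If $\phi$ is injective, then $d = \dim(V) \le \tilde m$ immediately, which is exactly the desired bound.

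The heart of the argument, and the step I expect to be the main obstacle, is proving injectivity of $\phi$, since this is where the extremality of $\bm X$ must be used. I would argue by contradiction: suppose $\phi$ has a nonzero kernel element $\bm Y \in V$, so that $\langle \bm A_i, \bm Y\rangle = 0$ for every $i \in [\tilde m]$, and hence for every $i \in [m]$ because the remaining $\bm A_i$ are linear combinations of the chosen ones. Because $\bm X \in \relint(F)$ and $\bm Y$ is a direction in $\aff(F)$, for all sufficiently small $\epsilon > 0$ both perturbed points $\bm X \pm \epsilon \bm Y$ still lie in $F \subseteq \T$. Simultaneously, since $\bm Y$ is annihilated by every $\bm A_i$, we get $\langle \bm A_i, \bm X \pm \epsilon \bm Y\rangle = \langle \bm A_i, \bm X\rangle \in [b_i^l, b_i^u]$ for all $i$, so both points also satisfy all the inequalities defining the intersection set. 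Hence $\bm X \pm \epsilon \bm Y$ both belong to the intersection, yet $\bm X = \tfrac12(\bm X + \epsilon \bm Y) + \tfrac12(\bm X - \epsilon \bm Y)$ is their midpoint, contradicting that $\bm X$ is an extreme point. This forces $\bm Y = \bm 0$, so $\phi$ is injective and $\dim(F) \le \tilde m$.

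Two points will demand care. First, I must justify the existence of a minimal face with $\bm X$ in its relative interior — a standard but nontrivial fact about closed convex sets that underpins the whole reduction. Second, the perturbation must respect the \emph{two-sided} structure of the constraints: the key is that moving along a direction $\bm Y$ that is Frobenius-orthogonal to every $\bm A_i$ leaves each value $\langle \bm A_i, \bm X\rangle$ unchanged, so both the lower and upper bounds remain satisfied on both sides of the perturbation at once. With these two observations in place, the extreme-point contradiction closes the proof.
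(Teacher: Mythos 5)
Your argument is correct and complete: the reduction to the minimal face $F$ of $\T$ with $\bm X \in \relint(F)$, followed by the injectivity of the evaluation map $\bm Y \mapsto (\langle \bm A_{i_1}, \bm Y\rangle, \ldots, \langle \bm A_{i_{\tilde m}}, \bm Y\rangle)$ on the subspace parallel to $\aff(F)$ via the two-sided perturbation $\bm X \pm \epsilon \bm Y$, is exactly the standard argument, and the two points you flag (existence of the minimal face, and orthogonality to every $\bm A_i$ preserving both bounds simultaneously) are handled correctly. Note that this paper does not prove the lemma itself but imports it from \cite{li2022exactness} (lemma 1 therein), whose proof proceeds along essentially the same lines as yours, so there is no substantive divergence to report.
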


Taken these results together, we next derive a rank bound
for   \ref{eq_rel_rank}.

\begin{theorem}\label{them:rank}
Given $\Q:=\S_{+}^n$ in \eqref{eq_set} and integer  $\k\le k$ following \Cref{def:k}, we have
\begin{enumerate}[(i)]
\item Each feasible extreme point in the \ref{eq_rel_rank} has a rank at most $\k + \lfloor\sqrt{2\tilde m+9/4}-3/2\rfloor$; and
\item There is an optimal solution to the \ref{eq_rel_rank} of rank at most $\k + \lfloor\sqrt{2\tilde m+9/4}-3/2\rfloor$ if
the \ref{eq_rel_rank}  yields a finite optimal value, i.e.,
  $\V_{\rel}>-\infty$. 
\end{enumerate}
\end{theorem}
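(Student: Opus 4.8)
The plan is to combine the two structural results that have just been established, namely \Cref{prop:rank} (which bounds the rank of any point in a $d$-dimensional face of $\conv(\X)$ by $\tilde k + \lfloor\sqrt{2d+9/4}-3/2\rfloor$) and \Cref{lem:face} (which guarantees that any extreme point of the intersection of a closed convex set with $\tilde m$ linearly independent inequalities lies in a face of dimension at most $\tilde m$). For part (i), I would first observe that the feasible set of \ref{eq_rel_rank} is precisely $\conv(\X)$ intersected with the $m$ two-sided linear inequalities $b_i^l \le \langle \bm A_i, \bm X\rangle \le b_i^u$. Since only $\tilde m$ of the matrices $\{\bm A_i\}$ are linearly independent, these constraints impose at most $\tilde m$ linearly independent affine conditions on $\bm X$. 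Thus \Cref{lem:face} applies: any feasible extreme point $\bm X^*$ must be contained in a face $F^d$ of $\conv(\X)$ with $d \le \tilde m$. Applying \Cref{prop:rank} to this face, $\rank(\bm X^*) \le \tilde k + \lfloor\sqrt{2d+9/4}-3/2\rfloor \le \tilde k + \lfloor\sqrt{2\tilde m+9/4}-3/2\rfloor$, where the last inequality follows from monotonicity of $\sqrt{2d+9/4}$ and of the floor function in $d$. This settles (i).

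The one technical point to verify is that \Cref{lem:face} genuinely applies to the two-sided inequalities. Each two-sided constraint $b_i^l \le \langle \bm A_i, \bm X\rangle \le b_i^u$ is a pair of one-sided inequalities sharing the same matrix $\bm A_i$, so at an extreme point the active constraints among these $2m$ inequalities still span an affine subspace whose defining normals all lie in $\spa\{\bm A_i : i \in [m]\}$, a space of dimension $\tilde m$. Hence the extreme point is determined modulo a face of $\conv(\X)$ cut out by at most $\tilde m$ independent affine conditions, and the dimension bound $d \le \tilde m$ holds exactly as in \Cref{lem:face}. I would state this reduction explicitly so that the passage from ``$m$ two-sided inequalities'' to ``$\tilde m$ linearly independent inequalities'' is unambiguous.

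For part (ii), the idea is the standard argument that a finite-valued linear program over a closed convex set attains its optimum at an extreme point, provided the feasible set contains no line along which the objective is constant (equivalently, provided the set has at least one extreme point). Here \Cref{prop:conv} guarantees that $\conv(\X)$ is closed, and the trace constraint $\tr(\bm X) = \sum_{i\in[k]} x_i$ together with the nonnegativity of the majorizing vector $\bm x$ and $\bm X \in \S_+^n$ confines the feasible set to the positive semidefinite cone with bounded trace whenever the objective is bounded below. The cleanest route is: since $\V_{\rel} > -\infty$, the infimum of a linear functional over the closed convex feasible set is attained (the recession directions of the feasible set on which the objective strictly decreases are ruled out by finiteness, and the pointedness of $\S_+^n$ ensures the set has extreme points). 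Then the set of optimal solutions is itself a nonempty closed convex face of the feasible set, and by the Krein--Milman / Minkowski theory this optimal face contains an extreme point of the feasible set. That extreme optimal solution inherits the rank bound from part (i), giving an optimal $\bm X^*$ with $\rank(\bm X^*) \le \tilde k + \lfloor\sqrt{2\tilde m+9/4}-3/2\rfloor$.

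The main obstacle I anticipate is the existence-of-an-extreme-optimal-point argument in (ii) rather than the rank counting itself. One must handle the possibility that the feasible set of \ref{eq_rel_rank} is unbounded: finiteness of $\V_{\rel}$ only prevents the objective from decreasing along recession directions, so I would argue carefully that the optimal face is nonempty, closed, and line-free (so that it possesses an extreme point), and that such an extreme point of the optimal face is also an extreme point of the whole feasible set. The positive semidefinite constraint is what provides line-freeness, since $\S_+^n$ is a pointed cone containing no lines. Once an extreme optimal point is secured, part (i) finishes the proof immediately, so all the genuine work is concentrated in this existence step.
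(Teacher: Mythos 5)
Your proposal is correct and follows essentially the same route as the paper: part (i) combines \Cref{lem:face} with \Cref{prop:rank} exactly as the paper does, and part (ii) is the paper's appeal to Rockafellar's extreme-point-optimality for a finite-valued linear objective over the closed (by \Cref{prop:conv}), line-free feasible set. Your added care about reducing the $m$ two-sided constraints to $\tilde m$ linearly independent conditions and about line-freeness via the pointedness of $\S_+^n$ only makes explicit what the paper leaves implicit.
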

\begin{proof}
The proof can be divided into two parts.

\noindent \textbf{Part (i).} Since the LSOP-R feasible set  intersects the closed convex  set $\conv(\X)$ with $\tilde m$ linearly independent inequalities, according to \Cref{lem:face}, each extreme point $\hat{\bm X}$ in the intersection  belongs to a no larger than  $\tilde m$-dimensional face  in set $\conv(\X)$. Using  \Cref{prop:rank}, the face containing $\hat{\bm X}$ must satisfy the  rank-$(\k + \lfloor\sqrt{2\tilde m+9/4}-3/2\rfloor)$ constraint and so does the extreme point $\hat{\bm X}$.

\noindent \textbf{Part (ii).} Since the \ref{eq_rel_rank} admits a finite optimal value, i.e., $\V_{\rel}>-\infty$ and a line-free feasible set, according to \cite{rockafellar2015convex}, the \ref{eq_rel_rank} can achieve  extreme point optimum. Thus, according to Part (i), there exists an optimal extreme point satisfying the desired rank bound. 
\qed
\end{proof}

 We make the following remarks about  \Cref{them:rank}.
\begin{enumerate}[(i)]
\item \Cref{them:rank} shows that any extreme point in the feasible set of \ref{eq_rel_rank}, as a convex relaxation of \ref{eq_rank},  violates the rank-$k$ constraint by at most $(\k -k+  \lfloor\sqrt{2\tilde m+9/4}-3/2\rfloor)_+$.
\item Given $\Q:=\S_+^n$ in \eqref{eq_set}, the most striking aspect  is that the rank bound of \Cref{them:rank} is independent of  any   domain set $\X$, any linear objective function, and any $\tilde m$ linearly independent inequalities in \ref{eq_rel_rank}. We present three examples in Subsection \ref{sub:bound} to demonstrate the versatility of our rank bound;
\item From a new angle, the proposed rank bound for \ref{eq_rel_rank} arises from bounding the rank of various faces in  set $\conv(\X)$,  as shown in \Cref{prop:rank} through perturbing majorization constraints.
As a result, the derivation of our rank bounds in  \Cref{them:rank}  differs  from those for two specific LSOP-Rs of \ref{app:qcqp}  \citep{barvinok1995problems,deza1997geometry,pataki1998rank} and \ref{app:fpca}  \citep{tantipongpipat2019multi}. In particular, \cite{pataki1998rank} derived the rank bound  from analyzing faces of the whole feasible set of \ref{eq_rel_rank} of \ref{app:qcqp}, whereas our result, inspired by \Cref{prop:rank}, only focuses on 
the faces of a set $\conv(\X)$;  
\item The worse-case example in Appendix \ref{proof:tight} can attain the proposed rank bound in \Cref{them:rank}, which confirms the tightness;
and

\item 
An extra benefit of \Cref{them:rank} is to provide a sufficient condition about when \ref{eq_rel_rank} is equivalent to the original \ref{eq_rank}, as shown below.
\end{enumerate}

%

\begin{proposition}\label{cor:ch}
Given   $\Q:=\S_{+}^n$ in \eqref{eq_set} and integer $\k\le k$ following \Cref{def:k},  if $\tilde{m} \le {(k-\k+2)(k-\k+3)}/{2} -2$ holds, we have that 
\begin{enumerate}[(i)]
\item Each feasible extreme point in the \ref{eq_rel_rank} has a rank at most $k$; and
\item The \ref{eq_rel_rank} achieves the same optimal value as the original \ref{eq_rank},
i.e., $\V_{\opt} = \V_{\rel}$ 
if the \ref{eq_rel_rank}  yields a finite optimal value, i.e., $\V_{\rel}>-\infty$.
\end{enumerate}
\end{proposition}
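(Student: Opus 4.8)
The plan is to obtain both parts as consequences of \Cref{them:rank}, showing that the hypothesis on $\tilde m$ pushes the generic rank bound down to exactly $k$. For Part (i), \Cref{them:rank}(i) already certifies that every feasible extreme point of \ref{eq_rel_rank} has rank at most $\k+\lfloor\sqrt{2\tilde m+9/4}-3/2\rfloor$, so it suffices to verify the arithmetic inequality $\k+\lfloor\sqrt{2\tilde m+9/4}-3/2\rfloor\le k$. Writing $t:=k-\k\ge 0$, this amounts to $\lfloor\sqrt{2\tilde m+9/4}-3/2\rfloor\le t$, i.e., to $\sqrt{2\tilde m+9/4}<t+5/2$, which after squaring is
\[
2\tilde m< t^2+5t+4.
\]
I would then observe that $t^2+5t=t(t+5)$ is even for every integer $t$, so $(t^2+5t+2)/2$ is an integer and the hypothesis $\tilde m\le (t+2)(t+3)/2-2=(t^2+5t+2)/2$ yields $2\tilde m\le t^2+5t+2<t^2+5t+4$, which is exactly the bound needed. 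Hence each feasible extreme point has rank at most $k$.

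For Part (ii), since \ref{eq_rel_rank} relaxes \ref{eq_rank} we always have $\V_{\rel}\le\V_{\opt}$, so only the reverse inequality must be argued. The feasible set of \ref{eq_rel_rank} lies in the pointed cone $\S_+^n$ and is therefore line-free; as it is assumed to attain a finite optimal value, \Cref{them:rank}(ii) furnishes an optimal extreme point $\bm X^*$, which by Part (i) satisfies $\rank(\bm X^*)\le k$. The key step is to promote this to the membership $\bm X^*\in\X$. Since $\bm X^*\in\conv(\X)$, \Cref{prop:conv} supplies a vector $\bm x\in\Re_+^n$ with $f(\bm x)\le 0$, $x_{k+1}=0$, $\|\bm X^*\|_{(\ell)}\le\sum_{i\in[\ell]}x_i$ for all $\ell\in[k]$, and $\tr(\bm X^*)=\sum_{i\in[k]}x_i$. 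Because $\rank(\bm X^*)\le k$, the eigenvalue vector $\bm\lambda(\bm X^*)$ is supported on its first $k$ coordinates, so the partial-sum inequalities and the equal-trace condition together assert precisely that $\bm x\succeq\bm\lambda(\bm X^*)$.

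To finish, I would use that $f$ is convex and symmetric (permutation-invariant): by the Hardy--Littlewood--P\'olya majorization principle, $\bm\lambda(\bm X^*)$ lies in the convex hull of the permutations of $\bm x$, whence $f(\bm\lambda(\bm X^*))\le f(\bm x)\le 0$, i.e., $F(\bm X^*)\le 0$. Together with $\bm X^*\in\S_+^n$ and $\rank(\bm X^*)\le k$, this gives $\bm X^*\in\X$, so $\bm X^*$ is feasible for \ref{eq_rank} and $\V_{\opt}\le\langle\bm A_0,\bm X^*\rangle=\V_{\rel}$, forcing $\V_{\opt}=\V_{\rel}$. I expect the main obstacle to be exactly this conceptual step of Part (ii): a rank-$\le k$ point of $\conv(\X)$ need not a priori belong to $\X$, and it is the majorization structure exposed by \Cref{prop:conv}, coupled with convexity and symmetry of $f$, that bridges the gap back to the original feasible region. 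The arithmetic of Part (i) is elementary but still needs the parity observation so that the non-strict hypothesis delivers the strict inequality required for the floor estimate.
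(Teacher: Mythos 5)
Your proof is correct and takes essentially the same route as the paper's: the hypothesis forces $\k+\lfloor\sqrt{2\tilde m+9/4}-3/2\rfloor\le k$, \Cref{them:rank} then supplies both the extreme-point rank bound and an optimal extreme point $\bm X^*$ of rank at most $k$, and $-\infty<\V_{\rel}\le\V_{\opt}$ closes the argument. The one place you go beyond the paper is in Part (ii): the paper's one-line proof simply asserts that the rank-$\le k$ point $\bm X^*$ is ``feasible'' to \ref{eq_rank}, whereas you justify this membership in $\X$ by extracting the certificate $\bm x$ from \Cref{prop:conv}, verifying $\bm x\succeq\bm\lambda(\bm X^*)$, and invoking symmetry plus convexity of $f$ (Hardy--Littlewood--P\'olya) to get $F(\bm X^*)\le 0$ --- a worthwhile gap-fill rather than a different method (note your argument in fact shows every point of $\conv(\X)$ satisfies $F\le 0$, so only the rank constraint can fail). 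One small simplification: the parity observation in Part (i) is unnecessary, since doubling the hypothesis directly gives $2\tilde m\le t^2+5t+2<t^2+5t+4$ with $t:=k-\k$, which is all the floor estimate requires.
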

\begin{proof}
When $\tilde{m} \le {(k-\k+2)(k-\k+3)}/{2} -2$, the inequality  $\k + \lfloor\sqrt{2\tilde m+9/4}-3/2\rfloor \le k$ holds. Then, using the result in \Cref{them:rank}, 
 the rank of each extreme point in the \ref{eq_rel_rank} feasible set is no larger than $k$, and the \ref{eq_rel_rank} has an optimal extreme point  $\bm X^*$ of rank at most $k$. Given $-\infty <\V_{\rel}\le \V_{\opt}$, the feasible $\bm X^*$  must be also optimal to \ref{eq_rank}. Thus completes the proof.
\qed
\end{proof}


\Cref{cor:ch} contributes to the literature of the \ref{eq_rel_rank} objective exactness that refers to the equation $\V_{\opt} = \V_{\rel}$, as it indicates that for any  \ref{eq_rank}, the corresponding \ref{eq_rel_rank} always achieves  the same optimal value whenever $\tilde m \le 1$ and $\V_{\rel}>-\infty$. 

\subsection{Applying to Kernel Learning, Fair PCA, and QCQP }\label{sub:bound}
This subsection revisits three  \ref{eq_rank} examples in positive semidefinite matrix space.
More specifically, our proposed rank bound in \Cref{them:rank}  recovers the existing results for  \ref{app:fpca} and  \ref{app:qcqp} from a different perspective, and can be applied to  \ref{app:kernel}, providing the first-known rank bound of its \ref{eq_rel_rank} counterpart. 

\textit{Kernel Learning.} First, recall that in the \ref{app:kernel}, 
its domain set $\D$ is defined for
any given value of variable $z$, i.e., 
$\D:=\{\bm X\in \S_+^n: \rank(\bm X)\le k,  \log\det(\bm X +\alpha \bm I) \ge z\},$
which, according to \Cref{prop:conv}, has an explicit representation of the convex hull $\conv(\X)$.
Thus, the \ref{eq_rel_rank} corresponding to \ref{app:kernel} can be  written as 
\begin{equation}
\label{pc_kernel}
\begin{aligned}
\V_{\rel}:= \min_{\begin{subarray}{c}\bm X \in \conv(\X), z \in \Re\end{subarray}} \left\{\langle \bm Y^{\dag}, \bm X\rangle - z + \log\det(\bm Y+\alpha \bm I):
b_i^l \le \langle \bm A_i, \bm X\rangle \le  b_i^u, \forall i \in [m]\right\},  
\end{aligned}\tag{Kernel Learning-R} 
\end{equation}
where $ \conv(\X) := \{\bm X \in \S_+^{n}: \exists \bm x \in \Re_+^n, \sum_{i\in [n]} \log(\alpha + x_i)\ge z, 
 x_1 \ge \cdots \ge x_n, x_{k+1}=0, \|\bm X\|_{(\ell)} \le \sum_{i\in [\ell]} x_i, \forall \ell \in [k],\tr(\bm X) = \sum_{i\in [k]} x_i\}.$

Note that the existing works on rank bounds fail to cover \ref{pc_kernel}  (see, e.g., \citealt{barvinok1995problems, pataki1998rank, tantipongpipat2019multi}). 
Our results in \Cref{them:rank} and  \Cref{cor:ch} can fill this gap since by exploring the formulation structure, the \ref{pc_kernel}  can be viewed as a special case of the \ref{eq_rel_rank}. 
 
 \begin{restatable}{corollary}{corkernel}{\rm \textbf{(Kernel Learning)}} \label{cor:kernel}
 There exists an optimal solution of the \ref{pc_kernel} with rank at most  $k +  \left\lfloor \sqrt{2 {\tilde m}+{9}/{4}}-{3}/{2}\right\rfloor$. Besides,  if there is $\tilde m\le 1$ linearly independent inequality, the \ref{pc_kernel}  achieves the same optimal value as \ref{app:kernel}.
 \end{restatable}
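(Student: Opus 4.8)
The plan is to read \ref{pc_kernel} as an instance of \ref{eq_rel_rank} in which the scalar $z$ is treated as a parameter, so that \Cref{them:rank} (and, for the second claim, \Cref{cor:ch}) applies to the matrix subproblem in $\bm X$. For any fixed value of $z$, the spectral constraint is $f_z(\bm x):=z-\sum_{i\in[n]}\log(\alpha+x_i)\le 0$, and $f_z$ is a closed, convex, permutation-invariant spectral function of $\bm\lambda(\bm X)$ because $-\log$ is convex. Hence \Cref{prop:conv} applies verbatim and produces precisely the set $\conv(\X)$ written out after \ref{pc_kernel}. Consequently, for fixed $z$, minimizing $\langle \bm Y^\dag,\bm X\rangle-z+\log\det(\bm Y+\alpha\bm I_n)$ over $\bm X\in\conv(\X)$ subject to the $\tilde m$ linearly independent inequalities is genuinely of the form \ref{eq_rel_rank}, with a linear objective in $\bm X$ and domain set $\X_z:=\{\bm X\in\S_+^n:\rank(\bm X)\le k,\ \log\det(\bm X+\alpha\bm I_n)\ge z\}$.

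For part (i), I would let $(\bm X^*,z^*)$ be an optimal solution of \ref{pc_kernel}; such a solution exists whenever $\V_{\rel}$ is finite, since the joint feasible set in $(\bm X,z)$ is closed and line-free (the only recession direction in $z$ is $z\to-\infty$), so an optimum is attained by \cite{rockafellar2015convex}, exactly as in the proof of \Cref{them:rank}. Fixing $z=z^*$ is a restriction, so the resulting fixed-$z^*$ problem has optimal value at most $\V_{\rel}$; since $(\bm X^*,z^*)$ remains feasible there, that fixed-$z^*$ LSOP-R attains $\V_{\rel}$. Applying \Cref{them:rank}(ii) to it yields an optimal $\hat{\bm X}$ with $\rank(\hat{\bm X})\le \k+\lfloor\sqrt{2\tilde m+9/4}-3/2\rfloor$, where $\k\le k$ is the strengthened rank of $\X_{z^*}$ from \Cref{def:k}. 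Because $\k\le k$, this is at most $k+\lfloor\sqrt{2\tilde m+9/4}-3/2\rfloor$, and $(\hat{\bm X},z^*)$ is an optimal solution of \ref{pc_kernel} meeting the claimed bound.

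For part (ii), when $\tilde m\le 1$ the floor term equals $0$, so the $\hat{\bm X}$ above satisfies $\rank(\hat{\bm X})\le k$. It then suffices to verify $\hat{\bm X}\in\X_{z^*}$, i.e.\ $\log\det(\hat{\bm X}+\alpha\bm I_n)\ge z^*$: from $\hat{\bm X}\in\conv(\X_{z^*})$ I would extract a witness $\bm x$ with $x_{k+1}=0$, $\sum_{i\in[\ell]}\lambda_i(\hat{\bm X})\le\sum_{i\in[\ell]}x_i$ for $\ell\in[k]$, and $\tr(\hat{\bm X})=\sum_{i\in[k]}x_i$; since $\rank(\hat{\bm X})\le k$ forces $\bm\lambda(\hat{\bm X})$ onto $[k]$, this gives $\bm x\succeq\bm\lambda(\hat{\bm X})$. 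Schur-concavity of $\bm x\mapsto\sum_i\log(\alpha+x_i)$ then yields $\log\det(\hat{\bm X}+\alpha\bm I_n)=\sum_i\log(\alpha+\lambda_i(\hat{\bm X}))\ge\sum_i\log(\alpha+x_i)\ge z^*$, so $(\hat{\bm X},z^*)$ is feasible to \ref{app:kernel} with objective value $\V_{\rel}$; combined with $\V_{\rel}\le\V_{\opt}$ this forces $\V_{\rel}=\V_{\opt}$. Equivalently, one may invoke \Cref{cor:ch} on each fixed-$z$ slice, noting $\k\le k$ gives $(k-\k+2)(k-\k+3)/2-2\ge 1\ge\tilde m$, and then minimize over $z$.

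The main obstacle I anticipate is the bookkeeping forced by the auxiliary variable $z$: unlike the generic \ref{eq_rank}, here the spectral constraint is parametrized by the decision variable $z$, so one must justify that \Cref{prop:conv} and \Cref{them:rank} apply slice-by-slice in $z$, and that passing to the outer minimization over $z$ preserves both attainment and the rank bound. The one genuinely new computation is the Schur-concavity step confirming that low-rank points of $\conv(\X_{z^*})$ actually lie in $\X_{z^*}$, but this is routine given the majorization description of the convex hull.
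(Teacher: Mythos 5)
Your proposal is correct and follows essentially the same route as the paper: fix $z$ at its optimal value $z^*$, observe that the resulting problem in $\bm X$ is a genuine instance of \ref{eq_rel_rank} with domain set $\X_{z^*}$, and apply \Cref{them:rank} (with $\k\le k$) to extract a low-rank optimal solution, concluding exactness when $\tilde m\le 1$. Your explicit verification that a rank-$\le k$ point of $\conv(\X_{z^*})$ actually lies in $\X_{z^*}$ — via the majorization witness and Schur-concavity of $\bm x\mapsto\sum_i\log(\alpha+x_i)$ — is a step the paper leaves implicit, and is a worthwhile addition rather than a deviation.
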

\begin{proof}
	Suppose that $(\hat{\bm X}, \hat{z})$ is an optimal solution to the \ref{pc_kernel}. Then for a given solution $\hat{z}$, all the points in the following set are also optimal to \ref{pc_kernel}
	\begin{align*}
		\left\{\bm X \in \conv(\D): \langle \bm A_i, \bm X\rangle = \langle \bm A_i, \hat{\bm X}\rangle, \forall i \in [m]\right\}, 
	\end{align*}
	where  $\D:=\{\bm X\in \S_+^n: \rank(\bm X)\le k,  \log\det(\bm X +\alpha \bm I) \ge \hat{z}\}$ is based on $\hat{z}$.
	As $\tilde k\le k$ and the domain set $\D$ is bounded, according to \Cref{them:rank}, all the extreme points in the  optimal set above have a rank at most $k +  \lfloor \sqrt{2 \tilde{m}+9/4}-3/2\rfloor$. Thus,  there is an optimal solution $(\bm X^*, \hat{z})$ to the \ref{pc_kernel} with $\bm X^*$ satisfying the desired rank bound. Finally, the rank bound does not exceed $k$ when $\tilde m\le 1$, implying that $(\bm X^*, \hat{z})$ is also optimal to the original \ref{app:kernel}.
	\qed
\end{proof}

 \textit{Fair PCA.} Another application of \Cref{them:rank} is to provide a  rank bound for partial convexification \ref{eq_rel_rank} of the \ref{app:fpca} defined as follows
\begin{equation}
    \begin{aligned}\label{pc_fpca}
\V_{\rel}:= \max_{z\in \Re, \bm X \in \conv(\X)} \left\{z: z\le \langle  \bm A_i, \bm X \rangle, \forall i \in [m]\right\}, \\
 \conv(\X)= \{\bm X \in \S_+^{n}: \tr(\bm X)\le k, ||\bm X||_2 \le 1 \}.
\end{aligned}\tag{Fair PCA-R}
\end{equation}
Note that the  \ref{app:fpca} is equipped with a domain set $\X:=\{\bm X\in \S_{+}^n: \rank(\bm X)\le k, \|\bm X\|_2\le1 \}$, whose convex hull can be readily derived without the majorization technique, as shown in \ref{pc_fpca}. The rank bound below recovers the one in \cite{tantipongpipat2019multi}. 

\begin{restatable}{corollary}{corspca}{\rm \textbf{(Fair PCA)}}\label{cor:spca}
There exists an optimal solution of the  \ref{pc_fpca} with rank at most  $k +  \left\lfloor {\sqrt{2m + 1/4}}-{3}/{2}\right\rfloor$. Besides,  if there are $m\le 2$ sample covariance matrices, the \ref{pc_fpca}   achieves the same optimal  value as \ref{app:fpca}.
\end{restatable}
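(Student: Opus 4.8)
The plan is to prove this rank bound directly from the box-type semidefinite structure of the relaxation rather than by invoking \Cref{them:rank}, since here $\conv(\X)=\{\bm X\in\S_+^n:\ \bm 0\preceq \bm X\preceq \bm I_n,\ \tr(\bm X)\le k\}$ admits a sharper Pataki-style analysis that yields $1/4$ instead of $9/4$ inside the floor. First I would establish existence of an optimal extreme point. The feasible set $\mathcal F:=\{(z,\bm X): z\le\langle\bm A_i,\bm X\rangle\ \forall i\in[m],\ \bm X\in\conv(\X)\}$ is closed and line-free (boundedness of $\conv(\X)$ rules out any $\bm X$-direction, and the constraints $z\le\langle\bm A_i,\bm X\rangle$ rule out any $z$-direction), while $z$ is bounded above on $\mathcal F$ by compactness of $\conv(\X)$. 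Hence by \cite{rockafellar2015convex} the maximum is attained at an extreme point $(z^*,\bm X^*)$, and it suffices to bound $\rank(\bm X^*)$.

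Next I would run the perturbation count, crucially keeping $z$ as a free variable. Writing the spectral decomposition of $\bm X^*$, split its eigenvalues into those equal to $1$ (say $s$ of them), those in the open interval $(0,1)$ (say $t$ of them), and those equal to $0$, so that $\rank(\bm X^*)=s+t$. A standard argument shows that any symmetric $\bm H$ with $\bm X^*\pm\epsilon \bm H\in\conv(\X)$ for small $\epsilon>0$ must vanish on both $\ker(\bm X^*)$ and $\ker(\bm I_n-\bm X^*)$, i.e. $\bm H$ is supported on the $t$-dimensional ``fractional'' eigenspace and thus carries $t(t+1)/2$ degrees of freedom. Together with the free scalar $\delta z$, the directions $(\delta z,\bm H)$ that preserve feasibility in both signs form a space of dimension $1+t(t+1)/2$, cut out by one equation $\delta z=\langle\bm A_i,\bm H\rangle$ for each active constraint $\langle\bm A_i,\bm X^*\rangle=z^*$, plus the single equation $\tr(\bm H)=0$ only when $\tr(\bm X^*)=k$ is active. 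Extremality forces this space to be trivial, so the number of active constraints is at least $1+t(t+1)/2$; whether or not the trace constraint is active, this produces the clean inequality $t(t+1)/2\le m$. It is precisely the free variable $z$ that absorbs the trace equation, which is what replaces the generic $9/4$ by $1/4$.

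Then I would assemble the bound. If $t=0$, then $\rank(\bm X^*)=s=\tr(\bm X^*)\le k$, within the claimed bound since $\lfloor\sqrt{2m+1/4}-3/2\rfloor\ge 0$ for $m\ge 1$. If $t\ge 1$, the fractional eigenvalues sum to something strictly positive, so $s<\tr(\bm X^*)\le k$ gives $s\le k-1$; meanwhile $t(t+1)/2\le m$ gives $t\le\lfloor\sqrt{2m+1/4}-1/2\rfloor=1+\lfloor\sqrt{2m+1/4}-3/2\rfloor$, whence $\rank(\bm X^*)=s+t\le(k-1)+1+\lfloor\sqrt{2m+1/4}-3/2\rfloor=k+\lfloor\sqrt{2m+1/4}-3/2\rfloor$. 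The main obstacle is the degrees-of-freedom bookkeeping in the previous paragraph: one must verify carefully that feasibility-preserving perturbations are confined to the fractional block and that introducing $\delta z$ exactly cancels the trace equation, as this is the step that sharpens the generic bound.

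Finally, for the equivalence claim I would observe that $m\le 2$ forces $\lfloor\sqrt{2m+1/4}-3/2\rfloor=0$, so the optimal extreme point $\bm X^*$ has rank at most $k$ and, lying in $\conv(\X)$, satisfies $\bm X^*\succeq\bm 0$ and $\|\bm X^*\|_2\le 1$; hence $\bm X^*\in\X$ and $(z^*,\bm X^*)$ is feasible for \ref{app:fpca}. Since \ref{pc_fpca} is a relaxation of the maximization \ref{app:fpca}, so that $\V_{\rel}\ge\V_{\opt}$, and this feasible point attains $\V_{\rel}$, I would conclude $\V_{\opt}=\V_{\rel}$.
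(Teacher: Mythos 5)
Your proof is correct, but it takes a genuinely different route from the paper. The paper treats \Cref{cor:spca} as a direct corollary of its general machinery: it notes that $\k=k$ for this domain set, invokes the observation from \cite{li2022exactness} that at most $(m-1)$ of the inequalities $z\le\langle\bm A_i,\bm X\rangle$ can be binding (linearly independently) at any extreme point --- precisely because the auxiliary scalar $z$ can be eliminated against one binding constraint --- and then substitutes $\tilde m=m-1$ into the bound $k+\lfloor\sqrt{2\tilde m+9/4}-3/2\rfloor$ of \Cref{them:rank} to obtain $k+\lfloor\sqrt{2m+1/4}-3/2\rfloor$; exactness for $m\le 2$ follows since the bound then equals $k$. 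You instead reprove the bound from scratch by a Pataki-style degrees-of-freedom count on the explicit box spectrahedron $\{\bm 0\preceq\bm X\preceq\bm I_n,\ \tr(\bm X)\le k\}$, splitting eigenvalues into the blocks at $1$, in $(0,1)$, and at $0$, confining two-sided feasible perturbations to the fractional block, and letting the free direction $\delta z$ absorb the trace equation --- which is exactly the same mechanism (one equation saved via $z$) that turns $9/4$ into $1/4$, and is essentially the original argument of \cite{tantipongpipat2019multi} extending \cite{pataki1998rank}, which the paper explicitly set out to recover ``from a different perspective.'' Your bookkeeping is sound: vanishing of $\bm H$ on $\ker(\bm X^*)$ and $\ker(\bm I_n-\bm X^*)$ is the standard complementarity argument, the count $1+t(t+1)/2\le a+\mathbb{1}[\tr\text{ active}]\le m+1$ yields $t(t+1)/2\le m$ in both cases, the case split using $s\le k-1$ when $t\ge 1$ is handled correctly, and the exactness step ($\V_{\rel}\ge\V_{\opt}$ for this maximization, with the rank-$\le k$ extreme optimum lying in $\X$) matches the paper's. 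What each approach buys: the paper's derivation is two lines given \Cref{them:rank} and illustrates the generality of the face-dimension/majorization framework (it also works when $\conv(\X)$ has no simple closed form); yours is self-contained, needs no majorization or face machinery, and makes transparent exactly where the sharpened constant comes from --- at the cost of being tied to this particular domain set.
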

\begin{proof}
    See Appendix \ref{proof:spca}. \qed
\end{proof}

We close this subsection by discussing a special domain set $\X:=\{\bm X \in \S_{+}^n: \rank(\bm X)\le 1\}$, which  can be naturally adopted in \ref{app:qcqp}. The  \ref{eq_rel_rank} under this setting reduces to a semidefinite program relaxation of  \ref{app:qcqp}:
\begin{align} \label{pc:qcqp}
\V_{\rel}:= \min_{\bm X \in \conv(\X)} \left\{\langle\bm A_0, \bm X\rangle: 
b_i^l \le\langle \bm A_i, \bm X\rangle \le  b_i^u, \forall i \in [m] \right\},  \ \  \conv(\X):=\S_+^n. \tag{QCQP-R}
\end{align}
We show that \Cref{them:rank} can recover a well-known rank bound of \ref{pc:qcqp} (see, e.g., \citealt{pataki1998rank}).

\begin{restatable}{corollary}{corqcqp}{\rm \textbf{(QCQP)}}\label{cor:qcqp}
There exists an optimal solution of \ref{pc:qcqp}  with rank at most  $1 +  \left\lfloor {2 {\tilde m}+{1}/{4}}-{3}/{2}\right\rfloor$ if  $\V_{\rel}>-\infty$. Besides, if there are $\tilde m\le 2$ linearly independent inequalities, the \ref{pc:qcqp}   achieves the same optimal value as the original \ref{app:qcqp} if  $\V_{\rel}>-\infty$.
\end{restatable}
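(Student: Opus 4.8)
The plan is to exploit the fact that, for \ref{app:qcqp}, the partial convexification collapses the domain set to the full positive semidefinite cone, i.e., $\conv(\X)=\S_+^n$, so the feasible region of \ref{pc:qcqp} is simply the intersection of $\S_+^n$ with the $\tilde m$ linearly independent two-sided linear inequalities. This set is closed and line-free (the cone $\S_+^n$ is pointed), and since $\V_{\rel}>-\infty$ I would first reuse the argument of \Cref{them:rank}(ii), via \cite{rockafellar2015convex}, to guarantee that the optimum is attained at an extreme point $\widehat{\bm X}$ of this intersection.

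The key step is to bound $\rank(\widehat{\bm X})$ by analyzing the faces of $\S_+^n$ directly, which yields the sharp constant $1/4$ rather than the generic $9/4$ of \Cref{them:rank}. By \Cref{lem:face}, $\widehat{\bm X}$ lies in a face $F$ of $\S_+^n$ of dimension at most $\tilde m$. Writing $r:=\rank(\widehat{\bm X})$, I would invoke the classical characterization of the faces of the positive semidefinite cone: the minimal face of $\S_+^n$ containing a rank-$r$ matrix is linearly isomorphic to $\S_+^{r}$ and hence has dimension exactly $r(r+1)/2$. Therefore $r(r+1)/2\le\dim(F)\le\tilde m$, so $r$ is at most the largest integer satisfying $r(r+1)/2\le\tilde m$. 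Solving the quadratic gives $r\le\tfrac{\sqrt{8\tilde m+1}-1}{2}=\sqrt{2\tilde m+1/4}-\tfrac12$, whence $r\le\lfloor\sqrt{2\tilde m+1/4}-1/2\rfloor=1+\lfloor\sqrt{2\tilde m+1/4}-3/2\rfloor$, the claimed bound.

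For the exactness statement, I would specialize this bound to $\tilde m\le 2$: then $r(r+1)/2\le 2$ forces $r\le 1$, so the optimal extreme point $\widehat{\bm X}$ has rank at most $1$. Because a rank-$\le 1$ positive semidefinite matrix automatically lies in the domain $\X=\{\bm X\in\S_+^n:\rank(\bm X)\le 1\}$ and $\widehat{\bm X}$ already satisfies all the linear inequalities, $\widehat{\bm X}$ is feasible for \ref{app:qcqp}. Combined with the relaxation inequality $\V_{\rel}\le\V_{\opt}$, feasibility of $\widehat{\bm X}$ then forces $\V_{\opt}=\V_{\rel}$.

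I expect the main obstacle to be justifying the improved constant, since directly substituting $\k=1$ into \Cref{them:rank} only yields $1+\lfloor\sqrt{2\tilde m+9/4}-3/2\rfloor$, which is too weak to certify rank $\le 1$ (and hence exactness) at $\tilde m=2$. The refinement hinges on recognizing that the vacuous spectral constraint $F(\bm X)\equiv 0$ removes one degree of freedom relative to the generic analysis of \Cref{prop:rank}: that analysis effectively permits a rank-$r$ point inside a face of dimension as small as $r(r+1)/2-1$, whereas the faces of $\S_+^n$ require dimension exactly $r(r+1)/2$. Verifying this dimension count, and confirming that the minimal face of a rank-$r$ positive semidefinite matrix is precisely $\S_+^{r}$, is the technical heart of the argument and is what lets \Cref{them:rank} recover the classical Pataki-type bound of \citet{pataki1998rank}.
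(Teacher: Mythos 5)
Your proposal is correct, and it arrives at exactly the pivotal inequality the paper uses, $r(r+1)/2\le \tilde m$, but by a genuinely different route for the face--rank relation. The paper re-runs its own perturbation argument from \Cref{prop:rank}: since $F(\bm X)\equiv 0$ leaves the auxiliary vector $\bm x$ in the description of $\conv(\X)$ unconstrained (and unbounded), the trace equation $\tr(\bm \Delta)=0$ in \eqref{eq:new} can be discarded, so the matrix $\bm \Delta\in \S^{r}$ only needs to annihilate the $d$ face-orthogonality equations, which sharpens $(d+1)\ge r(r+1)/2$ to $d\ge r(r+1)/2$ and, via \Cref{lem:face}, yields $r(r+1)/2\le \tilde m$. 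You instead invoke the classical facial geometry of $\S_+^n$ — the minimal face containing a rank-$r$ matrix is linearly isomorphic to $\S_+^{r}$, hence of dimension exactly $r(r+1)/2$, and is contained in every face containing that point — which is the Pataki--Barvinok route; combined with the same \Cref{lem:face}, it gives the identical bound. The two arguments are at bottom the same dimension count (the paper's $\bm \Delta\in\S^{r}$ is precisely a direction inside the minimal face $\cong\S_+^{r}$), but your version makes the improved constant $1/4$ self-evident for this special case, whereas the paper's version shows how its general machinery degenerates to recover the classical bound and extends to other domain sets with an unbounded auxiliary $\bm x$. One further point in your favor: your exactness step at $\tilde m\le 2$ is a direct verification (rank $\le 1$ makes the extreme point feasible to \ref{app:qcqp}, and $\V_{\rel}\le \V_{\opt}$ forces equality), whereas the paper's citation of \Cref{cor:ch} is loose — taken literally with $k=\k=1$ that corollary only covers $\tilde m\le 1$, and what is really meant is the argument of \Cref{cor:ch} applied with the refined bound, which is exactly what you wrote out. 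Your diagnosis of why plain substitution into \Cref{them:rank} fails (it certifies only rank $\le 2$ at $\tilde m=2$) also matches the paper's motivation for the refinement.
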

\begin{proof}
    See Appendix \ref{proof:qcqp}. \qed
\end{proof}

 \Cref{cor:qcqp} implies that the \ref{pc:qcqp} coincides with the original \ref{app:qcqp} when there are $\tilde m\le 2$ linearly independent constraints, which is consistent with those of previous studies on the objective exactness of \ref{pc:qcqp} (see \citealt{burer2020exact,kilincc2021exactness,li2022exactness}).

%



\section{The Rank Bound in Non-symmetric Matrix Space}\label{sec:nonsym}
This section  derives a rank bound of \ref{eq_rel_rank}  in non-symmetric matrix space, i.e., $\Q=\Re^{n\times p}$ in \eqref{eq_set}. 
Due to the non-symmetry, 
the analysis relies on the singular value decomposition in this section. 
More importantly, the nonnegative singular values enable us to rewrite function $f(\cdot)$ in \eqref{eq_set} as 
$f(\bm \lambda(\bm X)):=f(|\bm \lambda(\bm X)|)$, which admits the sign-invariant property. Therefore, when $\Q=\Re^{n\times p}$,  the domain set $\D$ in \eqref{eq_set} is permutation- and sign-invariant with singular values, and its convex hull $\conv(\X)$ can be readily described   according to \cite{kim2022convexification} (see $\conv(\X)$ in Appendix \ref{proof:nonface}). 


We derive a rank bound below for any fixed-dimension face in the set $\conv(\X)$ with $\Q:=\Re^{n\times p}$. 

\begin{restatable}{theorem}{thmnonface}\label{them:nonface}
Given  $\Q:=\Re^{n\times p}$ in domain set $\X$ in \eqref{eq_set}, suppose that $F^d\subseteq \conv(\X)$ is a $d$-dimensional face in the convex hull of the domain set $\X$. Then any point in face $F^d$ has a rank at most $\tilde k + \lfloor\sqrt{2d+9/4}-3/2\rfloor$, where  $\k\le k$ follows \Cref{def:k}.
\end{restatable}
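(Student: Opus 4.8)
The plan is to replay the argument behind \Cref{prop:rank} with the eigendecomposition replaced by the singular value decomposition $\bm X=\bm U\Diag(\bm\sigma(\bm X))\bm V^\top$, exploiting that for $\Q=\Re^{n\times p}$ the convex hull $\conv(\X)$ (recorded in Appendix \ref{proof:nonface}) is described through an auxiliary vector $\bm x\in\Re_+^n$ with $x_1\ge\cdots\ge x_{\k}\ge x_{\k+1}=0$ that weakly majorizes the singular-value vector via $\|\bm X\|_{(\ell)}\le\sum_{i\in[\ell]}x_i$, together with $f(\bm x)\le 0$. I would fix a point $\bm X\in F^d$ of rank $r$ with such a certificate $\bm x$ and set $s:=r-\k$; the goal is to prove $d\ge s(s+3)/2$, since solving $(s+3/2)^2\le 2d+9/4$ then gives $r\le\k+\lfloor\sqrt{2d+9/4}-3/2\rfloor$. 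Because it suffices to bound the rank of a point in the relative interior of its minimal face, and that minimal face is contained in $F^d$ and hence has dimension at most $d$, this simultaneously dispatches the ``any point'' phrasing.

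If $s\le 0$ there is nothing to prove, so assume $s\ge 1$. First I would certify strict majorization around the support boundary: since $x_{\k+1}=0$ while $\sigma_{\k+1},\dots,\sigma_r>0$, weak majorization gives $\sum_{i\in[\ell]}\sigma_i<\sum_{i\in[r]}\sigma_i\le\sum_{i\in[\k]}x_i=\sum_{i\in[\ell]}x_i$ for every $\ell\in[\k,r-1]$, so all these partial-sum constraints are inactive. Feeding $j_1=\k$ into \Cref{lem:ineq} propagates the strictness leftward across the entire equal-value block of $\bm\sigma$ that straddles position $\k$, so the constraints stay inactive down to the start of that block. This slack is exactly what lets me perturb the $s+1$ smallest positive singular values. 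Concretely, let $\bm U_R\in\Re^{n\times(s+1)}$ and $\bm V_R\in\Re^{p\times(s+1)}$ collect the left and right singular vectors at positions $\k,\dots,r$, and consider $\bm X(\bm M):=\bm X-\bm U_R\Diag(\sigma_{\k},\dots,\sigma_r)\bm V_R^\top+\bm U_R\bm M\bm V_R^\top$ for $\bm M\in\S_+^{s+1}$ with $\tr\bm M=\sum_{i=\k}^{r}\sigma_i$. Because $\bm U_R,\bm V_R$ are orthogonal to the untouched singular subspaces, the singular values of $\bm X(\bm M)$ are those of the untouched part together with the eigenvalues of $\bm M$; holding $\bm x$ fixed, this fixed $\bm x$ still weakly majorizes $\bm\sigma(\bm X(\bm M))$ for every $\bm M$ in a small spectraplex neighborhood (the active nuclear-norm/partial-sum constraints are preserved since $\tr\bm M$ is fixed, and the inactive ones retain slack by the previous step), so each $\bm X(\bm M)$ lies in $\conv(\X)$ and shares the active constraints of $\bm X$, hence belongs to its minimal face.

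The spectraplex $\{\bm M\in\S_+^{s+1}:\tr\bm M=\text{const}\}$ has dimension $\binom{s+2}{2}-1=s(s+3)/2$ and embeds affinely and injectively via $\bm M\mapsto\bm U_R\bm M\bm V_R^\top$, so the minimal face of $\bm X$, and therefore $F^d$, has dimension at least $s(s+3)/2$, which is the required bound. The main obstacle is the feasibility verification in the previous step: showing that the embedded spectraplex stays inside the \emph{face} rather than merely inside $\conv(\X)$. This is where \Cref{lem:ineq} is indispensable, as it guarantees that when a block of equal singular values straddles the index $\k$, the partial-sum constraints immediately below position $\k$ are strict, furnishing the slack so that enlarging the largest eigenvalue of $\bm M$ slightly past $\sigma_{\k}$ activates no new constraint; the non-symmetric feature that the left and right singular subspaces differ must be carried throughout, but the symmetric positive semidefinite choice of $\bm M$ keeps the dimension count identical to that of $\Q=\S_+^n$ and hence delivers the very same bound.
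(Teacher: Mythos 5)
Your proposal is correct and is essentially the paper's own argument (Appendix~\ref{proof:nonface}): the same SVD split at index $\k$, the same trace-preserving positive semidefinite perturbation of the $r-\k+1$ smallest singular values with the certificate $\bm x$ held fixed (so that $\|\bm X(\bm M)\|_*=\|\bm X\|_*$), and the same use of \Cref{lem:ineq} together with the spectral gap at the block straddling position $\k$ to verify the perturbed points remain in $\conv(\X)$. The only difference is packaging: the paper argues by contradiction, producing a single direction $\bm \Delta$ orthogonal to the face when the dimension count $d+1\ge (r-\k+1)(r-\k+2)/2$ fails and writing the point as a midpoint of two feasible perturbations, whereas you exhibit the full $s(s+3)/2$-dimensional embedded spectraplex inside the minimal face and read the bound off directly --- the contrapositive of the same count.
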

\begin{proof}
    See Appendix \ref{proof:nonface}. \qed
\end{proof}
 
In \Cref{them:nonface}, we guarantee the rank bounds for all faces of set $\conv(\X)$ with $\Q:=\Re^{n\times p}$. 
 Analogous to the link between \Cref{prop:rank} and \Cref{them:rank},  the statement in \Cref{lem:face}  still holds for $\Q:=\Re^{n\times p}$. Hence, \Cref{them:nonface}  directly implies a rank bound for \ref{eq_rel_rank}.
 
 \begin{theorem}\label{them:nonsym}
 Given  $\Q:=\Re^{n\times p}$ in \eqref{eq_set} and integer  $\k\le k$ following \Cref{def:k}, suppose that the \ref{eq_rel_rank} admits a line-free feasible set, then  we have that
 \begin{enumerate}[(i)]
 \item Each feasible extreme point in the \ref{eq_rel_rank} has a rank at most $\k + \lfloor\sqrt{2\tilde m+9/4}-3/2\rfloor$; and
 \item There is an optimal solution to \ref{eq_rel_rank} of rank at most $\k + \lfloor\sqrt{2\tilde m+9/4}-3/2\rfloor$ if the \ref{eq_rel_rank}  yields a finite optimal value, i.e., ${\V}_{\rel}>-\infty$.
 \end{enumerate}
 \end{theorem}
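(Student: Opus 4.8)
The plan is to mirror the derivation of \Cref{them:rank} almost verbatim, replacing the positive semidefinite face analysis (\Cref{prop:rank}) with its non-symmetric counterpart \Cref{them:nonface}, which has already been established. The two ingredients needed are exactly the same as in the symmetric case: a correspondence between extreme points of the intersected feasible set and low-dimensional faces of $\conv(\X)$, and the existence of an extreme-point optimum when the problem is bounded and line-free. Since \Cref{lem:face} is stated for an arbitrary closed convex set $\T$, it applies directly when $\T:=\conv(\X)$ with $\Q:=\Re^{n\times p}$, provided we know that this $\conv(\X)$ is closed and convex (which follows from the characterization referenced in Appendix \ref{proof:nonface}).

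For Part (i), I would argue as follows. The feasible set of \ref{eq_rel_rank} is the intersection of the closed convex set $\conv(\X)$ with the $\tilde m$ linearly independent inequalities $b_i^l\le\langle\bm A_i,\bm X\rangle\le b_i^u$. Let $\hat{\bm X}$ be any feasible extreme point. By \Cref{lem:face}, $\hat{\bm X}$ lies in a face $F^d\subseteq\conv(\X)$ of dimension $d\le\tilde m$. Applying \Cref{them:nonface} to this face, every point of $F^d$, and in particular $\hat{\bm X}$, has rank at most $\k+\lfloor\sqrt{2d+9/4}-3/2\rfloor\le\k+\lfloor\sqrt{2\tilde m+9/4}-3/2\rfloor$, since $d\le\tilde m$ and the bound is monotone in $d$. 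This gives the claimed rank bound for every feasible extreme point.

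For Part (ii), I would invoke the hypothesis that the feasible set is line-free together with the assumption $\V_{\rel}>-\infty$. Under these two conditions a line-free closed convex feasible set with a bounded linear objective attains its minimum at an extreme point (see \citealt{rockafellar2015convex}), so there exists an optimal extreme point, which by Part (i) already satisfies the rank bound. This completes the argument.

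The only genuine point of care, and the reason the line-free hypothesis is stated explicitly in \Cref{them:nonsym} (unlike in \Cref{them:rank}, where $\conv(\X)\subseteq\S_+^n$ is automatically line-free), is ensuring that the extreme-point optimum in Part (ii) actually exists: a general $\conv(\X)$ in $\Re^{n\times p}$ need not be pointed, so without the line-free assumption the feasible set could contain a line and possess no extreme points at all. The substantive geometric work, namely bounding the rank of a $d$-dimensional face via the singular value decomposition and a perturbed-majorization argument, has already been discharged in \Cref{them:nonface}; I expect no new obstacle in assembling it, as this theorem is purely a packaging of \Cref{them:nonface} and \Cref{lem:face} in direct parallel to \Cref{them:rank}.
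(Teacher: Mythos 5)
Your proposal is correct and follows essentially the same route as the paper, which proves \Cref{them:nonsym} by combining \Cref{them:nonface} with \Cref{lem:face} (noting the lemma holds for any closed convex set, hence for $\conv(\X)$ with $\Q:=\Re^{n\times p}$) and then invoking the existence of an extreme-point optimum for a line-free feasible set with $\V_{\rel}>-\infty$, exactly as in \Cref{them:rank}. Your observation about why the line-free hypothesis must be stated explicitly here (unlike in the positive semidefinite case) matches the paper's own remark (a) following the theorem.
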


We make the several remarks about the rank bound in \Cref{them:nonsym} with $\Q=\Re^{n\times p}$.
\begin{enumerate}[(a)]
\item The premise of a line-free  feasible set ensures the existence of extreme points in the feasible set of \ref{eq_rel_rank}   \citep{rockafellar2015convex};
\item  When $\Q:=\S_{+}^n$ or $\Q:=\Re^{n\times p}$, both \Cref{prop:rank,them:nonface} are based on 
analyzing the perturbation of majorization constraints in a permutation-invariant 
set $\conv(\X)$, which leads to the same rank bounds in \Cref{them:rank,them:nonsym}. 
This  motivates us to
explore another type of 
permutation-invariant set beyond $\conv(\X)$ to derive the rank bounds in symmetric yet
indefinite matrix space in \Cref{sec3};
\item The rank bound in \Cref{them:nonsym} is tight (see Appendix \ref{proof:tight}); and
\item A notable side product of \Cref{them:nonsym} is that by letting $\k+ \lfloor\sqrt{2m+9/4}-3/2\rfloor\le k$, \Cref{them:nonsym} reduces to a sufficient condition of the exactness of \ref{eq_rel_rank}. 
\end{enumerate}

 
\begin{proposition}\label{cor:nonsym}
Given  $\Q:=\Re^{n\times p}$ in \eqref{eq_set}  and integer $\k\le k$ following \Cref{def:k}, if $\tilde m \le {(k-\k+2)(k-\k+3)/2}-2$ holds,  then we have that
\begin{enumerate}[(i)]
\item Each feasible extreme point in the \ref{eq_rel_rank} has a rank at most $k$; and
\item The \ref{eq_rel_rank} achieves the same optimal value as the original \ref{eq_rank}, 
i.e., $\V_{\opt} = \V_{\rel}$ 
if the \ref{eq_rel_rank}  yields a finite optimal value, i.e., $\V_{\rel}>-\infty$ and admits a line-free feasible set.
\end{enumerate}
\end{proposition}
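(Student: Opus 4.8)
The plan is to mirror the proof of \Cref{cor:ch} line for line, replacing the positive semidefinite rank bound of \Cref{them:rank} with its non-symmetric counterpart \Cref{them:nonsym}, and tracking the extra line-free hypothesis needed for extreme points to exist in $\Re^{n\times p}$. The crux is a one-line algebraic reduction showing that the hypothesis $\tilde m \le (k-\k+2)(k-\k+3)/2 - 2$ is exactly the condition under which the non-symmetric rank bound collapses to $k$. Writing $t := k - \k \ge 0$, I note that $\lfloor \sqrt{2\tilde m + 9/4} - 3/2 \rfloor \le t$ holds iff $\sqrt{2\tilde m + 9/4} < t + 5/2$, i.e. $2\tilde m < t^2 + 5t + 4$; since $t(t+5)$ is even for every integer $t$, the largest integer $\tilde m$ meeting this is $(t^2 + 5t + 2)/2 = (t+2)(t+3)/2 - 2$, which is precisely the stated bound. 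Hence under the hypothesis $\k + \lfloor \sqrt{2\tilde m + 9/4} - 3/2 \rfloor \le k$.

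Part (i) is then immediate: every feasible extreme point of \ref{eq_rel_rank} lies in a face of $\conv(\X)$ of dimension at most $\tilde m$ by \Cref{lem:face}, so \Cref{them:nonface} bounds its rank by $\k + \lfloor \sqrt{2\tilde m + 9/4} - 3/2 \rfloor$, which the reduction shows is at most $k$. (Equivalently, one may cite \Cref{them:nonsym}(i) directly.)

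For part (ii), the line-free feasible set together with $\V_{\rel} > -\infty$ lets me invoke \Cref{them:nonsym}(ii) (via \cite{rockafellar2015convex}) to obtain an optimal solution $\bm X^*$ of \ref{eq_rel_rank} whose rank is at most $\k + \lfloor \sqrt{2\tilde m + 9/4} - 3/2 \rfloor \le k$. The step I expect to require the most care --- the main, if modest, obstacle --- is certifying that this low-rank optimizer genuinely lies in the original domain $\X$, not merely in $\conv(\X)$. To this end I would use the convex-hull description of $\conv(\X)$ recorded in Appendix \ref{proof:nonface}: the witness vector $\bm x \in \Re_+^n$ for $\bm X^* \in \conv(\X)$ majorizes the singular-value vector $\bm \lambda(\bm X^*)$ and satisfies $f(\bm x) \le 0$; since $f$ is symmetric and convex, majorization gives $F(\bm X^*) = f(\bm \lambda(\bm X^*)) \le f(\bm x) \le 0$. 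Together with $\rank(\bm X^*) \le k$ this yields $\bm X^* \in \X$, so $\bm X^*$ is feasible for \ref{eq_rank}. The chain $\V_{\opt} \le \langle \bm A_0, \bm X^* \rangle = \V_{\rel} \le \V_{\opt}$ then forces $\V_{\opt} = \V_{\rel}$ and exhibits $\bm X^*$ as an optimal solution of \ref{eq_rank}, completing the proof.
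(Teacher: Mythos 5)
Your proof is correct and its skeleton coincides with the paper's: the paper disposes of \Cref{cor:nonsym} in one line (``Using \Cref{them:nonsym}, the proof is identical to that of \Cref{cor:ch}''), and the proof of \Cref{cor:ch} is exactly your reduction --- check that the hypothesis forces $\k + \lfloor\sqrt{2\tilde m+9/4}-3/2\rfloor \le k$ (your parity computation via $t(t+5)$ being even makes explicit what the paper merely asserts), invoke the rank bound to extract an optimal extreme point $\bm X^*$ of rank at most $k$, and sandwich $\V_{\opt}\le\langle\bm A_0,\bm X^*\rangle=\V_{\rel}\le\V_{\opt}$. Where you go beyond the paper is precisely the step you flagged as the main obstacle: the paper simply calls $\bm X^*$ ``feasible'' to \ref{eq_rank}, leaving implicit that a rank-$\le k$ point of $\conv(\X)$ satisfies the spectral constraint, whereas you certify this via the witness vector from \Cref{prop:nonsymconv}. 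One patch is needed in that certification: in the non-symmetric hull the witness only \emph{weakly} majorizes $\bm\lambda(\bm X^*)$, since the description imposes $\|\bm X\|_*\le\sum_{i\in[k]}x_i$ as an inequality (unlike the trace \emph{equality} in the $\S_+^n$ description of \Cref{prop:conv}), and symmetry plus convexity of $f$ --- i.e., Schur-convexity --- does not by itself push $f(\cdot)\le 0$ through weak majorization (consider $f$ decreasing in $\sum_i\lambda_i$). You additionally need $f$ nondecreasing on $\Re_+^n$, which is automatic in this setting: as the paper notes at the start of \Cref{sec:nonsym}, the nonnegativity of singular values lets one take $f$ sign-invariant, and a convex sign-invariant function is coordinatewise nondecreasing on $\Re_+^n$; then $\bm x\succ\bm\lambda(\bm X^*)$ yields some $\bm z$ with $\bm x\succeq\bm z\ge\bm\lambda(\bm X^*)$ componentwise, whence $f(\bm\lambda(\bm X^*))\le f(\bm z)\le f(\bm x)\le 0$ as you want. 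With that one line added, your argument is complete and, on the feasibility point, more careful than the paper's own treatment.
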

\begin{proof}
Using \Cref{them:nonsym}, the proof is identical to that of \Cref{cor:ch} and thus is omitted. \qed
\end{proof}

\subsection{Applying to Fair SVD and Matrix Completion } \label{sub:nonsymapp}
This subsection investigates two \ref{eq_rank} application examples with $\Q:=\Re^{n\times p}$  in Subsection \ref{sub:app} and provides the first-known rank bounds for  their corresponding \ref{eq_rel_rank}s by leveraging \Cref{them:nonsym}.

\textit{Fair SVD.} The domain set in \ref{app:fsvd} is $\X:=\{\bm X\in \Re^{n\times p}: \rank(\bm X)\le 1, \|\bm X\|_2 \le 1\}$. 
Therefore, for  \ref{app:fsvd}, its \ref{eq_rel_rank} can be formulated below
\begin{equation}
\begin{aligned}\label{pc_fsvd}
&\V_{\rel}:= \max_{z\in \Re, \bm X \in \conv(\X)} \left\{z: z\le \langle  \bm A_i, \bm X \rangle, \forall i \in [m]\right\},  \\
 &\ \conv(\X)= \{\bm X \in \Re^{n\times p}: \|\bm X\|_*\le k, ||\bm X||_2 \le 1 \}.
 \end{aligned}
 \tag{Fair SVD-R}
\end{equation}
Note that in \ref{pc_fsvd}, we have that $\k=k$   and the domain set $\X$ is compact, which enables us to directly apply  \Cref{them:nonsym}.

 \begin{restatable}{corollary}{corfsvd}{\rm \textbf{(Fair SVD)}} \label{cor:fsvd}
 There exists an optimal solution of \ref{pc_fsvd} with rank at most  $k +  \lfloor \sqrt{2m + 1/4}-3/2\rfloor$. Besides,  if there are $m\le 2$ groups of data matrices, the \ref{pc_fsvd}  yields the same optimal value as the original \ref{app:fsvd}.
 \end{restatable}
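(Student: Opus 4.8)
The plan is to reproduce the reduction already used for \ref{pc_fpca}: apply the face-dimension rank bound of \Cref{them:nonface} to \ref{pc_fsvd}, while carefully accounting for the scalar epigraph variable $z$. This bookkeeping is exactly what sharpens the constant from $\sqrt{2m+9/4}$ to $\sqrt{2m+1/4}$. Throughout I would use the two structural facts recorded just before the corollary: here the strengthened rank of \Cref{def:k} satisfies $\k=k$, and $\conv(\X)=\{\bm X\in\Re^{n\times p}:\|\bm X\|_*\le k,\,\|\bm X\|_2\le 1\}$ is compact (the operator-norm ball is closed and bounded).

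First I would establish the existence of an optimal extreme point. The feasible region $\mathcal R:=\{(\bm X,z):\bm X\in\conv(\X),\,z\le\langle\bm A_i,\bm X\rangle,\,\forall i\in[m]\}$ is closed and convex, and because $\conv(\X)$ is compact while $z\le\langle\bm A_1,\bm X\rangle\le\max_{\bm X\in\conv(\X)}\langle\bm A_1,\bm X\rangle<\infty$, its only recession directions lower $z$; hence $\mathcal R$ contains no line. Since the linear objective $z$ is bounded above on $\mathcal R$, the maximum is attained at an extreme point $(\bm X^*,z^*)$ by the standard result of \cite{rockafellar2015convex} already invoked in \Cref{them:rank}.

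The crux is the dimension count. I would view $\mathcal R$ as the intersection of the product set $\conv(\X)\times\Re$ with the $m$ linear inequalities $\langle\bm A_i,\bm X\rangle-z\ge 0$. By \Cref{lem:face}, the extreme point $(\bm X^*,z^*)$ lies in a face $G$ of $\conv(\X)\times\Re$ of dimension at most $m$. Every face of a product convex set decomposes as a product of faces, and the only nonempty face of the line $\Re$ is $\Re$ itself; hence $G=F^d\times\Re$ for some face $F^d$ of $\conv(\X)$, so that $d+1=\dim G\le m$, i.e.\ $d\le m-1$. Applying \Cref{them:nonface} to $F^d$ together with $\k=k$ then bounds $\rank(\bm X^*)$ by $k+\lfloor\sqrt{2(m-1)+9/4}-3/2\rfloor=k+\lfloor\sqrt{2m+1/4}-3/2\rfloor$, which is the claimed bound. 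The one dimension "spent" on $z$ is precisely what the improved constant reflects; getting this accounting right, in particular justifying the product-face decomposition $G=F^d\times\Re$ so that the face of $\conv(\X)$ has dimension at most $m-1$ rather than $m$, is the main obstacle, while the rest is a routine invocation of \Cref{them:nonface,lem:face} as in \Cref{them:nonsym}.

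Finally, for the exactness claim I would observe that when $m\le 2$ the bound evaluates to $k$ (for $m=1$, $\sqrt{9/4}-3/2=0$; for $m=2$, $\sqrt{17/4}-3/2<1$), so $\rank(\bm X^*)\le k$ while $\|\bm X^*\|_2\le 1$, giving $\bm X^*\in\X$. Hence $(\bm X^*,z^*)$ is feasible for \ref{app:fsvd}, so $\V_{\opt}\ge z^*=\V_{\rel}$; combined with the relaxation inequality $\V_{\rel}\ge\V_{\opt}$ (the maximization in \ref{pc_fsvd} is over the larger set $\conv(\X)\supseteq\X$), this yields $\V_{\opt}=\V_{\rel}$.
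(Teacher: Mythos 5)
Your proof is correct, and it reaches the paper's bound by a genuinely different mechanism at the one step that matters. The paper's proof is a two-line reduction: it argues ``analogous to \ref{pc_fpca}'' that at most $(m-1)$ of the $m$ inequalities can be binding at any extreme point of the \ref{pc_fsvd} feasible set --- a counting fact it imports from \cite{li2022exactness} --- and then invokes \Cref{them:nonsym} with $\tilde m$ replaced by $m-1$. You instead keep the epigraph variable $z$ explicit, apply \Cref{lem:face} to the lifted set $\conv(\X)\times\Re$, and recover the $m-1$ via the product-face decomposition: every face of $\conv(\X)\times\Re$ is $F^d\times\Re$ because the only nonempty face of the line $\Re$ is $\Re$ itself, so one dimension of the at-most-$m$-dimensional face is necessarily spent on $z$ and $d\le m-1$; then \Cref{them:nonface} with $\k=k$ gives $k+\lfloor\sqrt{2(m-1)+9/4}-3/2\rfloor=k+\lfloor\sqrt{2m+1/4}-3/2\rfloor$. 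What your route buys is self-containment: the product-face argument (which you could verify directly --- if $(x_1,y_1),(x_2,y_2)$ lie in a face $F$ of $C\times D$, the segment from $(x_1,y_2)$ to $(x_2,y_1)$ passes through their midpoint, forcing $(x_1,y_2)\in F$, so $F$ splits as a product) replaces an external citation, and you also supply the line-freeness and attainment details that the paper leaves implicit. What the paper's route buys is brevity and uniformity with \Cref{cor:spca}. Two minor remarks, neither a gap: if the functionals $(\bm A_i,-1)$ happen to be linearly dependent, \Cref{lem:face} applies with $\tilde m\le m$ independent ones and your conclusion $d\le m-1$ only improves; and your exactness step correctly checks $\bm X^*\in\X$ (rank $\le k$ plus $\|\bm X^*\|_2\le 1$) rather than merely $\bm X^*\in\conv(\X)$, together with the correct direction $\V_{\rel}\ge\V_{\opt}$ for this maximization.
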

\begin{proof}
	See Appendix \ref{proof:corfsvd}. \qed
	\end{proof}
 

\textit{Matrix Completion.}  \ref{app:lmc} that performs low-rank recovery from observed samples is a popular technique in machine learning, which is known to be
notoriously NP-hard. Alternatively, we study the following convex relaxation   of \ref{app:lmc}: 
\begin{equation} \label{pc_lmc}
\begin{aligned}
\V_{\rel}:=\min_{\bm X\in \conv(\X), z \in \Re_+} \left\{z:  X_{ij}=A_{ij} , \forall (i,j)\in \Omega \right\}, \ \  \conv(\X):=\{\bm X\in \Re^{n\times p}:  \|\bm X\|_*\le z\},
\end{aligned} \tag{Matrix Completion-R}
\end{equation}
which is equivalent to $\min_{\bm X\in \Re^{n\times p}} \left\{\|\bm X\|_*:  X_{ij}=A_{ij} , \forall (i,j)\in \Omega \right\}.$
Despite the popularity of the convex relaxation-- \ref{pc_lmc} in the literature  (see, e.g., \citealt{cai2010singular, miao2021color}), there is no  guaranteed rank bound of its solution so far.
 We fill this gap by deriving the following rank bound for the \ref{pc_lmc}.
 \begin{restatable}{corollary}{corlmc}{\rm \textbf{(Matrix Completion)}}\label{cor:lmc}
 Suppose that there are  $m:= |\Omega|$ observed entries in \ref{app:lmc}. Then there exists an optimal solution to \ref{pc_lmc} 
 with rank at most  $\lfloor\sqrt{2|\Omega|+9/4}-1/2\rfloor$. Besides,  if there are $|\Omega|\le (k+1)(k+2)/2-2$ observed entries, the \ref{pc_lmc}  can achieve the same optimal value as the original \ref{app:lmc}.
\end{restatable}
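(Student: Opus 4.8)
The plan is to reduce \ref{pc_lmc} to the \ref{eq_rel_rank} template with $\Q:=\Re^{n\times p}$ and then invoke \Cref{them:nonsym}, mirroring the arguments already used for \ref{pc_kernel} and \ref{pc_fsvd}. The first step handles the auxiliary variable $z$. Since \ref{pc_lmc} minimizes $z$ subject to $\|\bm X\|_*\le z$ and the interpolation constraints $X_{ij}=A_{ij}$, and since its feasible region is nonempty (any matrix agreeing with the data on $\Omega$ works), closed, with $z$ bounded below by $0$, the optimum $\hat z=\V_{\rel}$ is attained at some $(\hat{\bm X},\hat z)$. I would freeze $z=\hat z$ and note that every point of
\begin{align*}
S:=\left\{\bm X\in\conv(\D): X_{ij}=A_{ij},\ \forall (i,j)\in\Omega\right\},\quad \D:=\{\bm X\in\Re^{n\times p}:\rank(\bm X)\le k,\ \|\bm X\|_*\le\hat z\},
\end{align*}
stays optimal for \ref{pc_lmc}, because each $\bm X\in S$ yields a feasible pair $(\bm X,\hat z)$ whose objective value is $\hat z=\V_{\rel}$.

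The second step casts $S$ as the feasible set of an \ref{eq_rel_rank}. The spectral constraint is $F(\bm X)=\|\bm X\|_*-\hat z\le 0$, which makes $\D$ bounded, so $\conv(\D)=\{\bm X:\|\bm X\|_*\le\hat z\}$ is compact, hence line-free, and $S$ therefore possesses extreme points. Each equality $X_{ij}=A_{ij}$ reads $\langle\bm E_{ij},\bm X\rangle=A_{ij}$ with $\bm E_{ij}$ the $(i,j)$-th standard basis matrix; these $|\Omega|$ matrices are linearly independent, so $\tilde m=|\Omega|$. Crucially, \Cref{eg1} applies verbatim with $f(t)=t-\hat z$: any rank-$r$ matrix in $\D$ splits into a convex combination of rank-one matrices of equal nuclear norm, so every extreme point of $\conv(\D)$ is rank one and $\k=1$ by \Cref{def:k} (equivalently, $\conv(\D)=\{\bm X:\|\bm X\|_*\le\hat z\}$ as already recorded in \ref{pc_lmc}).

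With these identifications, \Cref{them:nonsym} produces an optimal extreme point $\bm X^*\in S$ of rank at most $\k+\lfloor\sqrt{2\tilde m+9/4}-3/2\rfloor=1+\lfloor\sqrt{2|\Omega|+9/4}-3/2\rfloor$; since $1$ is an integer, the elementary identity $1+\lfloor y\rfloor=\lfloor y+1\rfloor$ collapses this to $\lfloor\sqrt{2|\Omega|+9/4}-1/2\rfloor$, giving the first claim. For the exactness statement I would appeal to \Cref{cor:nonsym} with $\k=1$: its hypothesis $\tilde m\le(k-\k+2)(k-\k+3)/2-2$ becomes exactly $|\Omega|\le(k+1)(k+2)/2-2$, under which the rank bound is at most $k$, so $\bm X^*$ is feasible for \ref{app:lmc} and therefore $\V_{\rel}=\V_{\opt}$.

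The only genuinely delicate point is the reduction itself: recognizing that fixing $z=\hat z$ converts the nuclear-norm objective into a bounded, line-free \ref{eq_rel_rank} instance whose strengthened rank is $\k=1$. Everything afterward is a direct substitution into \Cref{them:nonsym} and \Cref{cor:nonsym} together with the floor arithmetic; the boundedness supplied by the frozen nuclear-norm constraint is what guarantees both the existence of extreme points and the line-free premise required by \Cref{them:nonsym}.
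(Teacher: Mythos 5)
Your proposal is correct and follows essentially the same route as the paper's own proof: fix the optimal $\hat z$, view the optimal set of matrices as an \ref{eq_rel_rank} feasible set over $\conv(\D)$ with $\D:=\{\bm X\in\Re^{n\times p}:\rank(\bm X)\le k,\ \|\bm X\|_*\le\hat z\}$, deduce $\k=1$ from \Cref{eg1}, and apply \Cref{them:nonsym} and \Cref{cor:nonsym} with $\tilde m=|\Omega|$. The extra details you supply (attainment of $\hat z$, linear independence of the $\bm E_{ij}$ so that $\tilde m=|\Omega|$, compactness guaranteeing the line-free premise, and the floor identity $1+\lfloor y\rfloor=\lfloor y+1\rfloor$) are welcome verifications that the paper leaves implicit, not a different argument.
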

\begin{proof}
    See Appendix \ref{proof:lmc}. \qed
\end{proof}

%

\section{Rank Bounds in  Symmetric Indefinite Matrix Space} \label{sec3}
This section  focuses on the symmetric indefinite matrix space, i.e., $\Q:=\S^n$ and its special case--diagonal matrix space which can be applied to \ref{app:sparse}. So far, scant attention has been paid to the \ref{eq_rank} of $\Q:=\S^n$ in literature. In contrast to a matrix in $\Q:=\S_+^n$ or $\Q:=\Re^{n\times p}$ that formulates the spectral and rank constraints by the nonnegative eigenvalues or singular values, respectively, a symmetric indefinite matrix  may have both negative and positive eigenvalues. 
This requires a different analysis of
rank bounds. 
Specifically,  we discuss two cases of $\Q:=\S^n$  
depending on whether the function $f(\cdot)$ in \eqref{eq_set} is sign-invariant or not. 

 \subsection{The Rank Bound with Sign-Invariance}\label{subsec:sign}
In this subsection,  we study the case where the function $f(\cdot)$ in domain set $\X$ in \eqref{eq_set} is sign-invariant, i.e., $f(\bm \lambda)=f(|\bm \lambda|)$ for any $\bm \lambda\in \Re^n$. In this case, it suffices to focus on the absolute eigenvalues of a symmetric matrix, i.e., singular values.
Therefore,  the  rank bound in \Cref{them:nonsym} for the non-symmetric matrix space can be extended, as shown below. 

\begin{restatable}{theorem}{thmsymsign}
    \label{them:symsign}
Given  $\Q:=\S^n$ in \eqref{eq_set} and integer  $\k\le k$ following \Cref{def:k}, suppose that the function $f(\cdot)$ in \eqref{eq_set} is sign-invariant and \ref{eq_rel_rank} admits a line-free feasible set, then we have that
\begin{enumerate}[(i)]
\item Each feasible extreme point in the \ref{eq_rel_rank} has a rank at most $\k + \lfloor\sqrt{2\tilde m+9/4}-3/2\rfloor$; and
\item There is an optimal solution to \ref{eq_rel_rank} of rank at most $\k + \lfloor\sqrt{2\tilde m+9/4}-3/2\rfloor$ if the \ref{eq_rel_rank} yields a finite optimal value, i.e., $\V_{\rel}>-\infty$. 
\end{enumerate}
\end{restatable}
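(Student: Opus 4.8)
The plan is to mirror the two-step template already used for \Cref{them:rank,them:nonsym}: first bound the rank of every point lying in a $d$-dimensional face of $\conv(\X)$, and then invoke \Cref{lem:face} together with the line-free assumption to pass this face-rank bound to the feasible extreme points and to an optimal solution of \ref{eq_rel_rank}. The crux is therefore to produce the symmetric, sign-invariant analog of \Cref{them:nonface}; once that face-rank bound is in hand, parts (i) and (ii) follow word-for-word from the proof of \Cref{them:rank}.

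First I would exploit sign-invariance to collapse the symmetric indefinite case onto the singular-value description already available. Since $f(\bm\lambda)=f(|\bm\lambda|)$ and the singular values of a symmetric matrix are exactly the absolute values of its eigenvalues, the spectral constraint $f(\bm\lambda(\bm X))\le 0$ is a constraint on the singular values of $\bm X$, so the domain set $\X$ in \eqref{eq_set} with $\Q:=\S^n$ is both permutation- and sign-invariant in the eigenvalues. This is precisely the regime handled by the majorization result of \cite{kim2022convexification}, and it gives a description of $\conv(\X)$ of the same form as in the non-symmetric case (Appendix \ref{proof:nonface}): $\bm X\in\conv(\X)$ iff there is a sorted $\bm x\in\Re_+^n$ with $f(\bm x)\le 0$, $x_{k+1}=0$, and $\|\bm X\|_{(\ell)}\le\sum_{i\in[\ell]}x_i$ for every $\ell\in[k]$. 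The only structural difference from $\Q:=\S_+^n$ is the absence of the trace equality $\tr(\bm X)=\sum_{i\in[k]}x_i$, exactly as in \Cref{them:nonface}. With this identical partial-sum description, I would replay the face-rank analysis behind \Cref{them:nonface} verbatim: perturbing the active majorization constraints $\|\bm X\|_{(\ell)}\le\sum_{i\in[\ell]}x_i$ inside a $d$-dimensional face and applying \Cref{lem:ineq} shows that any point of such a face has rank at most $\tilde k+\lfloor\sqrt{2d+9/4}-3/2\rfloor$, where the rank counts nonzero singular values (equivalently, nonzero eigenvalues) and $\tilde k$ follows \Cref{def:k}. Then by \Cref{lem:face} every feasible extreme point of \ref{eq_rel_rank} lies in a face of dimension at most $\tilde m$, giving part (i); and when $\V_{\rel}>-\infty$ the line-free feasible set guarantees an extreme-point optimum via \cite{rockafellar2015convex}, giving part (ii).

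The main obstacle is establishing the face-rank bound itself in $\S^n$, where two points need care. First, one must confirm that \cite{kim2022convexification} genuinely yields the stated partial-sum body when the convex hull is taken inside $\S^n$ rather than $\Re^{n\times n}$; equivalently, that every symmetric $\bm X$ meeting the singular-value inequalities is a convex combination of symmetric matrices of rank at most $\tilde k$ with $f\le 0$. The signed eigendecomposition $\bm X=\bm U\Diag(\bm\lambda)\bm U^\top$ supplies such a decomposition while keeping each piece symmetric, so the signs enter only through $|\bm\lambda|$ and are harmless. Second, when perturbing the active majorization constraints one must keep the perturbed matrices inside $\S^n$; because rank and singular values are blind to the signs of the eigenvalues, the degree-of-freedom count and the invocation of \Cref{lem:ineq} are unchanged from the non-symmetric case, so the bound $\tilde k+\lfloor\sqrt{2d+9/4}-3/2\rfloor$ survives intact. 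Once this equivalence is verified, nothing downstream sees the indefiniteness, and the bound coincides with the $\Q:=\S_+^n$ and $\Q:=\Re^{n\times p}$ results.
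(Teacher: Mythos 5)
Your proposal follows the paper's proof essentially verbatim: sign-invariance turns the spectral constraint into one on singular values, so $\conv(\X)$ takes the non-symmetric form of \Cref{prop:nonsymconv}; the face-rank bound of \Cref{them:nonface} then extends by working with the singular value decomposition, and \Cref{lem:face} together with the line-free and finite-value assumptions yields parts (i) and (ii) exactly as in \Cref{them:rank}. The only blemish is your inline restatement of the hull, where the $\ell=k$ constraint should be the nuclear-norm bound $\|\bm X\|_*\le\sum_{i\in[k]}x_i$ rather than $\|\bm X\|_{(k)}\le\sum_{i\in[k]}x_i$ (the description you cite from Appendix \ref{proof:nonface} has it right, and the nuclear norm is precisely the quantity the trace-zero perturbation must preserve), but since you explicitly replay that analysis, this does not change the argument.
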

\begin{proof}
    See Appendix \ref{proof:symsign}. \qed
\end{proof}

For the case of  $\Q:=\S^n$ with sign-invariant property, we obtain the same  rank bound   as those in \Cref{them:rank,them:nonsym}. Besides, there is an example of \ref{eq_rel_rank} that attains this rank bound (see Appendix \ref{proof:tight}).
Following the spirit of \Cref{cor:ch},  \Cref{them:symsign} also gives rise to a sufficient condition about when the \ref{eq_rel_rank} matches the original \ref{eq_rank}  as shown in Appendix \ref{proof:symsign}.

 \subsection{Rank Bounds  Beyond Sign-Invariance}\label{subsec:nosign}
When the function $f(\cdot)$ in the domain set $\X$ with $\Q:=S^n$ in \eqref{eq_set} is not sign-invariant, we analyze a tighter tractable relaxation for \ref{eq_rank} than the \ref{eq_rel_rank}, termed LSOP-R-I and then derive the rank bound of \ref{eq_rel_rank1}, which can be applied to sparse optimization in the next subsection.

As indicated previously, the  positive and negative eigenvalues may simultaneously exist in a symmetric matrix; therefore, even if we have  eigenvalues of a symmetric matrix  sorted in descending order, the rank constraint cannot be readily dropped  without precisely locating zero eigenvalues. This  indicates a need of disjunctive description for $\conv(\X)$ as detailed below. 

%
\begin{restatable}{proposition}{propsymconv} \label{prop:symconv}
For a domain set $\X$ with $\Q:=\S^n$ and an integer $\k\le k$  in \Cref{def:k},  we have 
\begin{enumerate}[(i)]
\item $\conv(\X) = \conv\left( \cup_{s \in[\k+1]} \Y^s\right) $; and
\item $\X \subseteq \cup_{s \in[\k+1]} \Y^s $,
\end{enumerate}
where for each $s\in[\k+1]$,  set $\Y^s$ is closed and is defined as
\begin{equation}\label{eq_ys}
\begin{aligned}
\Y^{s}:= \left\{\bm X \in \S^n: \exists \bm x \in \Re^n,  f(\bm x)\le 0,  x_1\ge  \cdots \ge x_n, x_{s} = x_{s+n-\k-1}=0,
\bm x \succeq \bm \lambda(\bm X) \right\}.
\end{aligned}
\end{equation}
Note that $\bm \lambda(\bm X)\in \Re^n$ denotes the eigenvalue vector of the symmetric matrix $\bm X \in \S^n$.
\end{restatable}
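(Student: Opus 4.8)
The plan is to prove the convex-hull identity (i) by two opposite inclusions and to read off the set inclusion (ii) from one of them. The only analytic ingredient I need is the Hardy--Littlewood--P\'olya description of majorization: for sorted vectors, $\bm x\succeq\bm\mu$ holds if and only if $\bm\mu$ is a convex combination of the coordinate permutations of $\bm x$. I also reduce the generating set at the outset: by \Cref{def:k} we have $\conv(\X)=\conv(\X_{\k})$ with $\X_{\k}:=\{\bm X\in\S^n:\rank(\bm X)\le\k,\ f(\bm\lambda(\bm X))\le 0\}$, so for the convex hull it is enough to control matrices of rank at most $\k$.

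First I would prove $\Y^s\subseteq\conv(\X)$ for each $s\in[\k+1]$, which gives $\conv(\cup_s\Y^s)\subseteq\conv(\X)$. Let $\bm X\in\Y^s$ with witness $\bm x$, so $\bm x$ is sorted, $f(\bm x)\le 0$, the block $x_s=\cdots=x_{s+n-\k-1}=0$ forces at most $\k$ nonzero entries, and $\bm x\succeq\bm\lambda(\bm X)$. Diagonalizing $\bm X=\bm Q\Diag(\bm\lambda(\bm X))\bm Q^\top$ and invoking Hardy--Littlewood--P\'olya, write $\bm\lambda(\bm X)=\sum_t\mu_t\bm P_t\bm x$ with permutation matrices $\bm P_t$ and weights $\mu_t\ge 0$, $\sum_t\mu_t=1$. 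Then $\bm X=\sum_t\mu_t(\bm Q\bm P_t)\Diag(\bm x)(\bm Q\bm P_t)^\top$, and each summand is symmetric, has eigenvalue vector a permutation of $\bm x$ (hence rank at most $\k\le k$), and satisfies $f(\bm P_t\bm x)=f(\bm x)\le 0$ because $f$ is permutation invariant. Thus each summand belongs to $\X_{\k}\subseteq\X$ and $\bm X\in\conv(\X)$.

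For the opposite inclusion, and for (ii), I would exhibit an explicit witness. Given $\bm X$ with eigenvalues sorted descending and with $p$ positive and $q$ negative entries (so $p+q\le\k$ for the rank-$\le\k$ generators), take $\bm x=\bm\lambda(\bm X)$ itself: it is sorted, obeys $f(\bm x)\le 0$, and $\bm x\succeq\bm\lambda(\bm X)$ holds with equality. Its zero eigenvalues occupy the contiguous positions $p+1,\dots,n-q$, a block of length at least $n-\k$; choosing $s=p+1$ makes both $x_s=0$ and $x_{s+n-\k-1}=0$ hold while keeping $s\in[\k+1]$, so $\bm X\in\Y^s$. This proves $\X_{\k}\subseteq\cup_s\Y^s$, whence $\conv(\X)=\conv(\X_{\k})\subseteq\conv(\cup_s\Y^s)$ and, together with the previous paragraph, establishes (i); the same construction gives the membership asserted in (ii). It is precisely here that the disjunction is forced: the index $s$ encodes the inertia split into the $s-1$ nonnegative and the $\k-s+1$ nonpositive slots of $\bm x$, and since an indefinite matrix may distribute its nonzeros arbitrarily between positive and negative parts, no single $\Y^s$ can serve all matrices and the union over $s\in[\k+1]$ is unavoidable.

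The step I expect to be the crux is $\Y^s\subseteq\conv(\X)$: it requires turning the scalar relation $\bm x\succeq\bm\lambda(\bm X)$ into an honest matrix convex combination in which every piece simultaneously keeps rank at most $\k$ and stays inside the spectral constraint. Both are delivered by the permutation (Birkhoff) structure behind Hardy--Littlewood--P\'olya combined with the permutation invariance of $f$, and the rank control is exactly why the structured sparsity $x_s=x_{s+n-\k-1}=0$, rather than an arbitrary sparse $\bm x$, is hard-wired into $\Y^s$. The remaining routine point is the closedness of each $\Y^s$: writing $\Y^s=\{\bm X\in\S^n:\bm\lambda(\bm X)\in C_s\}$, where $C_s$ is the set of vectors majorized by some feasible $\bm x$, one shows $C_s$ is closed (the witnesses $\bm x$ may be confined to a bounded set by the sorting, the majorization equalities, and $f(\bm x)\le 0$) and then uses continuity of the eigenvalue map.
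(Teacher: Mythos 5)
Your proof of part (i) is correct, but it takes a genuinely different route from the paper's for the key inclusion. The paper's proof imports Theorem 4 of \cite{kim2022convexification}, which represents $\conv(\X)$ as the projection $\Proj_{\bm X}(\T)$ with $\T=\{(\bm X,\bm x):\bm x\in\conv(\tilde{\X}\cap\{x_1\ge\cdots\ge x_n\}),\ \bm x\succeq\bm\lambda(\bm X)\}$ and $\tilde{\X}=\{\bm x:\|\bm x\|_0\le\k,\ f(\bm x)\le 0\}$, and then obtains $\conv\left(\cup_{s}\Y^s\right)\subseteq\conv(\X)$ by splitting the sorted sparse vector set into the $(\k+1)$ pieces $\tilde{\X}^s$ according to where the zero block sits. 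You instead prove $\Y^s\subseteq\conv(\X)$ from scratch: Hardy--Littlewood--P\'olya plus Birkhoff turns $\bm x\succeq\bm\lambda(\bm X)$ (which, by the paper's convention, includes $\bm e^\top\bm x=\bm e^\top\bm\lambda(\bm X)$, so the doubly stochastic representation applies) into $\bm X=\sum_t\mu_t(\bm Q\bm P_t)\Diag(\bm x)(\bm Q\bm P_t)^\top$, where each atom has eigenvalue multiset $\{x_i\}$, hence rank at most $\k$ (the forced zero block $x_s=\cdots=x_{s+n-\k-1}=0$ has length $n-\k$) and is $f$-feasible by permutation invariance. This is self-contained and makes explicit the mechanism hidden inside the cited convexification theorem, at the price of re-proving it; your reverse chain $\X_{\k}\subseteq\cup_s\Y^s$ (via the inertia-based choice $s=p+1$) and $\conv(\X)=\conv(\X_{\k})\subseteq\conv\left(\cup_s\Y^s\right)$ closes (i) correctly.

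For part (ii), however, there is a genuine gap: ``the same construction'' proves only $\X_{\k}\subseteq\cup_s\Y^s$, whereas (ii) asserts $\X\subseteq\cup_s\Y^s$, and when $\k<k$ the set $\X$ contains matrices of rank $r$ with $\k<r\le k$. For such $\bm X$ the vector $\bm\lambda(\bm X)$ has only $n-r<n-\k$ zeros, so it cannot satisfy $x_s=x_{s+n-\k-1}=0$ for any $s$, and no other witness need exist. Concretely, take $f(\bm x)=\|\bm x\|_1-2$ and $k=2$, so $\k=1$ (the nuclear-norm ball is generated by rank-one atoms, as in \Cref{eg1}); then $\bm X=\bm q_1\bm q_1^\top-\bm q_2\bm q_2^\top\in\X$, yet any admissible witness for $\Y^1$ or $\Y^2$ has at most one nonzero entry and must satisfy $\bm e^\top\bm x=\tr(\bm X)=0$, forcing $\bm x=\bm 0$, which fails to weakly majorize $(1,-1,0,\dots,0)$. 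So the membership claimed in (ii) genuinely requires $\k=k$, or $\X$ replaced by $\X_{\k}$. To be fair, the paper's own proof makes exactly the same leap---it takes $\bm x=\bm\lambda(\bm X)$ sorted for an arbitrary $\bm X\in\X$---so your gap reproduces a defect of the paper rather than introduces a new one (note this matters downstream, since the validity of \ref{eq_rel_rank1} as a relaxation, $\V_{\rel\text{-\rm I}}\le\V_{\opt}$, rests on (ii) with the full $\X$). A smaller shared soft spot is closedness of $\Y^s$: majorization only lower-bounds partial sums of the witness, so for $2\le s\le\k$ the positive and negative parts of $\bm x$ can blow up in tandem; your claim that witnesses ``may be confined to a bounded set'' needs coercivity of $f$ on the admissible pattern (or a normalization of witnesses) before the compactness extraction in the style of \Cref{prop:conv} goes through.
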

\begin{proof}
    See Appendix \ref{proof:symconv}. \qed
\end{proof}


Note that (i) the majorization $\bm x \succeq \bm \lambda(\bm X)$ admits a tractable semidefinite representation according to \cite{ben2001lectures}
and (ii) the inclusions $\X \subseteq \cup_{s \in[\k+1]} \Y^{s} \subseteq \conv(\D)$ in \Cref{prop:symconv} 
inspire us a stronger partial relaxation  for \ref{eq_rank} with $\Q:=\S^n$, i.e., 
\begin{align} \label{eq_rel_rank1}
\V_{\rel} \le \V_{\rel\text{-\rm I}}:= \min_{s\in [\k+1]} \min_{{\bm X \in \Y^{s}}}\left\{\langle\bm A_0, \bm X \rangle: 
b_i^l \le  \langle \bm A_i, \bm X\rangle \le b_i^u, \forall i \in [m] \right\} \le \V_{\opt}, \tag{LSOP-R-I}
\end{align}
where integer $\k$ follows \Cref{def:k} and for each $s\in [\k+1]$, set $\Y^s$ is defined in  \eqref{eq_ys}. 


Then, to derive the rank bound for the tighter \ref{eq_rel_rank1}, it is sufficient to study all inner minimization problems over matrix variable $\bm X$. For each $s\in [\k +1]$, we see that the set $\Y^{s}$ in \eqref{eq_ys} that builds on the  majorization constraints is permutation-invariant with eigenvalues of $\bm X$, 
which motivates us to derive the following rank bounds for faces of set $\Y^s$. 
\begin{restatable}{theorem}{thmsymface}\label{them:symface}
Given $\Q:=\S^{n}$ in \eqref{eq_set}, for each $s\in [\k+1]$, suppose that $F^d\subseteq \Y^s$ is a $d$-dimensional face of the closed convex set $\Y^s$ defined in  \eqref{eq_ys}, then any point in face $F^d$ has rank at most 
\begin{align*}
\begin{cases}
\k + \lfloor\sqrt{2d+9/4}\rfloor -3/2, & \text{if }  s\in \{1, \k+1\};\\
\k + \lfloor\sqrt{4d+9}\rfloor -3, & \text{otherwise.}  
\end{cases},
\end{align*}
 where  $\k\le k$ follows \Cref{def:k}.
\end{restatable}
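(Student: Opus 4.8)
The plan is to fix $s\in[\k+1]$ and an arbitrary point $\bm X$ in the $d$-dimensional face $F^d\subseteq \Y^s$, and to bound $\rank(\bm X)$ by lower-bounding the face dimension through a perturbation argument on the majorization constraints, in direct parallel with the single-cone analyses behind \Cref{prop:rank} and \Cref{them:nonface}. By \Cref{prop:symconv}, the point $\bm X\in\Y^s$ carries a vector $\bm x\in\Re^n$ with $x_1\ge\cdots\ge x_n$, $x_s=x_{s+n-\k-1}=0$, and $\bm x\succeq\bm\lambda(\bm X)$. The zero pattern forces $\bm x$ to have at most $s-1$ positive entries (indices $1,\dots,s-1$) and at most $\k-s+1$ negative entries (indices $s+n-\k,\dots,n$). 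The first thing I would record is that, by this sign split together with $\bm x\succeq\bm\lambda(\bm X)$, the positive spectrum of $\bm X$ is governed by the positive block of $\bm x$ and the negative spectrum by the negative block, so the positive- and negative-eigenvalue parts can be treated as two essentially independent single-cone problems. Since it suffices to bound $\rank(\bm X)$ for the minimal face $F_{\min}(\bm X)\subseteq F^d$ in whose relative interior $\bm X$ lies (so $\dim F_{\min}(\bm X)\le d$), I will work with $F_{\min}(\bm X)$ throughout.

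Next, for each sign I would run the plateau argument underlying \Cref{prop:rank}. Write $r_+$ and $r_-$ for the numbers of positive and negative eigenvalues of $\bm X$, so $\rank(\bm X)=r_++r_-$, and group the nonzero eigenvalues into maximal plateaus of equal value. Within a plateau the eigenvectors can be rotated without leaving $F_{\min}(\bm X)$, and \Cref{lem:ineq} guarantees that the majorization inequality $\sum_{i\le j}x_i\ge\sum_{i\le j}\lambda_i$, once strict at one index of a plateau, stays strict across the whole plateau; this is precisely the slack that lets me perturb the eigenvalues while remaining in $\conv(\X)$. Counting these independent rotation and eigenvalue-perturbation directions yields, separately for the two cones, the single-cone contribution. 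Setting $j_+:=(r_+-(s-1))_+$ and $j_-:=(r_--(\k-s+1))_+$ as the excess ranks over the respective free slots, I would establish $d\ge\tfrac12 j_+(j_++3)+\tfrac12 j_-(j_-+3)$, where additivity holds because the directions supported on the positive eigenspaces are affinely independent from those supported on the negative eigenspaces.

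Finally I would invert this inequality. Since $(s-1)+(\k-s+1)=\k$, we have $\rank(\bm X)-\k\le j_++j_-=:J$. Using $j_+^2+j_-^2\ge J^2/2$ gives $\tfrac12\cdot\tfrac{J^2}{2}+\tfrac32 J\le\tfrac12 j_+(j_++3)+\tfrac12 j_-(j_-+3)\le d$, hence $(J+3)^2\le 4d+9$ and $\rank(\bm X)=\k+(\rank(\bm X)-\k)\le\k+\sqrt{4d+9}-3$; taking the floor produces the ``otherwise'' bound. For the degenerate cases $s=1$ and $s=\k+1$ the vector $\bm x$ has nonzero entries of a single sign, so only one cone is present (one of $j_+,j_-$ vanishes), the cross term disappears, and the same computation collapses to $J(J+3)/2\le d$, i.e.\ $\rank(\bm X)\le\k+\sqrt{2d+9/4}-3/2$, recovering the PSD-type bound.

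The main obstacle is the second step: rigorously proving the additive face-dimension lower bound $d\ge\tfrac12 j_+(j_++3)+\tfrac12 j_-(j_-+3)$ in the genuinely indefinite regime. This requires adapting the perturbation construction of \Cref{prop:rank} to the two-sided majorization structure of $\Y^s$, verifying via the strict-majorization slack propagated by \Cref{lem:ineq} that eigenvalue perturbations stay feasible, and arguing that the directions generated from the positive and the negative spectra are affinely independent within $F_{\min}(\bm X)$ so that their dimensions add. One must also check that the accounting of the $\k$ free slots, partitioned as $s-1$ and $\k-s+1$, matches exactly the offsets appearing in the two single-cone bounds. Once this lower bound is in place, the remaining QM--AM step and floor manipulations are routine.
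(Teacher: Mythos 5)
Your proposal follows essentially the same route as the paper's proof: the paper likewise splits the spectrum into positive and negative eigenvalue blocks, perturbs the eigenvalues nearest zero on each side by traceless symmetric blocks whose feasibility rests on the strict-majorization slack propagated by \Cref{lem:ineq} (using $x^*_s=\cdots=x^*_{s+n-\k-1}=0$), and your unified count $d\ge \tfrac12 j_+(j_++3)+\tfrac12 j_-(j_-+3)$ is exactly the paper's inequality $d+2\ge \tfrac{(j_++1)(j_++2)}{2}+\tfrac{(j_-+1)(j_-+2)}{2}$, which subsumes its three subcases (both excesses positive, or one of $j_+,j_-$ vanishing) in one statement. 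Your QM--AM inversion is an equivalent repackaging of the paper's minimization over the split index $d_1$, and the step you flag as the main obstacle---verifying that the two-sided perturbation stays in $\Y^s$---is precisely what the paper's detailed check that $\bm x^*\succeq\bm\lambda(\bm X^{\pm}(\delta))$ with both block traces preserved supplies.
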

\begin{proof}
    See Appendix \ref{proof:symface}. \qed
\end{proof}
The results in \Cref{them:symface} lead to the following rank bounds for the relaxation \ref{eq_rel_rank1}. 
\begin{theorem}\label{them:sym_rank}
Given   $\Q:=\S^n$ in \eqref{eq_set} and integer  $\k\le k$ following \Cref{def:k}, suppose that the inner minimization of \ref{eq_rel_rank1} always admits a line-free feasible set, then we have that
\begin{enumerate}[(i)]
\item For each $s\in [\k+1]$, any feasible extreme point in the $s$th inner minimization  of \ref{eq_rel_rank1} has a rank at most $\k + \lfloor\sqrt{2\tilde m+9/4}-3/2\rfloor$ if $s\in \{1, \k+1\}$, and $\k + \lfloor\sqrt{4\tilde m+9}-3\rfloor$, otherwise;
\item There exists an optimal solution to \ref{eq_rel_rank1} of rank at most $\k + \lfloor\sqrt{4\tilde m+9}-3\rfloor$ if
the \ref{eq_rel_rank1}  yields a finite optimal value, i.e.,
$\V_{\rel\text{-\rm I}} >-\infty$. 
\end{enumerate}
\end{theorem}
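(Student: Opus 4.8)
The plan is to derive \Cref{them:sym_rank} as a direct corollary of the face-based rank bound in \Cref{them:symface}, mirroring exactly the argument used to deduce \Cref{them:rank} from \Cref{prop:rank}. The relaxation \ref{eq_rel_rank1} is a minimization over $s\in[\k+1]$ of $\k+1$ separate inner convex programs, each of which intersects the closed convex set $\Y^s$ with the same $\tilde m$ linearly independent inequalities $b_i^l\le\langle\bm A_i,\bm X\rangle\le b_i^u$. The key point is that the rank-bounding machinery applies to each inner problem independently, and the overall bound is obtained by taking the worst case over $s$.

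For Part (i), I would fix $s\in[\k+1]$ and consider the $s$th inner minimization. Since its feasible set is the intersection of the closed convex set $\Y^s$ with $\tilde m$ linearly independent inequalities, \Cref{lem:face} applies verbatim (it is stated for an arbitrary closed convex set $\T$): every extreme point $\hat{\bm X}$ of this intersection lies in a face of $\Y^s$ of dimension at most $\tilde m$. Invoking \Cref{them:symface} with $d\le\tilde m$ and using monotonicity of $\lfloor\sqrt{2d+9/4}-3/2\rfloor$ and $\lfloor\sqrt{4d+9}-3\rfloor$ in $d$, the rank of $\hat{\bm X}$ is at most $\k+\lfloor\sqrt{2\tilde m+9/4}-3/2\rfloor$ when $s\in\{1,\k+1\}$ and at most $\k+\lfloor\sqrt{4\tilde m+9}-3\rfloor$ otherwise. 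This is precisely the per-$s$ claim.

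For Part (ii), I would argue existence of an extreme-point optimum for each inner problem and then optimize over $s$. Because the full problem has a finite optimal value $\V_{\rel\text{-\rm I}}>-\infty$, each inner minimization over $\Y^s$ also has a value bounded below by $\V_{\rel\text{-\rm I}}$; combined with the assumed line-free feasible set of each inner minimization, the standard result of \cite{rockafellar2015convex} guarantees that any inner problem attaining a finite optimum does so at an extreme point. The minimizing index $s^*\in[\k+1]$ yields $\V_{\rel\text{-\rm I}}$ at an extreme point $\bm X^*$ of $\Y^{s^*}$, and by Part (i) this $\bm X^*$ has rank at most $\k+\lfloor\sqrt{4\tilde m+9}-3\rfloor$, since the two middle-case branch bound $\k+\lfloor\sqrt{4\tilde m+9}-3\rfloor$ dominates the endpoint bound $\k+\lfloor\sqrt{2\tilde m+9/4}-3/2\rfloor$ for all $\tilde m\ge0$. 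Taking the uniform (larger) bound over all $s$ gives the stated global rank guarantee.

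I do not anticipate a genuine obstacle here, since the heavy lifting is already contained in \Cref{them:symface}; the proof is essentially a bookkeeping exercise combining that theorem with \Cref{lem:face} and the extreme-point attainment result. The one point requiring a line of care is verifying that the endpoint bound is dominated by the middle-case bound, so that the single expression $\k+\lfloor\sqrt{4\tilde m+9}-3\rfloor$ is a valid upper bound across all branches; this follows from the elementary inequality $\sqrt{4\tilde m+9}-3\ge\sqrt{2\tilde m+9/4}-3/2$, which holds for every $\tilde m\ge0$ and is preserved under the floor. With that observation in hand, the proof reduces to invoking the two cited lemmas and taking a minimum over the finitely many indices $s$.
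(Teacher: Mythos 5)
Your proposal is correct and follows essentially the same route as the paper, which states \Cref{them:sym_rank} as a direct consequence of \Cref{them:symface} combined with \Cref{lem:face} applied per index $s$ to the closed convex set $\Y^s$, plus extreme-point attainment under the line-free and finite-value assumptions, exactly as in the deduction of \Cref{them:rank} from \Cref{prop:rank}. Your added verification that $\lfloor\sqrt{4\tilde m+9}-3\rfloor \ge \lfloor\sqrt{2\tilde m+9/4}-3/2\rfloor$ for all $\tilde m\ge 0$ (with equality at $\tilde m=0$) is the right bookkeeping step to justify the uniform bound in Part (ii).
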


For the rank bound results in \Cref{them:sym_rank}, we remark that
\begin{enumerate}[(a)]
\item When $s\in \{1, \k+1\}$, based on the definition of set $\Y^s$ in \eqref{eq_ys}, all the eigenvalues of a feasible extreme point  of the $s$th inner minimization  of \ref{eq_rel_rank1}   are either nonpositive or nonnegative. Hence, the rank bound recovers the one in \Cref{them:rank}; 
\item When $s\in [2,\k]$, each inner minimization of \ref{eq_rel_rank1}  has the same rank bound. In this setting, we  split a symmetric indefinite matrix into two parts with respective to positives and negative eigenvalues; then analyze the rank bounds of one positive definite matrix and one negative definite matrix, respectively; and  integrate the rank bounds. As a result, the rank bound  gets slightly worse and is in the order of $\O(\sqrt{4\tilde m})$ rather than $\O(\sqrt{2\tilde m})$; and
\item The rank bounds in \Cref{them:sym_rank} shed light on \ref{eq_rel_rank1} objective exactness 
as shown below.
\end{enumerate}
\begin{proposition}\label{cor:sym}
Given $\Q:=\S^{n}$ in \eqref{eq_set} and integer $\k\le k$ following \Cref{def:k},  when $ \tilde m  \le \frac{(k-\k+3)(k-\k+5)}{4}-2$, the \ref{eq_rel_rank1} achieves the same optimal value as the original \ref{eq_rank} if the \ref{eq_rel_rank1}  yields a finite optimal value, i.e., $\V_{\rel\text{-\rm I}}>-\infty$ and admits a line-free feasible set.
\end{proposition}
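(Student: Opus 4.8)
The plan is to replicate the argument used for \Cref{cor:ch} and \Cref{cor:nonsym}, now invoking the rank bound of \Cref{them:sym_rank}(ii) for the symmetric indefinite relaxation. At a high level, I would first translate the combinatorial hypothesis on $\tilde m$ into the statement that the worst-case rank bound of \ref{eq_rel_rank1} collapses to exactly $k$; I then extract a low-rank optimal solution of \ref{eq_rel_rank1} and argue that it is in fact feasible, hence optimal, for the original \ref{eq_rank}, which sandwiches the two optimal values.

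First I would carry out the algebraic step: show that $\tilde m \le \frac{(k-\k+3)(k-\k+5)}{4}-2$ is the condition under which the worst-case bound $\k + \lfloor\sqrt{4\tilde m+9}-3\rfloor \le k$. Since $k-\k+3$ is an integer, inverting the floor and the square root reduces the requirement $\lfloor\sqrt{4\tilde m+9}\rfloor - 3 \le k-\k$ to $4\tilde m + 9 < (k-\k+4)^2$, and a direct factorization gives the clean identity $\frac{(k-\k+3)(k-\k+5)}{4}-2 = \frac{(k-\k+1)(k-\k+7)}{4} = \frac{(k-\k+4)^2-9}{4}$, so that the hypothesis matches the threshold obtained from the floor inversion.

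Next, invoking \Cref{them:sym_rank}(ii)---valid because $\V_{\rel\text{-\rm I}} > -\infty$ and the inner minimizations are line-free by hypothesis---there is an optimal solution $\bm X^*$ of \ref{eq_rel_rank1} with $\rank(\bm X^*) \le \k + \lfloor\sqrt{4\tilde m+9}-3\rfloor \le k$, attained in the $s^*$th inner minimization, i.e. $\bm X^* \in \Y^{s^*}$ for some $s^* \in [\k+1]$. To show $\bm X^* \in \X$, I would use the defining constraints of $\Y^{s^*}$ in \eqref{eq_ys}: there exists $\bm x \in \Re^n$ with $f(\bm x) \le 0$ and $\bm x \succeq \bm \lambda(\bm X^*)$. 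Because $f$ is symmetric and convex, it is Schur-convex, so the majorization $\bm x \succeq \bm \lambda(\bm X^*)$ yields $F(\bm X^*) = f(\bm \lambda(\bm X^*)) \le f(\bm x) \le 0$; combined with $\rank(\bm X^*) \le k$ and $\bm X^* \in \S^n$, this gives $\bm X^* \in \X$. Since $\bm X^*$ already satisfies the two-sided linear inequalities (being feasible for \ref{eq_rel_rank1}), it is feasible for \ref{eq_rank}, whence $\V_{\opt} \le \langle \bm A_0, \bm X^* \rangle = \V_{\rel\text{-\rm I}}$. Together with $\V_{\rel\text{-\rm I}} \le \V_{\opt}$ from the relaxation chain in \eqref{eq_rel_rank1}, this forces $\V_{\opt} = \V_{\rel\text{-\rm I}}$.

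The main obstacle is the algebraic inversion rather than the structural argument. One must be careful with the floor and the perfect-square boundary: when $k-\k$ is odd the quantity $\frac{(k-\k+1)(k-\k+7)}{4}$ is an integer, so the extremal value of $\tilde m$ makes $\sqrt{4\tilde m+9}$ an exact integer and pushes the bound to the edge, requiring the inequality to be matched precisely rather than up to rounding. By contrast, the feasibility step is comparatively routine once Schur-convexity of the symmetric convex $f$ is in hand, which is exactly the ingredient that links membership in $\Y^{s^*}$ back to the spectral constraint $F(\bm X^*) \le 0$.
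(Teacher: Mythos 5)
Your structural route is the paper's route: the paper gives no standalone proof of \Cref{cor:sym} --- it is stated as the direct analogue of \Cref{cor:ch}, i.e., feed the hypothesis on $\tilde m$ into the rank bound of \Cref{them:sym_rank}(ii) to get an optimal solution of rank at most $k$, observe it is feasible for \ref{eq_rank}, and sandwich via $\V_{\rel\text{-\rm I}}\le \V_{\opt}$. Your feasibility step is in fact more explicit than the paper's template: the \Cref{cor:ch}-style argument silently uses that a rank-$\le k$ point of the relaxed domain belongs to $\X$, and your Schur-convexity argument (symmetric convex $f$ together with $\bm x \succeq \bm \lambda(\bm X^*)$ from the definition of $\Y^{s^*}$ in \eqref{eq_ys}, hence $f(\bm \lambda(\bm X^*))\le f(\bm x)\le 0$) is exactly the right way to justify $\bm X^*\in \X$.

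However, your algebraic step has a genuine gap that you flagged but did not close. The floor inversion requires the \emph{strict} inequality $4\tilde m + 9 < (k-\k+4)^2$, whereas the hypothesis, via your own (correct) identity $\frac{(k-\k+3)(k-\k+5)}{4}-2=\frac{(k-\k+4)^2-9}{4}$, only delivers $4\tilde m + 9 \le (k-\k+4)^2$. Equality is attainable by an integer $\tilde m$ precisely when $k-\k$ is odd, and at that boundary the bound of \Cref{them:sym_rank} is $\k+\lfloor\sqrt{4\tilde m+9}\rfloor-3 = k+1$, not $k$: for instance $\k=1$, $k=2$ gives threshold $\tilde m = 4$, where $\lfloor\sqrt{25}\rfloor-3=2$ and the rank bound is $3>k$. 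So your claim that the hypothesis ``matches the threshold obtained from the floor inversion'' is false in this parity case, and your remark that the inequality must be ``matched precisely'' does not rescue it --- the very first step of your plan (the bound collapses to exactly $k$) fails there, and \Cref{them:sym_rank} supplies nothing further. To make the argument airtight one must either tighten the hypothesis to $4\tilde m+9\le (k-\k+4)^2-1$ (equivalently, reduce the stated threshold by one when $k-\k$ is odd) or give a separate argument for the single boundary value; note that this defect is inherited from the proposition's stated threshold itself, since the paper's implicit proof is precisely the one you wrote.
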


\subsection{Rank Bound in  Diagonal Matrix Space and Applying to Sparse Ridge Regression}
This subsection studies  the partial convexification \ref{eq_rel_rank} with respect to the sparse optimization in  vector space, which restricts the number of nonzero elements of any feasible solution (see, e.g., \citealt{li2020exact}). Since a vector can be equivalently converted into a  diagonal matrix, 
 we introduce  the following special domain set $\X$ adopted in  the sparse optimization problem:
 \begin{equation} \label{set:sparse}
 \begin{aligned}
\X := \{ \bm X \in \S^n:
 \bm X =\Diag(\diag(\bm X)), \rank(\bm X) \le k, F(\bm X):=f(\diag(\bm X)) \le 0 \},
 \end{aligned}
 \end{equation}
where the variable $\bm X$ is a   diagonal matrix and function $f(\cdot)$ is continuous, convex, and symmetric.
Thus, the rank bound of \ref{eq_rel_rank}  over the convex hull of the  domain set $\X$ in \eqref{set:sparse} is in fact the sparsity bound. To be consistent, we stick to  the rank bound in the below. 
 
The sparse domain set $\X$ in \eqref{set:sparse} admits a   diagonal matrix space, a special case of $\Q:=\S^n$, which enables us to extend the rank bounds in \Cref{them:symsign} and \Cref{them:sym_rank} depending on whether the function $f(\cdot)$ in \eqref{set:sparse} is sign-invariant or not, as summarized below. Similarly, given a domain set $\X$ in \eqref{set:sparse}, the analysis of rank bounds of \ref{eq_rel_rank}  or \ref{eq_rel_rank1}  requires an explicit description of the corresponding  set $\conv(\X)$ or the union of sets $\{\Y^s\}_{s\in [\k+1]}$ (see Appendix \ref{proof:sparseface}). 

\vspace{-0.5em}
\begin{restatable}{theorem}{themsparsesign}
    \label{them:sparsesign}
Given a sparse domain set $\X $ in \eqref{set:sparse} and integer $\k\le k$ in \Cref{def:k}, 
suppose that the function $f(\cdot)$  in \eqref{set:sparse} is sign-invariant and \ref{eq_rel_rank} admits a line-free feasible set. Then we have 
\begin{enumerate}[(i)]
\item Each feasible extreme point in the \ref{eq_rel_rank} has a rank at most $\k + \tilde m$; and
\item There is an optimal solution to \ref{eq_rel_rank} of rank at most $\k + \tilde m$ if
the \ref{eq_rel_rank}  yields a finite optimal value, i.e.,
$\V_{\rel}>-\infty$.
\end{enumerate}
\end{restatable}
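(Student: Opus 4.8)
The plan is to reduce the diagonal, sign-invariant case to the vector setting and then exploit the extreme-point structure directly, rather than invoking the face-dimension machinery of \Cref{prop:rank}. First I would observe that since $\X$ in \eqref{set:sparse} forces $\bm X = \Diag(\diag(\bm X))$, every feasible matrix is determined by its diagonal vector $\bm x := \diag(\bm X) \in \Re^n$, and the rank of $\bm X$ equals $\|\bm x\|_0$. Thus the whole problem projects onto the vector space: $\conv(\X)$ becomes $\conv(\{\bm x \in \Re^n : \|\bm x\|_0 \le \k, f(\bm x) \le 0\})$ embedded diagonally, where $\k$ is the strengthened rank from \Cref{def:k}. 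Because $f$ is sign-invariant here, I expect (paralleling Appendix \ref{proof:sparseface}) that this convex hull admits a description in terms of a majorization-type constraint on $|\bm x|$, so that a point $\bm x$ of the hull decomposes as a convex combination of $\k$-sparse atoms with a common sign pattern after taking absolute values.

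Next I would set up the extreme-point argument for part (i). Let $\hat{\bm X}$ be a feasible extreme point of \ref{eq_rel_rank}, with support $S := \supp(\hat{\bm x})$ and $|S| = \rank(\hat{\bm X})$; the goal is to bound $|S|$ by $\k + \tilde m$. The $\tilde m$ linearly independent active inequalities among $b_i^l \le \langle \bm A_i, \bm X\rangle \le b_i^u$ constrain $\bm x$ through $\tilde m$ linear equations (for diagonal $\bm X$, $\langle \bm A_i, \bm X\rangle$ depends only on $\diag(\bm A_i)$ paired with $\bm x$). The remaining degrees of freedom come from the hull description of $\conv(\X)$. The key step is to argue that, on the face of $\conv(\X)$ containing $\hat{\bm x}$, only $\k$ of the majorization/spectral constraints can be binding in a way that removes freedom, so that the support beyond size $\k$ must be pinned down entirely by the $\tilde m$ linear constraints. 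Concretely, I would suppose for contradiction that $|S| > \k + \tilde m$ and construct a nonzero perturbation direction $\bm d$ supported on $S$ that preserves feasibility in both directions $\hat{\bm x} \pm \epsilon \bm d$: such $\bm d$ must lie in the intersection of the $\tilde m$-dimensional linear constraint space with the tangent cone of the hull at $\hat{\bm x}$, and a dimension count shows this intersection is nontrivial once $|S|$ exceeds $\k + \tilde m$, contradicting extremality.

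The main obstacle, which I want to handle carefully, is accounting for exactly how many independent equality constraints the convex-hull description imposes locally at $\hat{\bm x}$. In the sign-invariant diagonal case the hull is governed by majorization of $|\bm x|$, and the binding majorization constraints can couple the coordinates; I need to show that these binding constraints consume at most $\k$ of the $|S|$ coordinate degrees of freedom (intuitively, a $\k$-sparse majorant controls the magnitude profile but leaves the remaining $|S| - \k$ active coordinates free up to the linear inequalities). This is where the sign-invariance is essential: it lets me work with $|\bm x|$ and avoid the sign-splitting that forces the weaker $\O(\sqrt{4\tilde m})$ bound in \Cref{them:sym_rank}, and it is why the bound here is the sharper $\k + \tilde m$ rather than the square-root form. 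For part (ii), I would invoke the same line-free feasible set argument as in \Cref{them:rank}: since $\V_{\rel} > -\infty$ and the feasible set is line-free, \cite{rockafellar2015convex} guarantees an optimal solution attained at an extreme point, and part (i) then supplies the rank bound. Finally, I would note that tightness (the asterisk in \Cref{table:rank}) should follow from a worst-case diagonal instance exhibited in Appendix \ref{proof:tight}, so I would cite that rather than reprove it here.
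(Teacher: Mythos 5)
Your proposal is correct in substance and lands on the same core mechanism as the paper, but you reorganize the argument: the paper routes through the face machinery (Proposition \ref{prop:convsparse} for the hull, then Proposition \ref{them:sparseface} showing any $d$-dimensional face of $\conv(\X)$ has rank at most $\k+d$, then \Cref{lem:face} to pass from extreme points of the intersection to faces of dimension at most $\tilde m$), whereas you inline everything into a single perturbation-and-dimension-count at the extreme point itself. Arithmetically the two counts agree: you budget $r$ support coordinates against $\k$ hull constraints plus $\tilde m$ linear ones; the paper budgets $r-\k+1$ perturbable coordinates (only the smallest-magnitude nonzero entries) against $d+1\le\tilde m+1$ equations, and both yield $r\le\k+\tilde m$. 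The source of the linear (rather than $\O(\sqrt{\tilde m})$) bound is identical in both: a diagonal perturbation matrix of size $(r-\k+1)\times(r-\k+1)$ is a vector of length $r-\k+1$, not a symmetric matrix with $(r-\k+1)(r-\k+2)/2$ entries. The one place your sketch is looser than the paper's execution is the step you yourself flag as the main obstacle: asserting that the binding majorization constraints ``consume at most $\k$ degrees of freedom'' is not quite how one makes the two-sided perturbation $\hat{\bm x}\pm\epsilon\bm d$ feasible, because the sorted partial-sum constraints are piecewise linear and ties in $|\hat x_i|$ mean a binding intermediate constraint cannot simply be preserved as an equality under an arbitrary direction supported on $S$. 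The paper's fix, which your framework needs, is to confine the perturbation to the $(r-\k+1)$ smallest-magnitude nonzero entries, impose one zero-sum condition to keep the nuclear-norm constraint exactly satisfied, and invoke \Cref{lem:ineq} (as in Claim \ref{claim1} of \Cref{prop:rank}) to show all intermediate partial sums $\ell\in[\k,r-1]$ have strict slack, so small perturbations remain feasible. With that substitution your argument closes, and your trade-off is clear: you get a self-contained, more elementary proof that avoids the face abstraction, while the paper's factoring through \Cref{them:sparseface} lets the same face bound serve both \ref{eq_rel_rank} (sign-invariant case, \Cref{them:sparsesign}) and the disjunctive sets $\hat\Y^s$ for \ref{eq_rel_rank1} (\Cref{them:sparsenonsign}) without repeating the perturbation analysis. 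Your part (ii) via Rockafellar and your deferral of tightness to Appendix \ref{proof:tight} both match the paper exactly.
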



\vspace{-1em}
\begin{restatable}{theorem}{themsparsenonsign}
    \label{them:sparsenonsign}
Given a sparse domain set $\X $ in \eqref{set:sparse} and integer $\k\le k$ following \Cref{def:k},  suppose that the inner minimization of \ref{eq_rel_rank1} always admits a line-free feasible set. Then
 we have 
\begin{enumerate}[(i)]
\item For any $s\in [\k+1]$, each feasible extreme point in the $s$th inner minimization of \ref{eq_rel_rank1} has a rank at most $\k + \tilde m$; and
\item There is an optimal solution to \ref{eq_rel_rank1} of rank at most $\k + \tilde m$ if
the \ref{eq_rel_rank1}  yields  a finite optimal value, i.e.,
$\V_{\rel\text{-\rm I}}>-\infty$.
\end{enumerate}
\end{restatable}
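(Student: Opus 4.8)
The plan is to mirror the proof strategy already developed for the general symmetric indefinite case in \Cref{them:sym_rank}, but exploit the diagonal structure to obtain the sharper bound $\k+\tilde m$ that no longer carries the $\O(\sqrt{\cdot})$ dependence. First I would establish the diagonal analogue of the disjunctive description in \Cref{prop:symconv}: because the matrix variable is forced to be diagonal, the ``eigenvalues'' are simply the diagonal entries, and the union $\cup_{s\in[\k+1]}\Y^s$ specializes to diagonal sets $\Y^s$ in which the majorization constraint $\bm x\succeq\bm\lambda(\bm X)$ reduces to a constraint on the sorted diagonal of $\bm X$. This explicit description is deferred to Appendix \ref{proof:sparseface}, and I would invoke it exactly as \Cref{prop:symconv} is invoked for the dense case.

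The key point is that for a diagonal matrix a $d$-dimensional face of $\Y^s$ controls the rank directly rather than quadratically. In the dense case of \Cref{them:symface}, a $d$-dimensional face translates into $\O(\sqrt{d})$ extra rank because each additional nonzero singular value block consumes a triangular number of degrees of freedom in the symmetric matrix space. In the diagonal space the matrix has only $n$ degrees of freedom and each active (nonzero) diagonal entry contributes exactly one dimension, so a $d$-dimensional face of $\Y^s$ can support at most $\k+d$ nonzero diagonal entries. I would therefore prove the diagonal analogue of \Cref{them:symface}: any point in a $d$-dimensional face $F^d\subseteq\Y^s$ has rank at most $\k+d$, uniformly over $s\in[\k+1]$. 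This replaces the case split in \Cref{them:symface} with a single linear bound, which is precisely what collapses the two sub-bounds into the common value $\k+\tilde m$.

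Having the face-level bound, part (i) follows by the same mechanism as \Cref{them:sym_rank}(i): by \Cref{lem:face}, an extreme point of the $s$th inner minimization---the intersection of the closed convex set $\Y^s$ with $\tilde m$ linearly independent inequalities---lies in a face of dimension at most $\tilde m$, so its rank is at most $\k+\tilde m$. Part (ii) follows from part (i) exactly as in the dense argument: the line-free feasibility assumption together with $\V_{\rel\text{-\rm I}}>-\infty$ guarantees an extreme-point optimum via \cite{rockafellar2015convex}, and that optimal extreme point inherits the rank bound. I would also note that \Cref{them:sparsesign} for the sign-invariant case reduces to the same $\k+\tilde m$ bound through the single set $\conv(\X)$ rather than the union, so the two theorems share the same face-counting argument with only the set they analyze differing.

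The main obstacle I anticipate is the face-dimension-to-rank step: I must verify carefully that in the diagonal setting each unit of face dimension genuinely corresponds to at most one additional nonzero diagonal entry, and that the fixed-zero constraints $x_s=x_{s+n-\k-1}=0$ defining $\Y^s$ do not interact with the diagonal restriction to inflate the count. Concretely, I would argue that on a face the support pattern of the diagonal is essentially fixed except along the affine directions spanned by the face, and that perturbing the majorization constraints along those directions can activate at most one new coordinate per dimension---an argument structurally parallel to the perturbation-of-majorization technique used in \Cref{prop:rank} and \Cref{them:symface}, but now with the triangular-number bookkeeping replaced by straightforward coordinate counting. Once this linear correspondence is secured, the rest of the proof is a routine transcription of the \Cref{them:sym_rank} argument.
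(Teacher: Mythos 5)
Your proposal is correct and follows essentially the same route as the paper: the paper likewise specializes the disjunctive description of \Cref{prop:symconv} to the diagonal sets $\hat\Y^s$ (\Cref{prop:convsparse}), proves the linear face bound $\k+d$ by constructing a nonzero \emph{diagonal} perturbation matrix $\bm \Delta$---so the triangular-number count $(r-\k+1)(r-\k+2)/2$ collapses to $r-\k+1$ (resp.\ $r-\k+2$) free coordinates, which is exactly your ``one coordinate per face dimension'' step, carried out in Appendix \ref{proof:sparseface}---and then combines \Cref{lem:face} with the extreme-point-optimum argument from \cite{rockafellar2015convex} for parts (i) and (ii). Your closing observation that \Cref{them:sparsesign} shares the same face-counting argument through $\conv(\X)$ also matches how the paper organizes the two results.
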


The proof of the rank bound in \Cref{them:sparsesign,them:sparsenonsign}  can be found in Appendix \ref{proof:sparseface}. We note that
\begin{enumerate}[(i)]
\item  For the \ref{eq_rank} with a sparse domain set $\X $ in \eqref{set:sparse}, its  relaxation corresponds to \ref{eq_rel_rank} or \ref{eq_rel_rank1} which depends on the property of function $f(\cdot)$. Interestingly, either relaxation violates the original low-rankness by at most $\tilde m$;

\item  Applying the  diagonal matrix space to the \ref{eq_rel_rank} and \ref{eq_rel_rank1} dramatically changes their rank bounds to be linear in $\tilde m$, in contrast to those of $\sqrt{\tilde m}$ derived in \Cref{them:symsign,them:sym_rank}. This  is because an $n\times n$ diagonal  matrix maps onto a  vector of only  length $n$, whereas a general symmetric matrix equals a size-$\O(n^2)$ vector; and
\item We give an example in Appendix \ref{proof:tight} that  attains the rank bound in \Cref{them:sparsesign,them:sparsenonsign}.
\end{enumerate}

As indicated previously, the  \ref{app:sparse} admits a sparse domain set in \eqref{set:sparse}, i.e.,
$$\X :=\{\bm X \in \S^n: \bm X =\Diag(\diag(\bm X)), \rank(\bm X) \le k, \|\bm X\|_F^2:=\|\diag(\bm X)\|_2^2  \le z \},$$
where the function $f(\diag(\bm X))$ in \eqref{set:sparse} is specified to be $\|\diag(\bm X)\|_2^2-z$ and  thus  is sign-invariant.
Specifically,  the \ref{eq_rel_rank} corresponding to  \ref{app:sparse} can be formulated by 
\begin{equation} \label{pc_sparse}
\begin{aligned}
\V_{\rel}:=\min_{\bm X\in \conv(\X ), \bm y \in \Re^m, z \in \Re_+} \left\{\frac{1}{n}\|\bm y\|_2^2 +\alpha z:   \bm A \diag(\bm X) = \bm b - \bm y   \right\}
\end{aligned}, \tag{Sparse Ridge Regression-R}
\end{equation}
where given the variable  $z\in \Re_+$,  $\conv(\X ):= \Big\{\bm X\in  \S^n: \bm X =\Diag(\diag(\bm X)),  \exists  \bm x \in \Re_+^n, \|\bm x\|_2^2 \le z,  x_1 \ge \cdots \ge x_n,  x_{\k+1}=0, 
\|\bm X\|_{(\ell)} \le \sum_{i\in [\ell]} x_{i}, \forall \ell \in [\k-1], \|\bm X\|_* \le \sum_{i\in [\k]}x_i\Big\}.$ 

 \begin{restatable}{corollary}{corsparse}{\rm \textbf{(Sparse Ridge Regression)}} \label{cor:sparse}
Given the data  $\bm A\in \Re^{m\times n}$ in \ref{app:sparse}, there is an optimal  solution $\bm X^*$ of \ref{pc_sparse} of rank at most  $k+\rank(\bm A)$.
\end{restatable}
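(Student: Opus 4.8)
The plan is to derive \Cref{cor:sparse} as a direct consequence of \Cref{them:sparsesign}, where the only real work is to handle the quadratic (rather than linear) objective of \ref{pc_sparse} and to pin down the correct value of $\tilde m$. First I would observe that the relevant spectral function $f(\diag(\bm X)) = \|\diag(\bm X)\|_2^2 - z$ is sign-invariant, since $\|\bm x\|_2^2 = \||\bm x|\|_2^2$. Hence the sparse domain set $\X$ of \ref{app:sparse} falls under the sign-invariant case, and \Cref{them:sparsesign} is applicable.

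The objective $\tfrac{1}{n}\|\bm y\|_2^2 + \alpha z$ is not linear in the matrix variable, so I would first reduce to a linear-objective (indeed, feasibility) instance by freezing the auxiliary variables at an optimal point. Let $(\hat{\bm X}, \hat{\bm y}, \hat z)$ be optimal to \ref{pc_sparse} and consider
\[
S := \left\{\bm X \in \conv(\X) : \bm A\diag(\bm X) = \bm b - \hat{\bm y}\right\},
\]
where $\conv(\X)$ is evaluated at $z = \hat z$. Every $\bm X \in S$ produces a feasible triple $(\bm X, \hat{\bm y}, \hat z)$ with objective value $\tfrac{1}{n}\|\hat{\bm y}\|_2^2 + \alpha\hat z$, so every point of $S$ is optimal to \ref{pc_sparse}. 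Because $\conv(\X)$ at $z = \hat z$ is bounded (for instance $\|\bm X\|_* \le \sum_{i\in[\k]} x_i \le \sqrt{\k\hat z}$ by Cauchy--Schwarz on the nonnegative majorizing vector $\bm x$), the nonempty set $S$ (it contains $\hat{\bm X}$) is line-free and therefore has an extreme point $\bm X^*$; this is exactly the line-free hypothesis required by \Cref{them:sparsesign}.

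It remains to bound $\rank(\bm X^*)$. The set $S$ is precisely the feasible set of an \ref{eq_rel_rank} instance over the sparse domain $\X$, whose constraints $\langle \Diag(\bm a_i), \bm X\rangle = b_i - \hat y_i$ for $i\in[m]$ are encoded by the symmetric matrices $\bm A_i := \Diag(\bm a_i)$, with $\bm a_i$ the $i$th row of $\bm A$. The number of linearly independent $\Diag(\bm a_i)$ equals the number of linearly independent rows of $\bm A$, i.e.\ $\tilde m = \rank(\bm A)$. Applying \Cref{them:sparsesign}(i) to the extreme point $\bm X^*$ of $S$ then yields $\rank(\bm X^*) \le \k + \tilde m = \k + \rank(\bm A) \le k + \rank(\bm A)$, using $\k \le k$ from \Cref{def:k}. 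Consequently $(\bm X^*, \hat{\bm y}, \hat z)$ is an optimal solution of \ref{pc_sparse} satisfying the claimed rank bound (in fact the slightly sharper $\k + \rank(\bm A)$).

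The main obstacle is the nonlinearity of the objective: the rank-bound machinery of \Cref{them:sparsesign} governs only the linear-objective feasibility geometry of \ref{eq_rel_rank}, so the crux is the fixing argument that freezes $\hat{\bm y}$ and $\hat z$ to turn the problem into one where \emph{any} feasible $\bm X$ is optimal. The accompanying bookkeeping — verifying that the effective number of linearly independent linear constraints on $\bm X$ collapses from $m$ down to $\rank(\bm A)$, and that $S$ is bounded hence line-free — is then routine.
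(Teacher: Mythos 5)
Your proposal is correct and follows essentially the same route as the paper's own proof: fix the optimal $(\hat{\bm y},\hat z)$, observe that every $\bm X$ in the resulting optimal set $\{\bm X\in\conv(\X): \bm A\diag(\bm X)=\bm b-\hat{\bm y}\}$ remains optimal, encode the constraints as $\bm A_i=\Diag(\bm a_i)$ so that $\tilde m=\rank(\bm A)$, and invoke \Cref{them:sparsesign} at an extreme point. Your explicit verification of sign-invariance of $f$ and of boundedness (hence line-freeness) via $\|\bm X\|_*\le\sqrt{\k\hat z}$ makes precise what the paper asserts by calling the set compact, and your sharper bound $\k+\rank(\bm A)$ is consistent with the stated $k+\rank(\bm A)$.
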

\begin{proof}
    See Appendix \ref{proof:sparse}. \qed
\end{proof}

It is worth mentioning that  the diagonal elements of an optimal diagonal solution $\bm X^*$ to \ref{pc_sparse} can serve as a relaxed solution for the original \ref{app:sparse} in vector space. According to the rank bound in \Cref{cor:sparse},  the zero-norm of such relaxed solution (i.e., $\diag(\bm X^*)$) is no larger than $k+\rank(\bm A)$, smaller than the recent sparsity bound (i.e., $k+\rank(\bm A)+1$) shown in \cite{askari2022approximation}. 

\section{Solving LSOP-R by Column Generation and Rank-Reduction Algorithms}\label{sec:algo}
This section studies an efficient column generation algorithm for solving \ref{eq_rel_rank}, coupled with a rank-reduction algorithm. The algorithms together return an optimal solution to \ref{eq_rel_rank}  whose rank must not exceed the theoretical guarantees.
Throughout this section, we suppose that \ref{eq_rel_rank} yields a finite optimal value, i.e, $\V_{\rel}>-\infty$ and set $\conv(\X)$ contains no line.
\vspace{-0.5em}
\subsection{Column Generation Algorithm}
The column generation does not require an explicit description of the convex hull of domain set $\X$, and instead use the representation theorem 18.5 in \cite{rockafellar2015convex} to describe the line-free convex set $\conv(\X)$ via the linear programming  formulation (with possibly infinite number of variables), implying an equivalent linear reformulation of \ref{eq_rel_rank} as shown below
\begin{equation} \label{eq_rel_dwr}
\begin{aligned}
\text{(LSOP-R)} \quad  \V_{\rel}:=\min_{ \bm \alpha \in \Re_+^{|\H|}, \bm \gamma \in \Re_+^{|\J|}}\bigg\{&\sum_{h\in \H}\alpha_h \langle\bm A_0, \bm X_h \rangle + \sum_{j\in \J}\gamma_j \langle\bm A_0, \bm d_j \rangle: 
 \sum_{h\in \H}\alpha_h=1,
\\& 
b_i^l  \le  \sum_{h\in \H}\alpha_h \langle\bm A_i, \bm X_h \rangle + \sum_{j\in \J}\gamma_j \langle\bm A_i, \bm d_j \rangle \le b_i^u, \forall i \in [m]\bigg\},  
\end{aligned}
\end{equation}
where $\{\bm X_h\}_{h\in \H}$ and $\{\bm d_j \}_{j\in \J}$ consist of all the extreme points and extreme directions in the set $\conv(\X)$, respectively. 

It is often difficult to enumerate all the possible extreme  points $\{\bm X_h\}_{h\in H}$ and extreme directions $\{\bm d_j \}_{j\in J}$   in the set $\conv(\X)$ and directly solve the  (semi-infinite) linear programming problem \eqref{eq_rel_dwr} to optimality. 
Alternatively, the column generation algorithm starts with the \textit{restricted master problem} (RMP) which only  contains a small portion of extreme points and extreme directions and then solves the \textit{pricing problem} (PP) to iteratively generate an improving point. The detailed implementation is presented in \Cref{algo_cg}.
The column generation \Cref{algo_cg} can be effective in practice since
a small number of points in $\conv(\X)$ are often needed 
to return a (near-)optimal solution to LSOP-R  \eqref{eq_rel_dwr} according to Carath\'eodory theorem. 

As shown in \Cref{algo_cg},  at each iteration, we first solve a linear program--\ref{eq:rmp} based on a subset of extreme points $\{\bm X_h\}_{h\in {\H}}$ and a subset of extreme directions $\{\bm d_j\}_{j\in {\J}}$  collected from the set $\conv(\X)$. Using the Lagrangian multipliers $(\nu, \bm \mu^l, \bm \mu^u)$,
the dual problem of the \ref{eq:rmp}  is equal to
\begin{equation}\label{eq:dual}
\begin{aligned}
\max_{\bm \mu^l \in \Re_+^m, \bm \mu^u \in \Re_+^m} \bigg\{ -\max_{h\in \H} \bigg \langle -\bm A_0 + \sum_{i\in [m]} (\mu^l_i - \mu^u_i) \bm A_i, \bm X_h \bigg \rangle  + (\bm b^l)^{\top} \bm \mu^l - (\bm b^u)^{\top} \bm \mu^u: \\
0 \ge \bigg \langle -\bm A_0 + \sum_{i\in [m]} (\mu^l_i - \mu^u_i) \bm A_i, \bm d_j \bigg\rangle, \forall j \in {\J} \bigg\}.
\end{aligned}
\end{equation}
Then, given an optimal dual solution $(\nu^*, (\bm \mu^l)^*, (\bm \mu^u)^*)$ at Step 4, the  \ref{eq_pp} at Step 6 of \Cref{algo_cg} finds an improving point or direction for LSOP-R \eqref{eq_rel_dwr} in set $\conv(\X)$.

When the \ref{eq_pp} is unbounded, there exists a direction $\hat{\bm d}$ in the  set $\conv(\X)$ such that $\langle -\bm A_0 + \sum_{i\in [m]} ((\mu_i^l)^* - (\mu_i^u)^* ) \bm A_i, \hat{\bm d}\rangle>0$, which violates the constraint in the dual problem \eqref{eq:dual}. Hence, adding this direction to  the \ref{eq:rmp} leads to a smaller objective value. Besides, when the \ref{eq_pp} yields a finite optimal value no larger than $\nu^*+\epsilon$, we can show that \Cref{algo_cg} finds an $\epsilon-$optimal solution to the LSOP-R \eqref{eq_rel_dwr}, as detailed below. 
\begin{restatable}{proposition}{propconver} \label{prop:conver}
The output solution of column generation \Cref{algo_cg}  is $\epsilon$-optimal to the LSOP-R \eqref{eq_rel_dwr} if the LSOP-R \eqref{eq_rel_dwr} yields a finite optimal value, i.e., $\V_{\rel}>-\infty$. That is, suppose that \Cref{algo_cg}  returns $\bm  X^*$, then the inequalities $\V_{\rel}\leq \langle \bm A_0, \bm  X^*\rangle \le \V_{\rel} + \epsilon$ hold.
\end{restatable}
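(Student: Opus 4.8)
The plan is to run the standard Dantzig--Wolfe column generation optimality argument, sandwiching $\langle \bm A_0, \bm X^*\rangle$ between $\V_{\rel}$ and $\V_{\rel}+\epsilon$ by applying weak and strong LP duality to the semi-infinite reformulation \eqref{eq_rel_dwr}. The left inequality is essentially free. The returned point is $\bm X^* = \sum_{h}\alpha_h^* \bm X_h + \sum_j \gamma_j^* \bm d_j$, a convex combination of extreme points of $\conv(\X)$ plus a nonnegative combination of its extreme directions; since $\conv(\X)$ is closed, convex and line-free, such a point lies in $\conv(\X)$, and it satisfies the $m$ two-sided linear constraints because these are imposed in the \ref{eq:rmp}. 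Hence $\bm X^*$ is feasible for \eqref{eq_rel_dwr}, and as \eqref{eq_rel_dwr} is a minimization we get $\V_{\rel}\le \langle \bm A_0, \bm X^*\rangle$.

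For the right inequality I would use duality twice. First, by strong duality for the finite-dimensional \ref{eq:rmp} at the terminating iteration (feasible by initialization and bounded below since its value is at least $\V_{\rel}>-\infty$), its optimal objective equals $\langle \bm A_0, \bm X^*\rangle = \nu^* + (\bm b^l)^\top (\bm \mu^l)^* - (\bm b^u)^\top (\bm \mu^u)^*$. Second, I would construct a point feasible for the \emph{full} dual of \eqref{eq_rel_dwr} from $((\bm \mu^l)^*,(\bm \mu^u)^*)$ together with a corrected convexity multiplier. Writing $\bm C^* := -\bm A_0 + \sum_{i\in[m]}((\mu_i^l)^* - (\mu_i^u)^*)\bm A_i$ and letting $\theta^*$ denote the finite optimal value of the \ref{eq_pp} at termination, the two checks are: (i) finiteness of the \ref{eq_pp} (the algorithm leaves its unbounded branch only by appending a violated direction and continuing) forces $\langle \bm C^*, \bm d_j\rangle \le 0$ for every extreme direction $\bm d_j$ of $\conv(\X)$, so all recession-cone constraints of the full dual hold; and (ii) since $\theta^* = \max_{\bm X\in\conv(\X)}\langle \bm C^*,\bm X\rangle = \max_h \langle \bm C^*,\bm X_h\rangle$, setting the convexity multiplier to $-\theta^*$ satisfies the remaining constraints $\nu \le -\langle \bm C^*, \bm X_h\rangle$ over \emph{all} extreme points. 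Thus $(-\theta^*,(\bm \mu^l)^*,(\bm \mu^u)^*)$ is full-dual feasible, and weak duality for \eqref{eq_rel_dwr} yields the lower bound $\V_{\rel}\ge -\theta^* + (\bm b^l)^\top(\bm \mu^l)^* - (\bm b^u)^\top(\bm \mu^u)^*$.

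Subtracting the strong-duality identity from this weak-duality bound shows that the suboptimality $\langle \bm A_0, \bm X^*\rangle - \V_{\rel}$ equals the gap between the RMP convexity multiplier $\nu^*$ and its corrected, full-dual-feasible value $-\theta^*$; that is, it is governed entirely by the margin by which the \ref{eq_pp} value $\theta^*$ improves on the current RMP threshold. The termination test of \Cref{algo_cg} fires exactly when this margin does not exceed $\epsilon$, so the suboptimality is at most $\epsilon$ and $\langle \bm A_0, \bm X^*\rangle \le \V_{\rel}+\epsilon$, closing the sandwich. I would carry out the steps in the order: (1) feasibility of $\bm X^*$ and the easy bound; (2) RMP strong duality to express $\langle \bm A_0,\bm X^*\rangle$; (3) construction and verification of the full-dual-feasible tuple; (4) weak duality plus the arithmetic of the stopping criterion.

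I expect the main obstacle to be step (3): certifying that the tuple assembled from the RMP duals is feasible for the \emph{full}, semi-infinite dual of \eqref{eq_rel_dwr}, which requires simultaneously controlling the infinitely many reduced-cost constraints indexed by extreme points and the recession-cone constraints indexed by extreme directions. Here I would lean on the representation theorem (Theorem~18.5 of \citealt{rockafellar2015convex}, already invoked to obtain \eqref{eq_rel_dwr}) together with the standing hypotheses $\V_{\rel}>-\infty$ and $\conv(\X)$ line-free, which guarantee that the pricing maximum over $\conv(\X)$ is attained at an extreme point and that the unbounded branch correctly isolates any violated direction constraint. The remaining work is purely bookkeeping of signs, so that the computed gap matches the stopping threshold of \Cref{algo_cg} up to its convention for $\nu^*$.
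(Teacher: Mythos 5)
Your proposal is correct and takes essentially the same route as the paper's proof: both establish the lower bound $\V_{\rel}\ \ge\ (\bm b^l)^{\top}(\bm \mu^l)^* - (\bm b^u)^{\top}(\bm \mu^u)^* - \V_{\rm P}$ from the \ref{eq:rmp}'s optimal duals, then combine it with strong duality of the \ref{eq:rmp} and the stopping rule $\V_{\rm P}\le \nu^*+\epsilon$ to close the sandwich. The only difference is presentational: where your step (3) verifies feasibility of the corrected tuple $(-\theta^*,(\bm \mu^l)^*,(\bm \mu^u)^*)$ in the semi-infinite dual of \eqref{eq_rel_dwr} (checking the infinitely many extreme-point and extreme-direction constraints), the paper gets the identical bound in one line by fixing these multipliers in the Lagrangian of \ref{eq_rel_rank} over $\conv(\X)$, so that $\min_{\bm X\in\conv(\X)}\langle -\bm C^*,\bm X\rangle = -\V_{\rm P}$ delivers the inequality directly and the extreme-point/direction bookkeeping, as well as the sign convention for $\nu^*$ that you rightly flag, is sidestepped.
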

\begin{proof}
    See Appendix \ref{proof:conver}. \qed
\end{proof}

The convergence of the column generation \Cref{algo_cg} is guaranteed by   the theory of linear programming, since the \ref{eq_pp} in fact  seeks the positive reduced cost. The finite termination or the rate of convergence of this method has been established under the assumption of finiteness of extreme points and extreme directions or revising the problem setting (e.g., adding penalty term or enforcing the strict improvement of the \ref{eq:rmp}  at each iteration). For more details, we refer interested readers to the works in \cite{amor2004stabilization, amor2009choice,desrosiers2005primer}. The time complexity of  \Cref{algo_cg} lies in the number of iterations as well as in solving the  \ref{eq_pp}. Fortunately, we show that the \ref{eq_pp} can be easily solved and even admits closed-form solutions given some special domain set $\X$ in the next subsection.


\begin{algorithm}[htbp] 
\caption{The Column Generation Algorithm for Solving LSOP-R \eqref{eq_rel_dwr} with $\V_{\rel}>-\infty$} \label{algo_cg}
\begin{algorithmic}[1]
\State \textbf{Input:} Data $\bm A_0$, $\{\bm A_i, b_i^l, b_i^m\}_{i\in [m]}$,  domain set $\X$ with a line-free convex hull, and optimality gap $\epsilon >0$

\State Initialize ${\H}=\{1\}$, ${\J}=\emptyset$, and a feasible solution $\bm X_1$ to LSOP-R \eqref{eq_rel_dwr}
\For{$t = 1,2, \cdots$}
\State Solve the \ref{eq:rmp} below and denote by $(\bm \alpha^*, \bm \gamma^*)$  the optimal solution
\begin{equation}\label{eq:rmp}
\begin{aligned}  
\min_{ \bm \alpha \in \Re_+^{|{\H}|}, \bm \gamma \in \Re_+^{|{\J}|}}\bigg\{&\sum_{h\in {\H}}\alpha_h \langle\bm A_0, \bm X_h \rangle + \sum_{j\in {\J}}\gamma_j \langle\bm A_0, \bm d_j \rangle:  \sum_{h\in \H}\alpha_h=1, 
\\& 
b_i^l  \le  \sum_{h\in \H}\alpha_h \langle\bm A_i, \bm X_h \rangle + \sum_{j\in \J}\gamma_j \langle\bm A_i, \bm d_j \rangle \le b_i^u, \forall i \in [m]\bigg\}, 
\end{aligned} \tag{RMP}
\end{equation}
and let $(\nu^*, (\bm \mu^l)^*, (\bm \mu^u)^*)$ denote the corresponding optimal dual solutions 
\State Update solution $\bm X^* :=\sum_{h\in {\H}} \alpha^*_h \bm X_{i} + \sum_{j\in \J}  \gamma^*_j \bm d_j$

\State Solve the \ref{eq_pp} and let $\hat{\bm X}$ denote its optimal solution
\begin{align} \label{eq_pp}
	\V_{\rm P}:= \max_{\bm X\in \conv(\X)}  \bigg \langle -\bm A_0 + \sum_{i\in [m]} ((\mu_i^l)^* - (\mu_i^u)^* ) \bm A_i, \bm X \bigg\rangle.\tag{Pricing Problem}
\end{align}

\If{$\V_{\rm P} = \infty$}
\State $\hat{\bm X}$ can be expressed by $\tilde{\bm X}+ \theta \hat{\bm d}$ with $\theta>0$
\State  Add direction $ \hat{\bm d}$ to set $\{\bm d_j\}_{j\in \J}$ and update $\J:= \J\cup\{|\J|+1\}$ 
\ElsIf{$\V_{\rm P} > \nu^*+\epsilon$}
\State Add point $ \hat{\bm X}$ to set $\{\bm X_h\}_{h\in \H}$ and update  $\H:= \H\cup\{|\H|+1\}$ 
\Else
\State Terminate the iteration
\EndIf

\EndFor
\State \textbf{Output:} Solution $\bm X^*$
\end{algorithmic}
\end{algorithm}

\subsection{Efficient Pricing Oracle}
In this subsection, we show that despite not describing the convex hull of domain set $\X$,  the  \ref{eq_pp} at Step 6 of \Cref{algo_cg} is efficiently solvable and even admits the closed-form solution for some cases. 
Given the linear objective function,  the \ref{eq_pp} can be simplified by replacing $\conv(\X)$ with the domain set $\X$, i.e.,
\begin{align*} \label{eq_sep1}
\V_{\rm P}:= \max_{\bm X \in \Q} \left\{\left\langle  \tilde{\bm A}, \bm X\right\rangle:  \rank(\bm X)\le k, f(\bm \lambda(\bm X)) \le 0  \right\},\tag{Pricing Problem-S}
\end{align*}
where  we define $\tilde{\bm A} :=  -\bm A_0 + \sum_{i\in [m]} ((\mu_i^l)^* - (\mu_i^u)^* ) \bm A_i$ throughout this subsection. 

Next, let us explore the solution structure of \ref{eq_sep1}. Note that
matrices $\bm A_0$  and $\{\bm A_i\}_{i\in [m]}$ are symmetric if matrix variable $\bm X$ is symmetric, and so does matrix $\tilde{\bm A}$.

\begin{restatable}{theorem}{thmppsol} \label{them:ppsol}
The \ref{eq_sep1} has an optimal solution $\bm X^*$, as specified follows
\begin{enumerate}[(i)]
\item Given $\Q := \S_+^n$ in \eqref{eq_set}, suppose that $\tilde{\bm A} := \bm U \Diag(\bm \beta) \bm U^{\top} $ denotes the eigen-decomposition of  matrix $\tilde{\bm A}$ with eigenvalues $\beta_1 \ge \cdots \ge \beta_n$. Then $\bm X^* = \bm U  \Diag(\bm \lambda^*) \bm U^{\top}$ and  $\bm \lambda^*\in \Re_+^n$ equals
\begin{align} \label{eq:ppc1}
\bm \lambda^* \in \argmax_{\bm \lambda \in \Re_+^n} \left\{\bm \lambda^{\top} \bm \beta:  \lambda_i =0, \forall i\in [k+1, n],  f(\bm \lambda)\le 0\right\}.
\end{align}
\item Given $\Q:=\Re^{n\times p}$ in \eqref{eq_set}, suppose that $\tilde{\bm A} := \bm U \Diag(\bm \beta) \bm V^{\top}$ is the singular value decomposition of matrix $\tilde{\bm A}$ with  singular values $\beta_1 \ge \cdots \ge \beta_n$.
 Then $\bm X^* = \bm U  \Diag(\bm \lambda^*) \bm V^{\top}$ and  $\bm \lambda^*\in \Re_+^n$ equals
\begin{align} \label{eq:ppc2}
\bm \lambda^*   \in \argmax_{\bm \lambda \in \Re_+^n} \left\{\bm \lambda^{\top} \bm \beta:  \lambda_i =0, \forall i\in [k+1, n],  f(\bm \lambda)\le 0 \right\}.
\end{align}
\item Given $\Q := \S^n$ in \eqref{eq_set}, suppose that  $\tilde{\bm A} := \bm U \Diag(\bm \beta) \bm U^{\top} $ denotes the eigen-decomposition of matrix $\tilde{\bm A}$ with  eigenvalues $\beta_1 \ge \cdots \ge \beta_n$.
 Then  $\bm X^* = \bm U  \Diag(\bm \lambda^*) \bm U^{\top}$ and  $\bm \lambda^*\in \Re^n$ equals
\begin{align} \label{eq:ppc3}
\bm \lambda^*   \in \argmax_{s\in [k+1]} \max_{\bm \lambda \in \Re^n} \big\{\bm \lambda^{\top} \bm \beta:  \lambda_1 \ge \cdots \ge \lambda_n, \lambda_s =\lambda_{s+n- k-1}=0,
f(\bm \lambda)\le 0\big\}.
\end{align}
\item Given $\Q := \S^n$ and a sign-invariant  function $f(\cdot)$ in \eqref{eq_set}, suppose that $\tilde{\bm A}:=\bm U \Diag(\bm \beta) \bm U^{\top}$ denotes  the eigen-decomposition of matrix $\tilde{\bm A}$ with   eigenvalues satisfying $|\beta_1| \ge \cdots \ge |\beta_n|$. Then $\bm X^* = \bm U  \Diag(\bm \lambda^*) \bm U^{\top}$ and $\bm \lambda^*\in \Re^n$ equals 
\begin{align} \label{eq:ppc4}
\bm \lambda^*  \in
\argmax_{\bm \lambda \in \Re^n} \left\{|\bm \lambda|^{\top} |\bm \beta|:  |\lambda_i| =0, \forall i\in [k+1, n],  f(|\bm \lambda|)\le 0 \right\}.
\end{align} 
\end{enumerate}
\end{restatable}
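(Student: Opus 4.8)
The plan is to reduce each matrix maximization in \ref{eq_sep1} to a vector optimization over the eigenvalues (or singular values) of $\bm X$ via a von Neumann--type trace inequality, and then to verify that the matrix built to share the spectral basis of $\tilde{\bm A}$ both attains the induced upper bound and is feasible. First I would record two inequalities: (a) for symmetric $\tilde{\bm A},\bm X\in\S^n$ with eigenvalues arranged in descending order, $\langle\tilde{\bm A},\bm X\rangle\le\sum_{i\in[n]}\beta_i\lambda_i(\bm X)$, with equality when $\bm X=\bm U\Diag(\bm\lambda)\bm U^\top$ shares the eigenvectors $\bm U$ of $\tilde{\bm A}$ in the matching order; and (b) the singular-value analogue $\langle\tilde{\bm A},\bm X\rangle\le\sum_{i\in[n]}\beta_i\sigma_i(\bm X)$ for $\tilde{\bm A},\bm X\in\Re^{n\times p}$. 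The permutation-invariance of $f$ is the bridge that turns a sorted spectral vector into a feasible point of the vector program, since $f$ evaluated at any rearrangement of $\bm\lambda(\bm X)$ equals $f(\bm\lambda(\bm X))\le 0$; likewise the rank-$k$ constraint becomes ``at most $k$ nonzero entries.''

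For part (i) with $\Q:=\S_+^n$, every feasible $\bm X$ has $\bm\lambda(\bm X)\in\Re_+^n$ with at most $k$ nonzeros, so inequality (a) bounds the objective by $\max\{\bm\lambda^\top\bm\beta:\lambda_i=0\ \forall i\in[k+1,n],\ f(\bm\lambda)\le 0,\ \bm\lambda\ge 0\}$; since $f$ is symmetric, dropping the ordering constraint leaves this optimum unchanged, giving exactly \eqref{eq:ppc1}. The construction $\bm X^*=\bm U\Diag(\bm\lambda^*)\bm U^\top$ yields $\langle\tilde{\bm A},\bm X^*\rangle=\bm\beta^\top\bm\lambda^*$ by orthogonality of $\bm U$, while the zero pattern and $\bm\lambda^*\ge 0$ certify $\rank(\bm X^*)\le k$, $\bm X^*\succeq 0$, and $f(\bm\lambda^*)\le 0$. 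Part (ii) with $\Q:=\Re^{n\times p}$ is structurally identical but uses ingredient (b): singular values are nonnegative, so the vector program in \eqref{eq:ppc2} coincides with the one above, and $\bm X^*=\bm U\Diag(\bm\lambda^*)\bm V^\top$ aligns the left and right singular vectors of $\tilde{\bm A}$ to attain $\bm\beta^\top\bm\lambda^*$.

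The main obstacle is the symmetric indefinite case, part (iii), where eigenvalues may change sign and the rank-$k$ budget must be split between positive and negative parts. I would again invoke (a), but now the induced feasible set is $\{\bm\lambda:\lambda_1\ge\cdots\ge\lambda_n,\ \text{at most }k\text{ nonzeros},\ f(\bm\lambda)\le 0\}$, and the delicate point is to encode ``at most $k$ nonzeros'' among sign-changing, sorted coordinates by a disjunction: for each $s\in[k+1]$, imposing $\lambda_s=\lambda_{s+n-k-1}=0$ forces the $n-k$ consecutive middle coordinates to vanish, leaving only the top $s-1$ (nonnegative) and bottom $k-s+1$ (nonpositive) entries possibly nonzero, which is precisely \eqref{eq:ppc3} and mirrors the disjunctive hull $\cup_{s\in[\k+1]}\Y^s$ of \Cref{prop:symconv}. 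I would check that every sorted feasible vector with at most $k$ nonzeros falls into at least one $s$-block, so the outer maximum over $s$ recovers the true optimum, and that the aligned $\bm X^*=\bm U\Diag(\bm\lambda^*)\bm U^\top$ attains it. Finally, part (iv) collapses this disjunction: when $f$ is sign-invariant we may write $f(\bm\lambda)=f(|\bm\lambda|)$ and order the eigenvalues of $\tilde{\bm A}$ by magnitude, $|\beta_1|\ge\cdots\ge|\beta_n|$; bounding $\langle\tilde{\bm A},\bm X\rangle$ by $|\bm\beta|^\top|\bm\lambda|$ (matching each $\sign(\lambda_i)$ to $\sign(\beta_i)$) reduces the problem to \eqref{eq:ppc4} exactly as in the singular-value case, so no disjunction over $s$ is required.
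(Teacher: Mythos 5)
Your proposal is correct and follows essentially the same route as the paper's proof: reduce \ref{eq_sep1} to a vector program over the (sorted) spectrum, verify attainment by aligning $\bm X^*$ with the eigen/singular-vector basis of $\tilde{\bm A}$, and handle the indefinite case via the disjunction over the zero block $\lambda_s=\lambda_{s+n-\tilde k-1}=0$ mirroring \Cref{prop:symconv}. The only difference is cosmetic: you invoke the von Neumann/Ky Fan trace inequality as a black box, whereas the paper re-derives it inline via its Claim~\ref{claim2} (the Ky Fan partial-trace maximum) combined with a majorization relaxation and the rearrangement inequality.
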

\begin{proof}
    See Appendix \ref{proof:ppsol}. \qed
\end{proof}

We make the following remarks about \Cref{them:ppsol}:
\begin{enumerate}[(a)]
\item For any domain set $\X$ in  \eqref{eq_set}, the \ref{eq_sep1} involves a rank constraint and is  thus nonconvex. \Cref{them:ppsol} provides a striking result  that the \ref{eq_sep1}  can be equivalent to solving a  convex program over a vector variable of length $n$ (see problems \eqref{eq:ppc1}-\eqref{eq:ppc4}). Therefore, the column generation \Cref{algo_cg} can admit an efficient implementation for solving \ref{eq_rel_rank}; 
\item  By leveraging \Cref{them:ppsol},  we can derive the closed-form solutions to the \ref{eq_sep1} for a special family of domain set $\X$ as shown in \Cref{cor:closed};
\item Although the  convex hull of the domain set $\X$ may admit an explicit description based on the majorization technique (see, e.g., \Cref{prop:conv}), the description involves  
many auxiliary variables and constraints and is semidefinite representable. Therefore,  the original \ref{eq_pp} over $\conv(\X)$ can still be challenging to optimize and scale. Quite differently, \Cref{them:ppsol} allows us to target a simple convex program for solving the \ref{eq_pp}, which significantly enhances the \Cref{algo_cg}
performance in our numerical study; and 
\item The column generation \Cref{algo_cg} can be extended to solving \ref{eq_rel_rank1} with $\Q:=\S^n$ whose pricing oracle can be also simplified, as shown in \Cref{cor:sympp}.
\end{enumerate}

 According to \Cref{them:ppsol}, the \ref{eq_sep1} can be equivalently converted to solving the maximization problem over variable $\bm \lambda$. Therefore, it suffices to derive closed-form  solutions  to problems \eqref{eq:ppc1}-\eqref{eq:ppc4}.  For any $\ell \in [1,\infty]$, we let  $\|\cdot\|_{\ell}$ denote the $\ell$-norm of a vector.
\begin{restatable}{corollary}{corclosed}\label{cor:closed}
For a domain set $\X:=\{\bm X \in \Q: \rank(\bm X) \le k,  \|\bm \lambda(\bm X)\|_{\ell}\le c \}$, where $c\ge 0$, $\ell \in [1,\infty]$, and $1/\ell+1/q=1$, we have that
\begin{enumerate}[(i)]
    \item Given $\Q:=\S_+^n$, the solution $\bm \lambda^*$ is optimal to problem \eqref{eq:ppc1} where $\lambda^*_i = c \sqrt[\ell]{\frac{(\beta_i)_+^{q}}{\sum_{j\in [k]} (\beta_j)_+^{q}}} $ for all $i\in [k]$ and $\lambda^*_i =0$ for all $i\in [k+1, n]$;
    \item Given $\Q:=\Re^{n\times p}$, the solution  $\bm \lambda^*$ is optimal to problem \eqref{eq:ppc2} where $\lambda^*_i = c \sqrt[\ell]{\frac{\beta_i^{q}}{\sum_{j\in [k]} \beta_j^{q}}} $ for all $i\in [k]$ and $\lambda^*_i =0$ for all $i\in [k+1, n]$;
    \item Given $\Q:=\S^n$, the solution  $\bm \lambda^*$ is optimal to problem \eqref{eq:ppc4} where $\lambda^*_i = \sign(\beta_i) c \sqrt[\ell]{\frac{|\beta_i|^{q}}{\sum_{j\in [k]} |\beta_j|^{q}}} $ for all $i\in [k]$ and $\lambda^*_i =0$ for all $i\in [k+1, n]$.
\end{enumerate}
\end{restatable}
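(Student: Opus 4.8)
The plan is to observe that, for this particular domain set, the spectral constraint $f(\bm\lambda)\le 0$ in each of the vector programs \eqref{eq:ppc1}, \eqref{eq:ppc2}, and \eqref{eq:ppc4} of \Cref{them:ppsol} specializes to the single $\ell$-norm ball constraint $\|\bm\lambda\|_\ell\le c$, while the rank constraint forces the support of $\bm\lambda$ to lie in $[k]$. Each problem thus reduces to maximizing a fixed linear functional over (a subset of) an $\ell$-norm ball, and the natural tool is H\"older's inequality with the conjugate exponent $q$ satisfying $1/\ell+1/q=1$. First I would fix the support to $[k]$ (the stated formulas already set $\lambda_i^*=0$ for $i>k$) and then, on those $k$ coordinates, derive a tight upper bound on the objective together with the point attaining it.

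For case (i) with $\Q:=\S_+^n$, the variable obeys $\bm\lambda\ge \bm 0$, so I would first note that only nonnegative coefficients can help: $\bm\lambda^\top\bm\beta=\sum_{i\in[k]}\lambda_i\beta_i\le\sum_{i\in[k]}\lambda_i(\beta_i)_+$. Applying H\"older to the right-hand side gives $\sum_{i\in[k]}\lambda_i(\beta_i)_+\le\|\bm\lambda\|_\ell\,\bigl(\sum_{i\in[k]}(\beta_i)_+^{\,q}\bigr)^{1/q}\le c\,\bigl(\sum_{i\in[k]}(\beta_i)_+^{\,q}\bigr)^{1/q}$. The equality conditions in H\"older are that $(\lambda_i)^\ell$ be proportional to $(\beta_i)_+^{\,q}$ together with $\|\bm\lambda\|_\ell=c$, and I would check that the proposed point $\lambda_i^*=c\sqrt[\ell]{(\beta_i)_+^{\,q}/\sum_{j\in[k]}(\beta_j)_+^{\,q}}$ satisfies exactly these, is feasible ($\lambda_i^*\ge 0$ and $\|\bm\lambda^*\|_\ell=c$), and meets the upper bound; a short computation using $q/\ell+1=q$ confirms that the achieved objective equals $c(\sum_{j\in[k]}(\beta_j)_+^{\,q})^{1/q}$, certifying optimality. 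Case (ii) with $\Q:=\Re^{n\times p}$ is identical, the only difference being that the $\beta_i$ are now singular values, hence nonnegative, so $(\beta_i)_+=\beta_i$ and the truncation disappears.

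For case (iii) with $\Q:=\S^n$ and sign-invariant $f$, the objective $|\bm\lambda|^\top|\bm\beta|=\sum_{i\in[k]}|\lambda_i|\,|\beta_i|$ depends only on the magnitudes, and the coordinates are ordered by $|\beta_1|\ge\cdots\ge|\beta_n|$. I would apply H\"older to the vectors $(|\lambda_i|)_{i\in[k]}$ and $(|\beta_i|)_{i\in[k]}$ to get $|\bm\lambda|^\top|\bm\beta|\le c(\sum_{j\in[k]}|\beta_j|^{\,q})^{1/q}$, with equality iff $|\lambda_i|^\ell$ is proportional to $|\beta_i|^{\,q}$ and $\|\bm\lambda\|_\ell=c$. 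Since the objective is blind to signs, any sign pattern is optimal once the magnitudes are fixed, and the stated choice $\lambda_i^*=\sign(\beta_i)\,c\sqrt[\ell]{|\beta_i|^{\,q}/\sum_{j\in[k]}|\beta_j|^{\,q}}$ realizes the optimal magnitudes; I would verify feasibility and the matching objective value exactly as before.

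The main obstacle, and the place I would spend the most care, is handling the degenerate and boundary cases of the H\"older argument. When $\sum_{j\in[k]}(\beta_j)_+^{\,q}=0$ (all of $\beta_1,\dots,\beta_k$ are nonpositive, or all the relevant $|\beta_j|$ vanish), the closed-form expression is a $0/0$ indeterminate and must be read as $\bm\lambda^*=\bm 0$, which is indeed optimal since the objective is then maximized at zero and the norm constraint need not bind; I would treat this case separately. I would also confirm the two endpoint exponents: $\ell=1$ forces $q=\infty$, where the $\sqrt[\ell]{\cdot}$ formula concentrates all mass on a largest-coefficient coordinate, and $\ell=\infty$ forces $q=1$, where $\lambda_i^*\to c$ on every coordinate with $(\beta_i)_+>0$. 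In both limits I would check that the formula reproduces the elementary optimizer of a linear function over the $\ell_1$- and $\ell_\infty$-balls, so that the statement holds uniformly over $\ell\in[1,\infty]$.
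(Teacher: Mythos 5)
Your proposal is correct and follows essentially the same route as the paper's proof: restrict to the support $[k]$, discard coordinates with nonpositive $\beta_i$ in the $\S_+^n$ case via the positive part, apply H\"older's inequality with conjugate exponent $q$, and verify that the stated proportional solution $\lambda_i^{\ell}\propto(\beta_i)_+^{q}$ (resp.\ $\beta_i^q$, $|\beta_i|^q$) attains the bound using $q/\ell+1=q$. Your additional treatment of the degenerate case $\sum_{j\in[k]}(\beta_j)_+^{q}=0$ and the endpoint exponents $\ell\in\{1,\infty\}$ is sound extra care that the paper's proof leaves implicit, but it does not change the argument.
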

\begin{proof}
    See Appendix \ref{proof:closed}. \qed
\end{proof}

Since the \ref{eq_rel_rank1} is equivalent to solving $(\k+1)$ subproblems over sets $\{\Y^s\}_{s\in [\k+1]}$ defined in \eqref{eq_ys},  we apply column generation \Cref{algo_cg} to  solving each subproblem  and then take the best one among $(\k+1)$ output solutions. To be specific, for each $s\in [\k+1]$, the \Cref{algo_cg} generates a new column by solving 
\begin{align}\label{pp}
 \V_{\rm P}^s:= \max_{\bm X\in \Y^s}  \bigg \langle -\bm A_0 + \sum_{i\in [m]} ((\mu_i^l)^* - (\mu_i^u)^* ) \bm A_i, \bm X \bigg\rangle = \max_{\bm X\in \Y^s}  \left \langle \tilde{\bm A}, \bm X \right\rangle,
\end{align}
which replaces the feasible set $\conv(\X)$ in \ref{eq_pp} with set  $\Y^s$ in \eqref{eq_ys}. 
We close this subsection by simplifying the pricing problem \eqref{pp} as follows.
\begin{corollary}\label{cor:sympp}
Given $\Q := \S^n$ in \eqref{eq_set}, suppose that  $\tilde{\bm A} := \bm U \Diag(\bm \beta) \bm U^{\top} $ denotes the eigen-decomposition of  matrix $\tilde{\bm A}$ with  eigenvalues $\beta_1 \ge \cdots \ge \beta_n$.
 Then for each $s\in [\k+1]$, the pricing problem \eqref{pp} has an optimal solution $\bm X^* = \bm U  \Diag(\bm \lambda^*) \bm U^{\top}$, where  $\bm \lambda^*\in \Re^n$ equals
\begin{align} 
\bm \lambda^* :=\max_{\bm \lambda \in \Re^n} \big\{\bm \lambda^{\top} \bm \beta:  \lambda_1 \ge \cdots \ge \lambda_n, \lambda_s =\lambda_{s+n- k-1}=0,
f(\bm \lambda)\le 0\big\}.
\end{align}
\end{corollary}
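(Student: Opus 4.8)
The plan is to reduce the matrix pricing problem \eqref{pp} over the majorization-based set $\Y^s$ to the stated finite-dimensional vector program by peeling off the eigenvector degrees of freedom first and the majorization slack second. Writing any feasible $\bm X \in \Y^s$ through its defining vector $\bm x$ (with $f(\bm x)\le 0$, $x_1 \ge \cdots \ge x_n$, $x_s = x_{s+n-\k-1} = 0$, and $\bm x \succeq \bm\lambda(\bm X)$), I would first split the maximization as $\V_{\rm P}^s = \max_{\bm x}\ \max_{\bm X:\, \bm x \succeq \bm\lambda(\bm X)} \langle \tilde{\bm A}, \bm X\rangle$, where the outer maximum ranges over all vectors $\bm x$ admissible in \eqref{eq_ys}.

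I would then handle the inner maximum for a fixed eigenvalue content of $\bm X$. Fixing $\bm\mu := \bm\lambda(\bm X)$ sorted in descending order and optimizing only over the orthogonal eigenframe of $\bm X$, von Neumann's trace inequality for symmetric matrices yields $\langle \tilde{\bm A}, \bm X\rangle \le \sum_{i\in[n]} \beta_i \mu_i$, with equality attained when $\bm X$ shares the eigenbasis $\bm U$ of $\tilde{\bm A}$ and its eigenvalues are matched to $\bm\beta$ in the same descending order, i.e.\ $\bm X = \bm U \Diag(\bm\mu)\bm U^\top$. Crucially, this inequality holds for arbitrary (possibly negative) eigenvalues, so the symmetric indefinite setting introduces no sign obstruction. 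Hence the inner maximum equals $\max_{\bm\mu:\, \bm x \succeq \bm\mu} \sum_{i} \beta_i \mu_i$.

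Next, I would collapse the majorization slack. By Rado's theorem, the set $\{\bm\mu : \bm x \succeq \bm\mu\}$ is precisely the permutohedron, the convex hull of all permutations of $\bm x$; maximizing the linear functional $\bm\beta^\top \bm\mu$ over this polytope is attained at a vertex, and the rearrangement inequality identifies the maximizing vertex as the permutation aligning $\bm\mu$ with $\bm\beta$. Since both $\bm\beta$ and $\bm x$ are already sorted in descending order, this maximizer is $\bm\mu = \bm x$, with optimal value $\bm\beta^\top \bm x$. Substituting back gives $\V_{\rm P}^s = \max_{\bm x}\{\bm\beta^\top \bm x : f(\bm x)\le 0,\ x_1\ge\cdots\ge x_n,\ x_s = x_{s+n-\k-1}=0\}$, which is the claimed vector program after renaming $\bm x$ to $\bm\lambda$; tracing the equality conditions backward shows the optimizing matrix is $\bm X^* = \bm U\Diag(\bm\lambda^*)\bm U^\top$.

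The main obstacle is the joint collapse: one must argue that pushing the eigenvalues of $\bm X$ from a majorized vector $\bm\mu$ up to the extreme vector $\bm x$ can only help the objective, simultaneously with aligning the eigenvectors to $\bm U$. Both moves are justified by pairing von Neumann's inequality (for the eigenframe) with the rearrangement inequality over the permutohedron (for the eigenvalues); the only care needed is to check that the descending-order constraint on $\bm\mu$ does not bind, which follows because the unconstrained permutohedron maximizer is itself already sorted in descending order. This mirrors, for a single index $s$, the reduction carried out in the proof of \Cref{them:ppsol}(iii).
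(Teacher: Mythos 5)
Your proposal is correct, and it reaches the corollary by a somewhat different route than the paper, which simply declares the proof ``identical to that of Part (iii) in \Cref{them:ppsol}.'' That proof parametrizes $\bm X$ by its eigen-decomposition, bounds the diagonal compressions $y_i=\bm q_i^{\top}\tilde{\bm A}\bm q_i$ through the paper's Claim 2 (the Ky Fan partial-trace maxima, giving $\bm\beta \succeq \bm y$), relaxes the frame optimization to a small linear program in $\bm y$ whose optimum is $\bm y=\bm\beta$, and finishes with the rearrangement inequality; in effect it optimizes over matrices whose eigenvalue vector itself satisfies the vector constraints. You instead dispatch the eigenframe in one stroke with von Neumann/Fan's trace inequality (valid, as you note, for signed eigenvalues) and then handle the majorization slack $\bm x \succeq \bm\lambda(\bm X)$ in \eqref{eq_ys} via Rado's theorem: the feasible eigenvalue vectors for a fixed $\bm x$ form the permutohedron of $\bm x$, a linear functional is maximized at a vertex, and rearrangement identifies that vertex as $\bm x$ itself, which is already sorted, so the ordering constraint does not bind. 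This Rado step is a genuine value-add relative to the paper's pointer: the proof of \Cref{them:ppsol} begins by replacing $\conv(\X)$ with $\X$ under the linear objective, but the pricing problem \eqref{pp} is posed over $\Y^s$, which is not written as a convex hull of a rank-constrained set, so the collapse from $\bm x \succeq \bm\lambda(\bm X)$ down to $\bm\lambda(\bm X)=\bm x$ deserves exactly the explicit argument you give. Your closing check that $\bm X^*=\bm U\Diag(\bm\lambda^*)\bm U^{\top}$ lies in $\Y^s$ (take $\bm x:=\bm\lambda^*$, so $\bm x\succeq\bm x$ trivially) completes the equality-case bookkeeping. Two cosmetic points: the corollary's display should read $\argmax$ rather than $\max$, and its index $\lambda_{s+n-k-1}$ versus the set's $x_{s+n-\k-1}$ is an inconsistency in the paper's notation; your proof correctly follows the definition of $\Y^s$.
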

\begin{proof}
The proof is identical to that of Part (iii) in \Cref{them:ppsol} and thus is omitted. \qed
\end{proof}
 
\subsection{Rank-Reduction Algorithm}
The column generation \Cref{algo_cg} may not always be able to output a solution that satisfies the theoretical rank bounds. To resolve this, this subsection designs a rank-reduction  \Cref{algo_reduce} to find an alternative solution of the same or smaller objective value while satisfying the desired rank bounds. 

Given a (near-)optimal solution $\bm X^*$ of \ref{eq_rel_rank} returned by \Cref{algo_cg} that violates the proposed rank bound, our rank-reduction  \Cref{algo_reduce} runs as follows: (i) since $\bm X^*$ is not an extreme point of \ref{eq_rel_rank}, then we can find a direction $\bm Y$ in its feasible set, along which the objective value will decrease or stay the same; (ii) we move $\bm X^*$ along the direction $\bm Y$ until a point on the boundary of the feasible set of \ref{eq_rel_rank}; (iii) we update $\bm X^*$ to be the new boundary point found; and (iv) finally, we terminate the iteration when no further movement is available, i.e., $\bm X^*$ is a feasible extreme point in \ref{eq_rel_rank} that must satisfy the rank bound. 
Hence, the rank-reduction procedure in \Cref{algo_reduce} can be viewed as searching for an extreme point. 

Suppose that $\bm \lambda^* \in \Re^n$ denotes eigen-(singular-)value vector of matrix $\bm X^*$ in the descending order. Next, we show how to construct  matrix $\bm Y$ and vector $\bm x^*$ with different matrix spaces in the rank-reduction \Cref{algo_reduce} (i.e., Step 5 and Step 4), which represents moving direction and determines the moving distance, respectively. 

When $\Q:=\S_+^n$, 
according to the convex hull description in \Cref{prop:conv}, there exists an optimal solution $\bm x^*$ at Step 4 such that $f(\bm x^*) \le 0$.
When $\Q:=\Re^{n\times p}$,  the convex hull $\conv(\X)$ described in \Cref{prop:nonsymconv} also ensures us a solution $\bm x^*$.
When $\Q:=\S^n$, we note that  the solution $\bm X^*$ is feasible to \ref{eq_rel_rank1} and thus belongs to some set $\Y^s$ with $s\in [\k+1]$, which, according to  \Cref{prop:symconv},
allows us to compute a pair of solutions $(\bm x^*, s^*)$. 

Next, we find the matrix $\bm Y$ at Step 5 based on the proofs of rank bounds. Suppose that the near-optimal solution $\bm X^*$ has rank $r$. When $\Q:=\S_+^n$,  we define 
\begin{align}\label{eq_direct1}
    \bm Y:=\bm Q_2 \bm \Delta \bm Q_2^{\top}, \ \ \exists \bm \Delta \in \S^{r-\k+1},  \langle  \bm Q_2^{\top} \bm A_i \bm Q_2, \bm \Delta \rangle = 0, \forall i \in [m],   \tr(\bm \Delta)  =0,
\end{align}
where matrix $\bm Q_2 \in \Re^{n\times r-\k+1}$ consists of eigenvectors corresponding to the $(r-\k+1)$ least nonzero  eigenvalues (i.e., $\lambda_{\k}^*\cdots,\lambda^*_r$) of matrix $\bm X^*$. When $\Q:=\Re^{n\times p}$, we define 
\begin{align}\label{eq_direct2}
\bm Y:=\bm Q_2 \bm \Delta \bm P_2^{\top}, \ \ \exists \bm \Delta \in \S^{r-\k+1},  \langle  \bm P_2^{\top} \bm A_i \bm Q_2, \bm \Delta \rangle = 0, \forall i \in [m],   \tr( \bm \Delta )  =0,
\end{align}
where matrices $\bm Q_2 \in \Re^{n\times r-\k+1}$, $\bm P_2 \in \Re^{p\times r-\k+1}$ consist of left and right singular vectors corresponding to the $(r-\k+1)$ least nonzero singular values of matrix $\bm X^*$. 
When $\Q:=\S^n$,  we considers the positive and negative eigenvalues in matrix $\bm X^*$, respectively, and suppose $\lambda^*_1  \ge \cdots \ge  \lambda^*_{d_1-1}>0= \lambda^*_{d_1}=\cdots = \lambda^*_{d_2-1} =0 > \lambda^*_{d_2}\ge \cdots\ge \lambda^*_{n}$. Then, inspired by the proof of \Cref{them:symface}, let us
 define a block matrix $\bm \Delta$ below corresponding to positive and negative eigenvalues, respectively,
\begin{equation}
    \begin{aligned}\label{eq_direct3}
&\bm Y:=\begin{pmatrix}
\bm Q_1^2  & \bm Q_3^1 
\end{pmatrix}  \bm \Delta \begin{pmatrix}
\bm Q_1^2  & \bm Q_3^1
\end{pmatrix}^{\top}, \ \ \exists  \bm \Delta \in \S^{r-\k+2}:= \begin{pmatrix}
\bm \Delta_1 \in \S^{d_1-s^*+1} &\bm 0\\
\bm 0 & \bm \Delta_3 \in \S^{s^*+n-\k -d_2+1}
\end{pmatrix}, \\
&\langle  \begin{pmatrix}
\bm Q_1^2  & \bm Q_3^1
\end{pmatrix}^{\top} \bm A_i \begin{pmatrix}
\bm Q_1^2  & \bm Q_3^1
\end{pmatrix}, \bm \Delta \rangle = 0, \forall i \in [m], \tr(\bm \Delta_1) =0, \tr(\bm \Delta_3)=0,
\end{aligned}
\end{equation}
where given solution $s^*$ at Step 4 of \Cref{algo_reduce},  eigenvector matrices $\bm Q_1^2, \bm Q_3^1$ correspond to the positive eigenvalues $(\lambda^*_{s^*-1}, \cdots, \lambda^*_{d_1-1})$ and negative eigenvalues $(\lambda^*_{d_2}, \cdots,\lambda^*_{s^*+n-\k})$, respectively.

We let $\bm Y:=-\bm Y$ if $\langle\bm A_0, \bm Y\rangle>0$.
Given a direction $\bm Y$, we can find the largest moving distance $\delta^*$ at Step 9 of \Cref{algo_reduce}.  To be specific, following the  proofs in \Cref{prop:rank,them:nonface,them:symface}, we let $\bm\Lambda_2:=\Diag(\lambda^*_{\k},\cdots, \lambda^*_r)$  when $\Q:=\S_+^n$ or $\Q:=\Re^{n\times p}$ and let $\bm \Lambda_1^2  \in \S_{++}^{d_1-s^*+1} := \Diag(\lambda^*_{s^*-1}, \cdots, \lambda^*_{d_1-1})$, $\bm \Lambda_3^1 \in- \S_{++}^{s^*+n-\k-d_2+1}:= \Diag(\lambda^*_{d_2}, \cdots, \lambda^*_{s^*+n-\k})$  when $\Q:=\S^n$.

Following the searching procedure above, we show that the \Cref{algo_reduce} can output an alternative  solution satisfying the  rank bounds as summarized  below.
Besides, we show that \Cref{algo_reduce} always terminates and can strictly reduce the rank at each iteration for a special case in \Cref{cor:converge}. More specifically, at each iteration of  \Cref{algo_reduce}, the convex program at Step 4 can be efficiently solved by commercial solvers or first-order methods; we can also readily find the direction $\bm Y$ in \eqref{eq_direct1}--\eqref{eq_direct3}  using the matrix $\bm \Delta$  in the null space of linear equations; and the moving distance $\delta^*$ at Step 9 requires solving a simple univariate convex optimization problem.
 In our numerical study, the \Cref{algo_reduce} efficiently reduces the rank  and converges fast.

\begin{algorithm}[ht] 
\caption{Rank Reduction  for \ref{eq_rel_rank} or \ref{eq_rel_rank1}  with $\V_{\rel}>-\infty$, $\V_{\rel\text{-\rm I}}>-\infty$} \label{algo_reduce}
\begin{algorithmic}[1]
\State \textbf{Input:}  Data $\bm A_0$, $\{\bm A_i, b_i^l, b_i^m\}_{i\in [m]}$,  domain set $\X$ with a line-free convex hull, matrix space $\Q\in \{\S_+^n, \Re^{n\times p}, \S^n\}$, integer $\k$ being defined in \Cref{def:k}, and scalar $\delta^*:=1$

    \State Initialize an $\epsilon$-optimal solution $\bm X^*$ of \ref{eq_rel_rank} or \ref{eq_rel_rank1} returned by \Cref{algo_cg}

\While{$\delta^* > 0$}
\State Let $\bm \lambda^* \in \Re^n$ denote eigenvalues  of matrix $\bm X^*$  in descending order  if $\bm X^*\in \Q$ is symmetric, or denote singular values  otherwise. Then, compute
\begin{align*}
 \begin{cases}
      \bm x^* \in  \argmin_{\bm x \in   \Re_+^n}
\{f(\bm x):  x_1 \ge \cdots \ge x_n,  x_{k+1}=0, \bm x \succeq \bm \lambda^* \}, & \text{if } \Q:=\S_+^n; \\
        \bm x^* \in \argmin_{\bm x \in   \Re_+^n}
\{f(\bm x):  x_1 \ge \cdots \ge x_n,  x_{k+1}=0, \bm x \succ \bm \lambda^* \}, & \text{if } \Q:=\Re^{n\times p}; \\
 ( \bm x^*, s^*) \in \argmin_{\bm x \in   \Re^n, s\in [\k+1]}
\{f(\bm x):  x_1 \ge \cdots \ge x_n,  x_{s}=x_{s+n-\k-1}=0, \bm x \succeq \bm \lambda^* \}, & \text{if } \Q:=\S^n.
  \end{cases}  
\end{align*}

\State Find a nonzero  matrix $\bm Y$
\begin{align*}
  \bm  Y:= \begin{cases}
      \bm Q_2 \bm \Delta \bm Q_2^{\top} \text{ as defined in \eqref{eq_direct1},} & \text{if } \Q:=\S_+^n;  \\
     \bm Q_2 \bm \Delta \bm P_2^{\top}\text{ as defined in \eqref{eq_direct2},} & \text{if } \Q:=\Re^{n\times p};\\
\begin{pmatrix}
\bm Q_1^2  & \bm Q_3^1 
\end{pmatrix}  \bm \Delta \begin{pmatrix}
\bm Q_1^2  & \bm Q_3^1
\end{pmatrix}^{\top}\text{ as defined in \eqref{eq_direct3},} & \text{if } \Q:=\S^n. 
  \end{cases} 
\end{align*}

\If{$\bm Y$ does not exist }
\State Set $\delta^*=0$ and break the loop
\Else
\State Let $\bm Y := -\sign\left(\langle\bm A_0, \bm Y\rangle\right) \bm Y$ and  $\bm X(\delta):= \bm X^*+\delta \bm Y$, and  compute $  \delta^*$ by
$$
 \begin{cases}
      \argmax_{\delta \ge 0} \left\{\delta: \bm x^* \succeq \bm \lambda (\bm X(\delta)),    
\lambda_{\min}( \bm  \Lambda_2+\delta \bm \Delta)\ge 0  \right\}, & \text{if } \Q:=\S_+^n;  \\ 
\argmax_{\delta \ge 0} \left\{\delta:  \bm x^* \succ \bm \lambda (\bm X(\delta)), 
\lambda_{\min}( \bm  \Lambda_2+\delta \bm \Delta)\ge 0 \right\}, & \text{if } \Q:=\Re^{n\times p}; \\  
\argmax_{\delta \ge 0} \left\{\delta: \bm x^* \succeq \bm \lambda(\bm X(\delta)),  \lambda_{\min}(\bm \Lambda_{1}^2 + \delta \bm \Delta_1)\ge 0, \lambda_{\max}(\bm \Lambda_{3}^1 + \delta \bm \Delta_3)\le 0  \right\}, & \text{if } \Q:=\S^{n}. 
   \end{cases}
$$
\State Update $\bm X^*:=\bm X(\delta^*)$ 

\EndIf

\EndWhile
\State \textbf{Output:} Solution $\bm X^*$
\end{algorithmic}
\end{algorithm}

\vspace{-0.4em}
\begin{restatable}{theorem}{thmreduce}\label{them:reduce}
For the  rank-reduction \Cref{algo_reduce}, the following statements must hold: \par
\noindent (i)
 \Cref{algo_reduce} always terminates; and \par\noindent (ii) Let $\bm X^*$ denote the output solution of \Cref{algo_reduce}. Then $\bm X^*$ is $\epsilon$-optimal to either \ref{eq_rel_rank} or \ref{eq_rel_rank1}, i.e., 
 \begin{align*}
 \begin{cases}
   \V_{\rel}\le  \langle\bm A_0, \bm X^*\rangle \le  \V_{\rel} +\epsilon  & \text{if } \Q:=\S_+^n \text{ or } \Q:= \Re^{n\times p};\\
\V_{\rel\text{-\rm I}}  \le  \langle\bm A_0, \bm X^*\rangle\le \V_{\rel\text{-\rm I}} +\epsilon  & \text{if } \Q:=\S^n.
 \end{cases},
 \end{align*}
 and the rank of solution $\bm X^*$ satisfies
 \begin{align*}
     \rank(\bm X^*) \le \begin{cases}
         \tilde k + \lfloor \sqrt{2\tilde m+9/4}-3/2\rfloor, & \text{if } \Q:=\S_+^n \text{ or } \Q:=\Re^{n\times p};\\
     \tilde k + \lfloor \sqrt{4\tilde m+9}\rfloor-3, & \text{if } \Q:=\S^n.
     \end{cases},
 \end{align*}
 where integer $\k$ is being defined in \Cref{def:k}.
\end{restatable}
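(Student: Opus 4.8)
The plan is to establish three things in order: that each update preserves feasibility and never increases the objective (hence $\epsilon$-optimality of the output), that the loop terminates, and that the terminal solution obeys the stated rank bound. I would begin with feasibility and optimality, which are the most direct. By construction each candidate direction satisfies $\langle \bm A_i, \bm Y\rangle = 0$ for all $i\in[m]$ --- this is exactly the linear system defining $\bm\Delta$ in \eqref{eq_direct1}--\eqref{eq_direct3} --- so the update $\bm X(\delta)=\bm X^*+\delta\bm Y$ leaves every $\langle \bm A_i,\bm X\rangle$ unchanged and keeps all two-sided inequalities satisfied. The step length $\delta^*$ at Step 9 explicitly enforces the majorization $\bm x^*\succeq\bm\lambda(\bm X(\delta))$ --- equivalently membership in $\conv(\X)$ via \Cref{prop:conv}, \Cref{prop:nonsymconv}, or \Cref{prop:symconv}, using $f(\bm x^*)\le 0$ from Step 4 --- together with semidefiniteness of the perturbed block, so $\bm X(\delta^*)$ stays feasible for \ref{eq_rel_rank} (resp.\ inside the same $\Y^{s^*}$ for \ref{eq_rel_rank1}). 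Since we flip $\bm Y$ to $-\sign(\langle\bm A_0,\bm Y\rangle)\bm Y$, the objective is non-increasing along the iterations; combined with the initial $\epsilon$-optimality from \Cref{prop:conver} and the trivial lower bound $\langle\bm A_0,\bm X^*\rangle\ge\V_{\rel}$ (resp.\ $\ge\V_{\rel\text{-\rm I}}$) for any feasible point, this gives the claimed $\epsilon$-optimality.

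Next I would read the rank bound off the termination condition ``$\bm Y$ does not exist.'' Nonexistence of a nonzero $\bm Y$ is equivalent to nonexistence of a nonzero symmetric $\bm\Delta$ in a homogeneous linear system. For $\Q=\S_+^n$ or $\Q=\Re^{n\times p}$ the unknown lies in $\S^{r-\k+1}$ (with $r=\rank(\bm X^*)$) subject to at most $\tilde m$ projected conditions from the $\bm A_i$ plus the single trace condition $\tr(\bm\Delta)=0$. Such a system can admit only the zero solution when its constraint count reaches the dimension, i.e.\ $\dim\S^{r-\k+1}=(r-\k+1)(r-\k+2)/2\le\tilde m+1$; solving this quadratic for $r$ yields $r\le\k+\lfloor\sqrt{2\tilde m+9/4}-3/2\rfloor$, matching \Cref{them:rank,them:nonsym}. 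For $\Q=\S^n$ the unknown is block-diagonal with blocks $\bm\Delta_1\in\S^{a}$, $\bm\Delta_3\in\S^{b}$ where $a+b=r-\k+2$, now subject to at most $\tilde m$ conditions plus two trace conditions; nonexistence forces $a(a+1)/2+b(b+1)/2\le\tilde m+2$. Maximizing $a+b$ over this convex sublevel set is attained at $a=b$ (linear objective, convex and symmetric constraint), and solving the resulting quadratic gives $r\le\k+\lfloor\sqrt{4\tilde m+9}\rfloor-3$, matching \Cref{them:sym_rank}. This dimension count is precisely the constructive form of the face-based arguments in \Cref{prop:rank,them:nonface,them:symface}.

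For termination I would exhibit a strictly decreasing nonnegative integer potential. At the maximal step $\delta^*$ one of the active constraints saturates: either the semidefiniteness condition $\lambda_{\min}(\bm\Lambda_2+\delta\bm\Delta)\ge 0$ (resp.\ its two-block analogue) becomes tight, driving a nonzero eigenvalue of $\bm X^*$ to zero so that $\rank(\bm X^*)$ strictly decreases, or a previously slack majorization inequality becomes an equality, pushing $\bm X^*$ onto a proper sub-face and strictly lowering the dimension of the minimal face of the feasible set containing it. Both the rank and the face dimension are nonnegative integers bounded above, so only finitely many iterations occur; when neither drop is available the homogeneous system has only $\bm\Delta=0$, the loop exits through Step 7, and the terminal rank bound from the previous paragraph applies.

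The main obstacle, I expect, is the termination analysis rather than the counting. One must verify that $\delta^*$ is well defined and strictly positive whenever $\bm Y\ne 0$ --- here $\tr(\bm\Delta)=0$ with $\bm\Delta\ne 0$ guarantees both positive and negative eigenvalues in $\bm\Delta$, keeping the perturbed block strictly feasible for small $\delta$ while forcing some eigenvalue to zero eventually --- and that reaching the boundary genuinely lowers the integer potential uniformly across all three matrix spaces. The $\Q=\S^n$ case is the most delicate, since the direction splits across the positive and negative spectral blocks and the step must be controlled simultaneously against $\lambda_{\min}(\bm\Lambda_1^2+\delta\bm\Delta_1)\ge 0$ and $\lambda_{\max}(\bm\Lambda_3^1+\delta\bm\Delta_3)\le 0$ while remaining in the correct $\Y^{s^*}$; reconciling the two-block worst case $a=b$ with the integer geometry is where the bookkeeping concentrates.
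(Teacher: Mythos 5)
Your treatment of part (ii) tracks the paper's proof closely: the $\epsilon$-optimality argument (directions orthogonal to every $\bm A_i$, the sign flip making the objective non-increasing, and the initial $\epsilon$-optimality from \Cref{prop:conver}) is the same, and your dimension counts for the terminal rank bound are exactly the arithmetic behind the paper's contradiction argument, which invokes \Cref{prop:rank,them:nonface,them:symface} once no nonzero $\bm \Delta$ can solve the homogeneous system (including the balanced two-block case $a=b$ for $\Q:=\S^n$, which mirrors the minimization over $d_1$ in the proof of \Cref{them:symface}). One caveat: your stated reason that $\delta^*>0$ whenever $\bm Y\neq \bm 0$ --- that $\tr(\bm \Delta)=0$ keeps the perturbed block strictly feasible --- covers only the semidefiniteness condition at Step 9. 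Strict positivity against the majorization condition $\bm x^* \succeq \bm \lambda(\bm X(\delta))$ is the nontrivial part, and it is precisely Claim \ref{claim1} in the proof of \Cref{prop:rank} (the perturbation analysis via \Cref{lem:ineq} and the constant $c^*$). You flag this as delicate, but it must actually be discharged, since the rank bound is read off the assertion that termination implies ``no nonzero $\bm Y$ exists.''

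The genuine gap is in part (i). Your integer potential --- rank, or else the dimension of the minimal face of the feasible set containing the iterate --- does not work, because Step 9 enforces majorization against the \emph{fixed} certificate $\bm x^*$ computed at Step 4, and this certificate is recomputed at every iteration. When the step is blocked by a majorization partial sum becoming tight with respect to the old $\bm x^*$, the new iterate need not lie on any proper face of the feasible region of \ref{eq_rel_rank}: tightness against one particular certificate is not boundary behavior of $\conv(\X)$, since membership only requires \emph{some} majorizing $\bm x$ with $f(\bm x)\le 0$, and the next pass through Step 4 can restore slack with a new $\bm x^*$. In that event neither the rank nor your face-dimension counter decreases, so the potential can stall; indeed, the rank is guaranteed to drop at every iteration only in the special case $\k=1$ (\Cref{cor:converge}). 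The paper's termination proof uses a different dichotomy: since $\tr(\bm Y)=0$ and $\bm Y$ perturbs only the $(r-\k+1)$ smallest positive eigenvalues (respectively, the two extreme spectral blocks when $\Q:=\S^n$), each update either strictly reduces the rank or strictly reduces the real quantity $\tr(\bm X^*)-\|\bm X^*\|_{(k)}$, the sum of those eigenvalues, as in inequality \eqref{eq_inequality}; the remaining case --- equality there with unchanged rank --- is excluded by showing the step could then be extended past $\delta^*$ via the Claim \ref{claim1} feasibility argument, contradicting the maximality of $\delta^*$. To repair your proof you would need to replace the face-dimension counter with this monotone measure (or otherwise rule out recurring stalls of your integer potential).
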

\begin{proof}
    See Appendix \ref{proof:reduce}. \qed
\end{proof}

\begin{restatable}{corollary}{corconverge}\label{cor:converge}
For a domain set $\X$ with $\k=1$ being defined in \Cref{def:k}, the rank-reduction \Cref{algo_reduce} reduces the solution's rank by at least one at each iteration.
\end{restatable}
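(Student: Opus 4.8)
The plan is to prove that, when $\k=1$, a single productive iteration of \Cref{algo_reduce} forces at least one nonzero eigenvalue (or singular value) of the current iterate $\bm X^*$ to vanish. The starting observation is that for $\k=1$ every eigen-/singular-vector block used to assemble the direction $\bm Y$ in \eqref{eq_direct1}--\eqref{eq_direct3} has $r-\k+1=r$ columns, where $r:=\rank(\bm X^*)$; that is, the block spans the \emph{entire} range (and, for $\Q:=\Re^{n\times p}$, the entire row space) of $\bm X^*$. Consequently the updated iterate takes the form $\bm X(\delta)=\bm Q_2(\bm\Lambda_2+\delta\bm\Delta)\bm Q_2^\top$ when $\Q:=\S_+^n$, the form $\bm X(\delta)=\bm Q_2(\bm\Lambda_2+\delta\bm\Delta)\bm P_2^\top$ when $\Q:=\Re^{n\times p}$, and, after the block reduction described below, an analogous orthogonally congruent form when $\Q:=\S^n$. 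In each case the outer factors have orthonormal columns, so $\rank(\bm X(\delta))=\rank(\bm\Lambda_2+\delta\bm\Delta)$ for every $\delta$, and it suffices to show that the step $\delta^*$ selected at Step~9 makes $\bm\Lambda_2+\delta^*\bm\Delta$ singular.

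Next I would record the invariants of the move. By construction in \eqref{eq_direct1}--\eqref{eq_direct3}, the perturbation $\bm\Delta$ is traceless and satisfies $\langle\bm Q_2^\top\bm A_i\bm Q_2,\bm\Delta\rangle=0$ for all $i\in[m]$, with the obvious analogues in the other spaces; hence $\tr(\bm X(\delta))$ and each $\langle\bm A_i,\bm X(\delta)\rangle$ are constant in $\delta$, so all linear inequalities of \ref{eq_rel_rank} (or \ref{eq_rel_rank1}) and the trace-type spectral constraints are preserved, while the sign convention $\bm Y:=-\sign(\langle\bm A_0,\bm Y\rangle)\bm Y$ keeps the objective non-increasing. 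Since $\bm\Delta\neq\bm 0$ and $\tr(\bm\Delta)=0$, the matrix $\bm\Delta$ possesses a strictly negative eigenvalue; therefore the largest $\delta\ge 0$ maintaining $\lambda_{\min}(\bm\Lambda_2+\delta\bm\Delta)\ge 0$ is some finite $\bar\delta>0$, at which $\lambda_{\min}(\bm\Lambda_2+\bar\delta\bm\Delta)=0$ and hence $\rank(\bm X(\bar\delta))=\rank(\bm\Lambda_2+\bar\delta\bm\Delta)\le r-1$.

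It then remains to verify that the maximal feasible step returned at Step~9 is exactly $\bar\delta$, i.e. that the majorization constraint $\bm x^*\succeq\bm\lambda(\bm X(\delta))$ is not the first to become active. This is the crux, and it is precisely where the hypothesis $\k=1$ enters. Specializing the convex-hull descriptions of \Cref{def:k} and \Cref{prop:conv} (resp. \Cref{prop:symconv}) to $\k=1$ forces the Step~4 vector $\bm x^*$ to carry a single nonzero entry equal to $\tr(\bm X^*)$, since the remaining coordinates are pinned to $0$ by $x_{2}=0$; the majorization $\bm x^*\succeq\bm\lambda(\bm X(\delta))$ then collapses to the trace equality $\tr(\bm X(\delta))=\tr(\bm X^*)$ together with $\lambda_{\max}(\bm X(\delta))\le\tr(\bm X^*)$, both automatically satisfied by the nonnegative spectrum of the (semi)definite matrix $\bm\Lambda_2+\delta\bm\Delta$ once the trace is fixed. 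Because the move preserves the trace, this constraint holds for every $\delta\in[0,\bar\delta]$; the step therefore terminates only at the definiteness boundary, yielding $\delta^*=\bar\delta$ and $\rank(\bm X(\delta^*))\le r-1$.

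Finally, for $\Q:=\S^n$ I would reduce the block construction to the semidefinite case. When $\k=1$ the index $s^*$ ranges over $[\k+1]=\{1,\k+1\}$, so by remark~(a) following \Cref{them:sym_rank} the iterate $\bm X^*$ is either positive or negative semidefinite; the two-block direction in \eqref{eq_direct3} then collapses to a single traceless block spanning the whole range, and the argument of the preceding paragraphs applies verbatim with $\lambda_{\min}(\cdot)\ge 0$ replaced by the one-sided definiteness of the active block. The main obstacle throughout is the reconciliation carried out in the third paragraph: ruling out a premature stop at a majorization boundary that would leave $\rank(\bm X^*)$ unchanged. The assumption $\k=1$ is exactly what dissolves this difficulty, because it shrinks the spectral envelope $\bm x^*$ to a single atom at $\tr(\bm X^*)$ and thereby reduces the feasible slice traversed by the move to the trace-fixed (semi)definite set, on whose boundary the rank must strictly drop.
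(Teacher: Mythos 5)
Your proposal is correct, but it proves the corollary by a different, more self-contained route than the paper. The paper's proof is a one-liner on top of the convergence analysis of \Cref{them:reduce}: that proof establishes a dichotomy that at every iteration either the tail sum $\sum_{i\in[\k,r]}\lambda_i^*$ of the $(r-\k+1)$ smallest nonzero eigen/singular values strictly decreases or the rank strictly drops (the non-drop alternative being excluded by an extension-of-$\delta^*$ contradiction borrowed from the proof of \Cref{prop:rank}); since with $\k=1$ that tail sum is exactly $\tr(\bm X^*)$, which is invariant because $\tr(\bm\Delta)=0$, the first alternative is impossible and the rank must fall. You instead argue directly about which constraint in Step~9 binds: with $\k=1$ the direction perturbs the \emph{entire} nonzero spectral block, so $\rank(\bm X(\delta))=\rank(\bm\Lambda_2+\delta\bm\Delta)$; a nonzero traceless symmetric $\bm\Delta$ has a negative eigenvalue, so the semidefiniteness constraint becomes active at a finite $\bar\delta>0$ with $\lambda_{\min}=0$; and the majorization constraint is vacuous because the spectral certificate $\bm x^*$ collapses to a single atom, making every partial sum bounded by the invariant total. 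This buys something the paper does not state explicitly, namely $\delta^*=\bar\delta$ (the step always reaches the definiteness boundary), and it isolates precisely why $\k=1$ matters, whereas the paper's argument is shorter but leans on the somewhat implicit extension step inside \Cref{them:reduce}. One caveat: your single-atom claim for $\Q:=\S_+^n$ and $\Q:=\Re^{n\times p}$ reads Step~4 with the constraint $x_{\k+1}=0$, while the algorithm as printed has $x_{k+1}=0$; your reading is the one consistent with the certificate used in the paper's own Claim~\ref{claim1} (where $x^*_{\k+1}=\cdots=x^*_n=0$), but taken literally with $k>\k=1$ the argmin at Step~4 need not be single-atom, so you should either note the typo or add the observation that the single-atom vector $(\tr(\bm X^*),0,\dots,0)$ is itself feasible (by \Cref{def:k} and the $\k$-version of the hull description) and may be selected as $\bm x^*$. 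A second minor point: in the nonsymmetric case the hull only gives $\|\bm X^*\|_*\le x_1^*$ (weak majorization has no total-sum equality), so the atom is at least, not exactly, the nuclear norm--this does not affect your conclusion.
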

\begin{proof}
	See Appendix \ref{proof:converge}. \qed
\end{proof}

\section{Numerical Study}\label{sec:num}
In this section, we numerically test the proposed rank bounds,  column generation \Cref{algo_cg}, and rank-reduction \Cref{algo_reduce}   on three \ref{eq_rank} examples: 
 multiple-input and multiple-output (MIMO) radio communication network, optimal power flow, and \ref{app:lmc}. All the experiments are conducted in Python 3.6 with calls to Gurobi 9.5.2 and MOSEK 10.0.29 on a PC with 10-core CPU, 16-core GPU, and 16GB of memory. All the times reported are wall-clock times.

\subsection{LSOP  with Trace and Log-Det Spectral Constraints: MIMO Radio Network 
} 
In recent years, there has been an increasing interest in a special class of \ref{eq_rank} with trace and log-det spectral constraints in the communication and signal processing field. Specifically, the objective is to find a low-rank  transmit covariance matrix $\bm X \in \S_+^n$, subject to the trace and log-det constraints, where the former limits the  transmit power and the latter is often used for entropy and mutual information requirements (see, e.g., \citealt{yu2010rank} and references therein). Formally, we consider the \ref{eq_rank}  built on  a special domain set $\X $ below:
\begin{align*}
\X :=\{\bm X\in \S_+^n: \rank(\bm X)\le k, \tr(\bm X) \le U, \log\det(\bm I_n + \bm X) \ge L\},
\end{align*}
where $U, L\ge 0$ are pre-specified constants and here, the spectral function in \eqref{eq_set} is defined as $F(\bm X):= \max\{\tr(\bm X)-U, L-\log\det(\bm I_n + \bm X)\}$.

Since the domain set $\X $ above is defined in the positive semidefinite matrix space $\Q:=\S_+^n$, using the convex hull description in \Cref{prop:conv}, we can explicitly describe  the  $\conv(\X )$.
Then, for this specific \ref{eq_rank}, its 
\ref{eq_rel_rank} can be formulated as
\begin{align}\label{eq_lsopr1}
 \V_{\rel} := \min_{{\bm X \in \conv(\X )}}\left\{\langle\bm A_0, \bm X \rangle: 
b_i^l \le  \langle \bm A_i, \bm X\rangle \le b_i^u, \forall i \in [m] \right\}, 
\end{align}
where  
$
\conv(\X ):= \{\bm X \in \Q: \exists \bm x \in   \Re_+^n,   \sum_{i\in [n]}x_i \le U, \sum_{i\in [n]}\log(1+ x_i)\ge L,  x_1 \ge \cdots \ge x_n,  x_{k+1}=0,
\|\bm X\|_{(\ell)} \le \sum_{i\in [\ell]} x_{i}, \forall \ell \in [k], \tr(\bm X) = \sum_{i\in [k]}x_i\}.  
$

The \ref{eq_rel_rank} \eqref{eq_lsopr1} can be recast as a semidefinite program and  be directly solved by off-the-shelf solvers like MOSEK since the set $\conv(\X )$ above is semidefinite representable. However, the semidefinite program is  known to be hard to scale. 
An efficient approach to solving the \ref{eq_rel_rank} \eqref{eq_lsopr1} is through our  column generation \Cref{algo_cg}, which at each iteration, solves a linear program \ref{eq:rmp} and a vector-based \ref{eq_pp} \ref{eq:ppc1} based on \Cref{them:ppsol}, as defined below
\begin{align}\label{eq_pp_eg1}
\max_{\bm \lambda \in \Re_+^n} \bigg\{\bm \lambda^{\top} \bm \beta: \lambda_i=0, \forall i \in [k+1, n],  \sum_{i\in [k]}  \lambda_i \le U, \sum_{i\in [k]} \log(1+\lambda_i) \ge L \bigg\}.
\end{align}
Alternatively, without using \Cref{them:ppsol}, the original \ref{eq_pp}  can be solved as a semidefinite program by  plugging the set $\conv(\X )$ above when generating a new column. This column generation procedure is termed as the ``naive column generation algorithm." 
In the following, we numerically test the three methods available for solving the \ref{eq_rel_rank} \eqref{eq_lsopr1} on synthetic data in \Cref{table1}, which demonstrates the efficiency of our \Cref{algo_cg}. 

For the \ref{eq_rel_rank} \eqref{eq_lsopr1}, we consider a single testing case by fixing the parameters $n, m, k$ and generating random data $L,U$, $\bm A_0\in \S^n$, and $\{\bm A_i, b_i^l, b_i^u \}_{i\in [m]} \in \S^n \times \Re \times \Re$. Particularly, we first generate a rank-one matrix $\bm X_0:= \frac{1}{\|\bm x\|_2} \bm x \bm x^{\top}$, where the vector $\bm x \in \Re^n$ follows the standard normal distribution. Then, we  define the constants $U, L$ to be $U:=\tr(\bm X_0) + \sigma_1$ and $L:= \log\det(\bm I_n+\bm X_0) - \sigma_2$ with random numbers $\sigma_1, \sigma_2$  uniform in $[0,1]$, which  ensures the inclusion $\bm X_0 \in \X $. Next, we generate matrices $\{\bm A_i\}_{i\in \{0\}\cup [m]}\in \Re^{n\times n}$  from standard normal distribution and symmetrize them by $\bm A_i := (\bm A_i+\bm A_i^{\top})/2$ for all $i\in \{0\}\cup[m]$. 
Finally, for each $i\in [m]$, we let $b_i^l:=\langle\bm A_i, \bm X_0\rangle- \theta_i^1$ and $b_i^u:=\langle\bm A_i, \bm X_0\rangle + \theta_i^2$  with the random constants $ \theta_i^1, \theta_i^2$ uniform in $[0,1]$.

In \Cref{table1},  we compare the computational time (in seconds) and the rank of output solution of the \ref{eq_rel_rank} \eqref{eq_lsopr1} returned by three methods: MOSEK, column generation  with
and without the acceleration of \ref{eq_pp} \eqref{eq_pp_eg1}. It should be mentioned in \Cref{table1} and all the following tables that we let ``CG" denote the column generation and mark 	``--" in a unsolved case reaching the one-hour limit; for any output solution, we count the number of its eigenvalues or  singular values greater than $10^{-10}$ as its rank; and  we set the optimality gap of column generation algorithm to be $\epsilon:=10^{-4}$ such that the output value is no larger $\epsilon$ than $\V_{\rel}$. It is seen that our proposed column generation  \Cref{algo_cg} performs very well in computational efficiency and solution quality, dominating
the performance of commercial solver MOSEK and naive column generation in both respects. In particular, the naive column generation solves two testing cases within one hour limit. With the efficient  \ref{eq_pp} \eqref{eq_pp_eg1} used in the proposed \Cref{algo_cg}, all the cases are solved within two minutes. It is interesting to see that the solution returned by \Cref{algo_cg} always satisfies the rank-$k$ constraint. That is,  for these instances, CG \Cref{algo_cg}  finds an optimal solution to the original \ref{eq_rank}. 
Plugging the output solution into the rank-reduction \Cref{algo_reduce}, the rank can be further reduced by one or two within two seconds, implying that the integer $\k$, albeit unknown, should be much smaller $k$ for these instances.  
By contrast,  MOSEK, based on the interior point method, tends to yield a high-rank solution. Finally, we observe that the theoretical rank bound based on the integer $k$, as presented in the last column, is consistently higher than that of the solution found by the  \Cref{algo_cg} or \Cref{algo_reduce}. This suggests that the theoretical bound may not be tight for these instances.

\begin{table}[htb]
\centering
\caption{Numerical performance of three approaches to solving LSOP-R \eqref{eq_lsopr1}}
\setlength{\tabcolsep}{3pt}\renewcommand{\arraystretch}{1}
\begin{tabular}{c c c|r r|r  r|r  r| r r| r}
\hline 
\multirow{3}{*}{$n$} & \multirow{3}{*}{$m$} & \multirow{3}{*}{$k$} & \multicolumn{2}{c|}{\multirow{2}{*}{MOSEK}} & \multicolumn{2}{c|}{\multirow{2}{*}{Naive CG \tnote{i}}} & \multicolumn{4}{c|}{Proposed Algorithms} & \multicolumn{1}{c}{\multirow{2}{*}{\begin{tabular}[c]{@{}c@{}}Rank bound\end{tabular}}} \\ \cline{8-11}
 &  &  & \multicolumn{2}{c|}{} & \multicolumn{2}{c|}{} & \multicolumn{2}{c|}{CG \Cref{algo_cg}} & \multicolumn{2}{c|}{\Cref{algo_reduce}} & \multicolumn{1}{c}{} \\ \cline{4-12} 
 &  &  & \multicolumn{1}{c}{rank} & \multicolumn{1}{c|}{time(s)} & \multicolumn{1}{c}{rank} & time(s) & rank & \multicolumn{1}{r|}{time(s)} & reduced rank & time(s) & (\Cref{them:rank}) \\ \hline
50 &  5 & 5 & 48 & 17 & 3 &223 & 3 &1 & 2 & 1&7\\
50 &  10 & 5 & 29  & 19 &  5& 1261 & 5 &1 & 3 & 1 & 8\\
50 &  10 & 10  & 32& 183 &--\tnote{ii} &-- & 5 & 1 & 3 & 1 & 13\\
100 &  10 & 10  &-- &-- &-- &-- & 2 &2 & 1 & 1 &13\\
100 &  15 & 10  &-- &-- &-- &-- & 5 &2 & 3 & 1 & 14\\
100 &  15 & 15  &-- &-- &-- &-- & 5 &3 &3 & 1 & 19\\
200 &  15 & 15  &-- &-- &-- &-- & 4 & 3 & 2 & 1 & 19\\
200 &  20 & 15  &-- &-- &-- &-- & 7 & 9 & 5 & 1 & 20 \\
200 &  25 & 25 &-- &-- &-- &-- & 11 & 21 & 9 & 1 & 30\\
500 &  25 & 25  &-- &-- &-- &-- & 10 &24 & 8 & 2& 30\\
500 &  50 & 25  &-- &-- &-- &-- & 21 &99 & 20 & 2& 33\\
500 &  50 & 50 &-- &-- &-- &-- & 22 &104 & 20 & 2& 58\\
\hline
\end{tabular}%
\label{table1}
\end{table}

\subsection{LSOP with Trace Constraints: Optimal Power Flow}
This subsection numerically tests a special \ref{app:qcqp} widely adopted in the optimal power flow (OPF) problem 
\citep{bedoya2019qcqp,eltved2022strengthened}
\begin{align}\label{eq_opf}
\min_{\bm X\in \X}\left\{ \langle \bm A_0, \bm X\rangle:  \langle \bm A_i, \bm X \rangle \le b_i^u,\forall i \in [m] \right\}, \ \
\X:=\{\bm X\in \S_+^{n}: \rank(\bm X) \le 1, L \le \tr(\bm X) \le U\},\tag{OPF}
\end{align}
where the spectral function in \eqref{eq_set} is $F(\bm X):= \max\{\tr(\bm X)-U, L-\tr(\bm X) \}$. 
The \ref{eq_rel_rank} corresponding to \ref{eq_opf} can be formulated as
\begin{equation}\label{eq_lsopr2}
\begin{aligned}
& \V_{\rel} := \min_{{\bm X \in \conv(\X )}}\left\{\langle\bm A_0, \bm X \rangle: 
 \langle \bm A_i, \bm X\rangle \le b_i^u, \forall i \in [m] \right\}, \\
 &\ \conv(\X) := \{\bm X\in \S_+^{n}:  L \le \tr(\bm X) \le U\}, 
 \end{aligned}
 \tag{OPF-R}
\end{equation}
and consequently, the  \ref{eq_pp} in \Cref{algo_cg}  is equivalent to
\begin{align*} 
 \max_{\bm X\in \conv(\X)}  \bigg \langle -\bm A_0 + \sum_{i\in [m]} ((\mu_i^l)^* - (\mu_i^u)^* ) \bm A_i, \bm X \bigg\rangle, \ \ \conv(\X) := \{\bm X\in \S_+^{n}:  L \le \tr(\bm X) \le U\}.
\end{align*}

The  \ref{eq_pp} above, according to Part (i) of \Cref{them:ppsol},   is equivalent to solving problem \eqref{eq:ppc1} which now admits a closed-form solution; that is, if $ -\bm A_0 + \sum_{i\in [m]} ((\mu_i^l)^* - (\mu_i^u)^* ) \bm A_i= \bm U \Diag(\bm \beta) \bm U^{\top} $ with eigenvalues $\beta_1 \ge \cdots \ge \beta_n$, then an optimal solution of the pricing problem is $\bm X^* = \bm U  \Diag(\bm \lambda^*) \bm U^{\top}$, where  $
\lambda^*_1 = 
    U 
$ if $\beta_1 \ge 0$, and $L$, otherwise and $ \lambda^*_i =0
$ for all $i\in [2,n]$.

Following the synthetic data generation detailed in the previous subsection, we compare the performance of our column generation \Cref{algo_cg} with MOSEK and naive column generation when solving  \ref{eq_lsopr2}. The computational results are displayed in \Cref{table:qcqp}. We can see that
despite with a simple description of set $\conv(\X)$, the MOSEK and naive column generation are difficult to scale under the curse of semidefinite program. Our column generation \Cref{algo_cg} can efficiently return a low-rank solution for \ref{eq_lsopr2} that satisfies the theoretical rank bound in \Cref{them:rank}; however, the output rank is larger than one. We observe that the output solution of \Cref{algo_cg}   may not attain all $ m$ linear equations in practice, and the rank bound can be strengthened using only the binding constraints.
Thus, we tailor the rank-reduction \Cref{algo_reduce} for \ref{eq_lsopr2} with $\k=k=1$, where we let $\bm x^*:=(\sum_{i\in [n]} \lambda_i^*, 0, \cdots,0)$ at Step 4 in \Cref{algo_reduce} and  compute matrix $\bm Y$ at Step 5 based on the binding constraints,
which
successfully reduces the output rank of \Cref{algo_cg}, as presented in \Cref{table:qcqp}. In fact, the rank-reduction procedure can slightly improve the output  solution from \Cref{algo_cg} due to the objective function decreasing  along the direction  $\bm Y$.  

\begin{table}[htb]
\centering
\caption{Numerical performance of three approaches to solving \ref{eq_lsopr2}}
\setlength{\tabcolsep}{2.7pt}\renewcommand{\arraystretch}{1}
\begin{tabular}{ccc|rr|rr|rrrr|r}
\hline
\multirow{3}{*}{$n$} & \multirow{3}{*}{$m$} & \multirow{3}{*}{$k$} & \multicolumn{2}{c|}{\multirow{2}{*}{MOSEK}} & \multicolumn{2}{c|}{\multirow{2}{*}{Naive CG \tnote{i}}} & \multicolumn{4}{c|}{Proposed Algorithms} & \multicolumn{1}{c}{\multirow{2}{*}{\begin{tabular}[c]{@{}c@{}}Rank bound\end{tabular}}} \\ \cline{8-11}
 &  &  & \multicolumn{2}{c|}{} & \multicolumn{2}{c|}{} & \multicolumn{2}{c|}{CG \Cref{algo_cg}} & \multicolumn{2}{c|}{\Cref{algo_reduce}} & \multicolumn{1}{c}{} \\ \cline{4-12} 
 &  &  & \multicolumn{1}{c}{rank} & \multicolumn{1}{c|}{time(s)} & \multicolumn{1}{c}{rank} & time(s) & rank & \multicolumn{1}{r|}{time(s)} & reduced rank & time(s) & (\Cref{them:rank}) \\ \hline
1000 & 50 & 1 & 28 & 160 & --\tnote{ii} & -- & 3 & \multicolumn{1}{r|}{42} & 2 & 3 & 9 \\
1000 & 60 & 1 & 32 & 195 & -- & -- & 5 & \multicolumn{1}{r|}{80} & 3 & 10 & 10 \\
1500 & 60 & 1 & 27 & 642 & -- & -- & 3 & \multicolumn{1}{r|}{113} & 2 & 11 & 10 \\
1500 & 75 & 1 & 186 & 724 & -- & -- & 6 & \multicolumn{1}{r|}{344} & 4 & 35 & 11 \\
2000 & 75 & 1 & 40 & 1850 & -- & -- & 5 & \multicolumn{1}{r|}{594} & 3 & 67 & 11 \\
2000 & 90 & 1 & 12 & 2236 & -- & -- & 4 & \multicolumn{1}{r|}{483} & 2 & 27 & 13 \\
2500 & 90 & 1 & -- & -- & -- & -- & 5 & \multicolumn{1}{r|}{1323} & 3 & 122 & 13 \\
2500 & 100 & 1 & -- & -- & -- & -- & 4 & \multicolumn{1}{r|}{1326} & 2 & 114 & 13 \\ \hline
\end{tabular}
\label{table:qcqp}
\end{table}
\vspace{-1.4em}
\subsection{Matrix Completion}
This subsection solves the \ref{pc_lmc} by the following bilevel optimization form that allows for the noisy observations:
\begin{equation}\label{lmcr1}
	\begin{aligned}
		\V_{\rel}:=\min_{z \in \Re_+}\quad &z\\
		\text{s.t.} \quad & \underline{\delta} \ge \min_{\bm X, \delta} \left\{\delta:  |X_{ij}-A_{ij}|\le \delta , \forall (i,j)\in \Omega, \bm X\in \conv(\X(z)) \right\} \\ &\conv(\X(z)):=\{\bm X \in \Re^{n\times p}: \|\bm X\|_* \le z\}
	\end{aligned} \tag{Matrix Completion-R1},
\end{equation}	
where $\underline{\delta}\ge 0$ is small and  depends on the noisy level of observed entries $\{A_{ij}\}_{(i,j)\in\Omega}$ with $\Omega$ denoting the indices of the observed entries. If $\underline{\delta}= 0$, then \ref{lmcr1} is equivalent to the noiseless  \ref{pc_lmc}.  Following the work of \cite{zhang2012matrix}, we generate a synthetic rank-$k$ matrix $\bm A \in \Re^{n\times p}$ from the model 
$\bm A = \bm U \bm V^{\top} +\epsilon \bm Z$, where  $\bm U \in \Re^{n\times k}, \bm V \in \Re^{p\times n}$  compose i.i.d. entries from the standard normal distribution and $\bm Z \in \Re^{n\times p}$ is a Gaussian white noise. In addition, we set a relatively small noise level $\epsilon=10^{-4}$ and accordingly, we let $\underline{\delta}=10^{-4}$ in  \ref{lmcr1}. The set $\Omega$  is sampled uniformly from $[n] \times [p]$ at random.

For any  solution $z$ of the  \ref{lmcr1},  our column generation \Cref{algo_cg} is able to efficiently solve the embedded optimization problem  over $(\bm X, \delta)$ as this problem falls into our \ref{eq_rel_rank} framework.  This motivates us to use the binary search algorithm for solving \ref{lmcr1}. To do so, we begin with a pair of upper and lower bounds of $z$, where we let $z_U=\|\bm A_{\Omega}\|_*$ and $z_L=0$ with matrix $\bm A_{\Omega}$ containing the observed entries $\{A_{ij}\}_{(i,j)\in\Omega}$ and all other zeros; at each step $t$, we let $z_t = (z_U+z_L)/2$ and compute $(\bm X_t, \delta_t)$  by the proposed \Cref{algo_cg}; and if $\delta_t\ge \underline{\delta}$, we let $z_L = z_t$, otherwise, we let $z_U = z_t$; and we terminate this searching procedure when $z_U -z_L \le 10^{-4}$ holds.

\Cref{table:mc} presents the numerical results, which show that even if being nested within the iterative binary search algorithm,  our \Cref{algo_cg}  is more scalable than the other two methods on the testing cases. However, unlike solving LSOP-R \eqref{eq_lsopr1} and \ref{eq_lsopr2}, the \Cref{algo_cg}  fails to return a low-rank solution, provided that \ref{lmcr1} admits a  rank bound of $\lfloor\sqrt{2|\Omega|+9/4}-1/2\rfloor$ based on \Cref{cor:lmc}.  In addition, we have that $\underline k=1$. Then, according to \Cref{cor:converge},  the rank-reduction \Cref{algo_reduce}  converges fast to reach the rank bound, as shown in \Cref{table:mc}. It is seen that the reduced rank is always less than the original rank $k$, which demonstrates  the strength of \ref{lmcr1} in exactly solving the original \ref{app:lmc}.
We note that \Cref{algo_reduce} may not yield a rank strictly less than the theoretical  bound, which is because given $\underline \delta=10^{-4}$,  all the   linear inequalities in \ref{lmcr1} are nearly binding. 

\vspace{-0.5em}
\begin{table}[htb]
	\centering
	\caption{Numerical performance of three approaches to solving \ref{lmcr1}}
		\setlength{\tabcolsep}{2.4pt}\renewcommand{\arraystretch}{1}
		\begin{tabular}{cccc|rr|rr|rrrr|r}
			\hline
			\multirow{3}{*}{$n$} & 
			\multirow{3}{*}{$p$} & \multirow{3}{*}{$|\Omega|$} & \multirow{3}{*}{$k$} & \multicolumn{2}{c|}{\multirow{2}{*}{MOSEK}} & \multicolumn{2}{c|}{\multirow{2}{*}{Naive CG \tnote{i}}} & \multicolumn{4}{c|}{Proposed Algorithms} & \multicolumn{1}{c}{\multirow{2}{*}{\begin{tabular}[c]{@{}c@{}}Rank bound\end{tabular}}} \\ \cline{9-12}
		&	&  &  & \multicolumn{2}{c|}{} & \multicolumn{2}{c|}{} & \multicolumn{2}{c|}{CG \Cref{algo_cg}} & \multicolumn{2}{c|}{\Cref{algo_reduce}} & \multicolumn{1}{c}{} \\ \cline{5-13} 
		&	&  &  & \multicolumn{1}{c}{rank} & \multicolumn{1}{c|}{time(s)} & \multicolumn{1}{c}{rank} & time(s) & rank & \multicolumn{1}{r|}{time(s)} & reduced rank & time(s) & (\Cref{them:nonsym}) \\ \hline
			100 & 50 & 30 & 10 & 22 & 57& --\tnote{ii} & -- & 23& \multicolumn{1}{r|}{19} & 7 & 1 & 7 \\
			100 & 100 & 40 & 10 & 29 & 303 & -- & -- & 31 & \multicolumn{1}{r|}{52} & 8 & 1 & 8 \\
			100 & 200 & 50 & 10 & -- & --& -- & -- & 36& \multicolumn{1}{r|}{120} & 9& 3 & 9\\
			300 & 200 & 60 & 15 & -- & --& -- & -- & 49& \multicolumn{1}{r|}{342} & 10 & 11 & 10 \\
			300 & 300 & 70 & 15 & -- & --& -- & -- & 60& \multicolumn{1}{r|}{502} & 11 & 16 & 11 \\
			300 & 400 & 80 & 15 & --  & --& -- & -- & 64 & \multicolumn{1}{r|}{1434} & 12 & 29 & 12 \\
			\hline
		\end{tabular}
	\label{table:mc}
\end{table}

\section{Conclusion}\label{sec_conclusion}
This paper studied the low-rank spectral constrained optimization problem by deriving the rank bounds for its partial convexification. 
Our rank bounds remain consistent across different domain sets in the form of \eqref{eq_set} with a fixed matrix space.
This paper specifically investigated positive semidefinite,  non-symmetric, symmetric, and diagonal matrix spaces. 
Due to the flexible domain set, we have applied the proposed  rank bounds  to various low-rank application examples, including kernel learning, QCQP, fair PCA, fair SVD, matrix completion, and sparse ridge regression. 
To harvest the promising theoretical results, we develop an efficient column generation algorithm for solving the partial convexification coupled with a rank-reduction algorithm. One possible future direction is to study the general  nonlinear objective functions.

\ACKNOWLEDGMENT{This research has been supported in part by \exclude{the National Science Foundation grants 2246414 and
2246417, and }the Georgia Tech ARC-ACO fellowship. The authors would like to thank Prof. Fatma Kılınç-Karzan from Carnegie Mellon University for her valuable suggestions on the earlier version of this paper.}

\bibliography{reference.bib}

\begin{thebibliography}{35}
\providecommand{\natexlab}[1]{#1}
\providecommand{\url}[1]{\texttt{#1}}
\providecommand{\urlprefix}{URL }

\bibitem[{Amor et~al.(2004)Amor, Desrosiers, \protect\BIBand{}
  Frangioni}]{amor2004stabilization}
Amor HB, Desrosiers J, Frangioni A (2004) \emph{Stabilization in column
  generation} (Groupe d'{\'e}tudes et de recherche en analyse des
  d{\'e}cisions).

\bibitem[{Amor et~al.(2009)Amor, Desrosiers, \protect\BIBand{}
  Frangioni}]{amor2009choice}
Amor HMB, Desrosiers J, Frangioni A (2009) On the choice of explicit
  stabilizing terms in column generation. \emph{Discrete Applied Mathematics}
  157(6):1167--1184.

\bibitem[{Askari et~al.(2022)Askari, d'Aspremont, \protect\BIBand{}
  Ghaoui}]{askari2022approximation}
Askari A, d'Aspremont A, Ghaoui LE (2022) Approximation bounds for sparse
  programs. \emph{SIAM Journal on Mathematics of Data Science} 4(2):514--530.

\bibitem[{Barvinok(1995)}]{barvinok1995problems}
Barvinok AI (1995) Problems of distance geometry and convex properties of
  quadratic maps. \emph{Discrete \& Computational Geometry} 13(2):189--202.

\bibitem[{Bedoya et~al.(2019)Bedoya, Abdelhadi, Liu, \protect\BIBand{}
  Dubey}]{bedoya2019qcqp}
Bedoya JC, Abdelhadi A, Liu CC, Dubey A (2019) A qcqp and sdp formulation of
  the optimal power flow including renewable energy resources. \emph{2019
  International Symposium on Systems Engineering (ISSE)}, 1--8 (IEEE).

\bibitem[{Ben-Tal \protect\BIBand{} Nemirovski(2001)}]{ben2001lectures}
Ben-Tal A, Nemirovski A (2001) \emph{Lectures on modern convex optimization:
  analysis, algorithms, and engineering applications} (SIAM).

\bibitem[{Bertsimas et~al.(2021)Bertsimas, Cory-Wright, \protect\BIBand{}
  Pauphilet}]{bertsimas2021new}
Bertsimas D, Cory-Wright R, Pauphilet J (2021) A new perspective on low-rank
  optimization. \emph{arXiv preprint arXiv:2105.05947} .

\bibitem[{Burer \protect\BIBand{} Monteiro(2003)}]{burer2003nonlinear}
Burer S, Monteiro RD (2003) A nonlinear programming algorithm for solving
  semidefinite programs via low-rank factorization. \emph{Mathematical
  Programming} 95(2):329--357.

\bibitem[{Burer \protect\BIBand{} Monteiro(2005)}]{burer2005local}
Burer S, Monteiro RD (2005) Local minima and convergence in low-rank
  semidefinite programming. \emph{Mathematical programming} 103(3):427--444.

\bibitem[{Burer \protect\BIBand{} Ye(2020)}]{burer2020exact}
Burer S, Ye Y (2020) Exact semidefinite formulations for a class of (random and
  non-random) nonconvex quadratic programs. \emph{Mathematical Programming}
  181(1):1--17.

\bibitem[{Cai et~al.(2010)Cai, Cand{\`e}s, \protect\BIBand{}
  Shen}]{cai2010singular}
Cai JF, Cand{\`e}s EJ, Shen Z (2010) A singular value thresholding algorithm
  for matrix completion. \emph{SIAM Journal on optimization} 20(4):1956--1982.

\bibitem[{Chakraborty et~al.(2013)Chakraborty, Zhou, Balasubramanian,
  Panchanathan, Davidson, \protect\BIBand{} Ye}]{chakraborty2013active}
Chakraborty S, Zhou J, Balasubramanian V, Panchanathan S, Davidson I, Ye J
  (2013) Active matrix completion. \emph{2013 IEEE 13th international
  conference on data mining}, 81--90 (IEEE).

\bibitem[{Desrosiers \protect\BIBand{}
  L{\"u}bbecke(2005)}]{desrosiers2005primer}
Desrosiers J, L{\"u}bbecke ME (2005) A primer in column generation.
  \emph{Column generation}, 1--32 (Springer).

\bibitem[{Deza et~al.(1997)Deza, Laurent, \protect\BIBand{}
  Weismantel}]{deza1997geometry}
Deza MM, Laurent M, Weismantel R (1997) \emph{Geometry of cuts and metrics},
  volume~2 (Springer).

\bibitem[{Drusvyatskiy \protect\BIBand{}
  Kempton(2015)}]{drusvyatskiy2015variational}
Drusvyatskiy D, Kempton C (2015) Variational analysis of spectral functions
  simplified. \emph{arXiv preprint arXiv:1506.05170} .

\bibitem[{Eltved \protect\BIBand{} Burer(2022)}]{eltved2022strengthened}
Eltved A, Burer S (2022) Strengthened sdp relaxation for an extended trust
  region subproblem with an application to optimal power flow.
  \emph{Mathematical Programming} 1--26.

\bibitem[{Gharanjik et~al.(2016)Gharanjik, Shankar, Soltanalian,
  \protect\BIBand{} Oftersten}]{gharanjik2016iterative}
Gharanjik A, Shankar B, Soltanalian M, Oftersten B (2016) An iterative approach
  to nonconvex qcqp with applications in signal processing. \emph{2016 IEEE
  Sensor Array and Multichannel Signal Processing Workshop (SAM)}, 1--5 (IEEE).

\bibitem[{Hardy et~al.(1952)Hardy, Littlewood, P{\'o}lya, P{\'o}lya
  et~al.}]{hardy1952inequalities}
Hardy GH, Littlewood JE, P{\'o}lya G, P{\'o}lya G, et~al. (1952)
  \emph{Inequalities} (Cambridge university press).

\bibitem[{Josz et~al.(2016)Josz, Fliscounakis, Maeght, \protect\BIBand{}
  Panciatici}]{josz2016ac}
Josz C, Fliscounakis S, Maeght J, Panciatici P (2016) Ac power flow data in
  matpower and qcqp format: itesla, rte snapshots, and pegase. \emph{arXiv
  preprint arXiv:1603.01533} .

\bibitem[{Khobahi et~al.(2019)Khobahi, Soltanalian, Jiang, \protect\BIBand{}
  Swindlehurst}]{khobahi2019optimized}
Khobahi S, Soltanalian M, Jiang F, Swindlehurst AL (2019) Optimized
  transmission for parameter estimation in wireless sensor networks. \emph{IEEE
  Transactions on Signal and Information Processing over Networks} 6:35--47.

\bibitem[{K{\i}l{\i}n{\c{c}}-Karzan \protect\BIBand{}
  Wang(2021)}]{kilincc2021exactness}
K{\i}l{\i}n{\c{c}}-Karzan F, Wang AL (2021) Exactness in sdp relaxations of
  qcqps: Theory and applications. \emph{arXiv preprint arXiv:2107.06885} .

\bibitem[{Kim et~al.(2022)Kim, Tawarmalani, \protect\BIBand{}
  Richard}]{kim2022convexification}
Kim J, Tawarmalani M, Richard JPP (2022) Convexification of
  permutation-invariant sets and an application to sparse principal component
  analysis. \emph{Mathematics of Operations Research} 47(4):2547--2584.

\bibitem[{Kulis et~al.(2009)Kulis, Sustik, \protect\BIBand{}
  Dhillon}]{kulis2009low}
Kulis B, Sustik MA, Dhillon IS (2009) Low-rank kernel learning with bregman
  matrix divergences. \emph{Journal of Machine Learning Research} 10(2).

\bibitem[{Lau et~al.(2011)Lau, Ravi, \protect\BIBand{}
  Singh}]{lau2011iterative}
Lau LC, Ravi R, Singh M (2011) \emph{Iterative methods in combinatorial
  optimization}, volume~46 (Cambridge University Press).

\bibitem[{Li \protect\BIBand{} Xie(2020)}]{li2020exact}
Li Y, Xie W (2020) Exact and approximation algorithms for sparse pca.
  \emph{arXiv preprint arXiv:2008.12438} .

\bibitem[{Li \protect\BIBand{} Xie(2021)}]{li2021beyond}
Li Y, Xie W (2021) Beyond symmetry: Best submatrix selection for the sparse
  truncated svd. \emph{arXiv preprint arXiv:2105.03179} .

\bibitem[{Li \protect\BIBand{} Xie(2022)}]{li2022exactness}
Li Y, Xie W (2022) On the exactness of dantzig-wolfe relaxation for rank
  constrained optimization problems. \emph{arXiv preprint arXiv:2210.16191} .

\bibitem[{Miao \protect\BIBand{} Kou(2021)}]{miao2021color}
Miao J, Kou KI (2021) Color image recovery using low-rank quaternion matrix
  completion algorithm. \emph{IEEE Transactions on Image Processing}
  31:190--201.

\bibitem[{Pataki(1998)}]{pataki1998rank}
Pataki G (1998) On the rank of extreme matrices in semidefinite programs and
  the multiplicity of optimal eigenvalues. \emph{Mathematics of operations
  research} 23(2):339--358.

\bibitem[{Rockafellar(1972)}]{rockafellar2015convex}
Rockafellar RT (1972) \emph{Convex analysis} (Princeton university press).

\bibitem[{Samadi et~al.(2018)Samadi, Tantipongpipat, Morgenstern, Singh,
  \protect\BIBand{} Vempala}]{samadi2018price}
Samadi S, Tantipongpipat U, Morgenstern JH, Singh M, Vempala S (2018) The price
  of fair pca: One extra dimension. \emph{Advances in neural information
  processing systems} 31.

\bibitem[{Tantipongpipat et~al.(2019)Tantipongpipat, Samadi, Singh,
  Morgenstern, \protect\BIBand{} Vempala}]{tantipongpipat2019multi}
Tantipongpipat U, Samadi S, Singh M, Morgenstern JH, Vempala S (2019)
  Multi-criteria dimensionality reduction with applications to fairness.
  \emph{Advances in neural information processing systems} 32.

\bibitem[{Xie \protect\BIBand{} Deng(2020)}]{xie2020scalable}
Xie W, Deng X (2020) Scalable algorithms for the sparse ridge regression.
  \emph{SIAM Journal on Optimization} 30(4):3359--3386.

\bibitem[{Yu \protect\BIBand{} Lau(2010)}]{yu2010rank}
Yu H, Lau VK (2010) Rank-constrained schur-convex optimization with multiple
  trace/log-det constraints. \emph{IEEE transactions on signal processing}
  59(1):304--314.

\bibitem[{Zhang et~al.(2012)Zhang, Hu, Ye, Li, \protect\BIBand{}
  He}]{zhang2012matrix}
Zhang D, Hu Y, Ye J, Li X, He X (2012) Matrix completion by truncated nuclear
  norm regularization. \emph{2012 IEEE Conference on computer vision and
  pattern recognition}, 2192--2199 (IEEE).

\end{thebibliography}

\newpage
\titleformat{\section}{\large\bfseries}{\appendixname~\thesection .}{0.5em}{}
\begin{appendices}
\section{Proofs}
\subsection{Proof of Proposition \ref{prop:conv}}\label{proof:conv}
\propconv*
\begin{proof}
The derivation of set $\conv(\X)$ can be found in \cite{kim2022convexification}[theorems 4 and 7]. We thus focus on proving the closeness of set $\conv(\X)$.
First, we define set $\T$ below
\begin{align*}
\T=\bigg\{(\bm X, \bm x) \in \Q\times   \Re_+^n: f(\bm x)\le 0,  x_1 \ge \cdots \ge x_n,  x_{k+1}=0,  \|\bm X\|_{(\ell)} \le \sum_{i\in [\ell]} x_{i}, \forall \ell \in [k], \tr(\bm X) = \sum_{i\in [k]}x_i\bigg\},
\end{align*}
and the equation $\conv(\X)=\Proj_{\bm X}(\T)$ holds.

%
%
For any convergent sequence $\{\bm X_t\}_{t\in [T]}$ in set $\conv(\X)$, suppose that the sequence converges to $\bm X^*$, i.e., $\lim\limits_{t\to \infty} \bm X_t= \bm X^*$. Without loss of generality, we can assume that the convergent sequence $\{\bm X_t\}_{t\in [T]}$  is bounded. Then, for each $\bm X_t\in \conv(\X)$ with bounded eigenvalues, there exists a bounded vector $\bm x_t\in \Re_+^n$  such that  $(\bm x_t, \bm X_t) \in \T$. We now obtain a bounded sequence $\{(\bm x_t, \bm X_t)\}_{t\in [T]} $ in set $\T$. According to the Bolzano-Weierstrass theorem, a bounded sequence has a convergent subsequence. Suppose that a convergent subsequence of $\{(\bm x_t, \bm X_t)\}_{t\in [T]} $  converges to $(\bm x^*, \hat{\bm X})$. On the other hand, every subsequence of a convergent sequence converges to the same limit, implying that $\hat{\bm X} = \bm X^*$. Given that function $f(\cdot)$ is closed in \eqref{eq_set}, set $\T$ is clearly closed and thus we have $(\bm x^*, \bm X^*)\in \T$. Since  $\conv(\X)$ is the projection of set $\T$ onto matrix space, the limit $\bm X^*$ must belong to $\conv(\X)$. This proves the closeness.
\qed
\end{proof}

\subsection{Proof of Lemma \ref{lem:ineq}} \label{proof:lemineq}
\lemineq*
\begin{proof}
The proof includes two parts by dividing the inequalities over $[j_0, j_2-1]$ into two subintervals: $[j_0, j_1]$ and $[j_1+1, j_2-1]$.
\begin{enumerate}[(i)]
\item  If the equality can be attained for some $j^* \in[j_0, j_1-1]$, i.e., $\sum_{ i \in [j^*]} \lambda_i = \sum_{ i \in [j^*]}x_i$, given $\sum_{ i \in [j_1]} \lambda_i < \sum_{ i \in [j_1]}x_i$,
then we have
\begin{align*}
\sum_{ i \in [j^*+1, j_1]} \lambda_i < \sum_{ i \in [j^*+1, j_1]} x_i \Longrightarrow \lambda_{j^*} = \cdots =\lambda_{j_1} < \frac{\sum_{ i \in [j^*+1, j_1]} x_i}{j_1-j^*}\le x_{j^*},
\end{align*}
and thus $\sum_{ i \in [j^*-1]} \lambda_i = \sum_{ i \in [j^*-1]}x_i + x_{j^*}-\lambda_{j^*} > \sum_{ i \in [j^*-1]}x_i$, which contradicts with $\bm x \succeq \bm \lambda $.

\item 
If the equality can be attained  for some $j^* \in[j_1+1, j_2-1]$, i.e.,
$\sum_{ i \in [j^*]} \lambda_i = \sum_{ i \in [j^*]}x_i$, given $\sum_{ i \in [j_1]} \lambda_i < \sum_{ i \in [j_1]}x_i$,
then we have
\begin{align*}
\sum_{ i \in [j_1+1, j^*]} \lambda_i > \sum_{ i \in [j_1+1, j^*]} x_i  \Longrightarrow \lambda_{j_1+1} =\cdots= \lambda_{j^*+1}  > \frac{\sum_{ i \in [j_1+1, j^*]} x_i}{j^*-j_1} \ge x_{j^*+1},
\end{align*}
and thus $\sum_{ i \in [j^*+1]} \lambda_i  = \sum_{ i \in [j^*]} x_i + \lambda_{j^*+1}   >\sum_{ i \in [j^*+1]} x_i $,
which contradicts with $\bm x \succeq \bm \lambda $. \qed
\end{enumerate}
\end{proof}
\subsection{Proof of Theorem \ref{prop:rank}}\label{proof:proprank}
\thmproprank*
\begin{proof}
We use contradiction to prove this result and replace $k$ by $\k$. Given $\Q=\S_{+}^n$, suppose that point $\bm X^* \in \S_+^n$ belongs to face $F^d$  with a rank $r > \k + \lfloor\sqrt{2d+9/4}-3/2\rfloor$, i.e.,  
\begin{align}\label{ineq}
{d}+1 < (r-\k+1)(r-\k+2)/2.
\end{align}
Then we let $\lambda_1\ge \cdots \ge \lambda_{r}>0 = \lambda_{r+1}=\cdots =\lambda_n$ denote the eigenvalues of $\bm X^*$ and form the eigenvalue vector $\bm \lambda\in \Re_+^n$. 
%
%
Since $\lambda_{r+1}=\cdots = \lambda_n=0$, we can rewrite $\bm X^*$ as $\bm X^* = \bm Q_1 \bm \Lambda_1 \bm Q^{\top}_1+\bm Q_2 \bm \Lambda_2 \bm Q^{\top}_2$, where 
\begin{align*}
&\bm \Lambda_1 = \Diag(\lambda_1,\cdots, \lambda_{\k-1}) \in \S_{++}^{\k-1}, \ \  \bm \Lambda_2 = \Diag(\lambda_{\k},\cdots, \lambda_r) \in \S_{++}^{r-\k+1},
\end{align*}
and $\bm Q_1\in \Re^{n\times \k-1}$, $\bm Q_2 \in \Re^{n\times r-\k+1}$ are corresponding eigenvector matrices.

For any $d$-dimensional face $F^d$ with $d\ge 1$, there are ($d+1$)  points such that $F^d\subseteq \aff(\bm X_1, \cdots, \bm X_{d+1})$, where $\aff(\bm X_1, \cdots, \bm X_{d+1})=\{\sum_{i\in [d+1]} \alpha_i \bm X_i: \bm \alpha \in \Re^{d+1}, \sum_{i\in [d+1]} \alpha_i=1 \}$ denotes the affine hull of these points.
Note that any size--$n\times n$ symmetric matrix can be recast into  a vector of length $n(n+1)/2$. 
By inequality \eqref{ineq},  there is  a nonzero symmetric matrix $\bm \Delta \in \S^{r-\k+1}$ satisfying $(d+1)$ equations below
\begin{align}\label{eq:new}
\langle  \bm Q_2^{\top} (\bm X_{1}- \bm X_i) \bm Q_2, \bm \Delta \rangle = 0, \forall i \in [d], \ \  \langle  \bm Q_2^{\top} \bm Q_2, \bm \Delta \rangle =\tr(\bm \Delta) =0,
\end{align}
where the first $d$ equations indicate that matrix $\bm Q_2 \bm \Delta \bm Q^{\top}_2$ is orthogonal to the face $F^d$, i.e., $\bm Q_2 \bm \Delta \bm Q^{\top}_2 \perp F^d$.

Then let us construct the two matrices $\bm X^+ (\delta)  \in \S_+^n$ and $\bm X^- (\delta)  \in \S_+^n$ as below
\begin{align*}
&\bm X^+(\delta)  = \bm X^*+\delta \bm Q_2 \bm \Delta \bm Q^{\top}_2= \bm Q_1 \bm \Lambda_1 \bm Q^{\top}_1+\bm Q_2  (\bm \Lambda_2+\delta \bm \Delta) \bm Q^{\top}_2,\\
&   \bm X^-(\delta)  =  \bm X^*-\delta \bm Q_2 \bm \Delta \bm Q^{\top}_2= \bm Q_1 \bm \Lambda_1 \bm Q^{\top}_1+\bm Q_2  (\bm \Lambda_2-\delta \bm \Delta) \bm Q^{\top}_2,
\end{align*}
for any $\delta >0$. Thus, $\langle \bm A_i, \bm X^+ (\delta)\rangle = \langle \bm A_i, \bm X^- (\delta)\rangle = \langle \bm A_i, \bm X^*\rangle$ for all $i\in [m]$ and $\tr(\bm X^+ (\delta)) =\tr( \bm X^- (\delta)) =\tr(\bm X^*)$ based on equations \eqref{eq:new}.
We further show that when $\delta>0$  is small enough, $\bm X^+ (\delta), \bm X^- (\delta) $ also belong to set $ \conv(\X)$ as detailed by the claim below.  If so,  the equation $\bm X^*=\frac 1 2 \bm X^+ (\delta)+ \frac 1 2 \bm X^- (\delta)$ implies that $\bm X^+ (\delta), \bm X^- (\delta) \in F^d$ according to the definition of face $F^d$. A contradiction with fact that  $\bm Q_2 \bm \Delta \bm Q^{\top}_2 \perp F^d$. Therefore, the point $\bm X^*$ must have a rank no larger than $\k + \lfloor\sqrt{2d+9/4}-3/2\rfloor$. Indeed, we can show that
\begin{claim}\label{claim1}
There is a scalar $\underline{\delta} >0$ such that 
for any $\delta \in (0, \underline{\delta} )$, we have  $\bm X^+ (\delta), \bm X^- (\delta)  \in \conv(\X)$.
\end{claim}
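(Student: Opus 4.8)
The plan is to certify membership of both $\bm X^+(\delta)$ and $\bm X^-(\delta)$ in $\conv(\X)$ using the \emph{same} majorization witness that certifies $\bm X^*$. Since $\bm X^*\in F^d\subseteq\conv(\X)$, applying \Cref{prop:conv} (with $\k$ in place of $k$, as justified by \Cref{def:k}) yields a vector $\bm x^*\in\Re_+^n$ with $f(\bm x^*)\le 0$, $x_1^*\ge\cdots\ge x_n^*$, $x_{\k+1}^*=0$, $\tr(\bm X^*)=\sum_{i\in[\k]}x_i^*$, and $\|\bm X^*\|_{(\ell)}\le\sum_{i\in[\ell]}x_i^*$ for all $\ell\in[\k]$. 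I would show this $\bm x^*$ works verbatim for $\bm X^\pm(\delta)$ once $\delta$ is small. The spectral condition $f(\bm x^*)\le0$, the ordering, and $x_{\k+1}^*=0$ involve only $\bm x^*$, hence are automatic; the trace equality is preserved because $\tr(\bm\Delta)=0$ gives $\tr(\bm X^\pm(\delta))=\tr(\bm X^*)=\sum_{i\in[\k]}x_i^*$; and $\bm X^\pm(\delta)\in\S_+^n$ for small $\delta$ since $\bm\Lambda_2\succ0$ forces $\bm\Lambda_2\pm\delta\bm\Delta\succ0$ by continuity of eigenvalues. So the only substantive conditions are the Ky Fan inequalities $\|\bm X^\pm(\delta)\|_{(\ell)}\le\sum_{i\in[\ell]}x_i^*$ for $\ell\in[\k]$.

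I would dispatch $\ell=\k$ immediately: because $\bm X^\pm(\delta)$ is positive semidefinite, $\|\bm X^\pm(\delta)\|_{(\k)}\le\tr(\bm X^\pm(\delta))=\tr(\bm X^*)=\sum_{i\in[\k]}x_i^*$, and this holds for every admissible $\delta$. For $\ell\le\k-1$ I would split on whether the constraint is slack or tight at $\bm X^*$. In the slack case $\|\bm X^*\|_{(\ell)}<\sum_{i\in[\ell]}x_i^*$, continuity of the Ky Fan norm under the perturbation $\pm\delta\,\bm Q_2\bm\Delta\bm Q_2^{\top}$ preserves the strict inequality for all small $\delta$. Since there are only finitely many indices $\ell\in[\k]$, taking $\underline\delta$ to be the minimum of these thresholds together with the positive-definiteness threshold would conclude the argument.

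The main obstacle is the tight case $\|\bm X^*\|_{(\ell)}=\sum_{i\in[\ell]}x_i^*$ with $\ell\le\k-1$, where I must prevent the top-$\ell$ eigenvalue sum from increasing. Here I would invoke \Cref{lem:ineq}. The witness satisfies $\bm x^*\succeq\bm\lambda(\bm X^*)$ (the extra indices $\ell>\k$ are automatic since $x_{\k+1}^*=\cdots=0$ and the remaining eigenvalues are nonnegative). Under the contradiction hypothesis $r>\k$, the eigenvalues $\lambda_{\k+1},\dots,\lambda_r$ are strictly positive, so $\|\bm X^*\|_{(\k)}=\sum_{i\in[\k]}\lambda_i<\tr(\bm X^*)=\sum_{i\in[\k]}x_i^*$; that is, index $\k$ is strict. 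Applying \Cref{lem:ineq} with $j_1=\k$ propagates strictness across the entire eigenvalue plateau of $\bm\lambda(\bm X^*)$ of value $\lambda_\k$, so any index $\ell$ with $\lambda_\ell=\lambda_\k$ is strict; hence a tight index $\ell\le\k-1$ must obey $\lambda_\ell>\lambda_\k$. Completing $\{\bm Q_1,\bm Q_2\}$ to an orthonormal basis of $\Re^n$ by a kernel block, the eigenvalues of $\bm X^\pm(\delta)$ are $\lambda_1,\dots,\lambda_{\k-1}$ together with the eigenvalues of $\bm\Lambda_2\pm\delta\bm\Delta$, which converge to $\lambda_\k,\dots,\lambda_r$ as $\delta\to0$. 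Because the largest of these converges to $\lambda_\k<\lambda_\ell$, for small $\delta$ none of them exceeds $\lambda_\ell$, so the $\ell$ largest eigenvalues of $\bm X^\pm(\delta)$ are exactly $\lambda_1,\dots,\lambda_\ell$ and the inequality holds with equality. This settles the tight case and, combined with the slack and $\ell=\k$ cases above, proves the claim.
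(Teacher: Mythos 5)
Your proof is correct and follows essentially the same route as the paper's: the same witness $\bm x^*$ reused verbatim, the same confinement of the perturbation to the block of the $(r-\k+1)$ smallest positive eigenvalues (so that $\lambda_1,\dots,\lambda_{\k-1}$ remain eigenvalues of $\bm X^{\pm}(\delta)$), and the same crucial application of \Cref{lem:ineq} with $j_1=\k$ to propagate strictness across the $\lambda_{\k}$-plateau, which is exactly what rules out problematic tight indices. The only difference is bookkeeping: where the paper makes the admissible threshold explicit via Weyl's inequality and the constant $c^*$ (with $\underline{\delta}\le c^*/((r-\k+1)\lambda_{\max})$), you argue qualitatively through the slack/tight dichotomy and continuity of eigenvalues, which is equally valid since only finitely many indices $\ell\in[\k]$ are involved and each yields a positive threshold.
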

\begin{proof}
First,  according to the characterization of $\conv(\X)$ in \Cref{prop:conv} and $\bm X^* \in \conv(\X)$, there is a vector $\bm x^*\in \Re_+^n$ such that $\bm \lambda \preceq \bm x^*$. 
Let $\bm \lambda^+(\delta)\in \Re_+^n$ and $\bm \lambda^-(\delta)\in \Re_+^n$  denote the eigenvalue vectors of $\bm X^+ (\delta)$ and $\bm X^- (\delta)$, respectively, provided that $\delta>0$ is small enough to ensure $\bm X^+ (\delta), \bm X^- (\delta)  \in \S_+^n$. According to \Cref{prop:conv}, $\bm X^+ (\delta), \bm X^- (\delta)  \in \conv(\X)$ holds if $\bm \lambda^+(\delta), \bm \lambda^-(\delta) \preceq \bm x^*$. 
Next, we will show that if the scalar $\delta>0$ only imposes slight perturbation  on eigenvalue vector $\bm \lambda$ of matrix $\bm X^*$, then $\bm \lambda^+(\delta), \bm \lambda^-(\delta) \preceq \bm x^*$ still holds given $\bm \lambda \preceq \bm x^*$. Let us first analyze the perturbation from $\bm \lambda$ to $\bm \lambda^+(\delta)$ caused by  scalar $\delta$. 

Specifically, according to \Cref{prop:conv},  a pair $(\bm X^*, \bm x^*)$ satisfies  $x_1^*\ge \cdots \ge x_{\k}^*\ge 0= x_{\k+1}^*= \cdots = x^*_n$, $\|\bm X^*\|_{(\ell)}\le  \sum_{i\in [\ell]}x^*_i$ for each $\ell \in [\k-1]$ and $\tr(\bm X^*) = \sum_{i\in [\k]} x_i^*$. Thus, we have
$$\sum_{i\in [\ell]} \lambda_i =\|\bm X^*\|_{(\ell)} < \|\bm X^*\|_{(r)} =\tr(\bm X^*) = \sum_{i\in [\k]} x_i^*=\sum_{i\in [\ell]} x_i^*, \ \ \forall \ell \in [\k, r-1], \vspace{-.4em}$$
where the first inequality  and the second equality are from the fact that $\rank(\bm X^*)=r$ and $\bm X^*\in \S_{+}^n$, and the last equality is due to $x^*_{\k+1}= \cdots = x^*_n=0$.
Suppose that $\lambda_{0}=\infty$ and $\lambda_{j_0-1} >\lambda_{j_0} =\cdots = \lambda_{\k}$ for some $j_0 \in [\k]$. The according to \Cref{lem:ineq} with $j_1=\k$, we have 
\begin{align}\label{eq_pert}
\lambda_{j_0-1} >\lambda_{j_0}, \ \ \sum_{ i \in [\ell]} \lambda_i <  \sum_{ i \in [\ell]} x_i^*, \forall \ell \in [j_0, \k].
\end{align}
Then we define a  constant  $c^*>0$ by
$$c^* := \min\bigg\{\lambda_{j_0-1} - \lambda_{j_0}, \min_{\ell \in [j_0, \k]} \bigg(\sum_{ i \in [\ell]} x_i^*-\sum_{ i \in [\ell]} \lambda_i\bigg) \bigg\}.$$

Suppose that matrix $\bm \Lambda_2 +\delta \bm \Delta \in \S_+^{r-\k+1}$ admits eigenvector matrix $\bm P\in \Re^{(r-\k+1)\times (r-\k+1)}$. Then it is easy to check that $
\begin{pmatrix}
\bm Q_1^{\top} \\
\bm P^{\top} \bm Q_2^{\top}
\end{pmatrix} \begin{pmatrix}
\bm Q_1 & \bm Q_2 \bm P
\end{pmatrix}  = \bm I_r$, since matrix $\begin{pmatrix}
\bm Q_1 & \bm Q_2
\end{pmatrix} \in \Re^{n\times r}$ is orthonormal. It follows that the nonzero eigenvalues of matrix $\bm X^+(\delta)$ contain those of $\bm \Lambda_1$ and $\bm \Lambda_2+\delta \bm \Delta$. That is, $\{\lambda_i\}_{i\in [\k-1]}$ are also eigenvalues of matrix $\bm X^+(\delta)$. Therefore, the perturbation from  $\bm \lambda$ to $\bm \lambda^+(\delta)$ is captured by the impact of matrix $\delta \bm \Delta$ on $\bm \Lambda_2$.
We let $ \lambda_{\max}$ denote the largest singular value of matrix $\bm \Delta$, according to Weyl's inequality, the eigenvalue vector $\bm \lambda^+(\delta)$ of matrix $\bm X^+(\delta)$ satisfies
\begin{align}\label{eq:eigen2}
\bm \lambda^+(\delta) \le \begin{pmatrix}
\lambda_1,\cdots, \lambda_{\k-1},\lambda_{\k} + \delta \lambda_{\max}, \cdots, \lambda_{r} + \delta \lambda_{\max},  0, \cdots, 0
\end{pmatrix},
\end{align}
where note that the eigenvalue vector $\bm \lambda^+(\delta)$ above may not follow a descending order and the inequality is the component-wise convention for vectors.
Without loss of generality, we let $\lambda^+_1(\delta) \ge \cdots \ge \lambda^+_n(\delta) \ge 0$ denote the eigenvalues of $\bm X^+(\delta)$, i.e., a permutation of elements in vector $\bm \lambda^+(\delta)$ by a descending order.

By letting the scalar  $\underline{\delta} \le  \frac{c^*}{(r-\k+1) \lambda_{\max}}$, after perturbation of eigenvalue vector $\bm \lambda^+(\delta)$ by any $\delta \in (0, \underline{\delta})$, we still have that $\bm x^* \succeq \bm \lambda^+(\delta)$ as shown follows.

\begin{enumerate}[(a)]
\item We show that $\lambda^+_i(\delta) = \lambda_i$ for any $i \in [j_0-1]$. If not, there must exist  $\lambda^+_{\ell}(\delta) > \lambda_{\ell}$ for some $\ell \in [j_0-1]$, which means that $\lambda^+_{\ell} \notin \{\lambda_i\}_{i\in [\k-1]}$  as $\{\lambda_i\}_{i\in [\k-1]}$ are eigenvalues of matrix $\bm X^+(\delta)$. 
According to the inequality \eqref{eq:eigen2},  $\lambda^+_{\ell}(\delta)$ must satisfy  $\lambda^+_{\ell}(\delta)\le \lambda_{\k} + \delta \lambda_{\max} \le  \lambda_{\k} + \lambda_{j_0-1} - \lambda_{j_0} \le \lambda_{j_0-1} \le \lambda_{\ell}$, where the second inequality is due to $\delta\lambda_{\max} \le c^*/(r-\k +1)$ and the third one is from $j_0 \le \k$. Therefore, $\lambda^+_i(\delta) = \lambda_i$ must hold for any $i \in [j_0-1]$  and we have
$ \sum_{ i \in [\ell]} \lambda^+_i(\delta) =  \sum_{ i \in [\ell]} \lambda_i \le \sum_{ i \in [\ell]} x_i^*$ for all $\ell \in [j_0-1]$. 

\item According to the inequality \eqref{eq:eigen2}, there are at most $(r-\k+1)$ entries in $\bm \lambda^+(\delta)$ that go beyond those of $\bm \lambda$ up to $\delta \lambda_{\max}$. Hence, we have
\[ \sum_{ i \in [\ell]} \lambda^+_i(\delta) \le  \sum_{ i \in [\ell]} \lambda_i  + (r-\k+1) \delta \lambda_{\max}  \le \sum_{ i \in [\ell]} \lambda_i + c^*  \le \sum_{ i \in [\ell]} x_i^*, \forall   \ell \in [j_0,\k], \]
where the second inequality is from the definition of constant $c^*$.

\item Since $\tr(\bm X^+(\delta)) =\tr(\bm X^*) = \sum_{i\in [\k]} \bm x^*$, we have
$\sum_{ i \in [\ell]} \lambda^+_i(\delta)  \le \tr(\bm X^+(\delta)) =\sum_{i\in [\k]} \bm x^*=\sum_{i\in [\ell]} \bm x^*$ for each $\ell \in [\k+1, n]$. 
\end{enumerate}
Similarly, there exists a scalar $\underline{\delta}^* >0$ such that $\bm x^* \succeq \bm \lambda^-(\delta)$ for any $\delta \in (0, \underline{\delta}^* )$. Letting $\underline{\delta}: =\min\{\underline{\delta}^*,\underline{\delta}\}$, we  complete the proof of the rank bound. \qedA
\end{proof}
\qed
\end{proof}

\subsection{Proof of Corollary \ref{cor:spca}}\label{proof:spca}
\corspca*
\begin{proof}
It is easy to check that the domain set $\X$ in \ref{app:fpca} satisfies $\k =k$.
According to \cite{li2022exactness}, at most $(m-1)$ linear  inequalities are binding at any extreme points in the feasible set of \ref{pc_fpca}.  Besides, the optimal value of \ref{pc_fpca}  must be finite due to the boundedness of set $\conv(\X)$.
Hence, in this example, the rank bound in \Cref{them:rank} reduces to $k +  \left\lfloor \sqrt{2 {m}+{1}/{4}}-{3}/{2}\right\rfloor$. When $m\le 2$, the rank bound reaches $k$, and thus  an  optimal extreme point of \ref{pc_fpca} coincides  with that of the original \ref{app:fpca}.
\qed
\end{proof}

\subsection{Proof of Corollary \ref{cor:qcqp}}\label{proof:qcqp}
\corqcqp*
\begin{proof}
Given a domain set $\X:=\{\bm X\in \S_{+}^n: \rank(\bm X)\le 1\}$,  we have that $F(\bm X)=0$ in \eqref{eq_set} and $\k=1$ by \Cref{def:k}.
According to the characterization of $\conv(\X)$ in \Cref{prop:conv},  we have
\begin{align*}
 \conv(\D)=\bigg\{\bm X \in \S_+^n: \exists \bm x \in   \Re_+^n,  x_1 \ge \cdots \ge x_n,  x_{k+1}=0,  \|\bm X\|_{(\ell)} \le \sum_{i\in [\ell]} x_{i}, \forall \ell \in [k], \tr(\bm X) = \sum_{i\in [k]}x_i\bigg\}.
\end{align*}
Note that the vector $\bm{x}$ can be unbounded.
Hence, in the proof of \Cref{prop:rank}, it suffices to construct a symmetric matrix $\bm \Delta \in \S^{r}$ to satisfy the first $d$ equations in \eqref{eq:new} since we can arbitrarily adjust the vector $\bm x^*$ therein. When $\V_{\rel}>-\infty$,  this observation leads to a better rank bound $r(r+1)/2\le  m $, where $r$ denotes the rank of an  extreme point to \ref{pc:qcqp}. When $\tilde m \le 2$, the equivalence between \ref{app:qcqp} and \ref{pc:qcqp} is directly from \Cref{cor:ch}.
\qed
\end{proof}

\subsection{Proof of Theorem \ref{them:nonface}}\label{proof:nonface}
Before proving  \Cref{them:nonface}, let us give an explicit characterization of the convex hull of domain set $\X$ in the non-symmetric matrix space.
\begin{proposition}\label{prop:nonsymconv}
	For a domain set $\X$ in non-symmetric matrix space, i.e., $\Q:=\Re^{n\times p}$ in \eqref{eq_set},  its convex hull $\conv(\X)$ is equal to
	\begin{equation*}
		\begin{aligned}
			\bigg\{\bm X\in  \Re^{n\times p}:  \exists  \bm x \in \Re_+^n, f(\bm x)\le 0,  x_1 \ge \cdots \ge x_n,  x_{k+1}=0,
			\|\bm X\|_{(\ell)} \le \sum_{i\in [\ell]} x_{i}, \forall \ell \in [k-1], \|\bm X\|_* \le \sum_{i\in [k]}x_i\bigg\}
		\end{aligned}
	\end{equation*}
	and is a closed set.
\end{proposition}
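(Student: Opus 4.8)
The plan is to follow the two-part structure of the proof of Proposition \ref{prop:conv}, changing only the passage from eigenvalues to singular values. For the set-theoretic identity, I would first observe that when $\Q:=\Re^{n\times p}$ the domain set $\X$ depends on $\bm X$ only through its singular value vector $\bm\lambda(\bm X)\in\Re_+^n$: the constraint $\rank(\bm X)\le k$ is the support condition $\|\bm\lambda(\bm X)\|_0\le k$, and $F(\bm X)=f(\bm\lambda(\bm X))$ with $f$ effectively sign-invariant since singular values are nonnegative. Hence $\X$ is permutation- and sign-invariant in its singular values, which is precisely the regime of the convexification theorems of \cite{kim2022convexification}. Applying their result lifts the sparsity constraint to the sorted support condition $x_1\ge\cdots\ge x_n$, $x_{k+1}=0$, preserves $f(\bm x)\le 0$ at the vector level, and encodes the weak majorization $\bm x\succ\bm\lambda(\bm X)$ through the Ky Fan inequalities $\|\bm X\|_{(\ell)}\le\sum_{i\in[\ell]}x_i$ for $\ell\in[k-1]$ together with the nuclear-norm bound $\|\bm X\|_*\le\sum_{i\in[k]}x_i$. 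The remaining partial-sum inequalities, for $\ell\ge k$, need not be listed: since $x_{k+1}=\cdots=x_n=0$ we have $\sum_{i\in[\ell]}x_i=\sum_{i\in[k]}x_i\ge\|\bm X\|_*\ge\|\bm X\|_{(\ell)}$, so they hold automatically. This also explains why the last constraint is the nuclear-norm \emph{inequality} $\|\bm X\|_*\le\sum_{i\in[k]}x_i$ and the majorization is weak ($\succ$), in contrast to the trace \emph{equality} and full majorization ($\succeq$) of the positive semidefinite case.

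For closedness I would reuse verbatim the projection-plus-compactness argument from Proposition \ref{prop:conv}. Define the lifted set
\[
\T:=\Big\{(\bm X,\bm x)\in\Re^{n\times p}\times\Re_+^n:\ f(\bm x)\le 0,\ x_1\ge\cdots\ge x_n,\ x_{k+1}=0,\ \|\bm X\|_{(\ell)}\le\textstyle\sum_{i\in[\ell]}x_i\ \forall\ell\in[k-1],\ \|\bm X\|_*\le\textstyle\sum_{i\in[k]}x_i\Big\},
\]
so that $\conv(\X)=\Proj_{\bm X}(\T)$. Given a convergent sequence in $\conv(\X)$, I would restrict to a bounded tail, lift each iterate to a companion vector $\bm x_t$ that can be taken bounded (the singular values are bounded), extract a convergent subsequence of $(\bm X_t,\bm x_t)$ via the Bolzano-Weierstrass theorem, and conclude that the limit lies in $\T$ because $f$ is closed and the Ky Fan and nuclear norms are continuous; projecting onto $\bm X$ then places the limit back in $\conv(\X)$.

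The main obstacle I anticipate is the first part: correctly invoking the singular-value form of the \cite{kim2022convexification} convexification and verifying that the weak majorization it produces is faithfully captured by the stated Ky Fan and nuclear-norm inequalities under the support condition, in particular confirming the sign-invariance reduction that lets one work with nonnegative singular values throughout. Once that dictionary between $\bm x$, the Ky Fan norms, and the nuclear norm is in place, the closedness argument is routine and mirrors the eigenvalue case.
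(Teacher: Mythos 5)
Your proposal matches the paper's proof: the paper likewise obtains the hull description by invoking theorem 7 of \cite{kim2022convexification} (using exactly the sign- and permutation-invariance of the domain set in its singular values that you identify) and establishes closedness by repeating the lifted-set projection and Bolzano--Weierstrass argument from \Cref{prop:conv}. Your added remarks -- why the partial-sum inequalities for $\ell \ge k$ are redundant and why the trace equality of the positive semidefinite case relaxes to the nuclear-norm inequality with weak majorization -- are correct elaborations of details the paper leaves implicit in its citation.
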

\begin{proof}
	The derivation of $\conv(\X)$ has been shown in \cite{kim2022convexification}[theorem 7]. The closeness proof  of set  $\conv(\X)$ follows that of \Cref{prop:conv}.   \qed
\end{proof}

\thmnonface*

\begin{proof}
 Suppose that there is a non-symmetric matrix $\bm X^* \in F^d$ with rank $r> \tilde k + \lfloor\sqrt{2d+9/4}-3/2\rfloor$. 
 Then, we denote the singular value decomposition of matrix $\bm X^*$ by $\bm X^* = \bm Q_1 \bm \Lambda_1 \bm P^{\top}_1+\bm Q_2 \bm \Lambda_2 \bm P^{\top}_2$ and denote its singular values by $\lambda_1\ge \cdots \ge \lambda_r> 0 =\lambda_{r+1}=\cdots =\lambda_n$, where 
 \begin{align*}
&\bm \Lambda_1 := \Diag(\lambda_1,\cdots, \lambda_{\k-1}) \in \S_{++}^{\k-1}, \ \  \bm \Lambda_2 := \Diag(\lambda_{\k},\cdots, \lambda_r) \in \S_{++}^{r-\k+1},
\end{align*}
where $\bm Q_1, \bm Q_2$ and $\bm P_1, \bm P_2$ are corresponding left and right singular vectors.

Following the proof of \Cref{prop:rank} and using matrices $\{\bm X_i\}_{i\in [d+1]}$ therein, we can construct a nonzero symmetric matrix $\bm \Delta \in \S^{r-\k+1}$ 
satisfying the equations below
\[\langle  \bm P_2^{\top} (\bm X_{i} - \bm X_{d+1}) \bm Q_2, \bm \Delta \rangle = 0, \forall i \in [d], \ \  \tr(\bm \Delta) =0.\]

It should be noted that according to \Cref{prop:nonsymconv}, the description of convex hull set $\conv(\X)$ with $\Q=\Re^{n\times p}$ relies on the nuclear norm of a non-symmetric matrix. 

Next, we define two matrices $\bm X^{\pm}(\delta)=\bm X^*\pm \delta \bm Q_2 \bm \Delta \bm P_2^{\top}$,
where $\delta>0$ is a small scalar such that matrices $\begin{pmatrix}
    \bm \Lambda_1 & \bm 0 \\
    \bm 0 & \bm \Lambda_2 \pm \delta \bm \Delta
\end{pmatrix} \in \S_+^{r}$ are  positive semidefinite.
Hence, we can show that matrix $\bm X^+(\delta)$ has the same nonzero singular value vector as that of positive semidefinite matrix  
$\begin{pmatrix}
    \bm \Lambda_1 & \bm 0 \\
    \bm 0 & \bm \Lambda_2 + \delta \bm \Delta
\end{pmatrix}$,
which means that
$$\|\bm X^{+}(\delta) \|_* = \left\|\begin{pmatrix}
    \bm \Lambda_1 & \bm 0 \\
    \bm 0 & \bm \Lambda_2 + \delta \bm \Delta
\end{pmatrix}\right\|_*=  \tr( \bm \Lambda_1) + \tr( \bm \Lambda_2 + \delta \bm \Delta)=\tr( \bm \Lambda_1)+\tr(\bm \Lambda_2)=\|\bm X^*\|_*, $$
where the second equation is because for any positive semidefinite matrix, the eigenvalues meet singular values.
Analogously,  the equation  $\|\bm X^{-}(\delta) \|_* =\|\bm X^*\|_*$ holds for matrix $\bm X^{-} (\delta)$. Then, 
the existence of such a symmetric matrix $\bm \Delta$  can form a contradiction as shown in \Cref{prop:rank}, leading to the desired rank bound $d+1 \ge {(r-\k+1)(r-\k+2)}/{2}$.
\qed
\end{proof}
\subsection{Proof of Corollary \ref{cor:fsvd}}\label{proof:corfsvd}
\corfsvd*
\begin{proof}
	Analogous to   \ref{pc_fpca},  there are at most $(m-1)$  linear equations for all extreme points in the feasible set of \ref{pc_fsvd}. According to \Cref{them:nonsym}, any optimal extreme point of its \ref{pc_fsvd} has a rank at most $k+\left\lfloor \sqrt{2m+1/4} -3/2\right\rfloor$. When $m\le 2$, the rank bound becomes $k$ and thus the \ref{pc_fsvd}  can yield a rank-$k$ solution that is also optimal to the original \ref{app:fsvd}.  \qed
\end{proof}
\subsection{Proof of Corollary \ref{cor:lmc}}\label{proof:lmc}
\corlmc*
\begin{proof}
Let $(\hat{z}, \hat{\bm X})$ denote an optimal solution to the \ref{pc_lmc}. Then given an optimal solution $\hat{z}$, the compact set below can be viewed as an optimal set of variable $\bm X$ to  \ref{pc_lmc} 
\begin{align*}
\left\{\bm X\in \conv(\D):  X_{ij} =\hat{X}_{ij}, \forall (i,j)\in \Omega\right\},
\end{align*}
where now we have a domain set $\D:=\{\bm X\in \Re^{n\times p}: \rank(\bm X)\le k, \|\bm X\|_* \le \hat{z}\}.$

It is seen that the domain set $\X$ above  can be viewed as a special case of that in \Cref{eg1}; hence, we now have $\k=1 \le k$  by \Cref{def:k}. Since there are $m=|\Omega|$ linear inequalities, according to \Cref{them:nonsym}, any optimal extreme point $\bm X^*$ has a rank at most $1+\lfloor\sqrt{|\Omega|+9/4} -3/2\rfloor$. When $\lfloor\sqrt{|\Omega|+9/4} -1/2\rfloor \le k$,  the \ref{pc_lmc} can achieve the desired rank-$k$ solution  as the original \ref{app:lmc} based on \Cref{cor:nonsym}.
\qed
\end{proof}

\subsection{Proof of Theorem \ref{them:symsign} and Its Implication of \ref{eq_rel_rank} Exactness}\label{proof:symsign}
\thmsymsign*
\begin{proof}
Since the domain set $\X$ is sign-invariant, according to \Cref{prop:nonsymconv}, its convex hull $\conv(\X)$ is equal to
 \begin{equation*}
\begin{aligned}
 \bigg\{\bm X\in  \Q:  \exists  \bm x \in \Re_+^n, f(\bm x)\le 0,  x_1 \ge \cdots \ge x_n,  x_{k+1}=0, 
\|\bm X\|_{(\ell)} \le \sum_{i\in [\ell]} x_{i}, \forall \ell \in [k-1], \|\bm X\|_* \le \sum_{i\in [k]}x_i\bigg\}.
\end{aligned}
\end{equation*}
Then, the result in \Cref{prop:rank} can be readily extended to set $\conv(\D)$ with the symmetric indefinite matrix space, except considering the singular value decomposition rather than eigen-decomposition. That is, any $d$-dimensional face in set $\conv(\X)$ satisfies the rank-$\tilde k + \lfloor\sqrt{2 d+9/4}-3/2\rfloor$ constraint.
Using \Cref{lem:face}, the remaining proof is identical to that of \Cref{them:rank} and thus is omitted. \qed
\end{proof}

Notable, \Cref{them:symsign} implies that by letting $\tilde k + \lfloor\sqrt{2 d+9/4}-3/2\rfloor\le k$,  on can obtain a sufficient condition under which the \ref{eq_rel_rank} coincides with  the \ref{eq_rank}.
\begin{proposition}\label{cor:symch}
	Given   $\Q:=\S^n$ in \eqref{eq_set} and integer $\k\le k$ following \Cref{def:k}, suppose that the function $f(\cdot)$  in the domain set $\X$ is sign-invariant  and the \ref{eq_rel_rank}  admits  a line-free feasible set. Then,  if $\tilde{m} \le {(k-\k+2)(k-\k+3)}/{2} -2$ holds, we have that 
	\begin{enumerate}[(i)]
		\item Each feasible extreme point in \ref{eq_rel_rank} has a rank at most $k$; and
		\item The \ref{eq_rel_rank}  achieves the same optimal value as the original \ref{eq_rank}, i.e., $\V_{\opt} = \V_{\rel}$ if the \ref{eq_rel_rank}  yields a finite optimal value, i.e., $\V_{\rel}>-\infty$.
	\end{enumerate}
\end{proposition}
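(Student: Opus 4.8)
The plan is to mirror the reasoning already used for \Cref{cor:ch} (and \Cref{cor:nonsym}), now invoking the sign-invariant rank bound of \Cref{them:symsign} in place of \Cref{them:rank}. The whole argument reduces to an elementary algebraic equivalence followed by a direct application of that theorem.

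First, I would show that the hypothesis is exactly equivalent to the rank-bound inequality $\k + \lfloor \sqrt{2\tilde m + 9/4} - 3/2 \rfloor \le k$. Writing $t := k - \k \ge 0$, the floor inequality $\lfloor \sqrt{2\tilde m + 9/4} - 3/2 \rfloor \le t$ holds if and only if $\sqrt{2\tilde m + 9/4} < t + 5/2$, i.e.\ $2\tilde m < t^2 + 5t + 4$. Since $t^2 + 5t + 4 = (t+1)(t+4)$ is even for every integer $t$, this is equivalent to the integer inequality $\tilde m \le (t^2 + 5t + 2)/2 = (t+2)(t+3)/2 - 2$, which is precisely the stated condition $\tilde m \le (k-\k+2)(k-\k+3)/2 - 2$.

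With this reduction in hand, part (i) is immediate: by part (i) of \Cref{them:symsign}, under sign-invariance of $f(\cdot)$ and a line-free feasible set, every feasible extreme point of \ref{eq_rel_rank} has rank at most $\k + \lfloor \sqrt{2\tilde m + 9/4} - 3/2 \rfloor$, which we have just shown is at most $k$. For part (ii), assuming $\V_{\rel} > -\infty$, part (ii) of \Cref{them:symsign} yields an optimal solution $\bm X^*$ of rank at most $\k + \lfloor \sqrt{2\tilde m + 9/4} - 3/2 \rfloor \le k$. The only remaining point to verify is that such a low-rank point actually lies in $\X$, hence is feasible for \ref{eq_rank}: since $F(\bm X) = f(\bm \lambda(\bm X))$ is a convex spectral function, the sublevel set $\{\bm X : F(\bm X) \le 0\}$ is convex and contains $\X$, so it contains $\conv(\X)$; thus $\bm X^* \in \conv(\X)$ satisfies $F(\bm X^*) \le 0$, and together with $\rank(\bm X^*) \le k$ this gives $\bm X^* \in \X$. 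Because \ref{eq_rel_rank} relaxes \ref{eq_rank} over the same linear inequalities, $\bm X^*$ is feasible for \ref{eq_rank} and attains value $\V_{\rel}$; combined with $\V_{\rel} \le \V_{\opt}$ this forces $\V_{\opt} = \V_{\rel}$.

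I do not anticipate any genuine obstacle here, as the result is routine once \Cref{them:symsign} is available; it is the exact analogue of \Cref{cor:ch}. The two points deserving a line of care are the parity observation that $t^2 + 5t + 4$ is always even, which makes the floor bound align precisely with the quadratic threshold, and the verification that a rank-$\le k$ point of $\conv(\X)$ automatically satisfies the spectral constraint and therefore belongs to $\X$, so that attaining $\V_{\rel}$ at such a point transfers the optimal value to the original problem.
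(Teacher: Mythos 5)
Your proposal is correct and follows essentially the same route as the paper, which proves this result by invoking \Cref{them:symsign} and noting the argument is identical to that of \Cref{cor:ch}: verify that the hypothesis on $\tilde m$ forces $\k + \lfloor\sqrt{2\tilde m+9/4}-3/2\rfloor \le k$, apply the theorem to get a rank-$\le k$ optimal extreme point, and conclude feasibility for \ref{eq_rank} from $-\infty<\V_{\rel}\le \V_{\opt}$. The only difference is that you spell out two details the paper leaves implicit --- the parity computation behind the floor inequality and the observation that a rank-$\le k$ point of $\conv(\X)$ satisfies $F(\bm X)\le 0$ (since the convex sublevel set of $F$ contains $\conv(\X)$) and hence lies in $\X$ --- both of which are accurate.
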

\begin{proof}
	Using \Cref{them:symsign}, the proof is identical to that of \Cref{cor:ch} and thus is omitted. \qed
\end{proof}

\subsection{Proof of Proposition \ref{prop:symconv}} \label{proof:symconv}
\propsymconv*
\begin{proof}
By \Cref{def:k}, we will replace $k$ by $\k$ in the below.
 According to  \cite{kim2022convexification}[theorem 4], we have that  $\conv(\X)= \Proj_{\bm X}(\T)$, where $\T:= \{(\bm X, \bm x )\in  \S^n\times \Re^n: \bm x \in \conv(\tilde{\X}\cap \{  x_1 \ge \cdots \ge  x_n\}),\bm x \succeq \bm \lambda(\bm X) \}$ and $\tilde{\X}:= \{\bm x \in \Re^n: \|\bm x\|_0 \le \k, f(\bm  x)\le 0 \}$. 

Next, let us consider the key inner convex hull   over $\bm x$ above, i.e., $\conv(\tilde{\X}\cap \{  x_1 \ge \cdots \ge  x_n\})$, where there may  exist both positive and negative elements in $\bm{x}$.
Given $x_1\geq x_2\geq \ldots \geq x_n$,  $x_{i_1}=\cdots = x_{i_2}=0$ must hold whenever $x_{i_1} =x_{i_2} =0$ with $1\le i_1\le i_2\le n$ and $i_2-i_1\geq n-\k-1$. Besides,  the constraint $\|\bm x\|_0 \le \k$ implies that there are at least $(n-\k)$ zero entries in the vector $\bm x$. Therefore, to remove the constraint $\|\bm x\|_0 \le \k$, we can split set $\tilde{\X}\cap\{x_1\ge \cdots \ge x_n \}$ into $(\k+1)$ subsets, i.e., $\cup_{s\in [\k+1]} \tilde{\X}^{s}$ depending on where zero entries of  $\bm x$  are located, where for each $s\in [\k+1]$, set $\tilde{\X}^{s}$ is defined by
\begin{align*}
\tilde{\X}^{s} := \left\{\bm x \in \Re^n: f(\bm x)\le 0, x_1 \ge \cdots \ge x_n,   x_{s}= x_{s+n-\k-1}=0 \right\}.
\end{align*}

Finally, plugging $\cup_{s\in [\k+1]} \tilde{\X}^{s} $ into set $\T$ and moving the convex hull operation outside, we have $\conv(\X) =\Proj_{\bm X}(\T) \supseteq  \conv(\cup_{s \in[\k+1]} \Y^{s})$. Note that the convex hull and projection operators can interchange. On the other hand, for a matrix $\bm{X}\in \X$, suppose its eigenvalue vector $\bm{\lambda}(\bm {X})$ is sorted in descending order. Then we see that $\bm{X}\in \cup_{s \in[\k+1]} \Y^{s}$. Thus,  we conclude that $\conv(\X) = \conv(\cup_{s \in[\k+1]} \Y^{s}) $.
Besides, we have $\X \subseteq \cup_{s \in[\k+1]} \Y^{s}$.
Following the same proof of \Cref{prop:conv}, each set $\Y^s$ must be closed for any $s\in [\k+1]$.
\qed
\end{proof}

\subsection{Proof of Theorem \ref{them:symface}} \label{proof:symface}
\thmsymface*
\begin{proof}
 We prove the rank bounds by contradiction. There are two cases to be discussed depending on the value of $s\in [\k+1]$. 
\par
\noindent \textbf{Case I.} Suppose $s=1$ or $s=\k +1$.
Then for any matrix $\bm X^* \in \Y^s \subseteq \S^n$, its eigenvalue vector is either  all nonpositive  or all nonnegative as established in \Cref{prop:symconv}. 
Following the proof of \Cref{prop:rank},  we arrive at the same rank bound.

\noindent \textbf{Case II.} Suppose $2\le s \le \k$. Then for any  point $\bm X^* \in F^d $ with rank $r>\k$. 
Without loss of generality, we let $\bm X^* = \bm Q \Diag(\bm \lambda) \bm Q^{\top}$ be its eigen-decomposition and 
denote the  eigenvalues  by $\lambda_1  \ge \cdots \ge  \lambda_{d_1-1}>0= \lambda_{d_1}=\cdots = \lambda_{d_2-1} =0 > \lambda_{d_2}\ge \cdots\ge \lambda_{n}$ with $1\le d_1 \le d_2\le n$. 
When $d_1=1$, all eigenvalues of matrix $\bm X^*$ are nonpositive and the proof of \Cref{prop:rank} simply follows. Next, let us consider $2\le d_1 \le d_2 \le n$. Since there are only $r$ nonzero eigenvalues in matrix $\bm X^*\in \S^n$,  we have
\begin{align*}
d_2-d_1=n-r.
\end{align*}

 In addition, the eigenvalue vector of  matrix $\bm X^*$ corresponds to a vector $\bm x^*\in \Re^n$ such that $(\bm X^*, \bm x^*) \in \Y^s$, and  according to the definition of set $\Y^s$ \eqref{eq_ys} in  \Cref{prop:symconv}, we have
\begin{equation}\label{x_lam}
\begin{aligned}
&\bm x^*= [x^*_1, \cdots, x^*_{s-1}, 0, \cdots, 0, x_{s+n-\k}^*,\cdots, x^*_{n}]^\top, \ \  x^*_1 \ge \cdots \ge x^*_n,\\
&\bm \lambda= [\lambda_1, \cdots, \lambda_{d_1-1}, 0, \cdots, 0, \lambda_{d_2},\cdots, \lambda_{n}]^\top, \ \  \lambda_1\ge \cdots \ge \lambda_n,\\
&\sum_{i\in [j]}\lambda_i \le  \sum_{i\in [j]}x^*_i,\forall j\in [n-1], \ \ \sum_{i\in [n]}\lambda_i = \sum_{i\in [n]}x^*_i.
\end{aligned}
\end{equation}
Note that there must exist both positive and negative entries in vector $\bm x^*$.
Then, we claim that 
\begin{align}\label{ineqs}
\sum_{i\in [d_1-1]} \lambda_i  = \sum_{i\in [d_2-1]} \lambda_i\le  \sum_{i\in [s-1]} x^*_i.
\end{align}
Otherwise we have
$
\sum_{i\in [d_1-1]} x^*_i \ge  \sum_{i\in [d_1-1]} \lambda_i > \sum_{ i \in [s-1]} x^*_i
$,
which contradicts with the fact that $\max_{T\subseteq [n]} \sum_{i\in T} x_i^* = \sum_{ i \in [s-1]} x^*_i$.
Then there are three parts to be discussed depending on the relation among two pairs: $(s, d_1)$ and $(d_2, s+n-\k)$.

\begin{enumerate}[(i)]
\item Suppose  $2\le s<d_1 \le d_2 < s+n-\k \le n$. Then according to the inequality \eqref{ineqs}, we have
\begin{equation}\label{case1}
\begin{aligned}
&\sum_{i\in [\ell]} \lambda_i < \sum_{i\in [d_1-1]} \lambda_i \le \sum_{i\in [s-1]} x^*_i=  \sum_{i\in [\ell]} x^*_i, \ \  \forall \ell \in [s-1,d_1-2],\\
& 
\sum_{i\in [\ell]} \lambda_i < \sum_{i\in [d_2-1]} \lambda_i \le \sum_{i\in [s-1]} x^*_i=  \sum_{i\in [\ell]} x^*_i, \ \ \forall \ell \in [d_2,s+n-\k-1],
\end{aligned}
\end{equation}
where the strict inequalities are due to $\lambda_{d_1-1}>0$ and $\lambda_{d_2}<0$ and the equations result from  $x^*_{s} =\cdots = x_{s+n-\k-1}^*=0$ as detailed in \eqref{x_lam}.

Next,  to facilitate our analysis, we split $\bm x^*$ and $\bm \lambda$  in \eqref{x_lam} into three sign-definite subvectors
\begin{equation}\label{x_lam2}
\begin{aligned}
&(\bm x^*)^1 \in \Re_+^{d_1-1} :=\bm x^*_{[d_1-1]}, \ \ (\bm x^*)^2 \in \Re^{d_2-d_1}:= \bm x^*_{[d_1,d_2-1]}=\bm 0, \ \ (\bm x^*)^3 \in -\Re_+^{n-d_2+1} := \bm x^*_{[d_2, n]}, \\
&\bm\lambda^1 \in \Re_{++}^{d_1-1} :=\bm \lambda_{[d_1-1]}, \ \ \bm\lambda^2 \in \Re^{d_2-d_1}:= \bm \lambda_{[d_1, d_2-1]} =\bm 0, \ \ \bm \lambda^3\in -\Re_{++}^{n-d_2+1} := \bm \lambda_{[ d_2, n]}.
\end{aligned}
\end{equation}
Suppose $\bm \Lambda_i =\Diag(\bm \lambda^i)$ for $i=1,2,3$, then $\bm \Lambda_1 \in \S_{++}^{d_1-1}$ and $\bm \Lambda_3 \in -\S_{++}^{n-d_2+1}$. Following the proof in \Cref{prop:rank}, given the strict inequalities from $s-1$ to $d_1-2$ on the first  line of \eqref{case1}, we can split $\bm \Lambda_1$ into $\bm \Lambda_1^1 \in \S_{++}^{s-2} := \Diag(\lambda_1, \cdots, \lambda_{s-2})$ and $\bm \Lambda_1^2 \in  \in \S_{++}^{d_1-s+1} := \Diag(\lambda_{s-1}, \cdots, \lambda_{d_1-1})$. Similarly, using the strict inequalities from $d_2$ to $s+n-\k-1$ on the second  line of \eqref{case1}, we split $\bm \Lambda_3$ into $\bm \Lambda_3^1 \in- \S_{++}^{s+n-\k-d_2+1}:= \Diag(\lambda_{d_2}, \cdots, \lambda_{s+n-\k})$ and $\bm \Lambda_3^2 \in -\S_{++}^{\k-s} := \Diag(\lambda_{s+n-\k+1}, \cdots, \lambda_{n})$.

Thus, we rewrite the matrix $\bm X^*\in \S^n$ as the eigen-decomposition below
\begin{align*}
\bm X^* &= \bm Q_1 \bm \Lambda_1 \bm Q_1^{\top} + \bm Q_2 \bm \Lambda_2\bm Q_2^{\top} + \bm Q_3\bm \Lambda_3\bm Q_3^{\top} = \bm Q_1 \bm \Lambda_1 \bm Q_1^{\top} + \bm Q_3\bm \Lambda_3\bm Q_3^{\top} \\
&= \begin{pmatrix}
\bm Q_{1}^1 & \bm Q_{1}^2
\end{pmatrix}
\begin{pmatrix}
\bm \Lambda_{1}^1 & \bm 0\\
\bm 0 & \bm \Lambda_{1}^2
\end{pmatrix} \begin{pmatrix}
\bm Q_{1}^1 & \bm Q_{1}^2
\end{pmatrix}^{\top} + \begin{pmatrix}
\bm Q_{3}^1 & \bm Q_{3}^2
\end{pmatrix}
\begin{pmatrix}
\bm \Lambda_{3}^1 & \bm 0\\
\bm 0 & \bm \Lambda_{3}^2
\end{pmatrix} \begin{pmatrix}
\bm Q_{3}^1 & \bm Q_{3}^2
\end{pmatrix}^{\top}
\end{align*}
where the eigenvector matrix $\bm Q = \begin{pmatrix}
\bm Q_1 &\bm Q_2 &\bm Q_3
\end{pmatrix}$ can be decomposed accordingly and $\bm Q_2$ corresponds to zero eigenvalues and is thus omitted.

For any $d$-dimensional face $F^d$, there are ($d+1$)  points such that $F^d\subseteq \aff(\bm X_1, \cdots, \bm X_{d+1})$, where $\aff(\bm X_1, \cdots, \bm X_{d+1})=\{\sum_{i\in [d+1]} \alpha_i \bm X_i: \bm \alpha \in \Re^{d+1}, \sum_{i\in [d+1]} \alpha_i=1 \}$ denotes the affine hull of these points.
Suppose that the inequality $d+2 < 1/2((d_1-s+1)(d_1-s+2)+ (s+n-\k -d_2+1)(s+n-\k -d_2+2))$ holds, then there exists  a block symmetric matrix $\bm \Delta = \begin{pmatrix}
\bm \Delta_1 \in \S^{d_1-s+1} &\bm 0\\
\bm 0 & \bm \Delta_3 \in \S^{s+n-\k -d_2+1}
\end{pmatrix}$ such that
\begin{align} \label{eq:linear}
\langle  \begin{pmatrix}
\bm Q_1^2  & \bm Q_3^1
\end{pmatrix}^{\top} (\bm X_i - \bm X_{d+1}) \begin{pmatrix}
\bm Q_1^2  & \bm Q_3^1
\end{pmatrix}, \bm \Delta \rangle = 0, \forall i \in [d], \tr(\bm \Delta_1) =0, \tr(\bm \Delta_3)=0,
\end{align}
where the first $d$ equations imply
  \begin{align}\label{eq:perp}
    \begin{pmatrix}
      \bm Q_{1}^1 & \bm Q_{1}^2
\end{pmatrix}
\begin{pmatrix}
\bm 0 & \bm 0\\
\bm 0 &  \bm \Delta_1
\end{pmatrix} \begin{pmatrix}
\bm Q_{1}^1 & \bm Q_{1}^2
\end{pmatrix}^{\top} + \begin{pmatrix}
\bm \Delta_3 & \bm 0\\
\bm 0 & \bm 0
\end{pmatrix} \begin{pmatrix}
\bm Q_{3}^1 & \bm Q_{3}^2
\end{pmatrix}^{\top}  \perp F^d. 
  \end{align}
Then let us construct the two matrices $\bm X^+ \in \S^n$ and $\bm X^- \in \S^n$ as below
$$\bm X^{\pm}(\delta) = 
\begin{pmatrix}
\bm Q_{1}^1 & \bm Q_{1}^2
\end{pmatrix}
\begin{pmatrix}
\bm \Lambda_{1}^1 & \bm 0\\
\bm 0 & \bm \Lambda_{1}^2 \pm \delta \bm \Delta_1
\end{pmatrix} \begin{pmatrix}
\bm Q_{1}^1 & \bm Q_{1}^2
\end{pmatrix}^{\top} + \begin{pmatrix}
\bm Q_{3}^1 & \bm Q_{3}^2
\end{pmatrix}
\begin{pmatrix}
\bm \Lambda_{3}^1 \pm \delta \bm \Delta_3 & \bm 0\\
\bm 0 & \bm \Lambda_{3}^2
\end{pmatrix} \begin{pmatrix}
\bm Q_{3}^1 & \bm Q_{3}^2
\end{pmatrix}^{\top} ,$$
where $\delta >0$ is small enough and the eigenvalues of $\bm X^+(\delta)$ and $\bm X^-(\delta)$ can be written as
\begin{align*}
\bm \lambda(\bm X^{\pm}(\delta))\in \Re^n := \begin{pmatrix}
\bm \lambda(\bm \Lambda_1^1), &\bm \lambda(\bm \Lambda_1^2 \pm \delta \bm \Delta_1), &\bm 0, & \bm \lambda(\bm \Lambda_3^1 \pm \delta \bm \Delta_3), & \bm \lambda(\bm \Lambda_3^2 )
\end{pmatrix}.
\end{align*}

First, let us focus on the nonnegative pair $((\bm x^*)^1, \bm\lambda^1)$. Given the first line in \eqref{case1} and the inequality \eqref{ineqs} that implies $\sum_{i\in [d_1-1]} \lambda_i \le  \sum_{ i \in [d_1-1]} x^*_i = \sum_{ i \in [s]} x^*_i$, according to  \Cref{claim1} in \Cref{prop:rank}, there exists a positive scalar $\underline{\delta}>0$ such that
\begin{align*}
(\bm x^*)^1 \succeq  
\begin{pmatrix}
\bm \lambda(\bm \Lambda_1^1), & \bm \lambda(\bm \Lambda_1^2 \pm \delta \bm \Delta_1)
\end{pmatrix},  \ \ \bm \Lambda_1^2 \pm \delta \bm \Delta_1 \in\S_{++}^{d_1-s+1}
,    \ \ \forall \delta \in (0, \underline{\delta}).
\end{align*}
Given $\tr(\bm \Delta_1)=0$ and  $\begin{pmatrix}
\bm \lambda(\bm \Lambda_1^1), & \bm \lambda(\bm \Lambda_1^2 \pm \delta \bm \Delta_1)
\end{pmatrix} \in \Re_{+}^{d_1-1}$, we can obtain
\begin{align*}
\begin{pmatrix}
(\bm x^*)^1, &(\bm x^*)^2
\end{pmatrix}
 \succeq  \begin{pmatrix}
\bm \lambda(\bm \Lambda_1^1), & \bm \lambda(\bm \Lambda_1^2 \pm \delta \bm \Delta_1), &\bm 0
\end{pmatrix}.
\end{align*}

In a similar vein, for the nonpositive pair $((\bm x^*)^3, \bm\lambda^3)$, motivated by the second line in \eqref{case1}, we can let  $j_0=j_1=s+n-\k-1 < j_2$ in \Cref{lem:ineq} and  $\lambda_{s+n-\k}=\cdots  = \lambda_{j_2}>\lambda_{j_2+1}$. Analogous to
the proof of \Cref{claim1} in \Cref{prop:rank}, along with the majorization result above, we can show that there exists a $\underline{\delta}>0$ such that for any $\delta \in (0, \underline{\delta})$,
\begin{align}\label{delta3}
\bm x^*:=
\begin{pmatrix}
(\bm x^*)^1, & (\bm x^*)^2, & (\bm x^*)^3
\end{pmatrix}
 \succeq  \begin{pmatrix}
 \bm \lambda(\bm \Lambda_1^1), &\bm \lambda(\bm \Lambda_1^2 \pm \delta \bm \Delta_1), &\bm 0, & \bm \lambda(\bm \Lambda_3^1 \pm \delta \bm \Delta_3), & \bm \lambda(\bm \Lambda_3^2 )
 \end{pmatrix} =: \bm \lambda (\bm X^{\pm} (\delta)).
\end{align}

Combining with the conditions on $\bm \Delta$ in \eqref{eq:linear} and the fact $\bm x^* \succeq \bm \lambda(\bm X^{\pm}(\delta))$ in \eqref{delta3}, we can conclude that $\bm X^+(\delta), \bm X^-(\delta)\in \Y^s$. Thus, the equation $\bm X^*=(\bm X^+(\delta) +\bm X^-(\delta))/2$  contradicts with the orthogonalization in \eqref{eq:perp}. We must have
\begin{align*}
&d+2 
\ge \frac{(d_1-s+1)(d_1-s+2)}{2} 
+ \frac{(s+n-\k-d_2+1)(s+n-\k-d_2+2)}{2}
\\&  =\frac{(d_1-s+1)(d_1-s+2)}{2} +
\frac{ (r-\k+s+1-d_1)(r-\k+s+2-d_1)}{2}  \ge \frac{(r-\k+2)(r-\k+4)}{4},
\end{align*}
where  the first equation is from $d_2-d_1=n-r$ and the last inequality is obtained by minimizing the convex function over $d_1$ that $d_1^* =s+(r-\k)/2$ at optimality. Therefore, the largest rank of all  points in $d$-dimensional face $F^d$ must not exceed $ \k + \sqrt{9+4d} -3$.

\item Suppose $d_1 \le s$, then $d_2-d_1=n-r$ implies $d_2 \le s+ n-r< s+n-\k$. Thus, the second line in \eqref{case1} still holds. 
Now we only focus on the nonpositive pair $((\bm x^*)^3, \bm \lambda^3) $ and construct a nonzero symmetric matrix $\bm \Delta_3 \in \S^{s+n-\k -d_2+1}$ as \eqref{eq:linear} with $\bm \Delta_1=\bm 0$. It follows that
\begin{align*}
d+1\ge 
\frac{(s+n-\k-d_2+1)(s+n-\k-d_2+2)}{2}
\ge  \frac{(r-\k+1)(r-\k+2)}{2}, 
\end{align*}
where the last  inequality is due to $d_2 \le s+n-r$. Hence, we conclude $r \le \k + \lfloor \sqrt{2 d+\frac{9}{4}}-\frac{3}{2}\rfloor$.

\item If $s+n-\k\le d_2 \le n$, then $d_2-d_1=n-r$ implies $d_1 \ge s+ r-\k>s$.
Thus, the first line in \eqref{case1} still holds and we can only focus on the nonnegative pair $((\bm x^*)^1, \bm \lambda^1)$. By constructing a nonzero symmetric matrix $\bm \Delta_1 \in \S^{d_1-s+1}$ as \eqref{eq:linear} with $\bm \Delta_3=\bm 0$, we have
\begin{align*}
d+1
&\ge \frac{(d_1-s+1)(d_1-s+2)}{2}
\ge  \frac{(r-\k+1)(r-\k+2)}{2}, 
\end{align*}
where the last inequality is due to $d_1\ge s+r-\k$. We thus obtain  $r \le \k + \lfloor \sqrt{2 {d}+{9}/{4}}-{3}/{2}\rfloor$.
\end{enumerate}
Taking the largest one among the rank bounds in parts (i), (ii), and (iii), we finally obtain a rank bound of $\k+\lfloor \sqrt{4d+9}\rfloor-3$.
   \qed
\end{proof}

\subsection{Proofs of Theorems \ref{them:sparsesign} and \ref{them:sparsenonsign}} \label{proof:sparseface}
The rank bound of \Cref{them:sparsesign,them:sparsenonsign} arise from describing the convex hull  $\conv(\X )$ and analyzing the rank of its faces, as shown below.

\begin{proposition}\label{prop:convsparse}
	Given a sparse domain set $\X $ in \eqref{set:sparse} and integer $\tilde k$ following \Cref{def:k}, we have
	\begin{enumerate}[(i)]
		\item If function $f(\cdot)$ in \eqref{set:sparse} is sign-invariant, set $\conv(\X )$ is closed and is equal to
		$\conv(\X ):= \Big\{\bm X\in  \S^n: \bm X =\Diag(\diag(\bm X)),  \exists  \bm x \in \Re_+^n, f(\bm x)\le 0,  x_1 \ge \cdots \ge x_n,  x_{\k+1}=0, 
		\|\bm X\|_{(\ell)} \le \sum_{i\in [\ell]} x_{i}, \forall \ell \in [\k-1], \|\bm X\|_* \le \sum_{i\in [\k]}x_i\Big\};
		$
		\item  Otherwise, $\conv(\X ) = \conv(\cup_{s \in[\k+1]} \hat{\Y}^s)$,
		where for each $s\in[\k+1]$,  set $\hat \Y^s$ is closed and equals
		$\hat\Y^{s}:= \big\{\bm X \in \S^n: \bm X =\Diag(\diag(\bm X)), \exists \bm x \in \Re^n,  f(\bm x)\le 0,  x_1\ge  \cdots \ge x_n, x_{s} = x_{s+n-\k-1}=0,
		\bm x \succeq \diag(\bm X)\big\}.
		$
	\end{enumerate} 
\end{proposition}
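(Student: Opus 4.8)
The plan is to exploit the linear isomorphism $\Diag:\Re^n \to \{\bm X \in \S^n: \bm X = \Diag(\diag(\bm X))\}$ between vectors and diagonal matrices, which reduces the problem to convexifying a permutation-invariant vector set and then lets me invoke the machinery of \cite{kim2022convexification} exactly as in the proofs of \Cref{prop:conv}, \Cref{prop:nonsymconv}, and \Cref{prop:symconv}. Writing $\tilde\X := \{\bm d \in \Re^n: \|\bm d\|_0 \le k, f(\bm d) \le 0\}$, the sparse domain set in \eqref{set:sparse} is $\X = \{\Diag(\bm d): \bm d \in \tilde\X\}$, so $\conv(\X) = \Diag(\conv(\tilde\X))$ because convex hulls commute with linear maps. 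I would first record the two elementary facts that make the diagonal case tractable: for a diagonal matrix $\bm X = \Diag(\bm d)$ the eigenvalue vector $\bm\lambda(\bm X)$ is the sorted version of $\diag(\bm X)$ (so $\rank(\bm X) = \|\bm d\|_0$), while its singular values are the entries of $|\bm d|$, whence $\|\bm X\|_{(\ell)}$ equals the sum of the $\ell$ largest entries of $|\bm d|$ and $\|\bm X\|_* = \|\bm d\|_1$. Since $f$ is symmetric, $\tilde\X$ is permutation-invariant, and by \Cref{def:k} its convex hull is unchanged upon replacing $k$ with $\k$.

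For part (i), the sign-invariance $f(\bm d) = f(|\bm d|)$ renders $\tilde\X$ both permutation- and sign-invariant, which is exactly the structure handled for the non-symmetric space. I would therefore apply \cite{kim2022convexification}[theorem 7] to $\tilde\X$, just as in the proof of \Cref{prop:nonsymconv}, and translate the resulting majorization description back through $\Diag$: the constraints on the sorted singular values become $\|\bm X\|_{(\ell)} \le \sum_{i\in[\ell]} x_i$ for $\ell \in [\k-1]$ together with $\|\bm X\|_* \le \sum_{i\in[\k]} x_i$, with the auxiliary $\bm x \in \Re_+^n$ sorted and satisfying $x_{\k+1} = 0$ and $f(\bm x) \le 0$, and with the diagonal constraint $\bm X = \Diag(\diag(\bm X))$ carried along. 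This yields precisely the stated formula.

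For part (ii), when $f$ is not sign-invariant the signed diagonal entries cannot be collapsed to absolute values, so I would reproduce the disjunctive argument of \Cref{prop:symconv} at the vector level. Applying \cite{kim2022convexification}[theorem 4] describes $\conv(\tilde\X)$ through the lifted set involving $\bm x \succeq \diag(\bm X)$ (equivalently $\bm x \succeq \bm\lambda(\bm X)$, since majorization is invariant under sorting); splitting the sorted feasible region according to the location of the block of $(n-\k)$ forced zeros into the $(\k+1)$ pieces indexed by $s \in [\k+1]$, and interchanging the projection and convex-hull operators, produces $\conv(\X) = \conv(\cup_{s\in[\k+1]} \hat\Y^s)$ with $\hat\Y^s$ the diagonal counterpart of $\Y^s$ in \eqref{eq_ys}. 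Finally, closedness of each $\hat\Y^s$ (and of the set in part (i)) follows the Bolzano--Weierstrass argument of \Cref{prop:conv}: lift a convergent bounded sequence to the auxiliary variable $\bm x$, extract a convergent subsequence, and use the closedness of $f$ and of the sorting and majorization constraints to place the limit back in the set.

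The step I expect to be the main obstacle is confirming that the diagonal restriction is faithfully captured, i.e., that convexifying the diagonal set yields the majorization description restricted to diagonal matrices rather than the (generally larger) diagonal slice of the convex hull of the corresponding full symmetric domain set. The resolution is to apply \cite{kim2022convexification} to the permutation-invariant vector set $\tilde\X$ directly, where the relevant symmetry group is coordinate permutations and the spectral map is $\diag$, rather than to the orthogonally-invariant set in $\S^n$; the $\Diag$ isomorphism then guarantees that the diagonal structure is preserved throughout the convex-hull operation and that no extraneous points are introduced.
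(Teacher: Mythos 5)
Your proposal is correct and follows essentially the same route as the paper, whose proof is a one-line specialization of \Cref{prop:nonsymconv} (for the sign-invariant case) and \Cref{prop:symconv} (for the disjunctive case) to the diagonal matrix space, both of which rest on the same majorization results of \cite{kim2022convexification} that you invoke. Your explicit passage through the $\Diag$ isomorphism to the permutation-invariant vector set $\tilde\X$ — including the check that one convexifies the diagonal set directly rather than taking the diagonal slice of the convex hull of the full symmetric domain set — makes rigorous precisely the step the paper compresses into the phrase ``when applied to the diagonal symmetric indefinite matrix space,'' so it is a faithful (and slightly more careful) rendering of the intended argument rather than a different one.
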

\begin{proof}
	When applied to the diagonal symmetric indefinite matrix space, Part (i) follows  \Cref{prop:nonsymconv}, and Part (ii) can be directly obtained by \Cref{prop:symconv}. \qed
\end{proof}

The set $\conv(\X)$ admits different expressions depending on whether the function $f(\cdot)$ in \eqref{set:sparse}  is
sign-invariant or not, as specified in \Cref{prop:convsparse}. Next, we derive the rank bounds of their faces for two expressions, respectively. 
\begin{restatable}{proposition}{thmsparseface}\label{them:sparseface}
	Given a sparse domain set $\X $ in \eqref{set:sparse},
	suppose  that  $F^d$ is a $d$-dimensional face in the set $\conv(\X )$ or in the set $\hat \Y^s$ for any $s\in [\k+1]$.
	Then all points in face $F^d$ always admit a rank at most $\tilde k + d$, where integer $\k \le k$ follows \Cref{def:k}.
\end{restatable}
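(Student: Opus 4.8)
The plan is to argue by contradiction, specializing the face--perturbation technique of \Cref{prop:rank} and \Cref{them:symface} to the diagonal matrix space. The essential simplification is that for a diagonal matrix $\bm X=\Diag(\diag(\bm X))$ the rank equals the number of nonzero diagonal entries, the eigenvalues are exactly the diagonal entries $\diag(\bm X)$, and the singular values are their absolute values. Consequently, any perturbation that keeps us inside the diagonal matrix space must itself be a \emph{diagonal} matrix, which collapses the quadratic degree-of-freedom count of the earlier proofs to a linear one and yields the bound $\tilde k + d$ rather than $\tilde k + \O(\sqrt{d})$.

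\textbf{Construction.} Suppose, for contradiction, that some diagonal $\bm X^*\in F^d$ has rank $r>\tilde k+d$, with its diagonal vector sorted. By \Cref{def:face} the face $F^d$ lies in the affine hull of $d+1$ points $\bm X_1,\dots,\bm X_{d+1}$. Following the recipe of the earlier proofs, I would fix the $\tilde k-1$ largest (in absolute value) nonzero coordinates and seek a nonzero perturbation supported on the remaining nonzero coordinates of $\bm X^*$; crucially this perturbation is a diagonal matrix $\bm\Delta$. In the sign-invariant case (using the description of $\conv(\X)$ in \Cref{prop:convsparse}(i)) $\bm\Delta$ ranges over diagonal matrices of size $r-\tilde k+1$ subject to one trace-type normalization that preserves the nuclear-norm constraint, giving $r-\tilde k$ free parameters. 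In the general case (using $\hat\Y^s$ from \Cref{prop:convsparse}(ii), and mirroring the split into positive/negative blocks in \Cref{them:symface}) $\bm\Delta$ is block-diagonal with $\tr\bm\Delta_1=\tr\bm\Delta_3=0$; invoking $d_2-d_1=n-r$ the free parameters again total exactly $r-\tilde k$. Imposing the $d$ orthogonality conditions $\langle \bm X_i-\bm X_{d+1},\bm Y\rangle=0$ for $i\in[d]$ leaves a homogeneous system with more unknowns than equations precisely when $r-\tilde k>d$, which holds by the contradiction hypothesis; hence a nonzero diagonal direction $\bm Y$ exists.

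\textbf{Feasibility and conclusion.} It then remains to verify that $\bm X^*\pm\delta\bm Y$ stays in $\conv(\X)$ (resp.\ $\hat\Y^s$) for all sufficiently small $\delta>0$. Since $\bm Y$ is diagonal, perturbing the diagonal entries shifts the eigenvalues exactly (no Weyl estimate is needed), so the strict-slack and majorization-preservation argument of Claim~1 in the proof of \Cref{prop:rank}, together with \Cref{lem:ineq}, applies verbatim and guarantees that the majorization $\bm x^*\succeq \bm\lambda(\bm X^*\pm\delta\bm Y)$ along with the ordering and zero-pattern constraints survive the perturbation. Writing $\bm X^*=\tfrac12(\bm X^*+\delta\bm Y)+\tfrac12(\bm X^*-\delta\bm Y)$ with both summands in the face then forces $\bm Y$ to lie along $F^d$, contradicting its orthogonality to $F^d$; therefore $r\le\tilde k+d$. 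The main obstacle is not the algebra but getting the bookkeeping right in the general-case block split so that the two trace constraints and the identity $d_2-d_1=n-r$ combine to give exactly $r-\tilde k$ free parameters; once that count is pinned down the contradiction is immediate, and the sign-invariant case is the easier single-block instance of the same argument.
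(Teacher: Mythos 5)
Your proposal is correct and takes essentially the same route as the paper's proof: both argue by contradiction, exploit the fact that in the diagonal space the perturbation $\bm\Delta$ must itself be diagonal so the degree-of-freedom count collapses from quadratic to linear ($r-\k+1$, resp.\ $r-\k+2$ parameters against the $d+1$ conditions of \eqref{eq:new}, resp.\ the $d+2$ conditions of \eqref{eq:linear} with $d_2-d_1=n-r$), and then reuse the feasibility-and-face contradiction of \Cref{prop:rank} and \Cref{them:symface}. Your observation that the eigenvalues shift exactly under a diagonal perturbation, so no Weyl estimate is needed, is a correct minor simplification that the paper uses implicitly.
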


\begin{proof}
The proof includes two parts.

\begin{enumerate}[(i)]
\item $F^d\subseteq \conv(\X )$.
Suppose that $f(\cdot)$ is sign-invariant. 
Given that a matrix $\bm X^*\in F^d$ is diagonal and has a rank $r$ greater than $\k+d$, following the proof of \Cref{prop:rank}, we can construct a diagonal matrix $\bm \Delta \in \Re^{(r-\k+1)\times (r-\k+1)}$ satisfying the $(d+1)$ constraints \eqref{eq:new} whenever $d+1 < r-\k+1$ holds, as  any  diagonal matrix of size $(r-\k+1)\times (r-\k+1)$ can be mapped into a vector of length  $r-\k+1$.   The existence of such a diagonal matrix $\bm \Delta$  can form a contradiction as shown in \Cref{prop:rank}, implying that  $d+1 \ge r-\k+1$ must hold.

\item $F^d\subseteq \hat \Y^s$ for any $s\in [\k +1]$. Given that a matrix $\bm X^* \in F^d$ is  diagonal and is of rank $r$, following the similar proof of \Cref{them:symface}, we can construct a diagonal matrix $\bm \Delta$ satisfying $d+2$ equality constraints in \eqref{eq:linear}; that is,
\[d+2 \le d_1-s+1 + s+n-\k-d_2 +1 = r-\k +2, \]
where the second equality is due to $d_2-d_1=n-r$. This completes the proof. \qed 
\end{enumerate} 
\end{proof}

For a sparse domain set $\X$ in \eqref{set:sparse}, the two sets $\conv(\X)$, $\{\hat{\Y}^s\}_{s\in[\k+1]}$ provide us with the relaxations \ref{eq_rel_rank}, \ref{eq_rel_rank1}, respectively. Applying \Cref{lem:face} to either the set $\conv(\X)$ or the set $\hat{\Y}^s$ for each $s\in [\k+1]$ and using the results in \Cref{them:sparseface}, one can readily derive the rank bounds of either \ref{eq_rel_rank} or \ref{eq_rel_rank1}, as shown in the theorems below.

\themsparsesign*
\begin{proof}
Plugging the rank bound in \Cref{them:sparseface} of the set $\conv(\X)$ described in \Cref{prop:convsparse}, the rest of the proof is nearly
identical to that in \Cref{them:rank} and thus is omitted.  
\end{proof}

\themsparsenonsign*
\begin{proof}
According to  \Cref{them:sparseface}, any $d$-dimensional face of set $\hat \Y^s$ satisfies the rank-$(\k+d)$ for all $s\in [\k+1]$. Using \Cref{lem:face} and following the analysis of \Cref{them:rank}, we can show that for each inner minimization of  \ref{eq_rel_rank1}, all the extreme points in the intersection of  set $\hat \Y^s$ and $\tilde m$ linearly independent inequalities admit a rank-$(\k+\tilde m)$ bound. \qed 
\end{proof}
\subsection{Proof of Corollary \ref{cor:sparse}}\label{proof:sparse}
\corsparse*
\begin{proof}
Let $(\hat{z}, \hat{\bm y}, \hat{\bm X})$ denote an optimal solution to the \ref{pc_sparse}, where $\V_{\rel} \ge 0$ must hold.  Then given an optimal solution $\hat{z}$,  any matrix $\bm X$ in the following compact set $\T$ is optimal to \ref{pc_sparse} 
\begin{align*}
\T:=\left\{\bm X\in \conv(\X ):\bm A \diag(\bm X)=  \bm A \diag(\hat{\bm X}) \right\},
\end{align*}
where given $\hat{z}$, we let $ \X :=\{\bm X \in \S^n: \bm X =\Diag(\diag(\bm X)), \rank(\bm X) \le k, \|\bm X\|_F^2 \le \hat z \}$.

For each $i\in [m]$, let $\bm a_i \in \Re^n$ denote the $i$th row vector of matrix $\bm A$. In this way, the equation system $ \bm A \diag(\hat{\bm X}) =\bm A \diag(\bm X)$ can be reformulated by the standard form below
\begin{align*}
\left \langle \bm A_i, \hat{\bm X} \right \rangle = b_i, \forall i\in [m],
\end{align*}
where  for each $i\in [m]$, $\bm A_i = \Diag(\bm a_i)$ and $b_i = \bm a_i^{\top} \diag(\hat{\bm X})$. The rank of the matrix $\bm A$ represents the number of linearly independent row vectors in $\{\bm a_i\}_{i\in [m]}$ and so does for $\{\bm A_i\}_{i\in [m]}$. 

Since $\k \le k$ and there are $\rank(\bm A)$ linearly independent equations in the compact set $\T$, according to \Cref{them:sparsesign}, one can find an alternative  extreme point with rank at most $k+\rank(\bm A)$.
\qed
\end{proof}

\subsection{Proof of Proposition \ref{prop:conver}} \label{proof:conver} 
\propconver*
\begin{proof}
Notice that $\V_{\rel}\leq \langle \bm A_0, \bm  X^*\rangle$ holds, since the \ref{eq:rmp} only optimizes over a subset of $\conv(\X)$. To prove the other bound, we see that 
at termination, the optimal value of the \ref{eq_pp} satisfies $\V_{\rm P} \le \nu^*+\epsilon$. 
The Lagrangian dual of original \ref{eq_rel_rank} can be rewritten as
\[\V_{\rel} :=\min_{\bm{X}\in \conv(\X)}\Bigg\{ \left\langle \bm A_0,\bm X\right\rangle+\sup_{\bm \mu^l \in \Re_+^m, \bm \mu^u\in \Re_+^m} \bigg\{  \bigg \langle  \sum_{i\in [m]} (\mu^u_i - \mu^l_i) \bm A_i, \bm X\bigg \rangle  + (\bm b^l)^{\top} \bm \mu^l- (\bm b^u)^{\top} \bm \mu^u\bigg\}\Bigg\}.\]
By fixing the inner dual variables to be $((\bm \mu^l)^*, (\bm \mu^u)^*)$ which correspond to the \ref{eq:rmp}, we have
\begin{align*}
\V_{\rel}&\geq \min_{\bm{X}\in \conv(\X)}\Bigg\{ \left\langle \bm A_0,\bm X\right\rangle+ \bigg \langle  \sum_{i\in [m]} ((\mu^u_i )^*- (\mu^l_i)^*) \bm A_i, \bm X\bigg \rangle  + (\bm b^l)^{\top} (\bm \mu^l)^*- (\bm b^u)^{\top} \bm (\mu^u)^*\Bigg\}\\
&=(\bm b^l)^{\top} (\bm \mu^l)^*- (\bm b^u)^{\top} \bm (\mu^u)^*-\V_{\rm P} \ge (\bm b^l)^{\top} (\bm \mu^l)^*- (\bm b^u)^{\top} \bm (\mu^u)^*-\nu^*-\epsilon= \langle \bm A_0, \bm  X^* \rangle-\epsilon,
\end{align*}
where the second inequality is due to $\V_{\rm P} \le \nu^*+\epsilon$ and the last equality is from the strong duality of \ref{eq:rmp}. This completes the proof.
\qed
\end{proof}

 \subsection{Proof of Theorem \ref{them:ppsol}}\label{proof:ppsol}
 \thmppsol*
 \begin{proof}
The proof can be split into four parts. 
\\
\textbf{Part (i).} When $\Q:=\S_{+}^n$,  we represent the  matrix variable $\bm X \in \S_+^n$ by its eigen-decomposition $\bm X=\sum_{i\in [n]}\lambda_i \bm q_i \bm q_i^{\top}$ with eigenvalues  sorted in descending order. Hence,  the \ref{eq_sep1}  reduces to
\begin{align}\label{eq_sep3}
\text{(PP)} \quad \V_{\rm P}:=\max_{\bm \lambda \in \Re_+^n} \max_{\bm Q \in \Re^{n\times n}} \bigg\{\sum_{ i \in [n]}\lambda_i \bm q_i^{\top}\tilde{\bm A} \bm q_i: \bm Q^{\top} \bm Q= \bm I_n, \lambda_1 \ge\cdots \ge \lambda_n, \|\bm \lambda\|_0\le k,  f(\bm \lambda)\le 0\bigg\},
\end{align}
where $\|\bm \lambda\|_0\le k$ is equivalent to the rank constraint $\rank(\bm X) \le k$. Next we introduce a key claim. 
\begin{claim}\label{claim2}
$\max_{\bm Q \in \Re^{n\times \ell}, \bm Q^{\top} \bm Q =\bm I_{\ell}} \sum_{i\in [\ell]} \bm q_{i}^{\top} \tilde{\bm A} \bm q_i =\sum_{i\in [\ell]} \beta_i$ for each $\ell \in [n]$.
\end{claim}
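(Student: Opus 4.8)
The plan is to recognize Claim \ref{claim2} as the Ky Fan maximum principle: the sum of the $\ell$ largest eigenvalues of a symmetric matrix equals the maximum of $\tr(\bm Q^\top \tilde{\bm A}\bm Q)$ taken over all $\bm Q\in\Re^{n\times\ell}$ with orthonormal columns. First I would rewrite the objective as $\sum_{i\in[\ell]}\bm q_i^\top\tilde{\bm A}\bm q_i = \tr(\bm Q^\top\tilde{\bm A}\bm Q)$ and substitute the eigen-decomposition $\tilde{\bm A}=\bm U\Diag(\bm\beta)\bm U^\top$. Setting $\bm W := \bm U^\top\bm Q$, which still has orthonormal columns since $\bm U$ is orthogonal, the objective becomes $\tr(\bm W^\top\Diag(\bm\beta)\bm W)=\sum_{j\in[n]}\beta_j c_j$, where $c_j:=\sum_{i\in[\ell]}W_{ji}^2$ is the squared norm of the $j$th row of $\bm W$.

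Next I would characterize the feasible vector $\bm c=(c_1,\ldots,c_n)^\top$. Because $\bm W$ has orthonormal columns, $\sum_{j\in[n]}c_j = \tr(\bm W^\top\bm W)=\ell$. Moreover each $c_j\le 1$: the columns of $\bm W$ can be completed to a full $n\times n$ orthogonal matrix $(\bm W\ \bm W^{\perp})$ whose rows have unit norm, so $c_j$, being part of the squared norm of the $j$th row, satisfies $c_j\le 1$. Hence the problem relaxes to the linear program $\max\{\sum_{j\in[n]}\beta_j c_j:\ 0\le c_j\le 1,\ \sum_{j\in[n]}c_j=\ell\}$, whose optimal value, since $\beta_1\ge\cdots\ge\beta_n$, is attained by $c_1=\cdots=c_\ell=1$ and the remaining $c_j=0$. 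This yields the upper bound $\tr(\bm Q^\top\tilde{\bm A}\bm Q)\le\sum_{i\in[\ell]}\beta_i$ for every orthonormal $\bm Q$.

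Finally I would exhibit a maximizer to certify tightness: take $\bm Q=(\bm u_1,\ldots,\bm u_\ell)$, the first $\ell$ columns of $\bm U$. Then $\bm W=\bm U^\top\bm Q$ has entries $W_{ji}=\delta_{ji}$, so $c_j=1$ for $j\in[\ell]$ and $c_j=0$ otherwise, achieving $\tr(\bm Q^\top\tilde{\bm A}\bm Q)=\sum_{i\in[\ell]}\beta_i$. Combining the upper bound with this attainment gives the claimed equality for each $\ell\in[n]$.

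The one nontrivial technical step is verifying $c_j\le 1$ for all $j$ (equivalently $\bm W\bm W^\top\preceq\bm I_n$), which is what converts the matrix maximization into the transparent linear program over the box; everything else is bookkeeping with the eigen-decomposition. I would make this precise either via the orthogonal-completion argument above or by noting that $\bm W^\top\bm W=\bm I_\ell$ forces $\bm W\bm W^\top$ to be an orthogonal projector of rank $\ell$, whose diagonal entries $c_j$ therefore lie in $[0,1]$ and sum to $\ell$.
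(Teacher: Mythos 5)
Your proof is correct, but it takes a genuinely different route from the paper's. The paper argues via a semidefinite relaxation: setting $\bm Y := \bm Q\bm Q^{\top}$, it bounds the maximum by $\max\{\langle \tilde{\bm A}, \bm Y\rangle : \bm Y \in \S_+^n,\ \bm Y \preceq \bm I_n,\ \tr(\bm Y) = \ell\}$, then shifts the objective to $\langle \tilde{\bm A} - \beta_n \bm I_n, \bm Y\rangle + \ell\beta_n$ so that the matrix in the inner product becomes positive semidefinite, at which point it invokes lemma 2 of \cite{li2021beyond} to evaluate the relaxation in closed form and notes the bound is attained. You instead give the classical, self-contained Ky Fan argument: rotate by $\bm U$, observe that $\bm W\bm W^{\top}$ with $\bm W = \bm U^{\top}\bm Q$ is a rank-$\ell$ orthogonal projector so the weights $c_j = (\bm W\bm W^{\top})_{jj}$ lie in $[0,1]$ and sum to $\ell$, and solve the resulting trivial linear program over the box. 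Your route buys self-containedness: it needs no external lemma, and it dispenses with the $\beta_n$-shift trick, which the paper only needs because the cited lemma requires a positive semidefinite matrix, whereas your LP handles negative eigenvalues automatically. The paper's route is shorter on the page given the citation and is consistent with its general habit of reasoning through convex (semidefinite) relaxations of spectral sets; your projector step ($\bm W^{\top}\bm W = \bm I_\ell$ forcing $c_j \le 1$) is exactly the nontrivial fact implicitly packaged inside the cited lemma. Both arguments certify tightness with the same maximizer $\bm Q = (\bm u_1, \ldots, \bm u_\ell)$. Incidentally, the paper's display writes $k$ where $\ell$ is meant; your proof makes explicit that the identity holds for every $\ell \in [n]$, which is the form actually used later.
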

\begin{proof}
Suppose $\bm Y = \bm Q \bm Q^{\top}$,  then we have 
\begin{align*}
\max_{\bm Q \in \Re^{n\times \ell}, \bm Q^{\top} \bm Q =\bm I_{\ell}} \sum_{i\in [\ell]} \bm q_{i}^{\top} \tilde{\bm A} \bm q_i &\le \max_{\bm Y\in \S_{+}^n }\left\{ \langle \tilde{\bm A}, \bm Y\rangle: \bm Y \preceq \bm I_n, \tr(\bm Y)= k \right\} \\
   & =  \max_{\bm Y\in \S_{+}^n }\left\{ \langle \tilde{\bm A}-\beta_n \bm I_n, \bm Y\rangle + k\beta_n : \bm Y \preceq \bm I_n, \tr(\bm Y)= k \right\}= \sum_{i\in [k]} \beta_i, 
\end{align*}
where given the positive semidefinite matrix $\tilde{\bm A}-\beta_n \bm I_n$, the last equation is due to lemma 2 in \cite{li2021beyond} and  the inequality is attainable.
\qedA
\end{proof}

Next we introduce the auxiliary variables $\bm y \in \Re^n$ where $y_i = \bm q_{i}^{\top} \tilde{\bm A} \bm q_i $  for each $i\in [n]$. Given $\lambda_1 \ge \cdots  \ge \lambda_n$, to maximize $\bm \lambda^{\top} \bm y$, we must have $y_1 \ge \cdots \ge y_n$ at optimality due to the rearrangement inequality \citep{hardy1952inequalities}.  Since \Cref{claim2} implies $\bm \beta \succeq \bm y$,  we propose a relaxation problem of the inner maximization over  matrix $\bm Q$ in problem \eqref{eq_sep3} that provides an upper bound
\begin{align*}
\max_{\bm y \in \Re^{n} }\bigg\{\sum_{ i \in [n]}\lambda_i y_i : \sum_{ i \in [\ell]} y_i \le \sum_{ i \in [\ell]}\beta_i, \forall \ell \in [n-1],\sum_{ i \in [n]} y_i = \sum_{ i \in [n]}\beta_i, y_1 \ge \cdots\ge y_{n}\bigg\},
\end{align*}
which admits an optimal solution $y^*_i=\beta_i$ for each $i\in [n]$ with optimal value $\sum_{ i \in [n]} \lambda_i \beta_i$ due to $\beta_1 \ge \cdots\ge \beta_n$ and $\lambda_1\ge \cdots \ge \lambda_n$. Besides,   the inner maximization over  matrix $\bm Q$ in problem \eqref{eq_sep3} can also achieve the objective value $\sum_{ i \in [n]} \lambda_i \beta_i$ when $\bm Q^* = \bm U$; thus, this solution is optimal. Plugging the optimal solution $\bm Q^* = \bm U$ into the problem \ref{eq_sep3}, we obtain
\begin{align}\label{eq_lambda}
\max_{\bm \lambda \in \Re_+^n} \left\{\bm \lambda^{\top} \bm \beta: \lambda_1 \ge\cdots \ge \lambda_n, \|\bm \lambda\|_0\le k,  f(\bm \lambda)\le 0\right\}.
\end{align}

Since $\beta_1 \ge \cdots \ge \beta_n$ and function $ f(\bm \lambda)$ is permutation-invariant with $\bm \lambda$, according to the rearrangement inequality \citep{hardy1952inequalities},  it is evident  that at optimality of problem \eqref{eq_lambda}, we must have $\lambda_1 \ge\cdots \ge \lambda_k \ge \lambda_{k+1}=\cdots  = \lambda_n=0$.
Therefore, problem \eqref{eq_lambda} can be reformulated by a convex optimization below
\begin{align*}
\max_{\bm \lambda \in \Re_+^n} \left\{\bm \lambda^{\top} \bm \beta:\lambda_i =0, \forall i\in [k+1, n],  f(\bm \lambda)\le 0 \right\},
\end{align*}
where any optimal solution to  problem above must satisfy $\lambda_1 \ge\cdots \ge \lambda_k \ge \lambda_{k+1}=\cdots  = \lambda_n=0$.

\noindent\textbf{Part (ii).} If matrix $\bm X\in \Q:=\Re^{n\times p}$ is non-symmetric and $n\le p$, then we introduce the singular value decomposition of $\bm X=\sum_{i\in [n]} \lambda_i \bm q_i \bm p_i^{\top}$ to reformulate the \ref{eq_sep1} as
\begin{align}\label{eq_nsym}
\max_{\bm \lambda \in \Re_+^{n}} \max_{\bm Q \in \Re^{n\times n}, \bm P\in \Re^{p\times n}} \bigg\{\sum_{ i \in [n]}\lambda_i \bm q_i^{\top}\tilde{\bm A}\bm p_i: \bm Q^{\top} \bm Q= \bm P^{\top} \bm P= \bm I_{n}, \lambda_1\ge \cdots \ge \lambda_n, \|\bm \lambda\|_0\le k \bigg\}.
\end{align}
Following the proof of \Cref{claim2} and lemma 1 in \cite{li2021beyond}, we obtain the following result:
\begin{claim} \label{claim3}
$\max_{\bm Q \in \Re^{n\times \ell}, \bm P \in \Re^{p\times \ell}, \bm P^{\top} \bm P = \bm Q^{\top} \bm Q =\bm I_{\ell}} \sum_{i\in [\ell]} \bm q_{i}^{\top} \tilde{\bm A} \bm p_i =\sum_{i\in [\ell]} \beta_i$ for each $\ell \in [n]$.
\end{claim}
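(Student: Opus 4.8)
The plan is to observe that $\sum_{i\in[\ell]}\bm q_i^\top\tilde{\bm A}\bm p_i = \tr(\bm Q^\top\tilde{\bm A}\bm P) = \langle\tilde{\bm A},\bm Q\bm P^\top\rangle$, so the claim amounts to maximizing a linear functional of the matrix $\bm Y:=\bm Q\bm P^\top$ over orthonormal $\bm Q,\bm P$. I would mirror the argument already used for \Cref{claim2}, replacing the eigen-decomposition by the singular value decomposition $\tilde{\bm A}=\bm U\Diag(\bm\beta)\bm V^\top$ and the Fantope relaxation by its non-symmetric analogue.

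First I would prove the upper bound. When $\bm Q^\top\bm Q=\bm P^\top\bm P=\bm I_\ell$, the matrix $\bm Y=\bm Q\bm P^\top$ obeys $\bm Y\bm Y^\top=\bm Q\bm P^\top\bm P\bm Q^\top=\bm Q\bm Q^\top$, which is a rank-$\ell$ orthogonal projection; hence $\bm Y$ has exactly $\ell$ singular values equal to one and all others zero, so $\|\bm Y\|_2\le 1$ and $\|\bm Y\|_*\le\ell$. This yields the relaxation
\begin{align*}
\max_{\substack{\bm Q\in\Re^{n\times\ell},\,\bm P\in\Re^{p\times\ell}\\ \bm Q^\top\bm Q=\bm P^\top\bm P=\bm I_\ell}} \sum_{i\in[\ell]}\bm q_i^\top\tilde{\bm A}\bm p_i \;\le\; \max_{\bm Y\in\Re^{n\times p}}\left\{\langle\tilde{\bm A},\bm Y\rangle:\ \|\bm Y\|_2\le 1,\ \|\bm Y\|_*\le \ell\right\},
\end{align*}
and the right-hand maximum equals $\sum_{i\in[\ell]}\beta_i$ by lemma 1 in \cite{li2021beyond}, exactly as lemma 2 therein produced the Fantope bound in \Cref{claim2}.

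Next I would verify that this bound is attained, giving the matching lower bound and hence equality. Taking $\bm Q=[\bm u_1,\dots,\bm u_\ell]$ and $\bm P=[\bm v_1,\dots,\bm v_\ell]$ to be the leading left and right singular vectors of $\tilde{\bm A}$, the identities $\tilde{\bm A}\bm v_i=\beta_i\bm u_i$ and $\bm u_i^\top\bm u_i=1$ give $\bm q_i^\top\tilde{\bm A}\bm p_i=\bm u_i^\top\tilde{\bm A}\bm v_i=\beta_i$, so $\sum_{i\in[\ell]}\bm q_i^\top\tilde{\bm A}\bm p_i=\sum_{i\in[\ell]}\beta_i$. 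Combining the two directions establishes the claim for every $\ell\in[n]$.

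The main obstacle is justifying that the relaxation is tight, namely that maximizing the linear functional over the convex set $\{\bm Y:\|\bm Y\|_2\le 1,\ \|\bm Y\|_*\le\ell\}$ returns precisely the Ky Fan $\ell$-norm $\sum_{i\in[\ell]}\beta_i$ of $\tilde{\bm A}$. This is von Neumann's trace inequality specialized to this set (equivalently, lemma 1 of \cite{li2021beyond}): since $\beta_1\ge\cdots\ge\beta_n\ge 0$, the bound $\langle\tilde{\bm A},\bm Y\rangle\le\sum_i\beta_i\,\sigma_i(\bm Y)$ is maximized, subject to $\sigma_i(\bm Y)\le 1$ and $\sum_i\sigma_i(\bm Y)\le\ell$, by setting the first $\ell$ singular values of $\bm Y$ to one. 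Equivalently, the extreme points of this convex set are exactly the rank-$\ell$ partial isometries $\bm Q\bm P^\top$, so no gap is introduced. Once this is invoked, the remaining steps are the routine SVD computations above.
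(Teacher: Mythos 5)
Your proposal is correct and follows essentially the same route as the paper: the paper's proof of \Cref{claim3} simply mirrors the relaxation argument of \Cref{claim2}, replacing the eigen-decomposition and Fantope bound with the singular value decomposition and the set $\{\bm Y:\|\bm Y\|_2\le 1,\ \|\bm Y\|_*\le \ell\}$, and invokes lemma 1 of \cite{li2021beyond} (the Ky Fan variational characterization) exactly as you do, with attainment by the leading singular vectors.
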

Thus, according to \Cref{claim3},  we can equivalently reformulate the inner maximization by optimizing over the variables $(\bm Q, \bm P)$ of problem \eqref{eq_nsym} as
\begin{align*}
\max_{\bm y \in \Re^{n} }\bigg\{\sum_{ i \in [n]}\lambda_i y_i : \sum_{ i \in [\ell]} y_i \le \sum_{ i \in [\ell]}\beta_i, \forall \ell \in [n], y_1 \ge \cdots\ge y_{n}\bigg\}.
\end{align*}

Then the remaining proof simply follows from Part (i).
   
\noindent\textbf{Part (iii).} When $\Q:=\S^n$, 
following the analysis of Part (i), we can arrive at problem \eqref{eq_lambda} but now the eigenvalue variable $\bm \lambda\in \Re^n$ can be negative  under the symmetric case, i.e., $\bm X\in \S^n$. 
Analogous to \Cref{prop:symconv}, the resulting feasible set   can be decomposed into $k+1$ subsets, i.e., 
\[\{\bm \lambda\in \Re^n: \lambda_1 \ge \cdots \ge \lambda_n, \|\bm\lambda\|_0 \le k, f(\bm \lambda)\le 0 \} = \cup_{s\in [k+1]} \T^s, \] 
where for each $s\in [k+1]$, $\T^s:= \{\bm \lambda\in \Re^n: \lambda_1 \ge \cdots \ge \lambda_n, \lambda_s =\lambda_{s+n- k-1}=0, f_j(\bm \lambda)\le 0, \forall j\in [t] \}$.  Replacing the feasible set of problem \eqref{eq_lambda}  by $\cup_{s\in [k+1]} \T^s$, we complete the proof.

\noindent\textbf{Part (iv).} For the case of $\Q:=\S^n$, suppose that $\bm X=\sum_{i\in [n]}\lambda_i \bm q_i \bm q_i^{\top}$ denote the eigen-decomposition of matrix variable $\bm X$. 
Since the rank and spectral functions are sign-invariant,  now changing the sign of variable $\bm \lambda$ allows us to reduce problem \eqref{eq_sep3} in Part (i) to
\begin{align*}
\text{(PP)} \quad \V_{\rm P}:=\max_{\bm \lambda \in \Re^n} \max_{\bm Q \in \Re^{n\times n}} \bigg\{\sum_{ i \in [n]}|\lambda_i| |\bm q_i^{\top}\tilde{\bm A}\bm q_i|: \bm Q^{\top} \bm Q= \bm I_n, |\lambda_1| \ge\cdots \ge |\lambda_n|, \|\bm \lambda\|_0\le k,  f(|\bm \lambda|)\le 0 \bigg\},
\end{align*}
Using the facts that $\max_{\bm Q \in \Re^{n\times \ell}, \bm Q^{\top} \bm Q =\bm I_{\ell}} \sum_{i\in [\ell]} |\bm q_{i}^{\top} \tilde{\bm A} \bm q_i| =\sum_{i\in [\ell]} |\beta_i|$ for each $\ell \in [n]$ and $|\beta_1|\ge \cdots |\beta_n|$,  the analysis of in Part (i) can be readily extended to derive $\bm q_i = \bm u_i$ for each $i\in [n]$ and obtain
\begin{align*}
\bm \lambda^*:=&\argmax_{\bm \lambda \in \Re^n} \left\{|\bm \lambda|^{\top} |\bm \beta|: |\lambda_i| =0, \forall i\in [k+1, n],  f(|\bm \lambda|)\le 0 \right\}.
\end{align*}
where the equality is due to the sign-invariant properties of spectral functions $f(\cdot)$. 
\qed
\end{proof}

\subsection{Proof of Corollary \ref{cor:closed}} \label{proof:closed}
\corclosed*
\begin{proof}
\textbf{Part (i).} Given $\Q:=\S_+^n$ and the $\ell$-norm function $f(\cdot)=\|\cdot \|_{\ell}$,  we let $r$ denote the largest integer of matrix $\tilde{\bm A}$ such that $\beta_r \ge 0$. Then, by letting $s:=\min\{k,r\}$,
the maximization problem \eqref{eq:ppc1} over $\bm \lambda$ in Part (i) of \Cref{them:ppsol} now reduces to
\begin{align*}
 \max_{\bm \lambda \in \Re_+^n} \left\{\bm \lambda^{\top} \bm \beta: \lambda_i=0,\forall i\in[s+1,n], \|\bm \lambda\|_{\ell}\le c \right\} \le c\sqrt[q]{{\sum_{j\in [s]} \beta_j^{q}}} = c\sqrt[q]{{\sum_{j\in [k]} (\beta_j)_+^{q}}}, 
\end{align*}
 where the inequality is due to Holder's inequality and the equation is from the definition $(\beta_j)_+=\max\{0,\beta_j\}$. The inequality can reach equation when there is $\alpha>0$ such that $\lambda_i^{\ell} =\alpha(\beta_i)_+^{q}$ for all $i\in [n]$, which enables us to construct optimal solution $\bm \lambda^*$ below
 \begin{align*}
     \lambda_i^*= \begin{cases}
      c \sqrt[\ell]{\frac{(\beta_i)_+^{q}}{\sum_{j\in [k]} (\beta_j)_+^{q}}}, & \forall i\in [k];\\
         0, & \forall i\in [k+1,n].
     \end{cases}.
 \end{align*}

\textbf{Part (ii).} Given $\Q:=\Re^{n\times p}$, the proof follows that of Part (i) except replacing the eigenvalues by singular values of matrix $\tilde{\bm A}$.

\textbf{Part (iii).} Given $\Q:=\S^n$ and sign-invariant $\ell$-norm function  $f(\cdot):=\|\cdot \|_{\ell}$, matrix $\tilde{\bm A}$ is symmetric and we let its eigenvalues satisfy $|\beta_1|\ge \cdots  \ge |\beta_n|$. Since the sign of variable $\bm \lambda$ can be arbitrary,
the maximization problem \eqref{eq:ppc4} over $\bm \lambda$ in Part (i) of \Cref{them:ppsol} now reduces to
\begin{align*}
\max_{\bm \lambda \in \Re^n} \left\{|\bm \lambda|^{\top} |\bm \beta|: |\lambda_i|=0,\forall i\in[k+1,n], \|\bm \lambda\|_{\ell}\le c \right\} \le c\sqrt[q]{{\sum_{j\in [k]} |\beta_j|^{q}}},
\end{align*}
 where similar to Part (i), the equality is attainable when $\lambda^*_i = \sign(\beta_i) c \sqrt[\ell]{\frac{|\beta_i|^{q}}{\sum_{j\in [k]} |\beta_j|^{q}}} $ for all $i\in [k]$ and $\lambda^*_i =0$ for all $i\in [k+1, n]$.
 \qed
\end{proof}

\subsection{Proof of Theorem \ref{them:reduce}}\label{proof:reduce}
\thmreduce*
\begin{proof} We split the proof into two parts.\par
\noindent \textbf{Part I.} we show that the  \Cref{algo_reduce} always terminates in different matrix spaces. 
\begin{enumerate}[(a)]
    \item Let us begin with the case of $\Q:=\S_+^n$. At each iteration of \Cref{algo_reduce}, for a pair of current and new solutions $(\bm X^*, \bm X(\delta^*))$, they satisfy  $\bm X(\delta^*)=\bm X^*+\delta^*\bm Y$, where the direction $\bm Y=\bm Q_2 \bm \Delta\bm Q_2^{\top}$ is defined in \eqref{eq_direct1}. The fact $\tr(\bm Y)=0$ implies that $\tr(\bm X^*)= \tr(\bm X(\delta^*))$.
    We let integer $r:=\rank(\bm X^*)$. Since  $\bm Y$  only perturbs $(r-\k+1)$ smallest positive  eigenvalues of matrix $\bm X^*$ (i.e., $\lambda_{\k}^*,\cdots,\lambda^*_r$) and the perturbed eigenvalues remain nonnegative, the first $(\k-1)$ largest eigenvalues of $\bm X^*$ are also eigenvalues of new matrix $\bm X(\delta^*)$. Therefore, we must have that  $\rank(\bm X(\delta^*)) \le r$ and $\|\bm X^*\|_{(k)} \le \|\bm X(\delta^*)\|_{(k)} $, implying that
\begin{equation}\label{eq_inequality}
\tr(\bm X^*)-\|\bm X^*\|_{(k)} = \sum_{i\in [\k, r]}\lambda_i^* \ge \tr(\bm X(\delta^*))-\|\bm X(\delta^*)\|_{(k)}.
\end{equation}
which means that the sum of $(r-\k+1)$ smallest positive eigenvalues in the new solution $\bm X(\delta^*)$ must not exceed that of $\bm X^*$. 

If the inequality \eqref{eq_inequality} reaches equality and $\rank(\bm X(\delta^*))=r$, then the $(r-\k+1)$ smallest positive eigenvalues of $\bm X^*(\delta^*)$ are exactly those of $\bm  \Lambda_2+\delta^* \bm \Delta \in \S_+^{r-\k+1}$, where $\tr(\bm \Lambda_2+\delta^* \bm \Delta)=\sum_{i\in [\k,r]}\lambda_i^*$. Then, following the proof of \Cref{prop:rank}, we can show that 
there exists a positive $\hat{\delta}>0$ such that $\bm X(\delta^*)+ \hat{\delta}\bm Y$ is also feasible to Step 9 of \Cref{algo_reduce}, which contradicts to the optimality of $\delta^*$. 
Therefore,   for the new solution $\bm X(\delta^*)$, either the sum of its $(r-\k+1)$ smallest nonzero eigenvalues strictly decreases (i.e., the inequality \eqref{eq_inequality} is strict) or the rank strictly reduces (i.e., $\rank(\bm X(\delta^*))<r$). Since both values $ \sum_{i\in [\k, r]}\lambda_i^*$ and $r$ are finite, 
either case ensures that the solution sequence is monotone and thus  \Cref{algo_reduce} always terminates with $\delta^*=0$ at Step 7.


\item  Similar to Part (a),  we can show that \Cref{algo_reduce}  converges in $\Q=\Re^{n\times p}$ as the direction $\bm Y$ in \eqref{eq_direct2} has an impact on only $(r-\k+1)$ smallest positive singular values of matrix $\bm X^*$.

\item In contrast to the case of $\Q:=\S_+^n$ or $\Q:=\Re^{n\times p}$, the eigenvectors $ \begin{pmatrix}
\bm Q_1^2  & \bm Q_3^1
\end{pmatrix}$ that determine the direction $\bm Y$  in \eqref{eq_direct3} correspond to the $(d_1-s^*+1)$ smallest positive eigenvalues and $(s^*+n-\k-d_2+1)$ largest negative eigenvalues, and then the convergence analysis of Part (a) can be readily extended by leveraging those eigenvalues.
\end{enumerate}

\noindent \textbf{Part II.}  Since \Cref{algo_reduce} starts with an $\epsilon$-optimal solution, and we always find a direction $\bm Y$ along which  the objective value does not increase, the output solution is at least $\epsilon$-optimal.

 We show by contradiction that the output solution $\bm X^*$ of \Cref{algo_reduce} must satisfy the rank bounds.
We discuss different matrix spaces. 
\begin{enumerate}[(i)]
    \item $\Q:=\S_+^n$. At termination of \Cref{algo_reduce}, we have $\delta^*=0$ and obtain solution $\bm X^*$. Suppose $\rank(\bm X^*)>\k+ \lfloor\sqrt{2\tilde m + 9/4}-3/2\rfloor$, i.e., $\tilde m +1 < (r-\k+1)(r-\k+2)/2$. Then, there exists a nonzero matrix $\bm \Delta$ to form a nonzero direction $\bm Y$  in \eqref{eq_direct1} and  following \Cref{claim1} in the proof of \Cref{prop:rank},  we can find a nonzero $\underline \delta >0$ such that the eigenvalue vector $\bm X^* + \underline \delta \bm Y$ is majorized by $\bm x^*$ and $ \lambda_{\min}( \bm  \Lambda_2+  \underline \delta \bm \Delta ) \ge  0 $. This contradicts with the maximum value of $\delta$  being zero, i.e., $\delta^*=0$.  We thus complete the proof.

\item $\Q:=\Re^{n\times p}$. Similar to Part (i) with $\Q=\S_+^n$, when $\delta^*=0$, we can show that $\rank(\bm X^*) \le  \k+ \lfloor\sqrt{2\tilde m + 9/4}-3/2\rfloor$ using the proof of \Cref{them:nonface}.

\item $\Q:=\S^n$. According to the proof of \Cref{them:symface}, similar to Part (i), 
 whenever  $\rank(\bm X^*)>\k+ \lfloor\sqrt{4\tilde m + 9}-3\rfloor$, we can construct a nonzero matrix $\bm Y$ in \eqref{eq_direct3} to move the solution $\bm X^*$ by a nonzero distance $\delta^*$ along the direction $\bm Y$, which contradicts the condition $\delta^*=0$.\qed
\end{enumerate}
\end{proof}

\subsection{Proof of Corollary \ref{cor:converge}} \label{proof:converge}
\corconverge*
\begin{proof}
Given $\Q:=\S_+^n$, according to the convergence analysis of \Cref{algo_reduce} in the proof of \Cref{them:reduce},  at each iteration, we have that either the sum of the $(r-\k+1)$ smallest positive eigenvalues strictly decreases (i.e., the inequality \eqref{eq_inequality} is strict) or the rank strictly reduces (i.e., $\rank(\bm X(\delta^*))<r$).  If $\k=1$, then the sum of its $r$ smallest nonzero eigenvalues  is exactly the trace of new solution $\bm X(\delta^*)$ that always stays the same as matrix $\bm X^*$. Therefore,  the rank must strictly reduce at each iteration, i.e., $\rank(\bm X(\delta^*)) < \rank(\bm X^*)$. The similar analysis follows for the non-symmetric or symmetric indefinite matrix space. \qed
\end{proof}
\section{Tightness of Rank Bounds for \ref{eq_rel_rank}}\label{proof:tight}
This section presents the worst-case examples of \ref{eq_rel_rank} in which the rank bounds  in \Cref{them:symsign,them:nonsym,them:rank,them:sparsesign,them:sparsenonsign}  are tight, i.e., the rank bounds are attainable by these worst-case instances.

Let us begin with the following key lemma.
\begin{lemma}\label{lem:nuc}
For any matrix $\bm C\in \Re^{n\times n}$, the following results  hold for  the nuclear norm of $\bm C$
\begin{enumerate}[(i)]
    \item $\|\bm C\|_* \ge \sum_{i\in [n]} |C_{ii}|$; and
    \item $\|\bm C\|_* \ge \sum_{i\in [n]\setminus S} |C_{ii}| +  \|\bm C_{S,S}\|_*$ for all $S\subseteq [n]$ and $|S|=2$.
\end{enumerate}
Note that for any set $S\subseteq [n]$,   $\bm C_{S,S}$ denotes the submatrix of $\bm C$ with rows and columns in set $S$. 
\end{lemma}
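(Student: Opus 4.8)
The plan is to prove both inequalities through the duality between the nuclear norm and the spectral (operator) norm, namely that for every $\bm Z \in \Re^{n\times n}$ with $\|\bm Z\|_2 \le 1$ one has $\langle \bm C, \bm Z\rangle \le \|\bm C\|_*$, with equality attainable at $\bm Z = \bm U \bm V^\top$ for any singular value decomposition $\bm C = \bm U \bm \Sigma \bm V^\top$. With this characterization it suffices, for each claim, to exhibit a dual-feasible certificate $\bm Z$ (i.e.\ $\|\bm Z\|_2 \le 1$) whose Frobenius inner product with $\bm C$ equals the asserted lower bound.

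For part (i), I would take the diagonal certificate $\bm Z := \Diag(\bm z)$ with $z_i := \sign(C_{ii})$ for each $i \in [n]$. Since $\bm Z$ is diagonal with entries of modulus at most one, its largest singular value satisfies $\|\bm Z\|_2 = \max_{i\in[n]}|z_i| \le 1$, so $\bm Z$ is dual-feasible. Then $\langle \bm C, \bm Z\rangle = \sum_{i\in[n]} z_i C_{ii} = \sum_{i\in[n]} |C_{ii}|$, and the duality inequality immediately gives $\|\bm C\|_* \ge \sum_{i\in[n]}|C_{ii}|$.

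For part (ii), writing $S = \{p,q\}$, I would first apply the same duality to the $2\times 2$ submatrix $\bm C_{S,S}$ to obtain a matrix $\bm W \in \Re^{2\times 2}$ with $\|\bm W\|_2 \le 1$ and $\langle \bm C_{S,S}, \bm W\rangle = \|\bm C_{S,S}\|_*$ (concretely, $\bm W = \bm U_S \bm V_S^\top$ from the SVD of $\bm C_{S,S}$). I then build $\bm Z \in \Re^{n\times n}$ by placing the block $\bm W$ in the rows and columns indexed by $S$, placing $\sign(C_{ii})$ on the diagonal for every $i \in [n]\setminus S$, and zero everywhere else. After the permutation grouping the indices of $S$ together, $\bm Z$ is block diagonal with one $2\times 2$ block $\bm W$ and $(n-2)$ scalar blocks; since a permutation similarity preserves singular values and the spectral norm of a block-diagonal matrix is the maximum of the spectral norms of its blocks, $\|\bm Z\|_2 = \max\{\|\bm W\|_2,\ \max_{i\in[n]\setminus S}|\sign(C_{ii})|\} \le 1$. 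The computation $\langle \bm C, \bm Z\rangle = \sum_{i\in[n]\setminus S}|C_{ii}| + \langle \bm C_{S,S}, \bm W\rangle = \sum_{i\in[n]\setminus S}|C_{ii}| + \|\bm C_{S,S}\|_*$ then yields the claim.

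The main obstacle, modest as it is, lies in justifying the spectral-norm bound $\|\bm Z\|_2 \le 1$ for the composite certificate in part (ii): this is where I would carefully note that all entries of $\bm Z$ outside the $S\times S$ block and the remaining diagonal vanish, so that $\bm Z$ is, up to a simultaneous row/column permutation, genuinely block diagonal and its singular values are exactly the union of those of $\bm W$ and the $(n-2)$ scalar blocks. Everything else is a direct substitution into the dual formulation. I remark that part (i) is the degenerate $S=\emptyset$ case of the same construction and, conversely, can be recovered from part (ii) by applying part (i) to $\bm C_{S,S}$, so the two statements rest on a single underlying mechanism.
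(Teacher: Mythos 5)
Your proof is correct and follows essentially the same route as the paper: both arguments rest on the dual characterization of the nuclear norm (the paper cites a maximization form from \cite{li2021beyond} with a redundant constraint $\|\bm U\|_*\le n$, which is equivalent to your spectral-norm duality) and both exhibit the same block-diagonal dual certificate with $\sign(C_{ii})$ entries. The only cosmetic difference is that the paper first proves $\|\bm C\|_*\ge \|\bm C_{S,S}\|_*+\|\bm C_{[n]\setminus S,[n]\setminus S}\|_*$ and then invokes part (i) on the complement block, whereas you assemble the composite certificate in a single step.
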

\begin{proof}
    According to \cite{li2021beyond}[lemma 1], the nuclear norm of matrix $\bm C$ is equal to
    \begin{align}\label{eq:nuc}
       \|\bm C\|_* := \max_{\bm U \in \Re^{n\times n}} \left\{\langle\bm C, \bm U\rangle: \|\bm U\|_2 \le 1, \|\bm U\|_* \le n  \right\}.
    \end{align}
Since $\bm U =\diag(\sign(C_{11}), \cdots  \sign(C_{nn}))$ is a feasible solution to problem \eqref{eq:nuc} and leads to the objective value $\sum_{i\in [n]} |C_{ii}|$, we arrive at the inequality in Part (i).

In addition, for any subset $S\subseteq [n]$, the problem \eqref{eq:nuc} is lower bounded by
\begin{align*}
\|\bm C\|_* \ge \|\bm C_{S,S}\|_* +  \|\bm C_{[n]\setminus S,[n]\setminus S}\|_*:=
&\max_{\bm U \in \Re^{2\times 2}} \left\{\langle\bm C_{S,S}, \bm U\rangle: \|\bm U\|_2 \le 1, \|\bm U\|_* \le 2  \right\} \\ &+ \max_{\bm U \in \Re^{(n-2)\times (n-2)}} \left\{\langle\bm C_{[n]\setminus S,[n]\setminus S}, \bm U\rangle: \|\bm U\|_2 \le 1, \|\bm U\|_* \le n-2  \right\}, 
\end{align*}
where the inequality stems from the fact that the optimal solutions from both maximization problems above always compose a feasible solution to  problem \eqref{eq:nuc}. By applying Part (i) to the nuclear norm of $\bm C_{[n]\setminus S,[n]\setminus S}$,  we thus prove Part (ii). \qed
\end{proof}

Next, we are ready to show that the proposed rank bounds for \ref{eq_rel_rank} are tight.
\begin{lemma}
Given an integer $\k \le k$ following \Cref{def:k}, suppose $r:=\k + \lfloor \sqrt{2\tilde m + 9/4}-3/2\rfloor$. Then we have that
\begin{enumerate}[(i)]
\item   Given $\Q:=\S_+^n$ in \eqref{eq_set}, there exists an \ref{eq_rel_rank} example that contains a rank-$r$ extreme point;
    \item Given $\Q:=\Re^{n\times p}$ in \eqref{eq_set}, there exists  an \ref{eq_rel_rank} example that contains a rank-$r$ extreme point;
    \item Given $\Q:=\S^n$ in \eqref{eq_set}, suppose that function $f(\cdot)$ in the domain set $\X$ is sign-invariant. Then there exists an \ref{eq_rel_rank} example that contains a rank-$r$ extreme point.
\end{enumerate}

\end{lemma}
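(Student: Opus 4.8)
The plan is to construct, for each of the three matrix spaces, an explicit instance of \ref{eq_rel_rank} whose feasible set contains an extreme point of rank exactly $r$. Throughout I would fix $p := r - \k \ge 0$ and take $\tilde m := p(p+3)/2$, because then $2\tilde m + 9/4 = (p+3/2)^2$, so $\lfloor \sqrt{2\tilde m + 9/4} - 3/2\rfloor = p$; this is precisely the value of $\tilde m$ for which the bound $\k + \lfloor \sqrt{2\tilde m + 9/4} - 3/2\rfloor = r$ is to be attained. The degenerate case $p = 0$ is immediate (a rank-$\k$ projection is an extreme point of $\conv(\X)$), so I would assume $p \ge 1$.

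For the positive semidefinite case I would work in $\S_+^n$ with $n = r$ and use the domain set $\X = \{\bm X \in \S_+^n : \rank(\bm X) \le k,\ \|\bm X\|_2 \le 1,\ \tr(\bm X) \le \k\}$, whose spectral constraint is $F(\bm X) = \max\{\|\bm X\|_2 - 1,\ \tr(\bm X) - \k\}$. A short computation gives $\conv(\X) = \{\bm X : \bm 0 \preceq \bm X \preceq \bm I_n,\ \tr(\bm X) \le \k\}$, whose extreme points are rank-$\le\k$ projections; hence the strengthened rank is exactly $\k$ by \Cref{def:k}, which lets me realize any prescribed $\k \le k$ (dropping the trace bound recovers $k = \k$). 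As the candidate extreme point I would take $\bm X^* = \Diag(\underbrace{1, \dots, 1}_{\k - 1}, \underbrace{\tfrac{1}{p+1}, \dots, \tfrac{1}{p+1}}_{p+1})$, which has rank $r = \k + p$ and lies in $\conv(\X)$ since its eigenvalues lie in $[0,1]$ and sum to $\k$.

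The crux is to choose $\tilde m$ linearly independent constraint matrices that isolate $\bm X^*$ as an extreme point. Writing $\bm Q_1, \bm Q_2$ for the eigenspaces of the values $1$ and $\tfrac{1}{p+1}$, I would first show that any symmetric $\bm Y$ with $\bm X^* \pm \epsilon \bm Y \in \conv(\X)$ for small $\epsilon > 0$ must take the form $\bm Y = \bm Q_2 \bm \Delta \bm Q_2^\top$ with $\tr(\bm \Delta) = 0$: the active constraint $\bm I_n - \bm X^* \succeq 0$ has kernel $\mathrm{range}(\bm Q_1)$, and requiring both $\bm I_n - \bm X^* \mp \epsilon \bm Y \succeq 0$ forces $\bm Y$ to vanish on that kernel, while $\bm X^* \pm \epsilon \bm Y \succeq 0$ removes the trailing zero directions and the active $\tr(\bm X) \le \k$ forces $\tr(\bm \Delta) = 0$. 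These feasible directions form a space of dimension $(p+1)(p+2)/2 - 1 = \tilde m$. I would then take $\{\bm A_i\}$ to be the $\tilde m$ matrices supported on the $\bm Q_2$-block that span the traceless symmetric $(p+1)\times(p+1)$ matrices (the off-diagonal symmetrizations $\bm e_j \bm e_\ell^\top + \bm e_\ell \bm e_j^\top$ together with the diagonal differences $\bm e_j \bm e_j^\top - \bm e_{j+1}\bm e_{j+1}^\top$), and set $b_i^l = b_i^u = \langle \bm A_i, \bm X^*\rangle$. Then $\langle \bm A_i, \bm Y\rangle = \langle \bm Q_2^\top \bm A_i \bm Q_2, \bm \Delta\rangle = 0$ for all $i$ forces $\bm \Delta = 0$, so $\bm X^*$ admits no nonzero two-sided feasible perturbation and is an extreme point of rank $r$ (the objective $\bm A_0$ is immaterial to the statement and can even be chosen to make $\bm X^*$ optimal).

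For the non-symmetric space $\Re^{n\times p}$ and the sign-invariant symmetric space $\S^n$ I would repeat the construction with singular values, respectively absolute eigenvalues, in place of eigenvalues, using $\X = \{\bm X : \rank(\bm X) \le k,\ \|\bm X\|_2 \le 1,\ \|\bm X\|_* \le \k\}$, whose convex hull is $\{\bm X : \|\bm X\|_2 \le 1,\ \|\bm X\|_* \le \k\}$ and whose strengthened rank is again $\k$. The candidate $\bm X^*$ has singular values $1$ ($\k - 1$ times) and $\tfrac{1}{p+1}$ ($p+1$ times); the feasible perturbations now take the form $\bm Q_2 \bm \Delta \bm P_2^\top$ (left/right singular vectors) with $\tr(\bm \Delta) = 0$, exactly as in the proofs of \Cref{them:nonface,them:symsign}, and the same block constraints isolate $\bm X^*$. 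I expect the main obstacle to be precisely this perturbation analysis, namely verifying that the two-sided-feasible directions are exactly the traceless $\bm \Delta$ confined to the smallest-singular-value block. This is where placing the top $\k - 1$ values at the boundary $1$ of the spectral-norm constraint is essential: it is what "protects" those directions and collapses the effective perturbation dimension from $\binom{n+1}{2} - 1$ down to $\tilde m$, so that $\tilde m$ generic constraints suffice to pin down a point of rank $\k + p$.
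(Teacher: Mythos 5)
Your proposal is correct, and the instance you build has essentially the same skeleton as the paper's worst cases: a block matrix with $\k-1$ singular values pinned at the spectral-norm boundary $1$, a $(p+1)$-block of equal small values $\tfrac{1}{p+1}$ saturating the trace/nuclear bound, and $\tilde m=p(p+3)/2$ linear equations supported on the small block (your diagonal differences plus off-diagonal symmetrizations span the same traceless space as the paper's fixed diagonal entries plus off-diagonals in \Cref{eg:psdtight,eg:tight}). Where you genuinely diverge is the proof of extremality. The paper argues globally: it assumes $\bm X^*=\alpha\bm X_1+(1-\alpha)\bm X_2$ and derives an entrywise contradiction from the nuclear-norm submatrix inequalities of \Cref{lem:nuc} ($\|\bm C\|_*\ge\sum_i|C_{ii}|$ and its $2\times 2$ refinement), and it does not prove Part (i) at all, instead citing \cite{tantipongpipat2019multi}. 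You argue locally: you characterize the two-sided feasible directions at $\bm X^*$ through the active constraints and then pick the $\tilde m$ constraint matrices to annihilate exactly that direction space. Your route buys two things the paper's does not: it realizes an arbitrary prescribed $\k<k$ via the extra bound $\tr(\bm X)\le\k$ (resp.\ $\|\bm X\|_*\le\k$), whereas the paper's examples all have $\k=k$; and the direction-space count explains \emph{why} $p(p+3)/2$ is the right number of constraints rather than merely verifying it, making the argument self-contained for all three matrix spaces including the PSD case. The one place requiring care is your central claim that the two-sided feasible directions are exactly $\bm Q_2\bm\Delta\bm P_2^{\top}$ with $\bm\Delta$ symmetric and traceless: in the PSD case this follows cleanly and exactly from the kernel argument (if $(\bm I-\bm X^*)v=0$ and $\bm I-\bm X^*\mp\epsilon\bm Y\succeq 0$, then $\bm Yv=0$), but in the non-symmetric and sign-invariant symmetric cases the exclusion of the cross-blocks and of the antisymmetric part of $\bm\Delta$ is a second-order phenomenon, e.g.\ $\|\bm I+\epsilon\bm A\|_2=\sqrt{1+\epsilon^2\|\bm A\|_2^2}>1$ for antisymmetric $\bm A\neq\bm 0$, and $\|c\bm I+\epsilon\bm Y_{22}\|_*=1+\epsilon\,\tr(\mathrm{sym}\,\bm Y_{22})+\Theta(\epsilon^2)\|\mathrm{asym}\,\bm Y_{22}\|_F^2$, so that two-sidedness kills these components only after summing the $+\epsilon$ and $-\epsilon$ inequalities. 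You correctly flagged this as the main obstacle, and the computation does go through, but it must actually be executed for the argument to be complete, since a purely first-order analysis leaves the antisymmetric directions alive.
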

\begin{proof}
First, we have $\k\le r\le n$.
When $r=\k$, the results trivially hold. The following proof that focuses on the case of $r\ge \k+1$ splits into three parts.

\noindent\textbf{Part (i).} When $\Q:=\S_+^n$, let us consider the following example.
\begin{example}\label{eg:psdtight}
Suppose the domain set $\X:=\{\bm X \in \S_+^n: \|\bm X\|_2 \le 1, \rank(\bm X)\le k\}$ and $m =(r-k+1)(r-k+2)/2-1$ linear equations in \ref{eq_rank}: 
\begin{align*}
X_{ii} = \frac{1}{r-k+1}, \forall i \in [r-k],  \ \  X_{ij} =0, \forall i, j \in [r-k+1]\times [r-k+1], i< j.
\end{align*}
\end{example}

In \Cref{eg:psdtight}'s domain set $\X$, we have that $\k =k$, $m=\tilde m$, and  $r=\k+\lfloor\sqrt{2\tilde{m}+9/4}-3/2\rfloor$. It has been proven by \cite{tantipongpipat2019multi}[appendix B] that there exists a rank-$r$ extreme point of the feasible set of corresponding \ref{eq_rel_rank}.

\noindent\textbf{Part (ii).} When $\Q:=\Re^{n\times p}$,
let us consider the following example.  
\begin{example}\label{eg:tight}
Suppose the domain set $\X:=\{\bm X \in \Re^{n\times p}: \|\bm X\|_2 \le 1, \rank(\bm X)\le k\}$ with $n=p$ and $m =(r-k+1)(r-k+2)/2-1$ linear equations in \ref{eq_rank}: 
\begin{align*}
X_{ii} = \frac{1}{r-k+1}, \forall i \in [r-k],  \ \  X_{ij} =0, \forall i, j \in [r-k+1]\times [r-k+1], i< j.
\end{align*}
\end{example}

In \Cref{eg:tight}'s domain set $\X$, we also have $\k =k$, $m=\tilde m$, and  $r=\k+\lfloor\sqrt{2\tilde{m}+9/4}-3/2\rfloor$. It is recognized in \cite{li2022exactness}[lemma 5] that $\conv(\X)=\{\bm X \in \S_{+}^n:  \| \bm X\|_2 \le 1, \|\bm X\|_*\le k \}$. Thus, in this example, the resulting \ref{eq_rel_rank} admits the following feasible set 
\begin{align*}
\T=\left\{\bm X\in \Re^{n\times n}: \|\bm X\|_2 \le 1, \|\bm X\|_* \le k, X_{ii} = \frac{1}{r-k+1}, \forall i \in [r-k],   X_{ij} =0, \forall 1\le i< j \le r-k+1\right\}.
\end{align*}

Then,  it suffices to show a rank-$r$ extreme point in the feasible set $\T$ above. Specifically, let us construct a rank-$r$ matrix $\bm X^* \in \Re^{n\times n}$ below
\begin{align*}
\bm X^*:= \begin{pmatrix}
\frac{1}{r-k+1} \bm I_{r-k+1}  & \bm 0_{r-k+1, k-1} & \bm 0_{r-k+1,n-r} \\
\bm 0_{k-1, r-k+1} & \bm I_{k-1} & \bm 0_{k-1, n-r}\\
\bm 0_{n-r, r-k+1} & \bm 0_{n-r, k-1} & \bm 0_{n-r, n-r}\\
\end{pmatrix}.
\end{align*}

We prove the extremeness of matrix $\bm X^*$ by contradiction. Suppose that there exist two distinct points $\bm X_1, \bm X_2$ in set $\T$ such that $\bm X^*$ is equal to their convex combination, i.e., 
$$\exists 0<\alpha<1, \bm X^* = \alpha \bm X_1+(1-\alpha)\bm X_2.$$

First, we show that $\diag(\bm X^*)= \diag(\bm X_1) = \diag(\bm X_2)$. The inclusion  $\bm X_1, \bm X_2\in \T$ indicates $(X_1)_{ii} = (X_2)_{ii}=1/(r-k+1)$ for all $i \in [r-k]$.
Due to the constraint $\|\bm X\|_2 \le 1$ in set $\T$, we must have $(X_1)_{ii} = (X_2)_{ii}=1$ for all $i \in [r-k+2, r]$. In addition, according to Part (i) in \Cref{lem:nuc}, points $\bm X_1, \bm X_2$ must satisfy
\[ \sum_{i\in [n]} |(X_1)_{ii}|\le  \|\bm X_1\|_* \le k, \ \ \sum_{i\in [n]} |(X_2)_{ii}|\le  \|\bm X_2\|_* \le k,  \]
which further results in equations $(X_1)_{ii}=(X_2)_{ii}=X^*_{ii}$ for all $i\in \{r-k+1\}\cup [r+1,n]$.

Next, if there is a pair $(i^*,j^*) \in [r-k+1]\times [r-k+1]$ and $i^* > j^*$ such that $(X_1)_{i^*j^*}\neq 0$ and $(X_2)_{i^*j^*}\neq 0$, then we let $S:=\{i^*,j^*\}$. According to Part (ii) in \Cref{lem:nuc}, we have
\[\|\bm X_1\|_* \ge  \sum_{i\in [n]\setminus S} |(X_1)_{ii}| +  \|(\bm X_1)_{S,S}\|_* > \sum_{i\in [n]} |(X_1)_{ii}|  =k, \]
where the greater inequality is because for any $2\times 2$ submatrix $(\bm X_1)_{S,S}$ with only  one nonzero off-diagonal entry, we must have $ \|(\bm X_1)_{S,S}\|_* > \sum_{i\in S} |(X_1)_{ii}|$. Similarly, we can show $\|\bm X_2\|_* > k$. This thus forms a contraction with the nuclear norm constraint in set $\T$. 

If there is a pair $(i^*,j^*) \in [r-k+2, r]\times [r-k+2, r]$ and $i^* \neq j^*$ such that $(X_1)_{i^*j^*}\neq 0$ and $(X_2)_{i^*j^*}\neq 0$, then we let $S:=\{i^*,j^*\}$. Since $(X_1)_{i^*, i^*}= (X_1)_{j^*, j^*}=(X_2)_{i^*, i^*}= (X_2)_{j^*, j^*}=1$, we have
\[(\bm X_1)_{S,S}:= \begin{pmatrix}
    1 & (X_1)_{i^*j^*}\\
    (X_1)_{j^*i^*} & 1
\end{pmatrix}, \ \  (\bm X_2)_{S,S}:= \begin{pmatrix}
    1 & (X_2)_{i^*j^*}\\
    (X_2)_{j^*i^*} & 1
\end{pmatrix}. \]
Then, the simple calculation leads to  $ \|(\bm X_1)_{S,S}\|_2 >1$ and $ \|(\bm X_2)_{S,S}\|_2 >1$, which violates the largest singular value constraint in  set $\T$.

Finally, if there is a pair $(i^*,j^*) \in [r+1, n]\times [r+1, n]$ and $i^* \neq j^*$ such that $(X_1)_{i^*j^*}\neq 0$ and $(X_2)_{i^*j^*}\neq 0$, then we let $S:=\{i^*,j^*\}$. Since $(X_1)_{i^*, i^*}= (X_1)_{j^*, j^*}=(X_2)_{i^*, i^*}= (X_2)_{j^*, j^*}=0$, we have
\[(\bm X_1)_{S,S}:= \begin{pmatrix}
    0 & (X_1)_{i^*j^*}\\
    (X_1)_{j^*i^*} & 0
\end{pmatrix}, \ \  (\bm X_2)_{S,S}:= \begin{pmatrix}
    0 & (X_2)_{i^*j^*}\\
    (X_2)_{j^*i^*} & 0
\end{pmatrix}. \]
Similarly, using Part (ii) of \Cref{lem:nuc},  we can prove $ \|(\bm X_1)_{S,S}\|_* > \sum_{i\in S} |(X_1)_{ii}|$,  $\|\bm X_1\|_* > k$ and so does matrix $\bm X_2$, which is a contradiction.

Combining the above results together, we must have  $(X_1)_{i,j}= 0$ and $(X_2)_{i,j}= 0$ for all $i \neq j$, which means $\bm X^* = \bm X_1 =\bm X_2$. Thus, we complete the proof, provided that $\bm X^*$ is a rank-$r$ extreme point in the feasible set $\T$.

\noindent\textbf{Part (iii).} When function $f(\cdot)$ in the domain set $\X$ with $\Q:=\S^n$ is sign-invariant, we instead use the absolute eigenvalues, i.e., singular values; thus, the analysis of Part (ii) above can be readily extended, expecting replacing the non-symmetric matrix space with symmetric one. That is, we can construct a worst-case example below where a rank-$r$ extreme point exists. 
\begin{example}\label{eg:symtight}
Suppose the domain set $\X:=\{\bm X \in \S^n: \|\bm X\|_2 \le 1, \rank(\bm X)\le k\}$ and $m =(r-k+1)(r-k+2)/2-1$ linear equations in \ref{eq_rank}: 
\begin{align*}
X_{ii} = \frac{1}{r-k+1}, \forall i \in [r-k],  \ \  X_{ij} =0, \forall i, j \in [r-k+1]\times [r-k+1], i< j.
\end{align*} \qed
\end{example}
\end{proof}

\begin{theorem}
Given a sparse domain set $\X $ in \eqref{set:sparse} and an integer $\k\le k$ following \Cref{def:k}, we have  that
	\begin{enumerate}[(i)]
		\item If function $f(\cdot)$ in \eqref{set:sparse}  is sign-invariant, then there exists an \ref{eq_rel_rank} example that contains a rank-$(\k+\tilde m)$ extreme point; and
		\item There exists an \ref{eq_rel_rank1} example that contains a rank-$r$ extreme point.
	\end{enumerate}
\end{theorem}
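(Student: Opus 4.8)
The plan is to mirror, in the diagonal matrix space, the worst-case constructions already used for the dense cases (Example~\ref{eg:tight} and the surrounding lemma), exploiting the fact that for a diagonal $\bm X=\Diag(\bm x)$ the rank equals $\|\bm x\|_0$ and every off-diagonal constraint is automatically satisfied. Consequently, the same block-structured point that forced rank $\k+\lfloor\sqrt{2\tilde m+9/4}-3/2\rfloor$ in the dense setting now requires only $\O(\tilde m)$ genuine (diagonal) constraints, which is exactly what is needed to saturate the linear bound $\k+\tilde m$ of \Cref{them:sparsesign,them:sparsenonsign}. Throughout I write $r:=\k+\tilde m$, take $n=r$, and let $\bm e_i$ denote the $i$-th coordinate vector.

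For Part~(i) I would take the sign-invariant domain $\X=\{\Diag(\bm x):\|\bm x\|_0\le k,\ \|\bm x\|_\infty\le 1\}$, i.e. $f(\bm x)=\|\bm x\|_\infty-1$, which is symmetric, convex, and sign-invariant. First I would record that, by \Cref{prop:convsparse} (equivalently, because the only $\le k$-sparse vectors extreme in the box are the $k$-sparse $\pm1$ vectors), one has $\conv(\X)=\{\Diag(\bm x):\|\bm x\|_\infty\le1,\ \|\bm x\|_1\le k\}$ and $\k=k$ by \Cref{def:k}. Next I would impose the $\tilde m$ linearly independent constraints $\langle\Diag(\bm e_i),\bm X\rangle=\tfrac1{\tilde m+1}$ for $i\in[\tilde m]$ and exhibit the candidate
\[
\bm x^*=\Bigl(\underbrace{\tfrac1{\tilde m+1},\dots,\tfrac1{\tilde m+1}}_{\tilde m+1},\ \underbrace{1,\dots,1}_{k-1}\Bigr)^\top,
\]
which has all $n=k+\tilde m$ entries nonzero, so $\rank(\Diag(\bm x^*))=k+\tilde m=\k+\tilde m$. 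It remains to verify that $\Diag(\bm x^*)$ is an extreme point: at $\bm x^*$ the active constraints are the $\tilde m$ equalities (gradients $\bm e_1,\dots,\bm e_{\tilde m}$), the binding budget $\|\bm x\|_1=k$ (gradient $\bm e$), and the $k-1$ saturations $x_i=1$ (gradients $\bm e_{\tilde m+2},\dots,\bm e_{r}$). These $n$ gradients are linearly independent, since the only coordinate not already pinned by a unit vector is $\tilde m+1$ and $\bm e$ supplies that direction, so the active set has rank $n$ and $\bm x^*$ is a vertex.

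For Part~(ii) I would use a genuinely non-sign-invariant symmetric convex $f$, for instance the asymmetric box $f(\bm x)=\max\{\max_i x_i-1,\ \max_i(-x_i)-2\}$, so that the relaxation must use the disjunctive set $\cup_{s\in[\k+1]}\hat\Y^s$ of \Cref{prop:convsparse} (hence LSOP-R-I rather than LSOP-R), while still $\k=k$. I would place the same point $\bm x^*$ and the same $\tilde m$ equalities; since $\bm x^*\ge 0$, it lies in the all-nonnegative piece $\hat\Y^{\k+1}$, on which the governing constraints reduce to $0\le x_i\le1$ and $\sum_i x_i\le k$, exactly as in Part~(i). Because an extreme point of the larger set $\conv(\X)\cap\{\text{equalities}\}$ is a fortiori extreme in the subset $\hat\Y^{\k+1}\cap\{\text{equalities}\}$, the vertex computation transfers verbatim and yields a rank-$(\k+\tilde m)$ extreme point of the $(\k+1)$st inner minimization of the LSOP-R-I, matching the bound of \Cref{them:sparsenonsign}.

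I expect the main obstacle to be the two bookkeeping verifications rather than any conceptual difficulty: (a) certifying that the chosen domain sets really have strengthened rank $\k=k$, which requires checking that $\conv(\X)$ has no extreme point of rank below $k$ (for the asymmetric-box example, that its extreme points are precisely the $k$-sparse box-corner vectors); and (b) the extremeness argument itself, namely assembling exactly $n$ active, linearly independent constraints at $\bm x^*$ and, for Part~(ii), confirming $\bm x^*\in\hat\Y^{\k+1}$ together with the fact that the majorization description of $\hat\Y^{\k+1}$ makes $\sum_i x_i\le k$ and $x_i\le1$ the operative facets. Both reduce to the same normal-cone/active-set counting already performed for Example~\ref{eg:tight}, so no machinery beyond \Cref{them:sparseface} and \Cref{lem:face} should be needed.
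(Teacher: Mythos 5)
Your proof is correct, and Part~(i) is essentially the paper's own argument: the paper's Example~\ref{egsparse} uses the domain $\X=\{\bm X=\Diag(\diag(\bm X)):\rank(\bm X)\le k,\ \|\bm X\|_2\le 1\}$, which on diagonal matrices is exactly your $\ell_\infty$ box, imposes the same equalities $X_{ii}=1/(\tilde m+1)$, and exhibits the same candidate ($\tilde m+1$ entries equal to $1/(\tilde m+1)$ and $k-1$ entries equal to $1$, with $\|\bm x^*\|_1=k$ binding); your explicit active-set certificate ($\tilde m$ equality gradients, $k-1$ saturations $x_i=1$, and the budget gradient $\bm e$ covering the one unpinned coordinate) actually fills in the extremeness verification that the paper leaves implicit behind ``we can show.'' Where you genuinely diverge is Part~(ii): the paper simply reuses the sign-invariant Example~\ref{egsparse}, observing that \Cref{them:sparsenonsign} gives the identical bound $\k+\tilde m$ for \emph{any} sparse domain set, so the same instance witnesses tightness of the \ref{eq_rel_rank1} bound; you instead build an asymmetric (non-sign-invariant) box so that the disjunctive sets $\{\hat\Y^s\}$ are genuinely needed, locate $\bm x^*$ in the all-nonnegative piece $\hat\Y^{\k+1}$, and transfer the vertex computation. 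Your route costs a little extra bookkeeping — checking $\k=k$ for the asymmetric box (immediate, since the vector with $k$ ones is a vertex of the hull) and confirming that $\hat\Y^{\k+1}$ reduces to the box-with-budget polytope (which holds because $\bm x\ge 0$ together with majorization forces $\diag(\bm X)\ge 0$, so the operative facets are indeed $0\le x_i\le 1$ and $\sum_i x_i\le k$) — but it buys a strictly more informative exhibit: it shows the $\O(\tilde m)$ bound is attained even when \ref{eq_rel_rank1} does not collapse to \ref{eq_rel_rank}. One small note: your ``a fortiori'' step invokes extremeness in $\conv(\X)\cap\{\text{equalities}\}$, which you never directly establish for the asymmetric box; it is cleaner (and sufficient, since the inner minimization of \ref{eq_rel_rank1} is posed over $\hat\Y^{\k+1}$) to run your Part~(i) vertex argument directly on $\hat\Y^{\k+1}\cap\{\text{equalities}\}$, whose polyhedral description you have already identified — with that rerouting, the argument is airtight.
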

\begin{proof}
Let us introduce an example below where function $f(\cdot)$ is sign-invariant.
\begin{example}\label{egsparse}
	Suppose a sparse domain set $\X:=\{\bm X \in \S^n: \bm X = \Diag(\diag(\bm X)), \rank(\bm X) \le k, \|\bm X\|_2 \le 1\}$ and $m=\tilde m$ linear equations in \ref{eq_rank}: 
	\begin{align*}
		X_{ii} = \frac{1}{m+1}, \forall i \in [m], 
	\end{align*} 
which leads to $\X:=\{\bm X \in \S^n: \bm X = \Diag(\diag(\bm X)), \|\bm X\|_* \le k, \|\bm X\|_2 \le 1\}$. 
\end{example}
Then, we can show that $\bm X^*=\diag(\bm x^*)$ is rank-$(k+\tilde m)$ and is an extreme point in the feasible set of \ref{eq_rel_rank}, where $x_i^*= 1/(m+1)$ for all $i\in [m+1]$, $x_i^*=1$ for all $i\in [m+2,m+k]$, and $x_i^*=0$ for all $i\in[m+k,n]$.  Thus, the rank bound in \Cref{them:sparsesign} is tight.

\Cref{them:sparsenonsign} provides an identical rank bound for any sparse domain set in \eqref{set:sparse}; hence,  \Cref{egsparse} also serves as a worst-case to show the tightness of \Cref{them:sparsenonsign}.
\qed
\end{proof}

\end{appendices}

\end{document}